\theoremstyle{plain}
\newtheorem{theorem}{Theorem}[section]
\newtheorem{fact}[theorem]{Fact}
\newtheorem{lemma}[theorem]{Lemma}
\newtheorem{proposition}[theorem]{Proposition}
\theoremstyle{definition}
\newtheorem{definition}[theorem]{Definition}
\newtheorem{example}[theorem]{Example}
\theoremstyle{remark}
\newtheorem{notation}[theorem]{Notation}
\newtheorem{remark}[theorem]{Remark}
\newcommand{\Qdv}{Q^{0\text{--}2}}
\newcommand{\Qdh}{Q^{1\text{--}3}}
\newcommand{\conftoquiv}{\mathcal{M}}
\newcommand{\defined}[1]{\textbf{#1}}
\newcommand{\fundamentalweight}{\omega}
\newcommand{\genminor}[2]{\Delta^{{#1} \fundamentalweight_{#2}}}
\newcommand{\genminorgk}[1]{\Delta^{\gamma_{#1}}}
\newcommand{\genminornumerator}[2]{\Delta_{\oplus}^{{#1} {#2}}}
\newcommand{\genminoryz}[3]{\Delta_{{#1} \fundamentalweight_{#3}, {#2}%
    \fundamentalweight_{#3}}}
\newcommand{\lifttoG}[1]{{\mathop{\overline{#1}}}}
\newcommand{\dlifttoG}[1]{{\mathop{\overline{\overline{#1}}}}}
\newcommand{\minorcoordmap}{Y}
\newcommand{\monomialmap}{m}
\newcommand{\muflipind}{{\mu_{\text{flip}}^{\ast}}}
\newcommand{\murotind}{{\mu_{\text{rot}}^{\ast}}}
\newcommand{\murottwistind}{{\mu_{\text{rotTw}}^{\ast}}}
\newcommand{\partialtorus}{\widetilde{T}^{\mathcal{A}}}
\newcommand{\posroots}{\Delta_{+}}
\newcommand{\pregenminor}[1]{\widetilde{\Delta}^{#1}}
\newcommand{\Q}[1]{\ifstrempty{#1}{Q}{Q_{\text{#1}}}}
\newcommand{\roots}{\Delta}
\newcommand{\seedAtorus}{T^{\mathcal{A}}}
\newcommand{\seedXtorus}{T^{\mathcal{X}}}
\newcommand{\triang}{\mathcal{T}}
\newcommand{\torush}{\chi^{\ast}}
\newcommand{\TtoZ}{p}
\newcommand{\word}[1]{\mathbf{#1}}
\newcommand{\muflip}{{\mu_{\text{flip}}}}
\newcommand{\mufliptwist}{{\mu_{\text{flipTw}}}}
\newcommand{\murot}{{\mu_{\text{rot}}}}
\newcommand{\murottwist}{{\mu_{\text{rotTw}}}}
\newcommand{\murottwistL}{{\mu^{L}_{\text{rotTw}}}}
\newcommand{\murottwistR}{{\mu^{R}_{\text{rotTw}}}}
\newcommand{\muint}[1]{\widetilde{\mu}_{\text{#1}}}
\newcommand{\muflipcore}{\muint{Flipcore}}
\newcommand{\murowperm}{\muint{RS}}
\newcommand{\mucolperm}{\muint{CS}}
\newcommand{\mucol}{\muint{Col}}
\newcommand{\muT}{\muint{T}}
\newcommand{\muP}{\muint{P}}
\newlength\inlmutseqhang
\newcommand{\inlmutseq}[2]{%
  {\small%
    \settowidth\inlmutseqhang{$#1 = \big\{~$~}%
    \hangindent\inlmutseqhang%
    \noindent $#1 = \big\{$ #2 $\big\}$\par}%
}
\newcommand{\inldynkinAthreeCtwo}{\raisebox{-0.7ex}{\includestandalone[mode=image|tex,height=3ex]{fig/dynkin-A3-C2}}}
\newcommand{\inldynkinAfive}{\raisebox{-0.7ex}{\includestandalone[mode=image|tex,height=3ex]{fig/dynkin-A5}}}
\newcommand{\inldynkinBthree}{\raisebox{-0.1ex}{\includestandalone[mode=image|tex,height=1.5ex]{fig/dynkin-B3}}}
\newcommand{\inldynkinDfive}{\raisebox{-0.7ex}{\includestandalone[mode=image|tex,height=3ex]{fig/dynkin-D5}}}
\newcommand{\inldynkinDtwo}{\raisebox{-0.0ex}{\includestandalone[mode=image|tex,height=1.1ex]{fig/dynkin-D2}}}
\newcommand{\inldynkinEsix}{\raisebox{-0.7ex}{\includestandalone[mode=image|tex,height=3ex]{fig/dynkin-E6}}}
\newcommand{\inldynkinEseven}{\raisebox{-0.7ex}{\includestandalone[mode=image|tex,height=3ex]{fig/dynkin-E7}}}
\newcommand{\inldynkinEeight}{\raisebox{-0.7ex}{\includestandalone[mode=image|tex,height=3ex]{fig/dynkin-E8}}}
\newcommand{\inldynkinFfour}{\raisebox{-0.1ex}{\includestandalone[mode=image|tex,height=1.5ex]{fig/dynkin-F4}}}
\newcommand{\inldynkinGtwo}{\raisebox{-0.1ex}{\includestandalone[mode=image|tex,height=1.5ex]{fig/dynkin-G2}}}
\DeclareMathOperator{\ad}{ad}
\DeclareMathOperator{\Conf}{Conf}
\DeclareMathOperator{\facemap}{d}
\DeclareMathOperator{\GL}{GL}
\DeclareMathOperator{\Hom}{Hom}
\DeclareMathOperator{\id}{id}
\DeclareMathOperator{\Id}{Id}
\DeclareMathOperator{\Isom}{Isom}
\DeclareMathOperator{\length}{length}
\DeclareMathOperator{\PGL}{PGL}
\DeclareMathOperator{\PSL}{PSL}
\DeclareMathOperator{\rank}{rank}
\DeclareMathOperator{\sgn}{sgn}
\DeclareMathOperator{\SL}{SL}
\DeclareMathOperator{\teich}{\mathcal{T}}
\DeclarePairedDelimiter\abs||
\DeclarePairedDelimiter\gen\langle\rangle
\DeclarePairedDelimiter\ip\langle\rangle
\setlist[itemize,1]{label=\raisebox{0.25ex}{\tiny$\bullet$}}
\author{S. Gilles}
\email{sgilles@sgilles.net}
\title{Fock--Goncharov Coordinates for Semisimple Lie Groups}
\date{2021-03-24}
\begin{document}

  \begin{abstract}
    Fock and Goncharov \cite{fockgoncharov2006} introduced cluster
    ensembles, providing a framework for coordinates on varieties of
    surface representations into Lie groups, as well as a complete
    construction for groups of type $A_n$. Later, Zickert
    \cite{zickert2016}, Le \cite{le2016}, \cite{le2016_2}, and Ip
    \cite{ip2016} described, using differing methods, how to apply this
    framework for other Lie group types. Zickert also showed that this
    framework applies to triangulated $3$-manifolds. We present a
    complete, general construction, based on work of Fomin and
    Zelevinsky. In particular, we complete the picture for the remaining
    cases: Lie groups of types $F_4$, $E_6$, $E_7$, and $E_8$.
  \end{abstract}

  \maketitle

  \tableofcontents

  \section*{Acknowledgements}

  I owe thanks to far too many people to list them all by name. My
  professors have shared their time and expertise with me, my
  office-mates have put up with me for multiple semesters, and my
  employers have given me flexibility to complete my research. I am
  grateful to them all.

  My advisor, Professor Zickert, has provided me with expert guidance
  and mentoring over the years. He has helped me to achieve the goal for
  which I began my study, and words cannot express my appreciation.

  Finally, I would like to thank S. Kane, who taught me the meaning of
  work, and whom I am unable to repay.

  \emph{Soli Deo Gloria.}

  \section{Introduction}

  The program of Fock--Goncharov, starting with
  \cite{fockgoncharov2006}, aims to describe representation spaces of
  hyperbolic surfaces into Lie groups by moduli spaces defined by
  polynomial equations. These moduli spaces carry a positive structure
  (see \cref{def:positive-structure}), and in the case of the Lie group
  $\PSL_2(\mathbb{R})$, the associated positive spaces can be identified
  with Teichmüller space and decorated Teichmüller space. For more
  complicated Lie groups, these positive spaces give (decorated) higher
  Teichmüller spaces.

  Our result regards an additional structure, a cluster ensemble
  structure (see \cref{sec:cluster-ensembles}), which allows
  manipulating these moduli spaces efficiently via quivers.

  \begin{figure}[h]
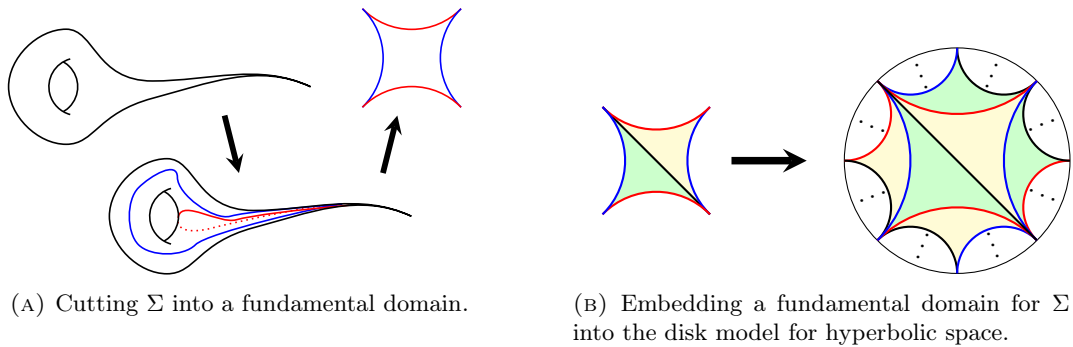

    \centering
    \begin{subfigure}[t]{0.40\textwidth}
      \centering
      \includestandalone[mode=image|tex,height=1.5in]{fig/sigma-1}
      \caption{Cutting $\Sigma$ into a fundamental domain.
        \label{fig:sigma-1}
      }
    \end{subfigure}
    ~ \hspace{0.05\textwidth} ~
    \begin{subfigure}[t]{0.40\textwidth}
      \centering \includestandalone[mode=image|tex]{fig/sigma-2}

      \caption{Embedding a fundamental domain for $\Sigma$ into the disk
        model for hyperbolic space.
        \label{fig:sigma-2}
      }
    \end{subfigure}
    \caption{Putting a hyperbolic structure on $\Sigma$.
      \label{fig:rho-by-fundamental-domain}
    }
  \end{figure}

  \subsection{Classical Teichmüller space}

  We start by describing the Fock--Goncharov program for the case of Lie
  groups of type $A_1$. Let $\Sigma$ be a surface that admits ideal
  triangulation, for example the once-punctured torus $\Sigma =
  \Sigma_{1,1}$. The Fock--Goncharov moduli spaces will describe
  (decorated) Teichmüller space, so fix a hyperbolic structure on
  $\Sigma$. This can be described by an embedding of $\Sigma$'s
  fundamental domain in $\mathbb{H}^2$, as in
  \cref{fig:rho-by-fundamental-domain}.

  We now describe the two moduli spaces with seven key points.

  \begin{quote}
    \emph{1. The structure is determined by ideal points.}
  \end{quote}

  This follows from our ideal triangulation: all vertices are on
  $\partial \mathbb{H}^2$, and all edges between them are unique
  geodesics. See \cref{fig:sigma-3}.

  \begin{figure}[h]
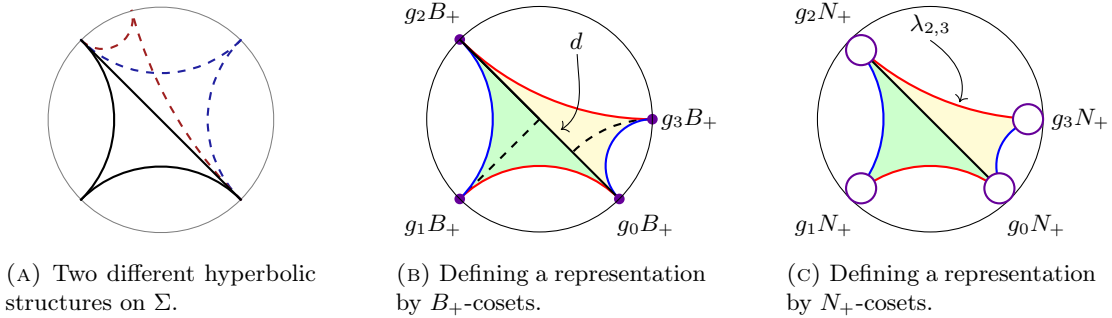

    \centering
    \begin{subfigure}[t]{0.25\textwidth}
      \centering \includestandalone[mode=image|tex]{fig/sigma-3}

      \caption{Two different hyperbolic structures on $\Sigma$.
        \label{fig:sigma-3}
      }
    \end{subfigure}
    ~ \hspace{0.05\textwidth} ~
    \begin{subfigure}[t]{0.25\textwidth}
      \centering \includestandalone[mode=image|tex]{fig/sigma-6}

      \caption{Defining a representation by $B_{+}$-cosets.
        \label{fig:sigma-6}
      }
    \end{subfigure}
    ~ \hspace{0.05\textwidth} ~
    \begin{subfigure}[t]{0.25\textwidth}
      \centering \includestandalone[mode=image|tex]{fig/sigma-5}

      \caption{Defining a representation by $N_{+}$-cosets.
        \label{fig:sigma-5}
      }
    \end{subfigure}
    \caption{Representations of $\Sigma$ by cosets.
      \label{fig:rho-by-flags}
    }
  \end{figure}

  \begin{quote}
    \emph{2. We can identify ideal points with cosets of isometries in
      $\Isom^{+}(\mathbb{H}^2)$ that fix them.}
  \end{quote}

  The specific type of isometry determines which moduli space we create.
  \begin{itemize}
  \item
    We can consider $B_{+}$, a maximal borel subgroup, which fixes a
    point on $\partial \mathbb{H}^2$. Thus different cosets $g B_{+}$
    fix, and can be identified with, different points on $\partial
    \mathbb{H}^2$. See \cref{fig:sigma-6}.
  \item
    Replacing points with horocycles, we can consider $N_{+}$, a maximal
    unipotent subgroup, which fixes a horocycle in $\mathbb{H}^2$. See
    \cref{fig:sigma-5}.
  \end{itemize}

  \begin{quote}
    \emph{3. Choosing an ordered triangulation attaches a non-degenerate
      ordered triple of cosets to each triangle.}
  \end{quote}

  The non-degeneracy condition requires that the cosets be distinct, so
  that the triangle has three well-defined edges. Describing how the
  coordinates of Fock--Goncharov's moduli spaces change under alternate
  choices for the triangulation and the ordering is one of our primary
  concerns.

  \begin{quote}
    \emph{4. Coordinates can be assigned to each triple (or pair of
      triples) of cosets.}
  \end{quote}

  \begin{itemize}
  \item
    If we consider $B_{+}$-cosets, then the coordinates we attach to
    each edge resemble Thurston's shear coordinates (as described in
    e.g.\ \cite{bonahon1996}) along that identified edge, as the
    distance $d$ in \cref{fig:sigma-6}. Some of these coordinates need
    two triples to define. We will not focus on these in this
    introduction.
  \item
    If we consider $N_{+}$-cosets, then the portion of each geodesic
    between horocycles has finite length. These are equivalent to
    Penner's $\lambda$-lengths in his parametrization of decorated
    Teichmüller space of \cite{penner1987}. We associate to each
    triangle's edge the $\lambda$-length of its truncated geodesic as a
    coordinate. See \cref{fig:sigma-5}.
  \end{itemize}

  \begin{figure}[h]
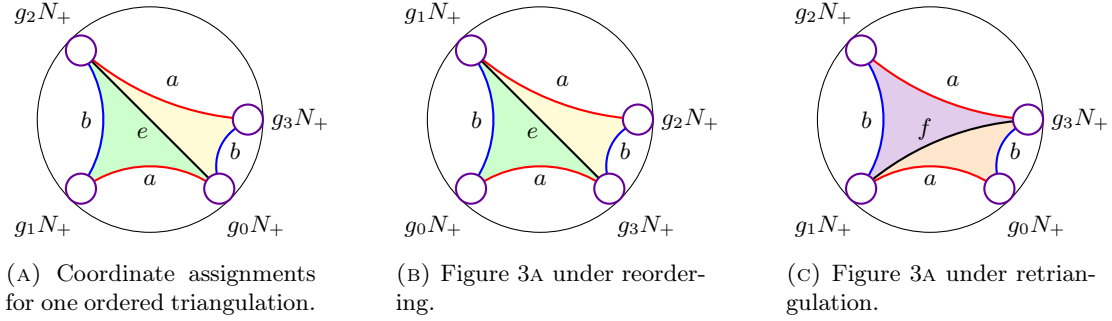

    \centering
    \begin{subfigure}[t]{0.25\textwidth}
      \centering \includestandalone[mode=image|tex]{fig/sigma-7}

      \caption{Coordinate assignments for one ordered triangulation.
        \label{fig:sigma-7}
      }
    \end{subfigure}
    ~ \hspace{0.05\textwidth} ~
    \begin{subfigure}[t]{0.25\textwidth}
      \centering \includestandalone[mode=image|tex]{fig/sigma-8}

      \caption{\cref{fig:sigma-7} under reordering.
        \label{fig:sigma-8}
      }
    \end{subfigure}
    ~ \hspace{0.05\textwidth} ~
    \begin{subfigure}[t]{0.25\textwidth}
      \centering \includestandalone[mode=image|tex]{fig/sigma-9}

      \caption{\cref{fig:sigma-7} under retriangulation.
        \label{fig:sigma-9}
      }
    \end{subfigure}
    \caption{Coordinates by $N_{+}$-cosets under reordering and
      retriangulation.
      \label{fig:rotation-and-flip-on-A-coords}
    }
  \end{figure}

  In our example, we label the edges $a$, $b$, and $e$ as in
  \cref{fig:sigma-7} (identified edges necessarily have the same
  $\lambda$-length). Using the $(0 \to 1,1 \to 2,2 \to 0)$-ordering on a
  triangle's edges, the coordinates we obtain are $(a,b,e)$ on the left
  and $(e,a,b)$ on the right.

  \begin{quote}
    \emph{5. We must describe how the coordinates change under
      (oriented) reordering and retriangulation.}
  \end{quote}

  In this $A_1$ case, reordering has a trivial effect on the
  coordinates. If we rename $g_0$, $g_1$, $g_2$, $g_3$, but do not
  change their values, we merely rearrange the coordinates following the
  new cyclic ordering on the triangle's edges. The coordinates defined
  by \cref{fig:sigma-8} are $(b,e,a)$ on the left and $(a,b,e)$ on the
  right.

  The retriangulation has a more interesting effect, as shown in
  \cref{fig:sigma-9}. The coordinates we obtain are $(b,a,f)$ for the
  top and $(a,f,b)$ for the bottom, but we need to describe the
  relationship of the new $\lambda$-length $f$ to the $\lambda$-lengths
  $a$, $b$, $e$. This is given by the classic Ptolemy's Theorem
  regarding lengths and diagonals of circumscribed quadrilaterals. In
  Euclidean space this is given by \cref{fig:ptolemy-euclidean}, and in
  hyperbolic space by \cref{fig:ptolemy-hyperbolic}. For our example,
  taking identifications into account, \[ ef = a^2 + b^2. \]

  \begin{figure}[h]
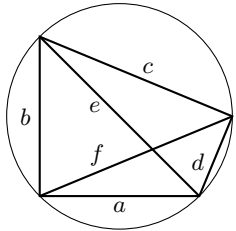
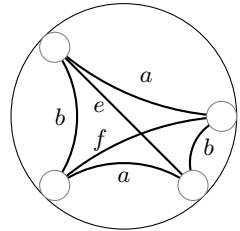

    \centering
    \begin{subfigure}[t]{0.40\textwidth}
      \centering
      \includestandalone[mode=image|tex]{fig/ptolemy-euclidean}
      \caption{Ptolemy's Theorem for lengths in Euclidean space.
        \label{fig:ptolemy-euclidean}
      }
    \end{subfigure}
    ~ \hspace{0.05\textwidth} ~
    \begin{subfigure}[t]{0.40\textwidth}
      \centering
      \includestandalone[mode=image|tex]{fig/ptolemy-hyperbolic}

      \caption{Ptolemy's Theorem for $\lambda$-lengths in hyperbolic
        space.
        \label{fig:ptolemy-hyperbolic}
      }
    \end{subfigure}
    \caption{Ptolemy's theorem: $ac + bd = ef$}
  \end{figure}

  \begin{quote}
    \emph{6. Hyperbolic structures are those for which all coordinates
      are positive.}
  \end{quote}

  The coordinates $a, b, e, f$ with the relation $ef = a^2 + b^2$
  describe an algebraic variety, but not every point corresponds to a
  hyperbolic structure. Those which do are exactly the points with
  positive coordinates, and thus positive geodesic lengths. This
  positivity need only be checked for one collection of coordinates: if
  $a$, $b$, and $e$ are all positive, then $f$ will be as well.

  \begin{quote}
    \emph{7. The representation $\rho : \pi_1(\Sigma) \to G$ can be
      reconstructed.}
  \end{quote}

  Via the coordinates we have mentioned, the holonomy $\rho(\gamma)$
  corresponding to this hyperbolic structure for $\gamma \in
  \pi_1(\Sigma)$ can be computed. See
  \cref{sec:coords-to-representations} for greater detail.

  The moduli space we have described using $N_{+}$-cosets is
  $\mathcal{A}_{\SL_2(\mathbb{R}),\Sigma}$, and points with positive
  coordinates correspond to points in Penner's decorated Teichmüller
  space for $\Sigma$. The moduli space derived from considering
  $B_{+}$-cosets is $\mathcal{X}_{\PGL_2(\mathbb{R}),\Sigma}$, and
  positive points correspond to points in Teichmüller space for
  $\Sigma$.

  \subsection{Cluster ensemble structures}

  In the above program, the retriangulation identity was given by
  Ptolemy's Theorem. Another structure that encodes the identity is a
  \defined{cluster ensemble}. We defer a detailed description to
  \cref{sec:cluster-ensembles}, but for now we only need that a cluster
  ensemble consists of a \defined{quiver} (a directed graph with a
  skew-symmetrizable adjacency matrix) and a pair of coordinate
  structures ($\mathcal{A}$- and $\mathcal{X}$-coordinates), and that
  the shape of the quiver dictates how the coordinates change under
  \defined{mutation} (a certain local alteration of the quiver).

  The quiver of the cluster ensemble which realizes $ac + bd = ef$ for
  the $\mathcal{A}$-coordinates is given (along with the effect of the
  only relevant mutation) in \cref{fig:QA1-and-flip}. The quivers are
  inscribed in two quadrilaterals with different diagonals.

  \begin{figure}[h]
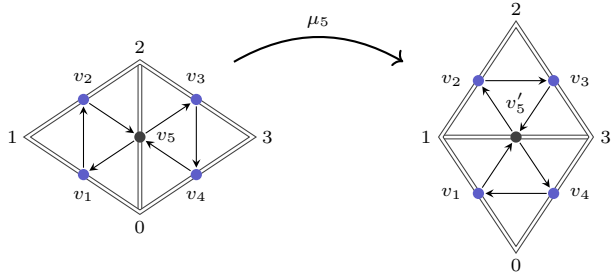

    \centering \includestandalone[mode=image|tex]{fig/QA1-and-flip}

    \caption{Quiver for type $A_1$. The $\mathcal{A}$-coordinate at
      $v_k$ is $a_k$. The relation on $\mathcal{A}$-coordinates for
      mutating at $v_5$ is $a_1 a_3 + a_2 a_4 = a_5 a_5'$.
      \label{fig:QA1-and-flip}
    }
  \end{figure}

  For other surfaces $\Sigma$, larger collections of triangles can be
  glued together. Retriangulations (and reorderings) can be computed as
  sequences of mutations, and the coordinate changes are given by
  collections of low-degree polynomial relations.

  \subsection{Higher Teichmüller spaces}

  We view higher Teichmüller spaces as collections of representations \[
    \Set{ \rho : \pi_1(\Sigma) \to G \text{ discrete and faithful}} / G,
  \] where the standard Teichmüller space case is given by $G =
  \Isom^{+}(\mathbb{H}^2) = \PSL_2(\mathbb{R})$. In order to apply the
  Fock--Goncharov program, we need the following conditions.
  \begin{itemize}
  \item
    The surface $\Sigma$ must admit an ideal triangulation, so we demand
    that it be hyperbolic with $n \ge 1$ punctures.
  \item
    The group $G$ must be a split semisimple algebraic group over
    $\mathbb{Q}$; for technical reasons the moduli space
    $\mathcal{A}_{G,\Sigma}$ is defined when $G$ is simply-connected,
    and $\mathcal{X}_{G,\Sigma}$ is defined when $G$ is centerless.
    However, the field underlying $G$ is not critical.
  \end{itemize}

  As another technical point, when $G$ is simply connected we will
  parametrize boundary-unipotent representations, and when $G$ is
  centerless the representations will be boundary-borel (see
  \cref{def:peripheral-stuff}).

  We now replay the program to create moduli spaces
  $\mathcal{A}_{G,\Sigma}$ and $\mathcal{X}_{G,\Sigma}$ for higher rank
  $G$.

  \begin{quote}
    \emph{1. The structure is determined by ideal points.}
  \end{quote}

  By restrictions on $\Sigma$, the representation is still defined by
  ideal points of a fundamental domain.

  \begin{quote}
    \emph{2. We can identify ideal points with cosets of isometries in
      $\Isom^{+}(\mathbb{H}^2)$ that define them}
  \end{quote}

  We replace $\Isom^{+}(\mathbb{H}^2)$ with general $G$. By our
  restrictions on $G$, subgroups $N_{+}$ and $B_{+}$ are still
  available.

  \begin{quote}
    \emph{3. Choosing an ordered triangulation attaches a non-degenerate
      ordered triple of cosets to each triangle.}
  \end{quote}

  Non-degenerate cosets are replaced by \defined{sufficiently generic
    configuration spaces} (see \cref{def:Conf_k_star}). These ensure
  that a well-defined element of $H$, the maximal torus of $G$,
  corresponding to translations along geodesics in the
  $\PSL_2(\mathbb{R})$ case, can be attached to each edge of each
  triangle.

  \begin{quote}
    \emph{4. Coordinates can be assigned to each triple (or pairs of
      triples) of cosets.}
  \end{quote}

  To assign coordinates to configuration spaces, Fock--Goncharov use
  \defined{generalized minors} (see \cref{def:generalized-minor}). In
  the case of $\GL_n$, these correspond to shuffling rows and columns
  according to two permutations, then taking the upper $i \times i$
  minor. In the more general case, two words in the Weyl group of $G$
  are invoked and a coordinate associated to the $i^{\text{th}}$
  fundamental weight is used.

  By work of Lusztig, configuration spaces carry a positive structure
  via \defined{factorization coordinates} (see
  \cref{def:factorization-coords-xi}). By work of Fomin--Zelevinsky,
  generalized minors inherit this positive structure.

  \begin{quote}
    \emph{5. We must describe how the coordinates change under
      (oriented) reordering and retriangulation}
  \end{quote}

  A major result of the Fock--Goncharov program is that reordering of
  triangles (we restrict to orientation-preserving reorderings, which we
  refer to as \defined{rotation}) and retriangulation of quadrilaterals
  (which we refer to as a \defined{flip} of the diagonal) preserve this
  positive structure. All general retriangulations and reorderings can
  be obtained this way.

  \begin{quote}
    \emph{6. Hyperbolic structures are those for which all coordinates
      are positive.}
  \end{quote}

  Instead of hyperbolic structures, we are interested in points in
  higher Teichmüller spaces. Since the general moduli spaces carry
  positive structures, the sets of positive points
  $\mathcal{A}^{+}_{G,\Sigma}$ and $\mathcal{X}^{+}_{G,\Sigma}$ are
  well-defined when $G$ is over $\mathbb{R}$. These do, in fact,
  correspond to higher Teichmüller spaces or decorations of them (see
  \cref{sec:coords-to-representations}).

  \begin{quote}
    \emph{7. The representation $\rho : \pi_1(\Sigma) \to G$ can be
      reconstructed.}
  \end{quote}

  The representation $\rho(\gamma)$ can still be computed by
  $\mathcal{A}$- or $\mathcal{X}$-coordinates (again, see
  \cref{sec:coords-to-representations}). Here our restrictions on $G$
  being simply connected or centerless are necessary.

  So the Fock--Goncharov program holds beyond $\PSL_2(\mathbb{R})$. We
  now address the following question:

  \begin{quote}
    \emph{The positive structure on coordinates is preserved under
      retriangulation and reorientation, but how are those two
      operations realized on coordinates?}
  \end{quote}

  In the $A_1$ case, the cluster ensemble of \cref{fig:QA1-and-flip}
  provides the answer. In \cite{fockgoncharov2006}, Fock--Goncharov
  produced cluster ensembles for all types $A_n$. The quivers appear as
  triangular lattices, see \cref{sec:An-case} for an overview.
  Fock--Goncharov also provided descriptions for coordinate changes
  under the two key operations as sequences of mutations. The
  \defined{rotation} describes reordering of a triangle, and the
  \defined{flip} describes retriangulation of a quadrilateral.

  Fock--Goncharov predicted that $\mathcal{A}_{G,\Sigma}$ and
  $\mathcal{X}_{G,\Sigma}$ would carry cluster ensemble structures as
  well. To realize those structures, we need, we need
  \begin{itemize}
  \item
    A way to assign coordinates of $\Conf_4^{\ast}(G/K)$ to a triangle.
    These coordinates will be encoded in a cluster ensemble. We will
    call the coordinate assignment map $\conftoquiv$. When the group $G$
    is over $\mathbb{R}$, collections of coordinates will be
    $(\mathbb{R}^{\ast})^n$. Seeds with all coordinates positive will be
    the \defined{positive points}.
  \item
    A way to realize (orientable) symmetries of a triangle on those
    coordinates (the \defined{rotation}, see \cref{fig:short-rot}). The
    bottom map will be realized by a quiver mutation which we call
    $\murot$.
  \item
    A way to realize changes of triangulation on those coordinates (the
    \defined{flip}, see \cref{fig:short-flip}). The bottom map will be
    realized by a quiver mutation which we call $\muflip$.
  \item
    The maps $\murot$ and $\muflip$ should preserve the positive
    structure. That is, if a seed has positive coordinates, applying the
    rotation or the flip should not change that. Using generalized
    minors for coordinates is a way to preserve positivity.
  \end{itemize}

  \begin{figure}[h]
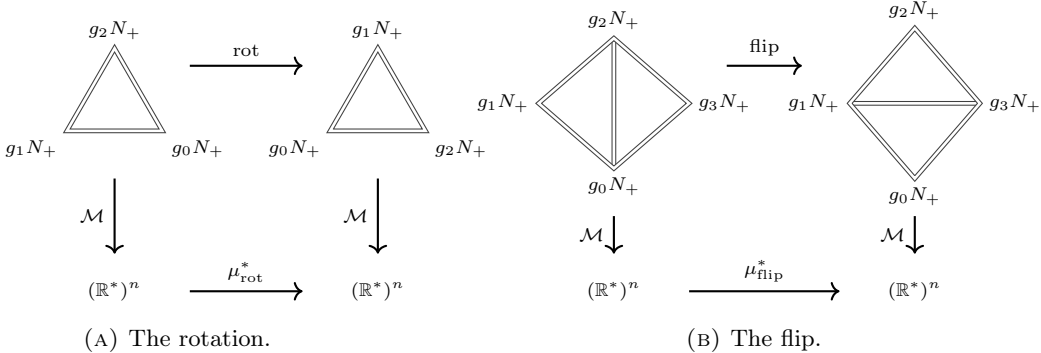

    \centering
    \begin{subfigure}[t]{0.30\textwidth}
      \centering \includestandalone[mode=image|tex]{fig/short-rot}

      \caption{The rotation.
        \label{fig:short-rot}
      }
    \end{subfigure}
    ~ \hspace{0.05\textwidth} ~
    \begin{subfigure}[t]{0.50\textwidth}
      \centering \includestandalone[mode=image|tex]{fig/short-flip}

      \caption{The flip.
        \label{fig:short-flip}
      }
    \end{subfigure}
    \caption{Retriangulation and reordering (for a group over
      $\mathbb{R}$).
      \label{fig:retriangulation-and-reordering}
    }
  \end{figure}

  \subsection{Results}

  Our main result is an explicit construction for cluster ensembles
  described above for all semisimple $G$, including the rotation and the
  flip. In other words, we present a constructive result of the
  following theorem.

  \begin{theorem}
    \label{thm:intro-main-result}
    For a split semisimple simply-connected (or centerless) algebraic
    group $G$ over $\mathbb{Q}$, there exists a quiver $Q$, quiver
    mutations $\murot$ and $\muflip$, and a coordinate map
    $\conftoquiv$.

    The map $\conftoquiv$ associates coordinates of a flag configuration
    to the coordinates on the cluster ensemble for $Q$, the quiver
    mutation $\murot$ describes how the coordinates change under
    rotation of an ordered triangle and $\muflip$ describes how the
    coordinates change under retriangulation.
  \end{theorem}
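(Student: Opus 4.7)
The plan is to give a constructive, type-uniform recipe: build the quiver from a reduced word for the longest Weyl element, assign coordinates through generalized minors, and then realize rotation and flip as explicit mutation sequences. First I would fix, for each split semisimple $G$, a reduced word $\word{i}$ for $w_0$, and build the quiver $Q_{\word{i}}$ from Fomin--Zelevinsky's construction for double Bruhat cells, arranging copies of this ``triangle quiver'' into a larger quiver $Q$ for the triangulated surface by gluing along boundary vertices of the triangles. The coordinate map $\conftoquiv$ is then essentially forced: to each flag configuration in $\Conf_3^{\ast}(G/K)$ one factors the corresponding element of $G$ via Lusztig's factorization coordinates, then reads off the $\mathcal{A}$-coordinate at each vertex $v$ as the appropriate generalized minor $\genminor{uw_0}{i}$ where $(u,w_0)$ is determined by the position of $v$ in the word; the $\mathcal{X}$-side is obtained by the standard monomial change of variables from generalized minors to factorization parameters.

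The harder half is to realize the geometric moves as mutations. For the rotation $\murot$, I would exploit the fact that cyclic reordering of a triple of flags corresponds, via the $\conftoquiv$ map, to passing from the word $\word{i}$ for $w_0$ in one position to a cyclically-shifted reduced word; by Tits' theorem any two reduced words are connected by braid moves, and each braid move can be implemented by a short sequence of mutations (this is essentially Berenstein--Fomin--Zelevinsky). So $\murot$ is the concatenation of braid-move mutations that transports the seed back to the original word presentation, possibly composed with a twist to account for the cyclic shift. For the flip $\muflip$, I would glue two adjacent triangle quivers along an edge, pick two different reduced-word decompositions of the corresponding quadrilateral's pair of configurations (before and after the diagonal flip), and connect them again by a sequence of braid moves; positivity preservation for both $\murot$ and $\muflip$ follows automatically because every step is a cluster mutation applied to a seed of generalized minors, and Fomin--Zelevinsky's Laurent positivity theorem guarantees the result.

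The main obstacle I expect is verifying that the mutation sequences obtained this way actually match the geometric rotation and flip on the level of $\conftoquiv$ rather than merely connecting two valid seeds: one must identify, for each braid move in the word, the precise mutation (at which vertex, in which order) and check that the generalized-minor identities provided by Fomin--Zelevinsky's exchange relations coincide with the identities that rotation/flip of flag configurations impose. For the classical types this is essentially in the literature of Zickert, Le, and Ip, but for $F_4$, $E_6$, $E_7$, $E_8$ one must handle the longer and less symmetric reduced words for $w_0$ and verify the combinatorics of the braid relations by working in each Dynkin type separately. My approach would be to prove the match abstractly in terms of the $i$-trail / pseudoline machinery that already underlies the construction of $Q_{\word{i}}$, so that no case-by-case exceptional-type check is needed beyond exhibiting one convenient reduced word per type.

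Finally, I would package the output: $Q$ is the glued quiver, $\conftoquiv$ is the generalized-minor assignment, $\murot$ and $\muflip$ are the explicit mutation sequences produced above, and the theorem follows by checking that $\conftoquiv$ intertwines the geometric rotation/flip with $\murot$/$\muflip$ on coordinates. The simply-connected versus centerless dichotomy is handled by running the construction on the $\mathcal{A}$-side with $N_+$-cosets (requiring $G$ simply connected so that the lifts $\lifttoG{\cdot}$ are well-defined) and on the $\mathcal{X}$-side with $B_+$-cosets (requiring $G$ centerless so the monomial map $\monomialmap$ descends), as outlined in the seven-step program in the introduction.
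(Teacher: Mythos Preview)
Your plan shares the correct starting point with the paper---build $Q$ from Fomin--Zelevinsky's double Bruhat cell construction for a reduced word of $w_0$ and assign generalized minors---but the mechanism you propose for $\murot$ and $\muflip$ is not the one that works, and I think the gap is substantive.

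You want to realize rotation and flip as braid-move mutation sequences connecting reduced words, via Tits' theorem. But rotation of a flag triple is \emph{not} a change of reduced word for $w_0$: it is the map $\operatorname{rot}$ on $\Conf_3^\ast(G/N_+)$, which on the canonical form $(h_1,h_2,h_3,u)$ acts by $u\mapsto (w_0(h_1)h_2)^{-1}(\Phi\Psi)^2(u)\,w_0(h_1)h_2$. Braid moves keep you inside the same double Bruhat cell with the same underlying element; they do not produce the minors of $(\Phi\Psi)^2(u)$ from those of $u$. The paper's key computation (its Lemma for ``rot works, trivial edge case'') is precisely the identity $\genminoryz{w_0}{w_k}{i_k}((\Phi\Psi)^2(u))=\genminoryz{w_k^\ast}{e}{i_k^\ast}(u)$ up to edge factors, and it is this that must be matched by mutations---not any word-rearrangement identity. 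The same issue arises for the flip, where the element carrying all the relevant minors is $g=\Phi^{-1}(u_{123})u_{130}h_{31}^{-1}h_{30}$ (from Zickert), and one must move between $\genminoryz{w_k}{e}{\bullet}(g)$, $\genminoryz{e}{w_k^\ast}{\bullet}(g)$, $\genminoryz{w_k}{w_0}{\bullet}(g)$, $\genminoryz{w_0}{w_k}{\bullet}(g)$, which is again not a braid-move story.

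What actually drives the proof is a very specific choice of reduced word: one takes $w_0=c^{h/2}$ for a Coxeter element $c$ (this requires $h$ even; type $A_{2n}$ is handled separately). With this choice the mutable part of $Q$ is a rectangular grid of copies of the Dynkin quiver, and the single identity \cite[Theorem~1.17]{fominzelevinsky1999},
\[
\genminoryz{u}{v}{i}\,\genminoryz{us_i}{vs_i}{i}=\genminoryz{us_i}{v}{i}\,\genminoryz{u}{vs_i}{i}+\prod_{j\ne i}\bigl(\genminoryz{u}{v}{j}\bigr)^{-A_{ji}},
\]
matches the exchange relation at every vertex as one sweeps column by column, sending $\genminoryz{c^n}{c^m}{j}$ to $\genminoryz{c^{n-1}}{c^{m-1}}{j}$. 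The ``ordering'' tail of each mutation sequence is then controlled by the Yang--Zelevinsky primitive exchange relations, which realize the Dynkin involution $\sigma_G$ and the $A_\ell$ involution on rows. Finally, the monomial correction $\monomialmap$ (involving the truncated minor $\Delta_\oplus^{w_k\omega_i}$) is not the standard $\mathcal{A}$-to-$\mathcal{X}$ change of variables; it is engineered so that the frozen edge vertices balance the identity above at the boundary of the grid. Your proposal does not contain any of these ingredients, and without the Coxeter-word grid structure there is no uniform way to see that the mutation sequence reproduces the $(\Phi\Psi)^2$ action.
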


  We call this collection $(Q, \murot, \muflip, \conftoquiv)$ a
  \defined{Fock--Goncharov coordinate structure}. The construction
  algorithm is presented in \cref{sec:main-result}. First, we use the
  algorithm of \cite[Section~2]{fominzelevinsky1999}, using a specific
  choice of $w_0$ (the longest word in the Weyl group) by
  \cref{fac:w0-construction}. This produces a quiver whose
  $\mathcal{A}$-coordinates generate the coordinate ring for $B_{-} = H
  \times N_{-}$. We call this quiver $Q_0$.

  The quiver $Q_0$ has the same mutable portion that we need for $Q$.
  However, the non-mutable portion isn't complete, so $Q_0$ does not
  have \defined{triangulation-compatible symmetry} (see \cref{def:tcs}).
  We therefore can apply the rotation $\murot$ to $Q_0$, and define $Q$
  to be the smallest quiver with triangulation-compatible symmetry
  containing $Q_0$.

  The quiver $Q$ and the map $\conftoquiv$ allow constructing
  representation varieties easily given a triangulation of a surface: a
  copy of $Q$ is inscribed on each triangle and vertices along edges are
  identified. The maps $\murot$ and $\muflip$ describe how the
  coordinate functions of the variety respond to retriangulation,
  ensuring that the variety itself is independent of triangulation
  choices.

  \begin{remark}
    The existence of cluster ensemble structures was shown
    non-constructively, and by different methods, in
    \cite{goncharovshen2018}.
  \end{remark}

  This theorem can also be used to compute representation varieties for
  $3$-manifolds, following work of Zickert. In this case, quivers are
  drawn on each face of each ideal tetrahedron in the triangulation. The
  map $\muflip$ describes the relation between coordinates on the four
  triangular faces of each tetrahedron. The variety constructed is not,
  however, independent of triangulation. If the triangulation is
  sufficiently fine, however, the variety detects all representations.

  \subsection{Historical context}

  \label{sec:historical-context}

  Fock and Goncharov introduced cluster ensembles in
  \cite{fockgoncharov2006} and \cite{fockgoncharov2003}, employing the
  cluster algebra work of Fomin--Zelevinsky \cite{fominzelevinsky2002},
  \cite{fominzelevinsky2003}, \cite{fominzelevinsky2005} as well as
  Lusztig's study of positivity \cite{lusztig1994}, \cite{lusztig1997}.
  The application was laid out for a split semisimple simply-connected
  Lie group over $\mathbb{R}$, but explicit constructions were only
  known for type $A_n$. Since then, there has been ongoing work.

  \begin{itemize}
  \item
    In \cite{zickert2016}, Zickert produced examples of cluster
    ensembles and mutations for types $A_2$, $B_2$, $C_2$, and $G_2$,
    directly employing Lusztig's positive maps to explicitly construct
    varieties. Our work addresses Conjecture~2.7, which predicts the
    existence of quivers, rotation and flip mutations, and coordinate
    maps for semisimple $G$.

    This work also expanded the use of Fock--Goncharov coordinates to
    representations of ideally-triangulated hyperbolic $3$-manifolds,
    building on \cite{gtz2015}.
  \item
    With \cite{le2016} and \cite{le2016_2}, Le described constructions
    for quivers of types $A_n$, $B_n$, $C_n$, and $D_n$ using tensor
    invariants and webs. Our work addresses Conjecture~3.12 (that the
    cluster algebra for $\Conf_m^{\ast}(G/N_{+})$ is invariant under
    retriangulation and reordering) for the orientation-preserving case,
    for our choice of presentation of $w_0$.
  \item
    Using representations of quivers, Fei constructed quivers and
    mutations for many types in \cite{fei2016}, though associated to a
    slightly different flag variety.
  \item
    In \cite{ip2016}, Ip produced similar ``basic quivers'', though not
    presenting mutation sequences in the general case.
  \item
    Goncharov--Shen continued work on invariants of cluster ensembles
    with \cite{goncharovshen2018}, providing existence proofs.
  \end{itemize}

  There were several obstacles to extending these results to our proof
  of \cref{thm:intro-main-result}. First, the cluster ensembles for type
  $A_n$ have trivial triangular symmetry. However, direct dimension
  counting of the coordinate ring shows that this is not possible for
  general $G$.

  Second, the algorithm of \cite[Section~2]{fominzelevinsky2005} to
  produce a quiver carrying the coordinate structure of $B_{-}$ (which
  is a significant step) is well-known. However, this algorithm depends
  on a particular choice of presentation for $w_0$ in the Weyl group of
  $G$. It is not immediately obvious how to choose $w_0$ for each group,
  or if this choice should matter.

  Third, the same non-triviality of triangular symmetry for $Q$ extends
  to non-triviality of the coordinate assignment map $\conftoquiv$. In
  other words, fully three sides of each square in
  \cref{fig:retriangulation-and-reordering} are trivial in the $A_n$
  case, and therefore give few clues as to the general case.

  Finally, the action of $w_0$ on a simple root $\alpha_i$ is not
  necessarily $w_0(\alpha_i) = -\alpha_i$. This creates various
  complications.

  The following observations are the key to our result.

  \begin{itemize}
  \item
    $A_n$ is the exception, not the base case. Specifically, for all $G$
    except type $A_{2n}$, there exists a very regular presentation of
    $w_0$ in terms of Coxeter elements.
  \item
    This regularity causes the quiver with the coordinate ring of
    $B_{-}$ to be be laid out so that
    \cite[Theorem~1.17]{fominzelevinsky1999} and
    \cite[Theorem~1.5]{yangzelevinsky2008} apply, and these identities
    hint at certain sequences of mutations.
  \end{itemize}

  \section{Ingredients}

  \label{sec:ingredients}

  Here we review some key components of our construction. These include
  generalized minors (see \cref{sec:generalized-minors}), quivers (see
  \cref{sec:quivers}), and cluster ensembles (see
  \cref{sec:cluster-ensembles}).

  \subsection{Root spaces and Weyl groups}

  \label{sec:lie-groups}

  We begin with some fundamentals of Lie groups, referring to e.g.
  \cite[Chapter~II]{knapp1996} or \cite{bourbaki2002_46} for more
  detail. We use the language of Lie groups over $\mathbb{C}$, with
  straightforward generalization to algebraic groups over other fields.

  \begin{definition}
    For $\mathfrak{g}$ a semisimple Lie algebra over $\mathbb{C}$, fix a
    Cartan subalgebra $\mathfrak{h}$. A \defined{root} is some
    simultaneous eigenvalue $\alpha \in \mathfrak{h}^{\ast}$ of all
    $\ad_H : X \mapsto [H, X]$ for $H \in \mathfrak{h}$. That is, there
    is some $X \in \mathfrak{h}$, and for all $H$ we have $[H, X] =
    \alpha(H) X$. $X$ is the simultaneous eigenvector, and $\alpha$ is
    the simultaneous eigenvalue, the root. $\roots$ is the set of all
    roots, the \defined{root system}.

    Every root $\alpha \in \mathfrak{h}^{\ast}$ corresponds to some
    $H_{\alpha} \in \mathfrak{h}$ such that for all $H \in
    \mathfrak{h}$, $B(H, H_{\alpha}) = \alpha(H)$ (with $B$ the Killing
    form). Let $\mathfrak{h}_0$ be the $\mathbb{R}$-linear span of all
    $H_{\alpha}$, and $\mathfrak{h}_0^{\ast} \subset
    \mathfrak{h}^{\ast}$ the dual.

    There is also a corresponding \defined{weight space}
    $\mathfrak{g}_{\alpha}$, defined as \[ \mathfrak{g}_{\alpha} =
      \set{X \in \mathfrak{g} : \text{for all $H \in \mathfrak{h}$,
          \quad $[H, X] = \alpha(H)X$}}. \] With this notation,
    $\mathfrak{h} = \mathfrak{g}_0$, and $\mathfrak{g} = \mathfrak{h}
    \oplus \bigoplus_{\alpha \in \roots} \mathfrak{g}_{\alpha}$.
  \end{definition}

  \begin{definition}
    For a root system $\roots$ relative to a Cartan subalgebra
    $\mathfrak{h}$ of a Lie algebra $\mathfrak{g}$ over $\mathbb{C}$,
    the \defined{Weyl group} is the subgroup of
    $\GL(\mathfrak{h}_0^{\ast})$ generated by (where $\ip{\cdot, \cdot}$
    is the usual inner product, viewing $\mathfrak{g}$ as a complex
    vector space) \[ \gen*{{s_{\alpha} : \varphi \mapsto \varphi -
          2\frac{\ip{\alpha, \varphi}}{\ip{\alpha, \alpha}}\alpha} :
        \text{$\alpha$ a root for $\roots$}.} \] Each $s_{\alpha}$ may
    be thought of as a reflection through the hyperplane perpendicular
    to $\alpha$.

    A sequence $(i_1, i_2, \dotsc, i_m)$ such that $w = s_{\alpha_{i_1}}
    s_{\alpha_{i_2}} \dotsb s_{\alpha_{i_r}}$ is a
    \defined{presentation} for $w \in W$. The elements
    $\set{s_{\alpha_i}}$ follow the braid relations, so presentations
    are often not unique. If a presentation for $w$ has the shortest
    possible length, it is \defined{reduced}.
  \end{definition}

  \begin{definition}
    For a root system $\roots$ and $\mathfrak{h}_0^{\ast}$ as above,
    arbitrarily choose some maximal subset $\posroots$ closed under
    addition and scalar multiplication by positive reals. Such a choice
    gives \defined{positive roots} for $\roots$.

    A positive root $\alpha \in h^{\ast}$ is \defined{simple} if it is
    positive and cannot be expressed as a sum of other positive roots
    with positive coefficients. The span $\gen{s_{\alpha} :
      \text{$\alpha$ a simple root}}$ generates the Weyl group.
  \end{definition}

  \begin{definition}
    Each simple root $\alpha_{i}$ has an associated \defined{fundamental
      weight} $\fundamentalweight_{i}$ in $\mathfrak{h}_0^{\ast}$,
    defined by \[ 2 \frac{\ip{\fundamentalweight_i,
          \alpha_j}}{\ip{\alpha, \alpha}} = \delta_{ij}. \]
  \end{definition}

  \begin{fact}
    The Weyl group is finite. In particular, there is a \defined{longest
      word} $w_0$. That is, $w_0$ is an element of $W$ such that for any
    $s_{\alpha}$, the word $w_0 s_{\alpha}$ has a shorter presentation
    than the shortest presentation of $w_0$.

    The element $w_0$ induces an action $\alpha \mapsto -w_0(\alpha)$ on
    the simple roots.
  \end{fact}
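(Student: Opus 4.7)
The plan is to prove the three claims in sequence: finiteness of $W$, existence of a longest element $w_0$, and the fact that $\alpha \mapsto -w_0(\alpha)$ permutes the simple roots.

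For finiteness of $W$: each reflection $s_\alpha$ permutes the finite root set $\roots$, so $W$ acts on $\roots$. This action is faithful because $\roots$ spans $\mathfrak{h}_0^*$ (the simple roots alone form a basis) and $W$ acts linearly. Thus $W$ embeds in the symmetric group on $\roots$, which is finite.

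For existence of $w_0$: since $W$ is finite, the length function $\ell$ is bounded above and attained at some $w_0 \in W$. To verify that $\ell(w_0 s_\alpha) < \ell(w_0)$ for every reflection $s_\alpha$, I would use that the parity of $\ell(w)$ is determined by $\det(w)$ (each simple reflection has determinant $-1$, so a presentation of $w$ with $k$ letters gives $\det(w) = (-1)^k$). Multiplication by any reflection flips $\det$, hence the length parity, so $\ell(w_0 s_\alpha) \neq \ell(w_0)$; by maximality, $\ell(w_0 s_\alpha) < \ell(w_0)$.

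The main obstacle is the final claim. My plan is to first establish that $w_0(\posroots) = -\posroots$. This rests on the combinatorial identity
\[
  \ell(w) = |\{\alpha \in \posroots : w(\alpha) \notin \posroots\}|,
\]
which I would prove by induction on $\ell(w)$, tracking how multiplication by a simple reflection $s_i$ alters the set on the right-hand side (it exchanges $\alpha_i$ with $-\alpha_i$ while permuting the remaining positive roots). This identity yields the bound $\ell(w) \le |\posroots|$, with equality only when $w$ sends every positive root to a negative root. Since $W$ acts transitively on Weyl chambers, some element does send $\posroots$ to $-\posroots$, so the upper bound is attained; therefore any length-maximizer $w_0$ satisfies $\ell(w_0) = |\posroots|$ and $w_0(\posroots) = -\posroots$. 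Consequently $-w_0$ preserves $\posroots$. Since $-w_0$ is linear, it preserves the property of being indecomposable as a sum of two positive roots, so it restricts to a permutation of the simple roots, as claimed.
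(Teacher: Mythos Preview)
Your argument is correct and follows the standard textbook route (faithful action on the finite root system for finiteness; the length formula $\ell(w) = |\{\alpha \in \posroots : w(\alpha) < 0\}|$ to characterize $w_0$; indecomposability to see that $-w_0$ permutes simple roots). One minor streamlining: once you have established $\ell(w_0 s_{\alpha_i}) < \ell(w_0)$ for every simple reflection, the length formula already forces $w_0(\alpha_i) < 0$ for every simple root $\alpha_i$, and hence $w_0(\posroots) = -\posroots$ directly; invoking transitivity on Weyl chambers is not needed.

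As for comparison with the paper: the paper gives no proof at all. This statement is recorded as a \emph{Fact}, i.e.\ standard background from the structure theory of semisimple Lie algebras, with the surrounding section pointing the reader to references such as Knapp or Bourbaki. So there is nothing to compare against; your write-up simply supplies the textbook justification that the paper elects to cite rather than reproduce.
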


  \begin{definition}
    \label{def:sigma_G}
    Let $\sigma_G$ be the permutation such that $-w_0(\alpha_i) =
    \alpha_{\sigma_G(i)}$. This permutation is of order $1$ or $2$, and
    is often trivial. Where convenient, we will label $\sigma_G(i)$ as
    $i^{\ast}$.

    In an abuse of notation, we will view $\sigma_G$ as acting on $H$ as
    the unique automorphism defined by (looking ahead, $\torush_{i}$ is
    from \cref{def:root-space-decomposition}) \[
      \sigma_G(\torush_{i}(t)) = \torush_{\sigma_G(i)}(t). \]
  \end{definition}

  \begin{remark}
    \label{rem:sigma_G-action-on-Dynkin-diagram}
    The permutation $\sigma_G$ may be realized as a graph automorphism
    of the Dynkin diagram associated to $G$. For example, for $G$ of
    type $D_5$, the involution $\sigma_G$ acts on the simple roots as
    shown:

    {\centering
      \includestandalone[mode=image|tex]{fig/sigmaG-action-on-D5}

    }
  \end{remark}

  \subsection{Unipotent subgroups}

  \begin{definition}
    \label{def:root-space-decomposition}
    For a Lie group $G$ (the Lie algebra of which is $\mathfrak{g}$), a
    Cartan subalgebra $\mathfrak{h}$ and a root system $\roots$ with a
    choice of positive roots $\posroots$, there is a \defined{root space
      decomposition} defining $\mathfrak{n}_{\pm}$ by \[ \mathfrak{g} =
      \overbrace{\bigoplus_{\alpha \in \posroots}
        \mathfrak{g}_{-\alpha}}^{\mathfrak{n}_{-}} {}\oplus{}
      \mathfrak{h} \oplus \overbrace{\bigoplus_{\alpha \in \posroots}
        \mathfrak{g}_{\alpha}}^{\mathfrak{n}_{+}}. \]

    The Lie subgroups of $G$ with these Lie algebras are, respectively,
    $N_{-}$, $H$, and $N_{+}$. The $N_{\pm}$ subgroups are
    \defined{maximal unipotent} subgroups of $G$. $H$ is a
    \defined{maximal torus}. We also have the \defined{borel subgroups}
    $B_{\pm} = H N_{\pm}$.

    We also fix standard generators $e_i \in \mathfrak{g}_{\alpha_i}$,
    $f_i \in \mathfrak{g}_{- \alpha_i}$, and $h_i \in \mathfrak{h}$
    (with $i \in \set{1, 2, \dotsc, \rank G}$, so that we may write \[
      x_i(t) = \exp(t e_i) \in N_{+}, \qquad y_i(t) = \exp(t f_i) \in
      N_{-}, \qquad \torush_{i}(t) = \exp(t h_i) \in H. \]
  \end{definition}

  \subsection{Coxeter elements}

  \begin{definition}
    For a root system generated by simple roots $\roots =
    \gen{\set{\alpha_1, \alpha_2, \dotsc, \alpha_r}}$, any element $c =
    s_{\alpha_1} s_{\alpha_2} \dotsb s_{\alpha_r}$ is a \defined{Coxeter
      element}. The ordering of the roots is irrelevant: any such
    product of all simple roots is a Coxeter element. All Coxeter
    elements have the same order, which is the \defined{Coxeter number},
    denoted $h$.
  \end{definition}

  \begin{fact}
    \label{fac:w0-construction}
    For a Weyl group $W$ with Coxeter element $c$ and even Coxeter
    number $h$, $c^{h/2} = w_0$. That is, if $c = s_{\alpha_1}
    s_{\alpha_2} \dotsb s_{\alpha_r}$, a presentation for $w_0$ is \[
      \word{i} = \set{\overbrace{1, 2, \dotsc, r}^{1}, \quad
        \overbrace{1, 2, \dotsc, r}^{2}, \quad \dotsc, \quad
        \overbrace{1, 2, \dotsc, r}^{h/2}}. \]
  \end{fact}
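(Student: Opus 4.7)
The plan is to identify $c^{h/2}$ with $w_0$ by a length argument: since $w_0$ is characterized as the unique element of $W$ of maximal length, and $\ell(w_0) = |\posroots|$, it suffices to show $\ell(c^{h/2}) = |\posroots|$.

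First, I would establish that $|\roots| = rh$ where $r = \rank G$, so $|\posroots| = rh/2$. This follows from the classical fact that $\gen{c}$ partitions $\roots$ into exactly $r$ orbits of size $h$; the standard proof uses that $c$ acts without fixed vector on $\mathfrak{h}_0^{\ast}$, its eigenvalues being $e^{2\pi i m_j / h}$ for the exponents $1 \le m_j < h$ of $W$, none of which is $1$.

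Next, I would show that the explicit presentation $\word{i}$ is a reduced expression for $c^{h/2}$. By the standard inversion criterion, a word $s_{\alpha_{i_1}} \dotsb s_{\alpha_{i_n}}$ is reduced if and only if the partial-product roots $\beta_k := s_{\alpha_{i_1}} \dotsb s_{\alpha_{i_{k-1}}}(\alpha_{i_k})$ are distinct positive roots. For $\word{i}$, of length $rh/2$, I would verify orbit-by-orbit under $\gen{c}$ that the $\beta_k$ enumerate $\posroots$ exactly once. The key input is that each $\gen{c}$-orbit of size $h$ contains exactly $h/2$ positive and $h/2$ negative roots, and that the traversal order induced by $c$ interleaves them so that precisely one positive root per orbit is revealed in each of the $h/2$ passes through $s_{\alpha_1} \dotsb s_{\alpha_r}$. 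The even parity of $h$ is essential here, since otherwise the positive and negative counts within an orbit would be unequal.

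With $\word{i}$ verified as reduced of length $rh/2 = \ell(w_0)$, it represents an element of maximal length, which must be $w_0$, yielding $c^{h/2} = w_0$. The main obstacle is the structural claim about $\gen{c}$-orbits on $\roots$; I would dispatch this by invoking the Coxeter-element analysis of Bourbaki Ch.~VI, §1 (or Humphreys, \emph{Reflection Groups and Coxeter Groups}, §3.17--3.19), which provides both the orbit count and the sign behavior of $c^{h/2}$ via the eigenvalue structure on $\mathfrak{h}_0^{\ast}$.
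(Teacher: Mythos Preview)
The paper gives no proof here; it simply says ``This is standard'' and cites Bourbaki, Ch.~VI, \S1.11 --- precisely the reference you invoke at the end. So there is nothing substantive to compare: your proposal is a sketch of what that citation contains.

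However, your sketch has a real gap, and it exposes an imprecision in the Fact as stated. Your claim that each $\gen{c}$-orbit contains exactly $h/2$ positive and $h/2$ negative roots, interleaved so that one positive root per orbit is revealed in each pass, is false for an arbitrary Coxeter element. In $A_3$ with the linear choice $c = s_1 s_2 s_3 = (1\,2\,3\,4)$, the $c$-orbit of $\alpha_1 = e_1-e_2$ is $\{e_1-e_2,\, e_2-e_3,\, e_3-e_4,\, e_4-e_1\}$, with three positive roots and one negative; the word $s_1 s_2 s_3 s_1 s_2 s_3$ is not reduced; and indeed $c^2 = (1\,3)(2\,4) \neq (1\,4)(2\,3) = w_0$. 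Bourbaki's argument is carried out for the \emph{bipartite} Coxeter element, where the orbit behaviour you describe does hold. For a general $c$ one only obtains that $c^{h/2}$ is conjugate to $w_0$, and equality requires either that $w_0$ be central (i.e.\ $\sigma_G$ trivial) or that $c$ be chosen $\sigma_G$-symmetrically --- which is exactly what the paper's later ``well-rooted'' condition arranges. Your length-count strategy is the right framework, but the orbit step needs that hypothesis on $c$.
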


  This is standard, see
  e.g.~\cite[Chapter~VI,~\S~1.11]{bourbaki2002_46}.

  \subsection{Generalized minors}

  \label{sec:generalized-minors}

  We review generalized minors. These are the extension of flag minors
  on $\GL_n$ of \cite{fominzelevinsky1996},
  \cite{berensteinzelevinsky1997} to arbitrary semisimple algebraic
  groups. This generalization is described fully in
  \cite{fominzelevinsky1999}, to which we refer for more details.

  \begin{definition}
    \label{def:gaussian-decomposition}
    For $G$ a Lie group over $\mathbb{C}$ admiting $H$ and $N_{\pm}$,
    $G_0 = N_{-} H N_{+}$. For $x \in G_0$, the \defined{Gaussian
      decomposition} of $x$ into these components is $x = [x]_{-}
    [x]_{0} [x]_{+}$, with $[x]_{0} \in H$ and $[x]_{\pm} \in N_{\pm}$.
    This decomposition is necessarily unique.
  \end{definition}

  \begin{definition}
    For $G$ a Lie group over $\mathbb{C}$ admiting $H$ as above, we may
    identify the Weyl group $W$ with $N_G(H)/H$, the quotient of the
    normalizer of $H$ in $G$ by $H$. We denote \[ \lifttoG{s_i} =
      x_i(-1) y_i(1) x_i(-1) \qquad \dlifttoG{s_i} = x_i(1) y_i(-1)
      x_i(1). \] These satisfy the braid relations, and whenever
    $\length{u v} = \length{u} + \length{v}$, we have \[ \lifttoG{u v} =
      \lifttoG{u} \cdot \lifttoG{v}, \qquad \dlifttoG{u v} =
      \dlifttoG{u} \cdot \dlifttoG{v}. \]

  \end{definition}

  \begin{remark}
    \label{rem:w0-switches-Ns}
    If $G$ is semisimple, then the standard choice of bases $\set{e_i}$,
    $\set{h_i}$, and $\set{f_i}$ for $\mathfrak{n}_{-}$, $\mathfrak{h}$,
    and $\mathfrak{n}_{+}$ give rise, via $\exp$, to one-parameter
    subgroups in $N_{-}$, $H$, $N_{+}$ respectively, and the
    identification of $W$ with $N_G(H)/H$ identifies $w_0$ with
    $\lifttoG{w_0}$. Acting by conjugation on $G$, this element switches
    each pair of subgroups $\exp(\mathbb{R}e_i)$, $\exp(\mathbb{R}f_i)$.
    Thus $\lifttoG{w_0}$ switches $N_{-}$ and $N_{+}$.
  \end{remark}

  \begin{remark}
    \label{rem:sG}
    The element $\lifttoG{w_0}^2$ arises frequently. It shall be denoted
    $s_G$, and is in the center of $G$, having order either $1$ or $2$.
  \end{remark}

  \begin{definition}
    \label{def:pre-generalized-minor}
    For $G$ a Lie group over $\mathbb{C}$, and any fundamental weight
    $\fundamentalweight_i$, define \[ \pregenminor{\fundamentalweight_i}
      : H \mapsto \mathbb{C} \qquad \text{by} \qquad
      \pregenminor{\fundamentalweight_i} : h \mapsto
      \exp(\fundamentalweight_i\exp^{-1}(h))). \]
  \end{definition}

  \begin{definition}
    \label{def:generalized-minor}
    For $G$ a Lie group over $\mathbb{C}$, admitting decomposition as
    above, $v, w$ elements of the Weyl group $W$, and
    $\fundamentalweight_i$ a fundamental weight, the
    \defined{generalized minor} for the words and fundamental weight $v
    \fundamentalweight_i, w \fundamentalweight_i$ is the regular
    function $\genminoryz{v}{w}{i} : G \mapsto \mathbb{C}$, defined by
    its restriction on $\dlifttoG{v} G_0 \lifttoG{w^{-1}}$ by \[
      \genminoryz{v}{w}{i} : g \mapsto
      \pregenminor{\fundamentalweight_i} \left( \left[\dlifttoG{v^{-1}}g
      \lifttoG{w}\right]_{0} \right). \]

    In the case $w$ is trivial, as it often is in our construction, we
    denote \[ \genminor{v}{i}(g) = \genminoryz{v}{}{i}(g). \]
  \end{definition}

  \begin{example}
    If $G = \SL(n,\mathbb{C})$ (of type $A_{n-1}$), we may take $H$ as
    diagonal matrices, with $N_{+}$ and $N_{-}$ as strictly upper and
    lower triangular matrices.

    The fundamental weights are $\fundamentalweight_i = (e_1 + e_2 +
    \dotsb + e_i)^{\ast}$, and the function
    $\pregenminor{\fundamentalweight_i}$ corresponds to taking the
    product $h_{11} h_{22} \dotsb h_{ii}$: the $i \times i$ minor of the
    first $i$ rows and columns. The reflections $s_{\alpha_i}$ lift to
    \[ \dlifttoG{s_i} = \Id_{i-1} \boxplus \begin{bmatrix} & 1 \\ -1 &
      \end{bmatrix} \boxplus \Id_{n-i-1} \qquad \lifttoG{s_i} =
      \Id_{i-1} \boxplus \begin{bmatrix} & -1 \\ 1 & \end{bmatrix}
      \boxplus \Id_{n-i-1}. \] (Here $\boxplus$ is diagonal matrix
    concatenation.) Thus $\dlifttoG{s_i}$ acts on $\SL(n, \mathbb{C})$
    on the left by permuting rows, $\lifttoG{s_i}$ acts on the right by
    permuting columns, and the generalized minor for $v
    \fundamentalweight_i,w\fundamentalweight_i$ on $\SL(n, \mathbb{C})$
    is exactly the $i \times i$ minor of the top left rows and columns
    after permutation.
  \end{example}

  \begin{definition}
    \label{def:gamma-k}
    For a fixed presentation $\word{i} = (i_1, i_2, \dotsc, i_m)$ of
    $w_0$, define \[ w_k = s_{i_m} \dotsb s_{i_{k+1}} s_{i_k}, \] and
    define the \defined{$\word{i}$-chamber weights} \[ \gamma_k = w_k
      \fundamentalweight_{i_k}. \] We also count
    $\fundamentalweight_{i_k}$ as $\word{i}$-chamber weights.
  \end{definition}

  \begin{remark}
    The $w_k$ are pairwise distinct and fill the involution set of
    $w_0$, see \cite[Chapter VI, \textsection 1.6]{bourbaki2002_46}.
  \end{remark}

  \begin{remark}
    \label{rem:all-wkomegaj-are-gamma-k-or-trivial}
    Any $\genminor{w_k}{j}$ is some $\genminorgk{k}$ or
    $\genminor{}{j}$. This follows from the fact that
    $\pregenminor{i}([\dlifttoG{s_{j \ne i}} g]_{0}) =
    \pregenminor{i}([g]_{0})$, so for use with generalized minors, any
    $w_k \fundamentalweight_{j}$ may be reduced to $w_{k+1}
    \fundamentalweight_{j}$ whenever $w_k$ is not trivial and $i_k \ne
    j$.
  \end{remark}

  \begin{definition}
    \label{def:non-negative-generalized-minor}
    When restricted to $H$, we may express $\genminor{v}{i}$ in another
    fashion. For $G$ of rank $r$, let $P$ denote the \defined{weight
      lattice}, integral combinations of $\set{\fundamentalweight_i}$.
    For $\beta \in P \cong \mathbb{Z}^r$, and for $A = \prod_{k=1}^{r}
    \torush_k(t_k) \in H$, \cref{def:pre-generalized-minor} is
    equivalent to \[ \pregenminor{\beta}(A) = \exp( \ip{\beta, t} ), \]
    where $\ip{\cdot, \cdot}$ is the normal Euclidean inner product.

    Now for $v \in W$, we may consider $v$ acting on $P \subset
    \mathfrak{h}_0$ by $v \beta = (v \beta^{\ast})^{\ast}$, using the
    action of $W$ on $\mathfrak{h}_{0}^{\ast}$. This allows expanding
    the above definition to \[ \Delta_{v \beta, v \beta}(A) = \exp(\ip{v
        \beta, t}). \]

    Now, define the \defined{non-negative generalized minor},
    $\genminornumerator{v}{\beta} : H \to \mathbb{C}$ as \[
      \genminornumerator{v}{\beta}(A) = \exp(\ip{ \max(v \beta, (0, 0,
        \dotsc, 0)), t}), \] where $\max$ operates componentwise.
  \end{definition}

  The $\oplus$ is intended to suggest tropical addition.

  \begin{remark}
    \label{rem:numerator-calculation}
    For $A \cong (t_1, t_2, \dotsc, t_r)$, the map
    $\genminornumerator{v}{\fundamentalweight_i}(A)$ can be computed by
    computing $\genminoryz{v}{v}{i}((s_1, s_2, \dotsc, s_r))$
    abstractly, taking the numerator of the result, then substituting
    $t_i$ for $s_i$.
  \end{remark}

  \subsection{Factorization coordinates}

  \label{sec:factorization-coordinates}

  We also review factorization coordinates, referring again to
  \cite{fominzelevinsky1999} for details. In brief, they will give
  coordinates on $\Conf_3^{\ast}(G/N_{+})$ (of \cref{def:Conf_k_star})
  which are governed by a word $\word{i}$ in the Weyl group of $G$.

  \begin{definition}
    \label{def:factorization-coords-xi}
    For $G$ a Lie group of rank $r$ over $\mathbb{C}$ admitting $H$,
    $N_{\pm}$, $x_i$, and $y_i$, with Weyl group $W$, $\word{i} = (i_1,
    \dotsc, i_k)$ a reduced word in $W$, and $t \in
    (\mathbb{C}^{\ast})^k$, we define \[ x_{\word{i}}(t) = x_{i_1}(t_1)
      \dotsb x_{i_k}(t_k) \in N_{+}, \qquad y_{\word{i}}(t) =
      y_{i_1}(t_1) \dotsb y_{i_k}(t_k) \in N_{-}. \]

    Let $\word{i} = (i_1, i_2, \dotsc, i_m)$ be a fixed representation
    for $w_0$. Then
    \begin{align*}
      x_{\word{i}}(\set{t_k}_{k=1}^{m}) &= x_{i_1}(t_1) x_{i_2}(t_2) \dotsb x_{i_m}(t_m) \in N_{+} \cap G_0 \lifttoG{w_0} \\
      y_{\word{i}}(\set{t_k}_{k=1}^{m}) &= y_{i_m}(t_m) \dotsb y_{i_2}(t_2) y_{i_1}(t_1) \in N_{-} \cap \lifttoG{w_0} G_0.
    \end{align*}
  \end{definition}

  By \cite[Theorem~1.3]{fominzelevinsky1999} these are isomorphisms.

  \begin{definition}
    For $u \in N_{-} \cap \lifttoG{w_0} G_0$, the \defined{factorization
      coordinates} of $u$ are $(t_1, \dotsc, t_m) \in \mathbb{C}^m$ such
    that $u = x_{\word{i}}(t_1, \dotsc, t_m)$.
  \end{definition}

  We also recall two utility functions from \cite{fominzelevinsky1999}
  and \cite{fockgoncharov2006}.

  \begin{definition}
    \label{def:phi-and-psi}
    The biregular anti-automorphism $\Psi : G \to G$ and the biregular
    automorphism $\Phi : N_{+} \cap G_0 \lifttoG{w_0} \to N_{-} \cap
    \lifttoG{w_0} G_0$ are the unique maps such that, for $t \in
    \mathbb{C}$, $h \in H$, $x \in N_{+}$, and $y \in N_{-}$,
    \begin{align*}
      \Psi(x_i(t)) &= y_i(t) & \Psi(y_i(t)) &= x_i(t) & \Psi(h) &= h
    \end{align*}
    and
    \begin{align*}
      \Phi(x) &= [x \lifttoG{w_0}]_{-}, & \Phi^{-1}(y) &= \lifttoG{w_0} [\lifttoG{w_0}^{-1} y]_{-} \lifttoG{w_0}^{-1}.
    \end{align*}
  \end{definition}

  \label{sec:quivers-and-cluster-ensembles}

  \subsection{Quivers}

  \label{sec:quivers}

  We review quivers. These are graphical encodings of cluster algebras
  (specifically, those of geometric type), studied in
  \cite{fominzelevinsky2002}, \cite{fominzelevinsky2003}, and
  \cite{fominzelevinsky2005}. The quiver interpretation, first discussed
  in \cite{marshreinekezelevinsky2003}, is given a complete introduction
  in \cite{marsh2013}, which we follow. The only departure we need from
  Marsh's definition is to allow some edge weights to be in $\frac{1}{2}
  \mathbb{Z}$ instead of $\mathbb{Z}$. We also give a name to the
  symmetrized edge weights $\sigma(v,w)$, and encode the
  symmetrizability of edge weights into vertex weights $d_v$.

  \begin{definition}
    \label{def:quiver}
    A \defined{quiver} is a directed graph with no $1$- or $2$-cycles or
    multiple edges, with weights on edges and vertices. Vertex weights
    are in $\mathbb{N}_{>0}$, and edge weights are in
    $\frac{1}{2}\mathbb{Z}$. The edge weights are denoted $\sigma(v,w)$,
    and the vertex weights $d_v$. Also defined is the auxiliary quantity
    $\epsilon(v,w)$, by \[ \epsilon(v,w) = \frac{d_w}{\gcd(d_v,d_w)}
      \sigma(v,w). \] Vertices are either \defined{frozen} or
    \defined{non-frozen}, and edge weights are integral unless the edge
    is between two frozen vertices, in which case they are
    half-integral.
  \end{definition}

  \begin{figure}[h]
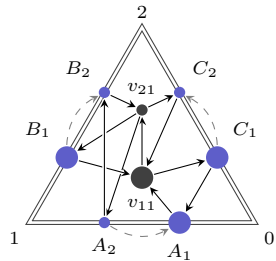

    \centering \includestandalone[mode=image|tex]{fig/QC2}

    \caption{A quiver. Larger vertices are of weight $2$, and dotted
      edges correspond to $\sigma(i,j) = \frac{1}{2}$.
      \label{fig:QC2}
    }
  \end{figure}

  \begin{remark}
    Almost all of the time, $\sigma(i,j)$ will lie in $\set{-1,0,+1}$.
    In this case, the edge weights $\sigma(i,j)$ are just the incidence
    matrix of the graph underlying the quiver.
  \end{remark}

  \begin{remark}
    The term \defined{quiver} occurs in other areas of literature as a
    multi-digraph. We use it here as a edge-weighted digraph with
    skew-symmetrizable adjacency matrix, encoding the exchange matrix of
    Fomin--Zelevinsky's cluster algebras.
  \end{remark}

  \begin{definition}
    For each non-frozen $v$, a quiver $\mu_{v}(Q)$ is defined. This is
    the \defined{mutation} of $Q$ at $v$. Edges in $\mu_{v}(Q)$ are
    defined by $\epsilon'(\cdot, \cdot)$, with \[ \epsilon'(x,y) =
      \begin{cases} \epsilon(x,y) & \text{if } \epsilon(x,v)
        \epsilon(v,y) \le 0, v \notin \set{x,y} \\ -\epsilon(x,y) &
        \text{if } v \in \set{x,y} \\ \epsilon(x,y) + |\epsilon(x,v)|
        \epsilon(v,y) & \text{if } \epsilon(x,v) \epsilon(v,y) > 0, v
        \notin \set{x,y}. \end{cases} \]
  \end{definition}

  Customarily, the vertex $v$ is renamed in $\mu_v(Q)$, e.g. to $v'$.
  This is because $\mu_v$ induces a map on the seed torus of $Q$ which
  changes the coordinate associated to $v$ (see
  \cref{def:cluster-ensemble}). For very long sequences, we will usually
  ignore the renamings for readability and think of $v$ as a vertex in a
  graph, not as a coordinate function.

  \begin{notation}
    \label{not:writing-mutations}
    Sequences of mutations are performed left to right: that is
    $\mu_{v,w} = \mu_{w} \circ \mu_{v}$. When subscripts would be
    awkward, we will also write mutation sequences as $\mu\set{v_1, v_2,
      \dotsc}$. We shall also employ $\prod_{v \in I} \mu_{v}$ to mean
    mutating at all vertices in $I$, in the order given by $I$.
  \end{notation}

  \begin{remark}
    When depicting quivers, we shall assume that all black edges have
    weight $\sigma = 1$, and gray, dashed edges have weight $\sigma =
    \frac{1}{2}$.

    Vertex weights will be depicted by the relative size of circles; we
    will only use weights 1, 2, and (only in the case of $G_2$) 3.
    Frozen vertices will be colored blue, but this will also be
    described in the text if at all relevant.

    \begin{figure}[h]
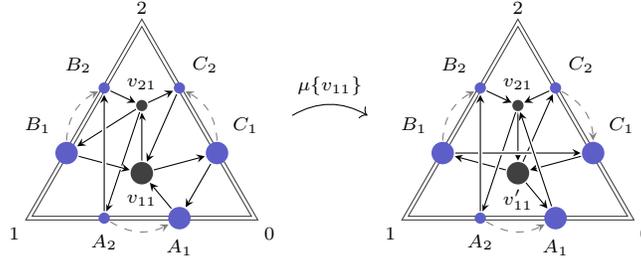

      \centering
      \includestandalone[mode=image|tex]{fig/quiver-mutation-example}

      \caption{Mutating the quiver of \cref{fig:QC2} at $v_{11}$.
        \label{fig:quiver-mutation-example}
      }
    \end{figure}

  \end{remark}

  \subsection{Cluster ensembles}

  \label{sec:cluster-ensembles}

  We now define cluster ensemble structures following
  \cite{fockgoncharov2003}. Also useful is \cite{ishibashi2018}.

  \begin{definition}
    \label{def:cluster-ensemble}
    For a quiver $Q$ with vertices $V$, let $\Lambda_V$ be the free
    abelian group generated by $V$, with dual $\Lambda_V^{\ast} =
    \Hom(\Lambda_V, \mathbb{Z})$. Let $\set{e_v : v \in V}$ be a basis
    for $\Lambda_V$, and $\set{f_v = d_v^{-1} e_v^{\ast}}$ a basis for
    $\Lambda_V^{\ast}$. Let $\Lambda^{\circ}$ be the $\mathbb{Q}$-span
    of $\set{f_v}$ inside $\Lambda^{\ast} \otimes \mathbb{Q}$.

    The \defined{seed $\mathcal{X}$-torus} is $\seedXtorus_Q =
    \Hom(\Lambda, \mathbb{C}^{\ast})$. The \defined{seed
      $\mathcal{A}$-torus} is $\seedAtorus_Q = \Hom(\Lambda^{\circ},
    \mathbb{C}^{\ast})$. The coordinates for these tori are denoted
    $\set{x_v}_{v \in V}$ and $\set{a_v}_{v \in V}$ and called
    \defined{$\mathcal{X}$-coordinates} and
    \defined{$\mathcal{A}$-coordinates} respectively. A quiver together
    with these coordinate tori is a \defined{seed}.

    There is a map $p: \seedAtorus_Q \to \seedXtorus_Q$. This map is
    characterized by the pullback $p^{\ast}(x_v)$ of an
    $\mathcal{X}$-coordinate to $\seedAtorus_Q$, which is defined as \[
      p^{\ast}(x_v) = \prod_{w \in V} a_w^{\epsilon(v,w)}. \]

    As these objects are defined in terms of quivers, we must define how
    quiver mutation affects the coordinates. As with $p$, we shall
    define $\mu_v$ by pullbacks--in this case, the pullback of the
    coordinates $x'_w$ and $a'_w$ for vertices $w$ of $\mu_v(Q)$.
    \begin{align*}
      \mu_v^{\ast} x'_w &= \begin{cases}
        x_v^{-1} & \text{if $v' = w$} \\
        x_w(1 + x_v^{\sgn \epsilon(v,w)})^{\epsilon(v,w)} & \text{if $v' \ne w$} \\
      \end{cases} \\
      \mu_v^{\ast} a'_w &=
      \begin{dcases}
        \frac{1}{a_v} \left(
        \prod_{\epsilon(v,z) > 0} a_z^{\epsilon(v,z)} +
        \prod_{\epsilon(v,z) < 0} a_z^{-\epsilon(v,z)}
        \right) & \text{if $v' = w$} \\
        a_w & \text{if $v' \ne w$}
      \end{dcases}
    \end{align*}

    Finally, define a \defined{cluster ensemble} as the orbit of a
    single seed under all quiver mutations.
  \end{definition}

  \begin{example}
    In the mutation of \cref{fig:quiver-mutation-example}, we have
    \begin{align*}
      a_{v_{11}'} = \frac{1}{a_{v_{11}}} \left( a_{C_1} a_{v_{21}} + a_{A_1} a_{C_2} a_{B_1} \right)
    \end{align*}
    The effect on $\mathcal{X}$-coordinates is
    \begin{align*}
      x_{v_{11}} &\mapsto \frac{1}{x_{v_{11}}} & x_{v_{21}} &\mapsto x_{v_{21}} (1 + x_{v_{11}})^2 = x_{v_{21}} + 2 x_{v_{11}} x_{v_{21}} + x_{v_{11}}^2 x_{v_{21}}\\
      x_{A_{1}} &\mapsto x_{A_{1}} (1 + x_{v_{11}}^{-1})^{-1} = \frac{x_{A_{1}} x_{v_{11}}}{1 + x_{v_{11}}} & x_{A_{2}} &\mapsto x_{A_{2}} \\
      x_{B_{1}} &\mapsto x_{B_{1}} (1 + x_{v_{11}}^{-1})^{-1} = \frac{x_{B_{1}} x_{v_{11}}}{1 + x_{v_{11}}} & x_{B_{2}} &\mapsto x_{B_{2}} \\
      x_{C_{1}} &\mapsto x_{C_{1}} (1 + x_{v_{11}}) = x_{C_{1}} + x_{C_{1}} x_{v_{11}} & x_{C_{2}} &\mapsto x_{C_{2}} (1 + x_{v_{11}}^{-1})^{-2} = \frac{x_{C_{2}} x_{v_{11}}^{2}}{1 + 2 x_{v_{11}} + x_{v_{11}}^2}
    \end{align*}
  \end{example}

  \begin{remark}
    The mutation relation for the $\mathcal{A}$- and
    $\mathcal{X}$-coordinates follows
    \cite[Equation~2.3]{fominzelevinsky2007}. The
    $\mathcal{A}$-coordinates especially form the \defined{cluster
      algebra} of Fomin--Zelevinsky which define the quiver. For
    consistency, we will refer to the cluster variables as
    $\mathcal{A}$-coordinates.
  \end{remark}

  A cluster ensemble, therefore, is a pair of coordinate structures on
  spaces, each defined by a quiver. Quiver mutation acts on each
  coordinate structure by replacing some coordinate functions, and $p$
  is always a map (generally non-surjective, non-injective) between the
  coordinate structures.

  \begin{remark}
    \label{rem:p-commutes-with-mutation}
    The map $p$ commutes with quiver mutation, as by
    \cite[Section~1.2]{fockgoncharov2003} and
    \cite[Proposition~3.9]{fominzelevinsky2007}.
  \end{remark}

  \section{Key identities}

  \label{sec:external-results}

  Here we reproduce two external results of particular significance to
  our constructions and proofs, from \cite{yangzelevinsky2008} and
  \cite{fominzelevinsky1999}. We refer to their respective origins for
  more details. These identities allow us to perform a program we call
  ``adjusting a minor coordinate''. Depending on the surrounding
  conditions, these will allow us to conclude that the action of
  mutation on $\seedAtorus_Q$ is to replace \[ \genminoryz{u}{v}{i}(g)
    \qquad \text{with} \qquad \genminoryz{u'}{v'}{i}(q) \] for some $u,
  v, u', v'$.

  \subsection{Actions of \texorpdfstring{$\sigma_G$}{sigma\_G}}

  \label{sec:yz-dynkin-identity}

  We give an overview of the identity used to justify mutations which
  apply $\sigma_G$ to the Dynkin diagram-like graph underlying a quiver,
  as in \cref{rem:sigma_G-action-on-Dynkin-diagram}. We use this in the
  proofs of \cref{lem:colsigmaG-does-sigmaG,lem:rowsigmaG-does-sigmaG}.
  This identity is collected from several results in
  \cite{yangzelevinsky2008}, so we import some notation.

  \begin{definition}
    Let the \defined{reduced double Bruhat cell} $L^{s,t} = N_{+}
    \lifttoG{s} N_{+} \cap B_{-} \lifttoG{t} B_{-}$. (We shall not need
    many facts about this object.) Recall that $k^{\star} =
    \sigma_G(k)$. The relation $a \prec_{c} b$ means that $a$ precedes
    $b$ in the Coxeter element $c$, and that $a$ and $b$ are connected
    by an edge in the Dynkin diagram associated to $c$.
  \end{definition}

  We shall mostly be interested in minors of the form
  $\genminoryz{c^m}{c^m}{k}$. Yang--Zelevinsky denote these as $x_{c^m
    \fundamentalweight_k;c}$, but we will avoid this for notational
  consistency.

  The main result is that for a cluster algebra $\mathcal{A}(c)$ defined
  by an initial seed with quiver of Dynkin type associated to Coxeter
  element $c$, there exists a $g \in L^{c,c^{-1}}$ such that all cluster
  coordinates and coefficients of $\mathcal{A}(c)$ are given by certain
  generalized minors of $g$. This allows combinatorially defining the
  exchange relations of all \defined{source} or \defined{sink mutations}
  (at vertices where all edges point out or in). These always exchange
  variables of the form $\genminoryz{c^m}{c^m}{i}$ with
  $\genminoryz{c^{m \pm 1}}{c^{m \pm 1}}{i}$. Yang--Zelevinsky refer to
  these as \defined{primitive exchange relations}.

  Finally, a number $h(i;c)$ is defined such that
  $\genminoryz{c^{h(i;c)}}{c^{h(i;c)}}{i} = \genminoryz{}{}{i^{\ast}}$.
  This allows convenient analysis of periodicity of source/sink mutation
  sequences.
  \begin{proposition}[{\cite[Equation~2.13]{yangzelevinsky2008}}]
    \label{prop:def-h(i;c)}
    In some cases, $h(i;c)$ is easily calculated.
    \begin{itemize}
    \item
      When $i^{\ast} = i$ (for example, if $\sigma_G = e$) we trivially
      have $h(i;c) = 0$.
    \item
      Otherwise, when $h$ is even, $c^{h/2} = w_0$, and by action of
      $w_0$ on simple roots, $h(i;c) = \frac{h}{2}$.
    \end{itemize}
    When $G$ is of type $A_{\ell}$, however, the calculation is more
    delicate. Let \[ t_{+} = s_1 s_3 \dotsb s_{2 \lfloor (\ell+1)/2
        \rfloor - 1}, \qquad t_{-} = s_2 s_4 \dotsc s_{2 \lfloor
        (\ell+1)/2 \rfloor}, \qquad w_0 = \overbrace{t_{+} t_{-} \dotsb
        t_{\pm}}^{\text{$h = \ell + 1$ factors}}, \qquad c = t_{+}
      t_{-}. \] Then by \cite[Equation~2.13]{yangzelevinsky2008}, as $h
    = \ell + 1$ be the Coxeter number for $A_{\ell}$, \[ h(i;c) =
      \begin{cases} \lfloor \frac{h}{2} \rfloor = \lfloor \frac{\ell +
          1}{2} \rfloor & \text{$i$ even} \\ \lceil \frac{h}{2} \rceil =
        \lceil \frac{\ell + 1}{2} \rceil & \text{$i$ odd.} \end{cases}
    \]
  \end{proposition}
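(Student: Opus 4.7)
The statement has three cases to verify against the defining property of $h(i;c)$, namely that $\genminoryz{c^{h(i;c)}}{c^{h(i;c)}}{i}$ equals $\genminoryz{}{}{i^{\ast}}$ as regular functions on $G$. Case 1 is immediate: if $i^{\ast} = i$, then $m = 0$ trivially satisfies the defining equation, since the two minors are literally the same.

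For Case 2, I would invoke \cref{fac:w0-construction} to obtain $c^{h/2} = w_0$, and then use the standard property of the longest element that $w_0(\fundamentalweight_i) = -\fundamentalweight_{i^{\ast}}$. This follows directly from the action $-w_0(\alpha_i) = \alpha_{i^{\ast}}$ that defines $\sigma_G$ in \cref{def:sigma_G}, via the duality between simple roots and fundamental weights. The required minor identity then reduces to showing $\genminoryz{w_0}{w_0}{i} = \genminoryz{}{}{i^{\ast}}$, which is a matter of verifying that the lifts $\lifttoG{w_0}$ and $\dlifttoG{w_0}$ interact with the Gaussian decomposition of \cref{def:gaussian-decomposition} so as to absorb the sign appropriately.

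For Case 3, the bipartite structure of $c = t_{+} t_{-}$ in type $A_{\ell}$ allows an explicit tracking of the orbit $\fundamentalweight_i, c\fundamentalweight_i, c^2\fundamentalweight_i, \dotsc$. I would do a case analysis on the parity of $i$, which determines whether $\fundamentalweight_i$ is first acted upon by $t_{+}$ (for odd $i$) or by $t_{-}$ (for even $i$), and hence whether the ``active'' simple root traverses the Dynkin diagram starting from one endpoint or slightly inside. Inducting on $m$, one shows that $c^m \fundamentalweight_i$ shifts the index of the associated fundamental weight by a controlled amount per step, and the first $m$ for which this index equals $\ell + 1 - i = i^{\ast}$ yields the stated formula; the ceiling versus floor arises from whether an extra half-step of $t_{\pm}$ is needed to reach the target.

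The main obstacle I anticipate is Case 2. Passing from the weight-level identity $c^{h/2} \fundamentalweight_i = -\fundamentalweight_{i^{\ast}}$ to the function-level equality of generalized minors requires unwinding \cref{def:generalized-minor} against the Weyl-group action on lifts, and reconciling the apparent sign discrepancy between $w_0 \fundamentalweight_i$ and $\fundamentalweight_{i^{\ast}}$ that the generalized-minor notation absorbs. Once this sign reconciliation is in hand, the remaining cases are either trivial (Case 1) or a bookkeeping exercise on the $A_{\ell}$ Coxeter combinatorics (Case 3).
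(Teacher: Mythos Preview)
The paper does not actually supply a proof for this proposition: it is stated as a citation of \cite[Equation~2.13]{yangzelevinsky2008}, and the only justification appears inline within the statement itself (the words ``trivially'' for Case~1 and ``by action of $w_0$ on simple roots'' for Case~2; Case~3 is deferred entirely to the cited reference). Your proposal therefore goes beyond what the paper does rather than diverging from it.

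Your treatment of Cases~1 and~2 matches the paper's inline remarks and is correct in outline. The sign issue you flag in Case~2 is real but mild: in the Yang--Zelevinsky setup the generalized minor $\genminoryz{c^{h/2}}{c^{h/2}}{i}$ is indexed by the weight $c^{h/2}\fundamentalweight_i = w_0\fundamentalweight_i = -\fundamentalweight_{i^{\ast}}$, and their identification of cluster variables with minors absorbs the sign so that this coincides with $\genminoryz{}{}{i^{\ast}}$; one does not need to unwind the lifts $\lifttoG{w_0}$, $\dlifttoG{w_0}$ directly. For Case~3 the paper simply quotes the formula, whereas you propose to track the orbit $c^m\fundamentalweight_i$ through the bipartite structure by induction. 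That is exactly how Yang--Zelevinsky derive Equation~2.13, so your sketch is correct but amounts to reproducing the cited argument rather than offering an alternative.
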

  Note that \[ \genminoryz{c^{m + h(k;c) + 1}}{c^{m + h(k;c) + 1}}{k} =
    \genminoryz{c^m}{c^m}{k^{\ast}}. \]

  We now combine several results of \cite{yangzelevinsky2008}, mainly
  Theorem~1.5.

  \begin{proposition}
    \label{prop:compressed-yang-zelevinsky}
    The cluster variables in $\mathcal{A}(c)$ satisfy the following
    primitive exchange relation (letting $A$ be the Cartan matrix): \[
      \genminoryz{c^{m-1}}{c^{m-1}}{k} \genminoryz{c^m}{c^m}{k} =
      \prod_{i \prec_{c} k} (\genminoryz{c^m}{c^m}{i})^{-A_{i,k}}
      \prod_{k \prec_{c} i}
      (\genminoryz{c^{m-1}}{c^{m-1}}{i})^{-A_{i,k}} + 1. \]
  \end{proposition}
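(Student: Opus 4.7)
The plan is to assemble the stated identity directly from the machinery of \cite{yangzelevinsky2008}, since it is essentially a repackaging of their Theorem~1.5 in the minor notation we have adopted.

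First, I would set the stage by fixing the cluster algebra $\mathcal{A}(c)$ whose initial exchange matrix is the one read off the Dynkin quiver oriented by $c$, with frozen variables attached in the manner of \cite[\S1.2]{yangzelevinsky2008}. By their realization theorem, there is a distinguished element $g \in L^{c,c^{-1}}$ such that every cluster variable in $\mathcal{A}(c)$ obtained by iterating source/sink mutations from the initial seed equals some generalized minor $\genminoryz{c^m}{c^m}{k}(g)$, with the Yang--Zelevinsky notation $x_{c^m \fundamentalweight_k; c}$ matching ours under the dictionary recorded in \cref{def:generalized-minor}. This realization converts algebraic identities among cluster variables into polynomial identities among generalized minors, which is the form we need.

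Next, I would invoke the classification of \emph{primitive} exchange relations: these are precisely the exchanges performed at source or sink vertices of the current quiver, and Yang--Zelevinsky prove (Theorem~1.5 together with the preceding combinatorial setup) that such a mutation at the vertex labeled $k$ sends $\genminoryz{c^{m-1}}{c^{m-1}}{k}$ to $\genminoryz{c^m}{c^m}{k}$. The exchange relation at that step has the general cluster-algebra shape
\[ a_v a_v' = \prod_{\epsilon(v,z) > 0} a_z^{\epsilon(v,z)} + \prod_{\epsilon(v,z) < 0} a_z^{-\epsilon(v,z)}, \]
and I would then read off the right-hand side from the Dynkin quiver oriented by $c$: neighbors $i$ with $i \prec_c k$ have already been mutated at stage $m$, contributing $\genminoryz{c^m}{c^m}{i}$, whereas neighbors with $k \prec_c i$ still carry their stage-$(m{-}1)$ values $\genminoryz{c^{m-1}}{c^{m-1}}{i}$. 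The multiplicities are precisely the off-diagonal Cartan entries $-A_{i,k}$ because the edge weights of the Dynkin quiver encode $A$. The constant term $+1$ arises because the monomial over the ``other'' side of the exchange is the empty product when one factor of the exchange relation collapses to $1$ in the normalization chosen in \cite{yangzelevinsky2008}, a point I would justify by tracking the frozen variables.

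The main obstacle I anticipate is the bookkeeping required to line up three different conventions: (i) the Yang--Zelevinsky notation $x_{c^m \fundamentalweight_k; c}$ versus our $\genminoryz{c^m}{c^m}{k}$; (ii) their choice of signs, orientations of Coxeter edges, and ``source versus sink'' conventions versus the $\prec_c$ relation as defined just above the proposition; and (iii) confirming that the exponents on the right-hand side really come out to $-A_{i,k}$ rather than, say, $-A_{k,i}$, which requires carefully tracking which coordinate of the Cartan matrix indexes the row and which indexes the column under our quiver convention. Once these translations are settled, the identity is simply the ``source mutation'' exchange relation rewritten in minor form, and no additional computation is needed.
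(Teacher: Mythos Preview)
Your proposal is correct and matches the paper's approach: the paper does not supply its own proof but simply imports the statement from \cite{yangzelevinsky2008}, prefacing the proposition with ``We now combine several results of \cite{yangzelevinsky2008}, mainly Theorem~1.5.'' Your plan to unpack that citation---realizing cluster variables as the minors $\genminoryz{c^m}{c^m}{k}$ via the Yang--Zelevinsky element of $L^{c,c^{-1}}$, then reading off the source/sink exchange relation with Cartan exponents and the normalized $+1$---is exactly the intended derivation, just spelled out in more detail than the paper bothers to give.
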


  \begin{remark}
    This does not give us much information about $g \in L^{c,c^{-1}}$,
    but we will restrict our attention to periodic mutations. The only
    effect will be to change which root the vertices of the quiver are
    associated to.
  \end{remark}

  \subsection{Grid exchange relations}

  \label{sec:thm-1.17}

  Here we give an overview of the identity
  \cite[Theorem~1.17]{fominzelevinsky1999} used to justify the exchange
  relations of $\murottwist$ and $\muflipcore$, and to prove
  \cref{lem:fgcs-exists-rot-works-trivial,lem:fgcs-exists-flip-works}:

  \begin{theorem}[{\cite[Theorem~1.17]{fominzelevinsky1999}}]
    \label{thm:local-1.17}
    For $u, v$ two words in the Weyl group $W$ such that $\length(u s_i)
    = \length(u) + 1$ and $\length(v s_i) = \length(v) + 1$, and $A$ the
    Cartan matrix, \[ \genminoryz{u}{v}{i} \genminoryz{u s_i}{v s_i}{i}
      = \genminoryz{u s_i}{v}{i} \genminoryz{u}{v s_i}{i} + \prod_{j \ne
        i} (\genminoryz{u}{v}{j})^{-A_{j,i}}. \]
  \end{theorem}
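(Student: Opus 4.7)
The plan is to reduce the identity to the base case $u = v = e$ using the length-additive lifting of Weyl group elements, and then verify the reduced identity by identifying the generalized minors as matrix coefficients in the fundamental representation $V_{\fundamentalweight_i}$.

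\textbf{Reduction to $u = v = e$.} The hypotheses $\length(u s_i) = \length(u) + 1$ and $\length(v s_i) = \length(v) + 1$ give $\dlifttoG{u s_i} = \dlifttoG{u}\dlifttoG{s_i}$ and $\lifttoG{v s_i} = \lifttoG{v}\lifttoG{s_i}$. Setting $h = \dlifttoG{u^{-1}} g \lifttoG{v}$ and unwinding \cref{def:generalized-minor}, the four minors $\genminoryz{u}{v}{i}$, $\genminoryz{us_i}{vs_i}{i}$, $\genminoryz{us_i}{v}{i}$, $\genminoryz{u}{vs_i}{i}$ evaluated at $g$ become $\genminor{}{i}$, $\genminoryz{s_i}{s_i}{i}$, $\genminoryz{s_i}{}{i}$, $\genminoryz{}{s_i}{i}$ evaluated at $h$. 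For $j \ne i$, the same substitution gives $\genminoryz{u}{v}{j}(g) = \genminor{}{j}(h)$ directly, since $\pregenminor{\fundamentalweight_j}$ only depends on $[h]_0$. The claim reduces to
\[
  \genminor{}{i}(h)\,\genminoryz{s_i}{s_i}{i}(h) - \genminoryz{s_i}{}{i}(h)\,\genminoryz{}{s_i}{i}(h) = \prod_{j \ne i} \genminor{}{j}(h)^{-A_{j,i}}.
\]

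\textbf{The base identity.} Realize each minor as a matrix coefficient in $V_{\fundamentalweight_i}$: fixing a highest-weight vector $\eta_i$ and its lowest-weight dual $\eta_i^{\ast}$, we have $\genminoryz{a}{b}{i}(h) = \eta_i^{\ast}(\dlifttoG{a^{-1}} h \lifttoG{b} \eta_i)$. Because $\langle \alpha_i^{\vee}, \fundamentalweight_i \rangle = 1$, the plane $W = \mathbb{C}\eta_i \oplus \mathbb{C}(f_i \eta_i)$ is an irreducible module for the $\mathfrak{sl}_2$-subalgebra associated to $\alpha_i$, and both $\dlifttoG{s_i^{-1}}\eta_i$ and $\lifttoG{s_i}\eta_i$ are nonzero scalar multiples of $f_i\eta_i$. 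The left-hand side of the reduced identity thus reads as a $2 \times 2$ determinant describing the action of $h$ on $W$, paired against $\eta_i^{\ast}$ and $(f_i\eta_i)^{\ast}$. Using the decomposition $\fundamentalweight_i - s_i \fundamentalweight_i = \alpha_i = \sum_j A_{j,i}\, \fundamentalweight_j$ and evaluating first on the torus and then on the dense Bruhat cell $G_0 = N_{-} H N_{+}$, the determinant collapses to $\prod_{j \ne i} \genminor{}{j}(h)^{-A_{j,i}}$: the Cartan entries appear precisely as the exponents needed to account for the weight discrepancy $\fundamentalweight_i - s_i\fundamentalweight_i$. Since both sides are regular on $G$, agreement on $G_0$ extends to all of $G$.

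\textbf{Principal obstacle.} The delicate step is the determinantal collapse in the base identity. Although $W$ is not a $G$-submodule, the specific combination $\genminor{}{i}\, \genminoryz{s_i}{s_i}{i} - \genminoryz{s_i}{}{i}\, \genminoryz{}{s_i}{i}$ is arranged so that contributions from weight spaces outside $W$ cancel, leaving a product of characters whose exponents are forced by the identity $\alpha_i = \sum_j A_{j,i}\, \fundamentalweight_j$. This cancellation is essentially the Plücker-type relation underlying all generalized-minor identities; in practice one either pushes through direct bookkeeping on the Bruhat cell or performs a rank-$2$ reduction to the $\mathfrak{sl}_2^{(i)}$-slice, verifying first that both sides carry the same character under left/right multiplication by $H$, and then matching normalizations on a single fiber. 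Once the base identity is established, the reduction of Step~1 delivers the full statement.
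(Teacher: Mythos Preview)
The paper does not prove this statement: \cref{thm:local-1.17} is quoted verbatim from \cite[Theorem~1.17]{fominzelevinsky1999} and used as a black box throughout \cref{sec:proof}. There is therefore no in-paper proof to compare against; the relevant comparison is with Fomin--Zelevinsky's original argument.

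Your reduction to $u = v = e$ is correct and is exactly the first move in the original proof. The remaining base identity is where the content lies, and here your write-up is an outline rather than a proof. You correctly identify the minors as matrix coefficients in $V_{\fundamentalweight_i}$ and observe that the two-dimensional subspace $W = \mathbb{C}\eta_i \oplus \mathbb{C} f_i\eta_i$ is an $\mathfrak{sl}_2^{(i)}$-submodule, but the sentence ``the determinant collapses to $\prod_{j\ne i}\genminor{}{j}(h)^{-A_{j,i}}$'' is precisely the assertion to be proved, and your ``Principal obstacle'' paragraph acknowledges that the cancellation of contributions from weight spaces outside $W$ has not actually been carried out. Matching $H\times H$-weights and checking a single fiber, as you suggest, would work, but neither step is executed. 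In Fomin--Zelevinsky the base case is handled by an explicit computation on $G_0$ using the Gaussian decomposition together with a careful analysis of how $\lifttoG{s_i}$ and $\dlifttoG{s_i}$ act; your sketch points in the right direction but stops short of that computation.

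In summary: the strategy is the standard one and the reduction step is complete, but the base identity is asserted rather than proved. To finish, either carry out the Bruhat-cell bookkeeping explicitly or make the rank-reduction argument precise (showing both sides transform identically under $N_{-}\times N_{+}$ and then comparing on $H$).
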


  \begin{remark}
    It appears that this theorem only allows us to adjust the minor
    coordinates by one letter at a time: $s_i$. By the same logic as
    \cref{rem:all-wkomegaj-are-gamma-k-or-trivial}, however, we actually
    are able to adjust $u$ and $v$ by longer sequences of letters, as
    long as they do not contain multiple $s_i$.
  \end{remark}

  We use this theorem in situations where, mutating at $v_{a,b}$,
  \begin{align*}
    \genminoryz{u s_i}{v s_i }{i} &= \text{Coordinate of $v_{a,b}$ before mutation} \\
    \genminoryz{u}{v}{i} &= \text{Coordinate of $v_{a,b}$ after mutation} \\
    \genminoryz{u }{v s_i}{i} &= \text{Coordinate to left of $v_{a,b}$} \\
    \genminoryz{u s_i}{v }{i} &= \text{Coordinate to right of $v_{a,b}$} \\
    \genminoryz{u }{v}{j} &= \text{Coordinates in same column as $v_{a,b}$}
  \end{align*}

  \begin{example}
    We look ahead to the results of \cref{sec:main-result} and assume
    all edge coordinates are $1$ for simplicity.
    \Cref{fig:QC4-middle-murot} depicts $Q_{C_4}$ partway through the
    rotation mutation. The minors associated to relevant coordinates
    (see the proof of \cref{lem:fgcs-exists-rot-works-trivial} for
    details) are at this point
    \begin{align*}
      v_{22} &: \genminoryz{w_1}{w_{10}}{2} = \genminoryz{w_{3} s_2}{w_{11} s_2}{2}\\
      v_{21} &: \genminoryz{w_5}{w_{10}}{2} = \genminoryz{w_{3} }{w_{11} s_2}{2}\\
      v_{23} &: \genminoryz{w_1}{w_{14}}{2} = \genminoryz{w_{3} s_2}{w_{11} }{2}\\
      v_{12} &: \genminoryz{w_5}{w_{13}}{1} = \genminoryz{w_{3} }{w_{11} }{1}\\
      v_{32} &: \genminoryz{w_1}{w_{11}}{3} = \genminoryz{w_{3} }{w_{11} }{3}\\
    \end{align*}

    \begin{figure}[h]
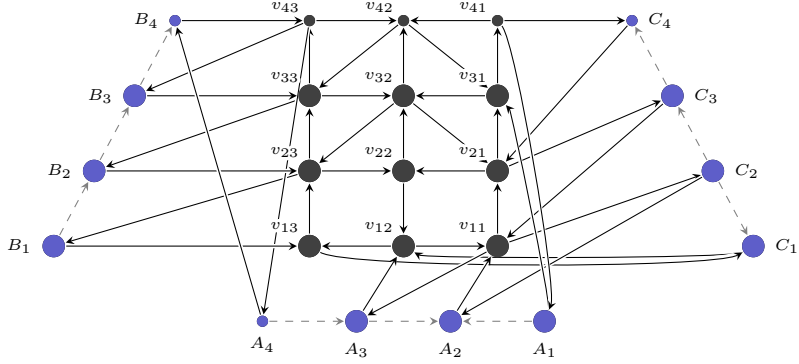

      \centering
      \includestandalone[mode=image|tex]{fig/QC4-middle-murot}

      \caption{After applying $\mu\Set{v_{11}, v_{21}, v_{31}, v_{41},
          v_{12}}$ to $Q_{C_4}$. The next mutation in $\murot$ is at
        $v_{22}$.
        \label{fig:QC4-middle-murot}
      }
    \end{figure}

    The equality follows from
    \cref{rem:all-wkomegaj-are-gamma-k-or-trivial}. Therefore, by
    applying the identity we see that mutating at $v_{22}$ will change
    the coordinate to $\genminoryz{w_{3}}{w_{11}}{2} =
    \genminoryz{w_{6}}{w_{14}}{2}$.
  \end{example}

  Finally, we note that the requirements on $u$ and $v$ may be loosened,
  which we need to prove \cref{lem:fgcs-exists-flip-works}.

  \begin{lemma}
    \label{lem:thm-1.17-for-repeated-c}
    Even if $\length(u s_i) \ne \length(u) + 1$ or $\length(v s_i) \ne
    \length(v) + 1$, the result of \cref{thm:local-1.17} still holds as
    long as $u$ and $v$ are subwords of a repeated Coxeter element $c$.
  \end{lemma}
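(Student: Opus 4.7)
The plan is to reduce the statement to \cref{thm:local-1.17} by exploiting a weight-level dependence of the generalized minors. The key observation is that $\genminoryz{a}{b}{k}$ depends on $a, b \in W$ only through the weights $a \fundamentalweight_k$ and $b \fundamentalweight_k$. This follows from \cref{rem:all-wkomegaj-are-gamma-k-or-trivial}: since $s_j \fundamentalweight_k = \fundamentalweight_k$ for $j \ne k$, one has $\pregenminor{\fundamentalweight_k}([\dlifttoG{s_j} h]_0) = \pregenminor{\fundamentalweight_k}([h]_0)$, so any factor of $\dlifttoG{s_j}$ (with $j \ne k$) appearing on the outside of $\dlifttoG{a^{-1}}$ can be absorbed without changing the minor, and symmetrically for $\lifttoG{b}$.

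Given $u, v$ subwords of $c^m$, I would construct Weyl group elements $\tilde u, \tilde v$ with reduced expressions satisfying
\[
  \length(\tilde u s_i) = \length(\tilde u) + 1, \qquad \length(\tilde v s_i) = \length(\tilde v) + 1,
\]
together with the weight-matching conditions $\tilde u \fundamentalweight_k = u \fundamentalweight_k$ and $(\tilde u s_i) \fundamentalweight_i = (u s_i) \fundamentalweight_i$ for $k = i$ and for each $k$ with $A_{k,i} \ne 0$, and analogously for $\tilde v$. The construction proceeds by pruning the subword expression of $u$: any trailing simple reflection $s_j$ for which removal does not affect the action on $\fundamentalweight_i$ or on $\fundamentalweight_k$ for $k$ adjacent to $i$ in the Dynkin diagram may be discarded; trailing $s_i$'s (where $s_i^2 = e$ forces a length drop) are likewise handled by collapsing. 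The repeated-Coxeter structure of $c^m$ ensures this pruning terminates at a reduced expression with the required length property.

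Once $\tilde u, \tilde v$ are produced, I apply \cref{thm:local-1.17} to obtain the identity at $\tilde u, \tilde v$, and then rewrite each minor back in terms of $u$ and $v$ via the weight-dependence observation of the first paragraph. Since each occurrence of a minor on either side of the identity only involves one of $\fundamentalweight_i$ or some $\fundamentalweight_j$ with $A_{j,i} \ne 0$, the weight-matching conditions above are exactly what is needed to make every substitution valid.

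The main obstacle is the combinatorial step of producing $\tilde u, \tilde v$: one must check that the subword-of-repeated-Coxeter structure always admits a reduced representative that simultaneously achieves the length condition at $s_i$ and preserves the weights indexed by $i$ and its Dynkin neighbors. If this becomes delicate in edge cases, a cleaner alternative is an algebraic-geometric extension argument: both sides of the identity, once the lifts $\dlifttoG{u^{-1}}$ and $\lifttoG{v}$ are fixed by the specific subword expressions, are regular functions on $G$, and the identity is already known by \cref{thm:local-1.17} on the Zariski-dense open locus where the length conditions hold. It then extends to all of $G$ by continuity.
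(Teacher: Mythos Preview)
Your main construction cannot work. The weight-matching conditions you impose on $\tilde u$, namely $\tilde u\fundamentalweight_i = u\fundamentalweight_i$ and $(\tilde u s_i)\fundamentalweight_i = (u s_i)\fundamentalweight_i$, already force $\tilde u\alpha_i = u\alpha_i$, since $s_i\fundamentalweight_i - \fundamentalweight_i = -\alpha_i$. But $\length(\tilde u s_i) = \length(\tilde u) + 1$ is equivalent to $\tilde u\alpha_i$ being a positive root, so this condition on $\tilde u$ holds if and only if the same condition already held for $u$. In other words, precisely in the case the lemma is meant to address ($u\alpha_i < 0$), no $\tilde u$ with your weight constraints can satisfy the length hypothesis of \cref{thm:local-1.17}. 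The pruning procedure you sketch does not escape this: removing a trailing $s_i$ changes $u\fundamentalweight_i$, and removing only $s_j$ with $j$ not adjacent to $i$ leaves $u\alpha_i$ untouched.

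Your fallback density argument is also misapplied. The length conditions are conditions on the fixed Weyl-group data $u,v$, not on the point $g \in G$; for given $u,v$ they either hold everywhere or nowhere, so there is no Zariski-open locus of $G$ on which \cref{thm:local-1.17} is known and from which to extend.

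The paper's argument is different in kind: rather than replacing $u$ and $v$ separately by weight-equivalent elements, it shifts them \emph{simultaneously} using the conjugation identity $\genminoryz{wu}{wv}{i}(g) = \genminoryz{u}{v}{i}(w^{-1} g w)$. Because $u$ and $v$ are both subwords of a repeated Coxeter element, one can left-multiply both by a common power of $c$ (using that $\lifttoG{c^h} = s_G$ is central) so that the resulting pair satisfies the length hypotheses; the identity at the shifted pair is then literally the desired identity at $(u,v)$ evaluated at a conjugate of $g$.
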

  \begin{proof}
    From the proof in \cite{fominzelevinsky1999}, the only reason for
    the length condition is to ensure that $\lifttoG{v} \lifttoG{s_i} =
    \lifttoG{vs_i}$ (and similar for $u$), since the lifting is not
    quite a homomorphism: for example $\lifttoG{s_i s_i} \ne
    \lifttoG{e}$.

    However, the only situations in which multiplication is not
    preserved is when the multiplication by $s_i$ induces a
    length-shortening identity in $W$. But if $u$ and $v$ are subwords
    of a repeated Coxeter element, their suffixes will always be of the
    form $w_k$. These admit no length-shortening braid relations.

    So the only possibility for $\lifttoG{v s_i} \ne \lifttoG{v}
    \lifttoG{s_i}$ is if $v s_i$ contains a copy of $w_0 = c^{h/2}$. But
    $\lifttoG{c^{h}} = s_G$ is an element of $H$. By prepending copies
    of $c^h$ to either $u$ or $v$, we may ensure that the difference in
    length between $u$ and $v$ is never more than $\frac{h}{2} - 1$.
    Then, using \[ \genminoryz{w u}{w v}{i}(g) =
      \genminoryz{u}{v}{i}(w^{-1} g w) \] we reduce to the case where
    $u$ and $v$ are each some $w_k$ shorter than $w_0$. By replacing $g$
    with $w g w^{-1} s_G^{\epsilon}$ (where $\epsilon \in {0,1}$
    depending on the exact difference between $u$ and $v$), the desired
    identity follows from the regular theorem.
  \end{proof}

  \section{Coordinates on generically-decorated representations}

  \label{sec:reps-from-conf3star}

  Here we review \defined{decorations} of representations and define
  \defined{Fock--Goncharov coordinate structures}. For a representation
  $\rho$ and a chosen triangulation, a decoration of $\rho$ is a
  collection of flags for each simplex in the triangulation. These flags
  can recreate $\rho$, and have a canonical form that admits generalized
  minor coordinates. Fock--Goncharov coordinate structures will describe
  coordinates on these decorations.

  These decorations apply to surfaces, but also to $3$-manifolds with
  fixed ideal triangulations. We follow \cite{fockgoncharov2006} for the
  surface case and \cite{zickert2016} for the $3$-manifold case.

  \subsection{Configurations and gluings}

  \label{sec:configurations-and-gluings}

  \begin{definition}
    \label{def:Conf_k_star}
    Let $G$ be a Lie group over $\mathbb{C}$, with sufficient choices to
    define $G_0$ as in \cref{def:gaussian-decomposition}. Let $K$ be a
    subgroup of $G$ (we will ultimately use $N_{+}$). A tuple of cosets
    $(g_0K, g_1K, \dotsc, g_{m}K)$ is \defined{sufficiently generic} if
    each $g_i^{-1} g_j \in \lifttoG{w_0}G_0$.

    Such a tuple corresponds to a labeling of edges in an oriented
    $m$-simplex by elements of $\lifttoG{w_0}G_0$, as in
    \cref{fig:labeled-sufficiently-generic-simplex}, with the edge from
    $i$ to $j$ (assuming $j>i$) labeled by $g_i^{-1} g_j$.

    The variety of such sufficiently generic $(m+1)$-tuples is the
    \defined{configuration space} $\Conf_{m+1}(G/K)$. Identifying
    elements which differ by left-multiplication of $G$ yields the
    variety $\Conf_{m+1}^{\ast}(G/K)$.
  \end{definition}

  \begin{figure}[h]
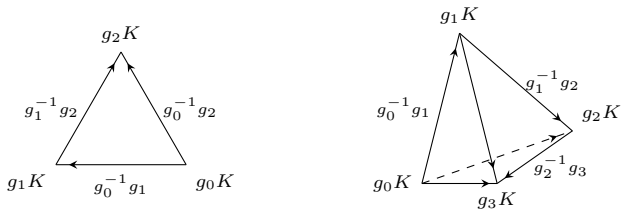

    \centering \includestandalone[mode=image|tex]{fig/labeled-sufficiently-generic-simplex}

    \caption{Oriented $2$- and $3$-simplices labeled by elements of
      $\Conf_3^{\ast}(G/K)$ and $\Conf_4^{\ast}(G/K)$.
      \label{fig:labeled-sufficiently-generic-simplex}
    }
  \end{figure}

  \begin{definition}
    \label{def:peripheral-stuff}
    Let $M$ be a compact manifold (possibly with boundary) and $G$ a Lie
    group over $\mathbb{C}$, with $K$ a subgroup of $G$. A subgroup $L$
    of $\pi_1(M)$ is \defined{peripheral} if there is a boundary
    component $D$ of $M$ such that $L$ is induced by the inclusion of
    $D$; that is $\iota^{\ast}(\pi_1(D)) = L$.

    A representation $\rho : \pi_1(M) \to G$ is a
    \defined{$(G,K)$-representation} if, for every peripheral subgroup
    $L$, the image $\rho(L)$ is a conjugate of $K$. In the case $K =
    N_{+}$, the term \defined{boundary-unipotent} is used. If $K =
    B_{+}$, then $\rho$ is \defined{boundary-borel}.
  \end{definition}

  \begin{lemma}[{\cite[Lemma~5.8, Proposition~5.9]{zickert2016}}]
    \label{lem:canonical-form-for-conf_3_star}
    There is an isomorphism of varieties \[ \Conf_3^{\ast}(G/N_{+})
      \cong H^3 \times_{H} (N_{-} \cap \lifttoG{w_0} G_0). \] The fiber
    product $\times_{H}$ means that for $(h_1, h_2, h_3, u) \in
    \Conf_3^{\ast}(G/N_{+}))$, we have \[ [\lifttoG{w_0}^{-1} u]_{0} =
      (w_0(h_3 h_1) h_2)^{-1}. \]
  \end{lemma}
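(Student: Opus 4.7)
The plan is to exhibit an explicit canonical form for sufficiently generic triples and read off the four invariants. Starting from a representative $(g_0,g_1,g_2)$ with $g_i^{-1}g_j \in \lifttoG{w_0}G_0$, I would first use the left $G$-action to normalize $g_0 = e$. The stabilizer of the coset $eN_{+}$ under left multiplication is $N_{+}$, and I would use this residual $N_{+}$-freedom to bring $g_1$ into the form $g_1 = \lifttoG{w_0}h_1$ for some $h_1 \in H$: writing $g_1 = \lifttoG{w_0}\cdot n_- h n_+$ by Gaussian decomposition and conjugating $n_-$ across $\lifttoG{w_0}$ (which produces an element of $N_{+}$ by \cref{rem:w0-switches-Ns}) absorbs the $N_-$-part into the residual $N_{+}$-action on the left, while the $n_+$ on the right is killed by the right $N_{+}$-freedom on $g_1$. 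The remaining freedom is right $N_{+}$-action on $g_2$; since $g_2 \in \lifttoG{w_0}G_0$, Gaussian decomposition gives $g_2 = \lifttoG{w_0}\, u\, h'\, n_+'$, and the right $N_+$-action eliminates $n_+'$, leaving the canonical form $(e,\ \lifttoG{w_0}h_1,\ \lifttoG{w_0}u h')$ with $u\in N_-$ and $h'\in H$ both well-defined.

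Next, I would identify the three $H$-invariants with the edge invariants $h_{ij} := [\lifttoG{w_0}^{-1} g_i^{-1} g_j]_0$. A direct check shows each $h_{ij}$ is invariant under the residual $N_{+}$-freedoms (again using $\lifttoG{w_0} N_- \lifttoG{w_0}^{-1} = N_{+}$, so that changes in representative only perturb the $N_-$-factor of the Gaussian decomposition), hence defines a function on $\Conf_3^{\ast}(G/N_{+})$. Reading from the canonical form gives $h_{01}=h_1$, $h_{02}=h'$, and for the edge $12$, one computes
\[
  \lifttoG{w_0}^{-1} g_1^{-1} g_2 \;=\; \lifttoG{w_0}^{-1} h_1^{-1} \lifttoG{w_0}\; \lifttoG{w_0}^{-1} u \; h' \;=\; w_0(h_1^{-1})\,\lifttoG{w_0}^{-1} u\, h',
\]
so that $h_{12} = w_0(h_1^{-1})\,[\lifttoG{w_0}^{-1}u]_0\,h'$. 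After matching this with the labeling convention used in the lemma statement, this rearranges to precisely $[\lifttoG{w_0}^{-1}u]_0 = (w_0(h_3 h_1)h_2)^{-1}$. The condition $g_1^{-1}g_2\in\lifttoG{w_0}G_0$ is equivalent (given the canonical form) to $u \in \lifttoG{w_0}G_0$, which is exactly the locus $N_-\cap\lifttoG{w_0}G_0$ in which the Gaussian decomposition of $\lifttoG{w_0}^{-1}u$ — and hence the compatibility above — makes sense.

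To finish, I would construct the inverse: given $(h_1,h_2,h_3,u) \in H^3 \times_H (N_- \cap \lifttoG{w_0}G_0)$ satisfying the compatibility, set $g_0=e$, $g_1 = \lifttoG{w_0}h_1$, and $g_2 = \lifttoG{w_0} u h'$ where $h'$ is determined by the compatibility relation. One then checks (i) sufficient genericity of the three cosets follows from $u \in \lifttoG{w_0}G_0$ and the fact that $\lifttoG{w_0}h_1 \in \lifttoG{w_0}G_0$ automatically, and (ii) the two constructions are mutually inverse, by uniqueness of the canonical form up to the quotient by $G$ and by $N_+$. I expect the main obstacle to be purely bookkeeping: keeping straight the $w_0$-twist on $H$, the conjugation relations between $N_{\pm}$ under $\lifttoG{w_0}$, and the orientation conventions that determine which $h_{ij}$ is labeled $h_1$, $h_2$, or $h_3$. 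Once these are fixed, every step is a direct Gaussian-decomposition calculation.
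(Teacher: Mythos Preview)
The paper does not actually prove this lemma—it is quoted from \cite[Lemma~5.8, Proposition~5.9]{zickert2016}, and the only additional content the paper supplies is the explicit representative $(N_{+},\ \lifttoG{w_0} h_1 N_{+},\ u\, w_0(h_1) h_2 s_G\, N_{+})$ stated immediately after. So there is no in-paper argument to compare against directly.

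Your overall strategy (normalize via the left $G$-action, then read off invariants by Gaussian decomposition) is the natural one and is almost certainly how the cited proof proceeds. However, your normalization of the third coset does not match the paper's convention, and this is more than a labeling issue. You put $g_2$ in the form $\lifttoG{w_0}\, u\, h'$ with $u\in N_-$, whereas the paper's representative is $u\, w_0(h_1)h_2 s_G$ with the $N_-$-part $u$ sitting on the \emph{left}, not under $\lifttoG{w_0}$. These two $u$'s are genuinely different elements of $N_-$, and they satisfy different compatibility relations. Concretely, with the cyclic convention $h_1=h_{01}$, $h_2=h_{12}$, $h_3=h_{20}$ (so that $h_{02}=w_0(h_3)^{-1}s_G$), your identity $h_{12}=w_0(h_1^{-1})[\lifttoG{w_0}^{-1}u]_0\,h'$ rearranges to
\[
[\lifttoG{w_0}^{-1}u]_0 \;=\; w_0(h_1)\,h_2\,w_0(h_3)\,s_G,
\]
which differs from the stated $(w_0(h_3h_1)h_2)^{-1}$. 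The step ``rearranges to precisely'' therefore does not go through as written.

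The fix is simply to choose the other natural normalization for the third coset: use genericity to write $g_2 N_+ = u\cdot h\, N_+$ directly with $u\in N_- \cap \lifttoG{w_0}G_0$ and $h\in H$ (this is the paper's form). Computing the three edge invariants from that representative yields the lemma's fiber relation on the nose. Everything else in your outline—the reduction to $g_0=e$, the use of the residual $N_+$-freedom on $g_1$, and the inverse construction—carries over unchanged.
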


  Thus we will often write $(h_1, h_2, h_3, u)$ for $\alpha$, which has
  one representative $(N_{+}, \lifttoG{w_0} h_1 N_{+}, u w_0(h_1) h_2
  s_G N_{+})$. We refer to these as \defined{canonical forms} for
  $\Conf_3^{\ast}(G/N_{+})$.

  \begin{definition}
    \label{rem:canonical-form-for-B-cosets}
    By expanding $N_{+}$ to $B_{+}$ above, we obtain that $\alpha \in
    \Conf_3^{\ast}(G/B_{+})$ can be given as $(B_{+}, \lifttoG{w_0}
    B_{+}, u B_{+})$. Choose the unique $u_x$ such that, for all $k$,
    the minor $\genminoryz{w_k}{}{1}(u_x) = 1$. This element $u_x$ is
    the \defined{canonical form} for $\alpha$.

    For an arbitrary $(B_{+}, \lifttoG{w_0} B_{+}, u B_{+})$, by
    expanding \cref{def:generalized-minor} there is a unique (up to the
    center of $G$) element $h$ such that $h u h^{-1}$ is the canonical
    form $u_x$. When $G$ is centerless, we denote this element by
    $n(u)$.
  \end{definition}

  \begin{proposition}
    \label{prop:Phi-Psi-and-rot}
    This summarizes \cite[Proposition~5.10]{zickert2016}. Let $\alpha
    \in \Conf_3^{\ast}(G/N_{+})$ be given by $\alpha = (h_1, h_2, h_3,
    u)$. Recall $\Phi$ and $\Psi$ of \cref{def:phi-and-psi}. The map \[
      \operatorname{rot}: \Conf_3^{\ast}(G/N_{+}) \to
      \Conf_3^{\ast}(G/N_{+}), \qquad (g_0 N_{+}, g_1 N_{+}, g_2 N_{+})
      \mapsto (g_2 N_{+}, g_0 N_{+}, g_1 N_{+}) \] is given, in this
    form, via \[ \operatorname{rot} : (h_1, h_2, h_3, u) \mapsto (h_3,
      h_1, h_2, h_2^{-1} (w_0(h_1))^{-1} (\Phi \Psi \Phi \Psi)(u)
      (w_0(h_1)) h_2). \]
  \end{proposition}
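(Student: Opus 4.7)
The plan is a direct reduction-to-canonical-form argument. I start from the explicit representative $(g_0, g_1, g_2) = (N_{+}, \lifttoG{w_0} h_1 N_{+}, u\,w_0(h_1) h_2 s_G N_{+})$ for $\alpha = (h_1, h_2, h_3, u)$ provided by \cref{lem:canonical-form-for-conf_3_star}, apply the cyclic rotation $(g_0, g_1, g_2) \mapsto (g_2, g_0, g_1)$, and then bring the result back to canonical form. The assertions $h_1' = h_3$, $h_2' = h_1$, $h_3' = h_2$ simply reflect the fact that the three $H$-factors are attached to the three vertices of the triangle, so the nontrivial content is the formula for $u'$.

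First, I left-multiply the rotated triple by $g_2^{-1} = s_G^{-1} h_2^{-1} w_0(h_1)^{-1} u^{-1}$, sending the first coset to $N_{+}$ and rewriting the other two in terms of $u^{-1}$ and torus/central corrections. To reach canonical form I must push $u^{-1}$ through $\lifttoG{w_0}$: since $u \in N_{-} \cap \lifttoG{w_0} G_0$, one has $u^{-1} \lifttoG{w_0} \in \lifttoG{w_0} G_0$, and the $H$-part of this Gaussian decomposition is determined by the fiber-product condition $[\lifttoG{w_0}^{-1} u]_{0} = (w_0(h_3 h_1) h_2)^{-1}$. This extraction identifies the new $H$-data as the cyclic permutation of $(h_1, h_2, h_3)$ and provides the new representative of the second coset in the form $\lifttoG{w_0} h_3 N_{+}$.

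Second, the new unipotent parameter $u'$ must lie in $N_{-} \cap \lifttoG{w_0} G_0$ and must realize the rotated third coset. By construction $u'$ arises from $u$ by inverting, conjugating by $\lifttoG{w_0}$, extracting the $N_{-}$-part, and accounting for the $H$-factors $s_G, h_1, h_2$ that were shuffled past $\lifttoG{w_0}$. The first three operations are precisely what $\Phi \Psi$ encodes: $\Psi$ transports $u$ into $N_{+} \cap G_0 \lifttoG{w_0}$ by swapping the roles of $N_{\pm}$, and $\Phi$ brings it back into $N_{-} \cap \lifttoG{w_0} G_0$ via multiplication by $\lifttoG{w_0}$ and extraction of the $N_{-}$-part. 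Because the rotation has order three while each $\Phi \Psi$ corresponds to traversing one edge of the triangle in a single sense, a full rotation requires applying $\Phi \Psi$ twice, yielding $\Phi \Psi \Phi \Psi$. The $H$-factors absorbed in the last step then assemble into the outer conjugation by $w_0(h_1) h_2$.

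The main obstacle is the bookkeeping of $H$-factors crossing $\lifttoG{w_0}$ via the relation $\lifttoG{w_0} h = w_0(h) \lifttoG{w_0}$, together with the interaction of $s_G = \lifttoG{w_0}^2$ with these commutations; one must check that the torus corrections picked up during the four applications of Gaussian decomposition collapse exactly to $h_2^{-1} w_0(h_1)^{-1}\,(\,\cdot\,)\,w_0(h_1) h_2$, with no residual $s_G$ surviving. Once this bookkeeping is done and the iterated Gaussian-decomposition step is recognized as $\Phi \Psi \Phi \Psi$, the result follows from the uniqueness of canonical form in \cref{lem:canonical-form-for-conf_3_star}.
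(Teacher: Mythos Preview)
The paper does not prove this proposition; it simply records the statement and cites \cite[Proposition~5.10]{zickert2016}. So there is no ``paper's own proof'' to compare against, and your task is really to produce an independent argument.

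Your overall strategy is the right one and is exactly how the cited result is obtained: start from the explicit representative supplied by \cref{lem:canonical-form-for-conf_3_star}, cyclically permute, left-translate back to canonical form, and read off the new data. The cyclic permutation of $(h_1,h_2,h_3)$ falls out immediately, as you say.

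Where your sketch is not yet a proof is in the identification of $u'$. Two points:
\begin{itemize}
  \item You write that ``inverting, conjugating by $\lifttoG{w_0}$, extracting the $N_{-}$-part'' is ``precisely what $\Phi\Psi$ encodes''. That is not quite right: $\Psi$ is the anti-automorphism $x_i(t)\leftrightarrow y_i(t)$ fixing $H$ (the transpose in type $A$), not inversion, and inversion followed by conjugation by $\lifttoG{w_0}$ is a genuinely different map from $\Psi$. The relation you need is not a tautology; it comes from unwinding the Gaussian decomposition $\Psi(u)\,\lifttoG{w_0}=y\,h\,x$ and tracking how the pieces reassemble after a second application. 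The paper in fact carries out exactly this unraveling (for a different purpose) in the proof of \cref{lem:fgcs-exists-rot-works-trivial}, arriving at $(\Phi\Psi)^2(u)=H_2^{-1}\Psi(x_2^{-1})H_2$ with $H_2=[\Psi(u)\lifttoG{w_0}]_0$; that computation is what your third paragraph is gesturing at, and it is not bypassable by the heuristic you give.
  \item The ``rotation has order three, hence $\Phi\Psi$ must be applied twice'' argument is not a proof: it does not by itself pin down the exponent, and it says nothing about the conjugating $H$-factor. The exponent $2$ and the specific conjugator $w_0(h_1)h_2$ both emerge only from the explicit bookkeeping you defer to the last paragraph.
\end{itemize}

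In short: correct plan, but the substance of the proposition lives entirely in the ``bookkeeping'' step you leave undone, and your shortcut identification of the three operations with $\Phi\Psi$ is inaccurate as stated. If you actually perform the two-step Gaussian decomposition (as in the displayed computation in the proof of \cref{lem:fgcs-exists-rot-works-trivial}) and track the $H$- and $s_G$-factors, the stated formula drops out.
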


  \begin{definition}
    \label{def:gluing-configurations}
    This summarizes \cite[Section~2.2.1]{zickert2016}. We define and
    $\Conf_3^{\ast}(G/N_{+}) \times_{13}^{s_G} \Conf_3^{\ast}(G/N_{+})$
    by gluing copies of $\Conf_3^{\ast}(G/N_{+})$ along matching copies
    of $\Conf_2^{\ast}(G/N_{+})$. Specifically,
    \begin{align*}
      \Conf_3^{\ast}(G/N_{+}) \times_{02}^{s_G} \Conf_3^{\ast}(G/N_{+}) = \Set{ \substack{(g_0 s_G N_{+}, g_1 N_{+}, g_2 N_{+}),\\ (g_0 N_{+}, g_2 N_{+}, g_3 N_{+})} : (g_0 N_{+}, g_1 N_{+}, g_2 N_{+}, g_3 N_{+}) \in \Conf_4^{\ast}(G/N_{+}) } \\
      \Conf_3^{\ast}(G/N_{+}) \times_{13}^{s_G} \Conf_3^{\ast}(G/N_{+}) = \Set{ \substack{(g_1 N_{+}, g_2 N_{+}, g_3 N_{+}),\\ (g_0 N_{+}, g_1 s_G N_{+}, g_3 N_{+})} : (g_0 N_{+}, g_1 N_{+}, g_2 N_{+}, g_3 N_{+}) \in \Conf_4^{\ast}(G/N_{+}) } \\
    \end{align*}
    We also define the following maps from $\Conf_4^{\ast}(G/N_{+})$ to
    $\Conf_{3}^{\ast}(G/N_{+}) \times_{jk}^{s_G}
    \Conf_3^{\ast}(G/N_{+})$ for $jk = 02, 13$:
    \begin{align*}
      \Psi_{02} : (g_0 N_{+}, g_1 N_{+}, g_2 N_{+}, g_3 N_{+}) &\mapsto \substack{(g_0 s_G N_{+}, g_1 N_{+}, g_2 N_{+}),\\ (g_0 N_{+}, g_2 N_{+}, g_3 N_{+})} \\
      \Psi_{13} : (g_0 N_{+}, g_1 N_{+}, g_2 N_{+}, g_3 N_{+}) &\mapsto \substack{(g_1 N_{+}, g_2 N_{+}, g_3 N_{+}),\\ (g_0 N_{+}, g_1 s_G N_{+}, g_3 N_{+})}
    \end{align*}

    See part of \cref{fig:QC2-and-flip} for a graphical depiction.

    In the surface case, these correspond to the two different ways of
    triangulating a quadrilateral. The act of retriangulating a single
    quadrilateral this way is called a ``flip''. In the $3$-manifold
    case, these correspond to decomposing a tetrahedron's vertices into
    the two triangles' worth.
  \end{definition}

  \begin{remark}
    \label{rem:purpose-of-sG}
    The element $s_G$ is in the center of $G$, so is only relevant for
    $N_{+}$-cosets. Its purpose is to allow the quiver amalgamation (see
    \cref{def:dv-dh-construction} ahead) to agree with the
    identification of elements of $\Conf_3^{\ast}(G/N_{+})$ along a copy
    of $\Conf_2^{\ast}(G/N_{+})$.

    To see this, note that \cref{fig:dv-construction} shows identifying
    a $0$--$2$ edge with a $0$--$1$ edge, and \cref{fig:dh-construction}
    identifes a $0$--$2$ edge with a $1$--$2$ edge. Since the associated
    elements of $\Conf_2^{\ast}(G/N_{+})$ are oriented via the cyclic
    ordering on $(0,1,2)$, this quiver amalgamation corresponds to
    identifying elements of $\Conf_2^{\ast}(G/N_{+})$ ``backwards''.

    This use of $s_G$ is why $\mathcal{A}_{G,\Sigma}$ is the moduli
    space of \emph{twisted}, decorated representations from
    $\pi_1(\Sigma)$ into $G$. See \cite[Section~8.6]{fockgoncharov2006}
    and \cite[Section~2.3]{zickert2016} for more details.
  \end{remark}

  \subsection{Triangular quivers and Fock--Goncharov coordinate
    structures}

  We want a variety that is independent of the choice of triangulation.
  Therefore, we will need coordinate change maps corresponding to
  changes in triangulation (the rotation of individual triangles, or the
  quadrilateral flip).

  Here we define the outputs of the algorithm of \cref{sec:main-result}.
  These are \defined{triangular quivers} (see
  \cref{def:triangular-quiver}), which have vertices on edges and in the
  interior. If, moreover, these quivers admit a rotation and a flip as
  quiver mutations, they have \defined{triangulation-compatible
    symmetry} (see \cref{def:tcs}). And if we can then associate the
  $\mathcal{A}$- and $\mathcal{X}$-coordinates to coordinates on
  $\Conf_3^{\ast}(G/N_{+})$ and $\Conf_3^{\ast}(G/B_{+})$, we have a
  \defined{Fock--Goncharov coordinate structure} (see \cref{def:fgcs}).

  \begin{definition}
    \label{def:dynkin-type}
    Let $D$ be a Dynkin diagram, with associated Cartan matrix
    $\begin{bmatrix}a_{ij}\end{bmatrix}$. A quiver $Q$ is of
    \defined{Dynkin type $D$} if $\abs{\epsilon(i,j)} = -a_{ji}$ (for $i
    \ne j$) and $\sigma(i,j)$ takes values only in $\set{-1, 0, +1}$.
    The undirected graph of $Q$ carries the adjacency information of
    $D$, and the vertex weights carry the edge details.

    $Q$ is of \defined{half-Dynkin type $D$} if $2Q$, the quiver
    obtained by multiplying all edge weights by $2$, is of Dynkin type
    $D$.
  \end{definition}

  \begin{definition}
    \label{def:Coxeter-elt-induced-by-quiver}
    Let $D$ be a quiver of Dynkin type (with no oriented cycles). We can
    obtain a particular Coxeter element $c$ by requiring that
    $s_{\alpha_i}$ appear before $s_{\alpha_j}$ in $c$ if $\sigma(i, j)
    > 0$ in $D$. Such a $c$ is \defined{induced} by the quiver $D$.
  \end{definition}

  \begin{definition}
    \label{def:tree-like-quiver}
    A quiver of Dynkin type is \defined{tree-like} if, as an unoriented
    graph, it is a tree, and the directed edges always point away from
    the root. In this case, there is a partition of vertices $\set{T_0,
      T_1, \dotsc, T_m}$ such that the vertices in $T_i$ are of distance
    $i$ (in the unoriented graph) from the root vertex.

    Note that Coxeter elements induced by tree-like Dynkin quivers
    always take the form \[ c = \left( \prod_{\alpha \in T_0} s_{\alpha}
      \middle) \middle( \prod_{\alpha \in T_1} s_{\alpha} \middle)
      \dotsb \middle( \prod_{\alpha \in T_m} s_{\alpha} \right). \]
  \end{definition}

  \begin{definition}
    \label{def:well-rooted-tree-like-quiver}
    The involution $\sigma_G$ acts on quivers of Dynkin type in the same
    way that it would act on the Dynkin diagram as in
    \cref{rem:sigma_G-action-on-Dynkin-diagram}. A tree-like quiver $Q$
    of Dynkin type $D$ is \defined{well-rooted} if this action of
    $\sigma_G$ preserves all $T_i$ partitions. See
    \cref{fig:well-rooted-quivers}.
  \end{definition}

  \begin{figure}[h]
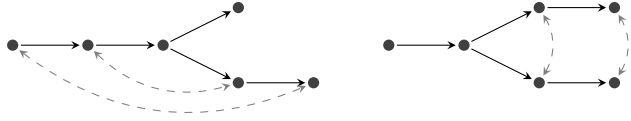

    \centering \includestandalone[mode=image|tex]{fig/well-rooted-E6}

    \caption{A non-well-rooted quiver (left) and a well-rooted quiver
      (right) for $E_6$.
      \label{fig:well-rooted-quivers}
    }
  \end{figure}

  \begin{remark}
    In the most cases, the quiver obtained by applying a na\"ive
    ``left-to-right'' ordering to the common presentation of the Dynkin
    diagram is well-rooted. The pathological case is $E_6$.
  \end{remark}

  \begin{definition}
    \label{def:triangular-quiver}
    A quiver $Q$ is \defined{triangular} if, for a standard $2$-simplex
    $\sigma$ with vertices $0, 1, 2$, each vertex in $Q$ is associated
    to the interior of some sub-simplex $\sigma'$ of $\sigma$.
    $\facemap_{\sigma'}(Q)$ is defined to be the sub-quiver of $Q$
    obtained by deleting all vertices except those that lie on the
    interior of $\sigma$. See \cref{fig:visually-triangular}.

    In other words, a triangular quiver $Q$ is one that can be split
    into sub-quivers:
    \begin{itemize}
    \item
      $\facemap_{(0,1)}(Q)$, $\facemap_{(1,2)}(Q)$, and
      $\facemap_{(0,2)}(Q)$: the vertices on the edges of the triangle.
    \item
      $\facemap_{(0,1,2)}(Q)$: the vertices in the interior of the
      triangle.
    \end{itemize}

    We label the vertices on the $\facemap_{(0,1)}$ edge's vertices by
    $A_i$, those on the $\facemap_{(1,2)}$ edge by $B_i$, and those on
    the $\facemap_{(0,2)}$ edge by $C_i$.

    An isomorphism $\varphi$ of triangular quivers must preserve these
    classifications: $\facemap_{\Delta}(\varphi(Q)) =
    \varphi(\facemap_{\Delta}(Q))$.
  \end{definition}

  \begin{figure}[h]
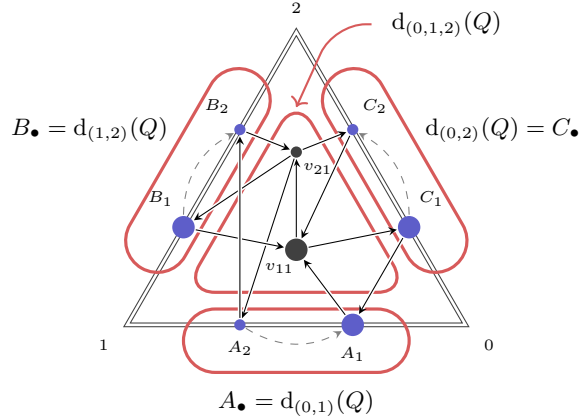

    \centering
    \includestandalone[mode=image|tex]{fig/visually-triangular}

    \caption{A triangular quiver (this one is associated to a Lie group
      of type $C_2$).
      \label{fig:visually-triangular}
    }
  \end{figure}

  \begin{definition}
    \label{def:dv-dh-construction}
    Let $Q$ be a triangular quiver such that the edge sub-quivers are
    isomorphic as sets of vertices. That is, fix isomorphisms $\varphi:
    \facemap_{(0,1)}(Q) \cong \facemap_{(0,2)}(Q)$ and $\psi :
    \facemap_{(0,2)}(Q) \cong \facemap_{(1,2)}(Q)$. Then we define
    \begin{itemize}
    \item
      $\Qdv$ as the quiver obtained by identifying two copies of $Q$
      along $\varphi$, and
    \item
      $\Qdh$ as the quiver obtained by identifying two copies of $Q$
      along $\psi$.
    \end{itemize}
    In performing the identification of vertices, edge weights are
    added. They need not agree, and in particular might cancel; see
    \cref{fig:dv-construction,fig:dh-construction}.
  \end{definition}

  This is a special case of \defined{amalgamation} as defined in
  \cite[\S 2.2]{fockgoncharov2006_2}.

  \begin{figure}[h]
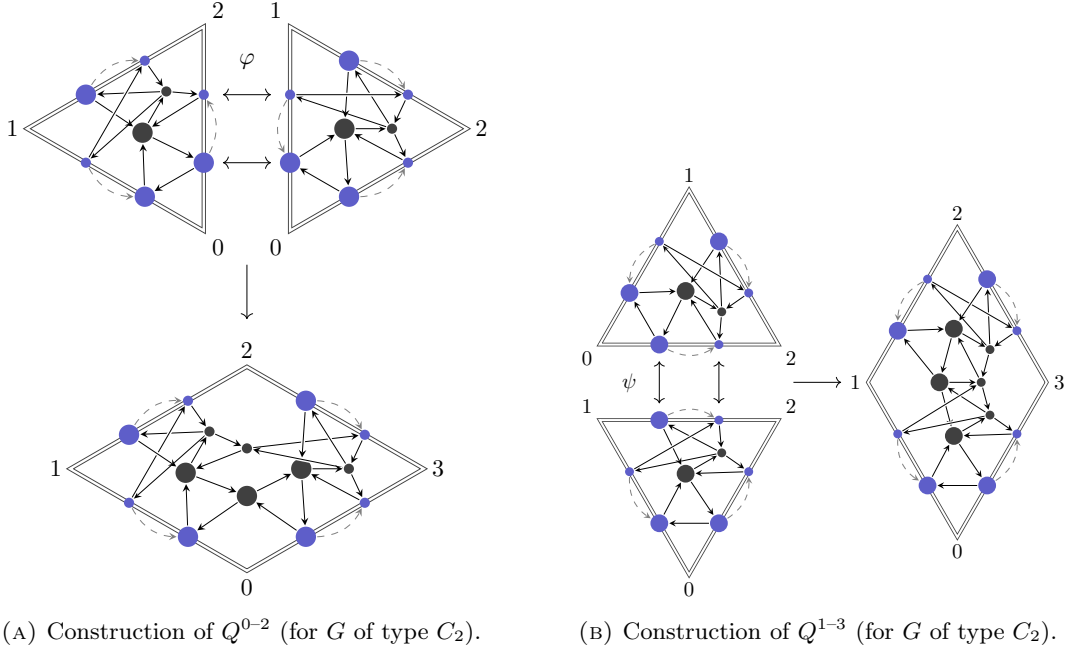

    \centering
    \begin{subfigure}[t]{0.40\textwidth}
      \centering \includestandalone[mode=image|tex,width=\textwidth]{fig/dv-construction}

      \caption{Construction of $\Qdv$ (for $G$ of type $C_2$).
        \label{fig:dv-construction}
      }
    \end{subfigure}
    ~ \hspace{0.05\textwidth} ~
    \begin{subfigure}[t]{0.40\textwidth}
      \centering \includestandalone[mode=image|tex,width=\textwidth]{fig/dh-construction}

      \caption{Construction of $\Qdh$ (for $G$ of type $C_2$).
        \label{fig:dh-construction}
      }
    \end{subfigure}
    \caption{Amalgamating triangular quivers.}
  \end{figure}

  \begin{definition}
    \label{def:tcs}
    A quiver \defined{has triangulation-compatible symmetry} if
    \begin{enumerate}
    \item
      \label{def:tcs-1}
      It is triangular by \cref{def:triangular-quiver},
    \item
      \label{def:tcs-rot}
      There is a quiver mutation $\murot$ which is an isomorphism of
      triangular quivers between $Q$ and $Q'$, where $Q'$ is obtained
      from $Q$ by permuting simplex indices from $(0,1,2)$ to $(2,0,1)$.
    \item
      \label{def:tcs-flip}
      There is a quiver mutation $\muflip$ which transforms $\Qdv$ to
      $\Qdh$.
    \end{enumerate}
  \end{definition}

  If $\triang$ is an oriented triangulation of a surface, and $\triang'$
  is an orientation-preserving retriangulation of $\triang$, then
  inscribing $Q$ in each simplex of $\triang$ and $\triang'$ and
  amalgamating along edges results in two quivers that differ by some
  sequence of $\murot$ and $\muflip$. Therefore, if the cluster
  variables of $Q$ are coordinates for the representation variety, the
  existence of $\murot$ and $\muflip$ will (eventually) show the variety
  to be independent of the specific choice of triangulation.

  \begin{definition}
    \label{def:fgcs}
    Fix a semisimple Lie group $G$ over $\mathbb{C}$ with a fixed
    maximal torus $H$ and a maximal unipotent subgroup $N_{+}$, with
    $B_{+} = HN_{+}$. A quiver $Q$ carries a \defined{Fock--Goncharov
      coordinate structure for $G$ } if
    \begin{enumerate}
    \item
      \label{def:fgcs-1}
      $Q$ has triangulation-compatible symmetry as in \cref{def:tcs}.
    \item
      \label{def:fgcs-2}
      Each of the edge sub-quivers of $Q$ are of half-Dynkin type for
      $G$, as by \cref{def:dynkin-type}, and the isomorphisms $\varphi$
      and $\psi$ of \cref{def:dv-dh-construction} identify vertices that
      come from the same nodes in the Dynkin diagram.
    \item
      \label{def:fgcs-M-is-birational-equivalence}
      There exists a map $\conftoquiv : \Conf_3^{\ast}(G/N_{+}) \to
      \seedAtorus_Q$, which is a birational equivalence.
    \item
      \label{def:fgcs-rot-works}
      Moreover, $\conftoquiv$ respects the rotation. That is, the
      following diagrams commute:

      {\centering
        \begin{tikzcd}
          \Conf_3^{\ast}(G/N_{+}) \ar[r, "\conftoquiv"] \ar[d, "\operatorname{rot}"] & \seedAtorus_Q \ar[d, "\murotind"] &
          \Conf_3^{\ast}(G/B_{+}) \ar[r, "\conftoquiv"] \ar[d, "\operatorname{rot}"] & \seedXtorus_Q \ar[d, "\murotind"] \\
          \Conf_3^{\ast}(G/N_{+}) \ar[r, "\conftoquiv"] & \seedAtorus_{\murot(Q)} &
          \Conf_3^{\ast}(G/B_{+}) \ar[r, "\conftoquiv"] & \seedXtorus_{\murot(Q)}
        \end{tikzcd}

      } with $\operatorname{rot} : (g_0 K, g_1 K, g_2 K) \mapsto (g_2 K,
      g_0 K, g_1 K)$ (see also \cref{fig:QC2-and-rot}).
    \item
      \label{def:fgcs-flip-works}
      Moreover, $\conftoquiv$ respects the flip. That is, the following
      diagrams commute:

      {\centering
        \begin{tikzcd}
          \Conf_3^{\ast}(G/N_{+}) \times_{02}^{s_G} \Conf_3^{\ast}(G/N_{+}) \ar[r, "\conftoquiv"] \ar[d, "\Psi_{13} \circ \Psi_{02}^{-1}"] & \seedAtorus_{\Qdv} \ar[d, "\muflipind"] &
          \Conf_3^{\ast}(G/B_{+}) \times_{02}^{s_G} \Conf_3^{\ast}(G/B_{+}) \ar[r, "\conftoquiv"] \ar[d, "\Psi_{13} \circ \Psi_{02}^{-1}"] & \seedXtorus_{\Qdv} \ar[d, "\muflipind"] \\
          \Conf_3^{\ast}(G/N_{+}) \times_{13}^{s_G} \Conf_3^{\ast}(G/N_{+}) \ar[r, "\conftoquiv"] & \seedAtorus_{\Qdh} &
          \Conf_3^{\ast}(G/B_{+}) \times_{13}^{s_G} \Conf_3^{\ast}(G/B_{+}) \ar[r, "\conftoquiv"] & \seedXtorus_{\Qdh}
        \end{tikzcd}

      } where $\Psi_{jk}$ are defined as in
      \cref{def:gluing-configurations} (see also
      \cref{fig:QC2-and-flip}).
    \end{enumerate}
  \end{definition}

  \begin{remark}
    By the map $p$ of \cref{def:cluster-ensemble}, $\conftoquiv$
    provides a map to $\seedXtorus_Q$. In fact, this map factors through
    $\Conf_3^{\ast}(G/B_{+})$. By abuse of notation, we also refer to
    this map as $\conftoquiv$ in
    \cref{def:fgcs-rot-works,def:fgcs-flip-works}.
  \end{remark}

  \begin{example}
    \Cref{fig:QC2-and-rot,fig:QC2-and-flip} illustrate the last two
    demands: to move back and forth between $\Conf_{3}^{\ast}(G/N_{+})$
    and $\seedAtorus_{Q}$, or $\Conf_{4}^{\ast}(G/N_{+})$ and
    $\seedAtorus_{\Qdv}$ in terms of coordinates and quiver mutations.
  \end{example}

  \begin{figure}[h]
    \centering \includestandalone[mode=image|tex]{fig/QC2-and-rot}

    \caption{Quiver for type $C_2$, illustrating $\murotind \circ
      \conftoquiv = \conftoquiv \circ \operatorname{rot}$, with
      $\seedAtorus_Q$ depicted by a graph for $Q$.
      \label{fig:QC2-and-rot}
    }
  \end{figure}

  \begin{figure}[h]
    \centering \includestandalone[mode=image|tex]{fig/QC2-and-flip}

    \caption{Quiver for type $C_2$, illustrating $\muflipind \circ
      \conftoquiv = \conftoquiv \circ \Psi_{13} \circ \Psi_{02}^{-1}$.
      The tetrahedron shows the use of the flip in the $3$-manifold
      context.
      \label{fig:QC2-and-flip}
    }
  \end{figure}

  We can now restate \cref{thm:intro-main-result} as the following

  \begin{theorem}
    \label{thm:fgcs-exists}
    For a split semisimple simply-connected (or centerless) algebraic
    group $G$ over $\mathbb{Q}$, a quiver with a Fock--Goncharov
    structure exists.
  \end{theorem}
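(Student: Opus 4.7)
The plan is to execute the algorithm outlined in the introduction and verify each clause of \cref{def:fgcs} in turn. First I would build the quiver $Q_0$ by running the Fomin--Zelevinsky algorithm from \cite[Section 2]{fominzelevinsky1999} on the specific presentation of $w_0$ produced by \cref{fac:w0-construction}, namely $\word{i} = (1,2,\dotsc,r)^{h/2}$ obtained by iterating a Coxeter element $c$ chosen (via \cref{def:Coxeter-elt-induced-by-quiver}) from a well-rooted tree-like Dynkin quiver. This yields a rectangular ``grid'' quiver of size $r \times (h/2)$ whose $\mathcal{A}$-coordinates are the minors $\genminorgk{k}$ indexed by $\word{i}$-chamber weights (\cref{def:gamma-k}), with one column per simple root and one row per Coxeter power. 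The type $A_{2n}$ case requires the separate $h(i;c)$-bookkeeping of \cref{prop:def-h(i;c)}, and must be treated by hand; for all other types $h$ is even and the grid is uniform. I then enlarge $Q_0$ to a triangular quiver $Q$ by adding the missing frozen vertices on the two remaining edges of the simplex so that \cref{def:fgcs-2} holds, choosing the identifications $\varphi, \psi$ of \cref{def:dv-dh-construction} to match Dynkin nodes.

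Next I would construct the rotation mutation $\murot$. Geometrically, $\murot$ must realize the map of \cref{prop:Phi-Psi-and-rot}. Combinatorially, I would define $\murot$ as a composition of column-by-column sweeps: for each column index $k=1,\dotsc,h/2$, mutate the vertices of that column from bottom to top. Using \cref{thm:local-1.17} together with \cref{rem:all-wkomegaj-are-gamma-k-or-trivial}, each such mutation replaces a minor $\genminoryz{u s_i}{v s_i}{i}$ by $\genminoryz{u}{v}{i}$, effectively shifting the $w_k$-indices of each vertex by one Coxeter step. After the full sweep, every interior vertex's minor has been shifted by $c$, which (by \cref{fac:w0-construction} and \cref{prop:def-h(i;c)}) matches the effect of $\Phi\Psi\Phi\Psi$ on $u$ computed in \cref{prop:Phi-Psi-and-rot}, up to the torus twist by $(h_1,h_2,h_3)$ that is carried by the frozen vertices. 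The key combinatorial check is that the sequence really defines a quiver mutation returning a quiver isomorphic (as a triangular quiver) to $Q$ with its vertex labels cyclically permuted, which requires \cref{prop:compressed-yang-zelevinsky} to handle the source/sink mutations at the boundary of each sweep.

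For the flip $\muflip$, I would glue two copies of $Q$ along an edge to form $\Qdv$ and define $\muflip$ as a composite: apply $\murot$ twice on one triangle to align the shared edge, perform an interior ``core'' sweep $\muflipcore$ that permutes the minor labels across the shared edge using \cref{lem:thm-1.17-for-repeated-c} (this is where one needs the loosened length condition, because repeated Coxeter subwords appear), and then apply $\murot^{-1}$-type corrections on both triangles. The output must be $\Qdh$ as required by clause \cref{def:tcs-flip}. I would verify the commutativity of the squares in \cref{def:fgcs-rot-works,def:fgcs-flip-works} by evaluating both sides on the canonical form $(h_1,h_2,h_3,u)$ of \cref{lem:canonical-form-for-conf_3_star}; the left-hand side uses \cref{prop:Phi-Psi-and-rot} and the gluing maps $\Psi_{02},\Psi_{13}$ of \cref{def:gluing-configurations}, while the right-hand side is a product of the explicit mutation formulae in \cref{def:cluster-ensemble}, and equality of the two reduces to the repeated application of \cref{thm:local-1.17} that defined the mutation in the first place.

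The coordinate map $\conftoquiv$ is then forced: at each interior vertex send $\alpha \in \Conf_3^{\ast}(G/N_{+})$ to the corresponding $\genminorgk{k}$ evaluated at the canonical $u$, and at each frozen vertex on edge $(i,j)$ send $\alpha$ to an explicit monomial in the $h_i$ dictated by \cref{def:non-negative-generalized-minor} and \cref{rem:numerator-calculation}. Birationality in clause \cref{def:fgcs-M-is-birational-equivalence} follows from \cite[Theorem 1.3]{fominzelevinsky1999} relating factorization coordinates to minors, together with a dimension count: the interior grid has $r \cdot h/2 = \length(w_0)$ vertices, matching $\dim(N_- \cap \lifttoG{w_0}G_0)$, and the $3r$ frozen vertices record the three $H$-factors. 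The corresponding $\mathcal{X}$-statements pass through the map $p$ of \cref{def:cluster-ensemble} by \cref{rem:p-commutes-with-mutation}, using \cref{rem:canonical-form-for-B-cosets} to identify $\Conf_3^{\ast}(G/B_{+})$. The main obstacle, and the bulk of the work, is the detailed bookkeeping for $\muflip$: writing down an explicit mutation sequence that exchanges $\Psi_{02}$-type labels with $\Psi_{13}$-type labels on every vertex across the glued edge without disturbing the boundary, and verifying via \cref{lem:thm-1.17-for-repeated-c} that each step is a legitimate primitive exchange. I expect the non-trivial action of $\sigma_G$ on the Dynkin diagram (\cref{def:sigma_G,rem:w0-switches-Ns}) to force an additional twist in the middle of this sequence, handled through the primitive source/sink relations of \cref{prop:compressed-yang-zelevinsky} applied along the Coxeter-induced orientation of the well-rooted tree.
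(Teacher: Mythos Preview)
Your overall strategy matches the paper's, but there are three genuine gaps that would block the argument as written.

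First, you have the monomial map $\monomialmap$ in the wrong place. In the paper (\cref{def:conf_3_star-to-seed-torus}), frozen vertices receive the bare characters $\genminor{}{i}(h_j)$, while it is the \emph{interior} vertices $v_{ij}$ that get multiplied by the correction factor $\genminor{}{i}(\sigma_G(h_1^{-1})h_2)\cdot\genminornumerator{w_{k(i,j)}}{\fundamentalweight_i}(\sigma_G(h_1))$ involving \cref{def:non-negative-generalized-minor}. This is not cosmetic: the purpose of the monomial correction is to absorb the extraneous frozen factors that appear in the mutation relation near the boundary of the grid, so that \cref{thm:local-1.17} still balances there (\cref{lem:fgcs-exists-rot-works-nontrivial}). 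Putting the monomials on the frozen vertices instead would leave the interior exchange relations uncorrected and the rotation square would fail to commute for nontrivial $h_i$.

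Second, you say you will ``enlarge $Q_0$ \dots\ by adding the missing frozen vertices on the two remaining edges,'' but $Q_0$ already carries two of the three edges ($B_\bullet$ and $C_\bullet$); only $A_\bullet$ is missing, and the nontrivial problem is determining the \emph{arrows} between $A_\bullet$ and the interior. The paper's mechanism (\cref{sec:the-A-edge}) is a bootstrap: since $\murot$ touches only mutable vertices, one first defines $\murot$ on $Q_0$, applies it and its inverse, and then reads off the $A_\bullet$ arrows as exactly those needed to make $\murot$ an order-three symmetry of the full quiver. Without this step you have no way to specify $Q$.

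Third, your description of $\murot$ as a single column-by-column sweep is only $\murottwist$; by itself this lands the coordinates at $\genminoryz{w_{m+r-k}^{\ast}}{e}{i_k^{\ast}}(\widetilde{u})$ rather than $\genminoryz{w_k}{e}{i_k}(\widetilde{u})$, i.e.\ in positions permuted by $\sigma_{A_\ell}$ on rows and $\sigma_G$ on columns. The paper appends the ordering mutations $\muint{O1}$ and $\muint{O2}$, built from $\murowperm$ and $\mucolperm$, and proves via the Yang--Zelevinsky periodicity (\cref{lem:colsigmaG-does-sigmaG,lem:rowsigmaG-does-sigmaG}) that these act purely as vertex permutations. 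Analogous ordering pieces $\muint{O3},\muint{O4},\muint{O5}$ are needed after $\muflipcore$. Finally, note that the paper does not treat $A_{2n}$ ``by hand'' inside this framework; it uses the entirely separate triangular-lattice construction of \cref{sec:An-case} (trivial $\murot$, diamond-sequence $\muflip$) for all $A_n$.
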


  We will mostly focus on the simple case. The semisimple case follows
  quickly as described in \cref{sec:products}. The construction for
  \cref{thm:fgcs-exists} is given in \cref{sec:main-result}.

  \subsection{From coordinates to representations}

  \label{sec:coords-to-representations}

  The Fock--Goncharov program for defining $\mathcal{A}_{G,\Sigma}$ and
  $\mathcal{X}_{G,\Sigma}$ works for any compact, oriented $\Sigma$ with
  boundary components. In the case that the boundary consists of
  punctures, we can describe reconstructing a representation from a
  point in the moduli space. We start with the $\mathcal{A}$-coordinate
  version due to complications in technicalities, referring to
  \cite[Section~8]{fockgoncharov2006}, \cite[Section~6]{zickert2016} for
  more details.

  We start with an ideal triangulation $\triang$ of $\Sigma$, together
  with a quiver $Q$ and seed torus $\seedAtorus_Q$ for each triangle in
  $\triang$, amalgamated together.

  First, we truncate the triangulation, as in
  \cref{fig:truncated-simplex}. We will associate an element of $G$ to
  each directed edge: elements on the long edges will be called
  $\alpha_{ij}$, and elements on short edges will be called
  $\beta_{ijk}$. The local labeling of vertices by $0$, $1$, and $2$ is
  determined by triangularity (see \cref{def:triangular-quiver}) of the
  quiver $Q$.

  \begin{figure}[h]
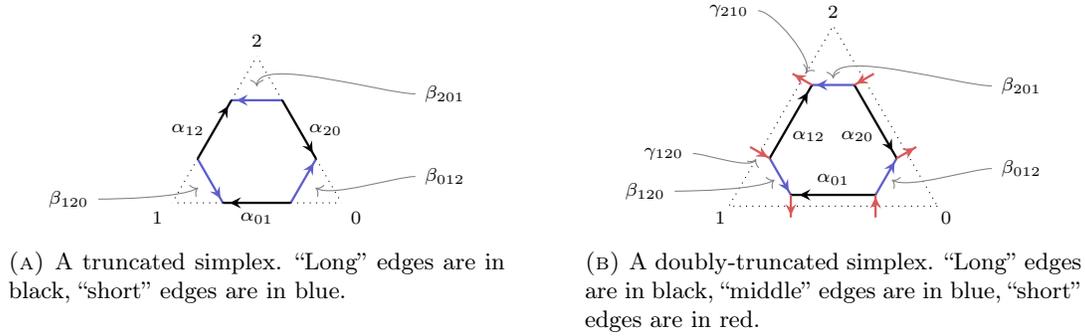

    \centering
    \begin{subfigure}[t]{0.40\textwidth}
      \centering
      \includestandalone[mode=image|tex]{fig/truncated-simplex}

      \caption{A truncated simplex. ``Long'' edges are in black,
        ``short'' edges are in blue.
        \label{fig:truncated-simplex}
      }
    \end{subfigure}
    ~ \hspace{0.05\textwidth} ~
    \begin{subfigure}[t]{0.40\textwidth}
      \centering
      \includestandalone[mode=image|tex]{fig/doubly-truncated-simplex}

      \caption{A doubly-truncated simplex. ``Long'' edges are in black,
        ``middle'' edges are in blue, ``short'' edges are in red.
        \label{fig:doubly-truncated-simplex}
      }
    \end{subfigure}
    \caption{Truncations of $\triang$. The left is used for
      $\mathcal{A}$-coordinates, the right for
      $\mathcal{X}$-coordinates.}
  \end{figure}

  To compute $\rho(\gamma)$, we homotope $\gamma$ to follow these edges,
  then multiply together the elements in the order given by $\gamma$.
  When the orientation of $\gamma$ disagrees with the orientation of the
  edge, we use the inverse of the attached element.

  All that remains is to give a formula for the $\alpha_{ij}$ and
  $\beta_{ijk}$.

  \begin{itemize}
  \item
    We start with an element in $\seedAtorus_Q$; the
    $\mathcal{A}$-coordinates of $Q$.
  \item
    Since we have a Fock--Goncharov coordinate structure, we may apply
    $(\conftoquiv)^{-1}$. This gives an explicit element of
    $\Conf_3^{\ast}(G/N_{+})$. It is most conveniently expressed as
    $(h_1, h_2, h_3, u)$.
  \item
    Set
    \begin{align*}
      \alpha_{01} &= \lifttoG{w_0} h_1 & u_0 &= u & \beta_{201} &= \left( w_0(h_1) h_2 \right)^{-1} (\Psi \Phi \Psi)(u_0) \left( w_0(h_1) h_2 \right) \\
      \alpha_{12} &= \lifttoG{w_0} h_2 & u_1 &= \left( w_0(h_1) h_2 \right)^{-1} (\Phi \Psi)^2(u_0) \left( w_0(h_1) h_2 \right) & \beta_{120} &= \left( w_0(h_2) h_3 \right)^{-1} (\Psi \Phi \Psi)(u_1) \left( w_0(h_2) h_3 \right) \\
      \alpha_{20} &= \lifttoG{w_0} h_3 & u_2 &= \left( w_0(h_1) h_2 \right)^{-1} (\Phi \Psi)^2(u_1) \left( w_0(h_1) h_2 \right) & \beta_{012} &= \left( w_0(h_3) h_1 \right)^{-1} (\Psi \Phi \Psi)(u_2) \left( w_0(h_3) h_1 \right).
    \end{align*}
  \end{itemize}

  \begin{remark}
    Since all $\beta_{ijk}$ are assigned elements of $N_{+}$, and any
    peripheral $\gamma$ can be homotoped to follow only short edges, any
    $\rho$ constructed this way is boundary-unipotent.
  \end{remark}

  Also, given a point in $\mathcal{X}^{+}_{G,\Sigma}$, we can
  reconstruct a boundary-borel representation $\rho$ (up to conjugation)
  as follows. We refer to \cite[Section~6]{fockgoncharov2006},
  \cite[Section~9]{ggz2015} for more details.

  Again, we start with an ideal triangulation $\triang$ of $\Sigma$,
  together with a quiver $Q$ and seed torus $\seedXtorus_Q$ for each
  triangle in $\triang$. Instead of triangulating, however, we doubly
  triangulate, as in \cref{fig:doubly-truncated-simplex}. To compute
  $\rho(\gamma)$, we homotope $\gamma$ to follow the edges, then
  multiply $\alpha_{ij}$, $\beta_{ijk}$, or $\gamma_{ijk}$ (or their
  inverses) as appropriate.

  \begin{itemize}
  \item
    We start with elements of $\seedXtorus_Q$.
  \item
    Taking $(\conftoquiv)^{-1}$ gives elements of
    $\Conf_3^{\ast}(G/B_{+})$ for each triangle. To compute edges at a
    particular triangle, assume its flags are $(g_0 B_{+}, g_1 B_{+},
    g_2 B_{+}) = (B_{+}, \lifttoG{w_0} B_{+}, u_x B_{+})$, and that the
    neighboring triangles have vertices $(0, 1, 3)$, $(1, 2, 4)$, and
    $(2, 0, 5)$ (with cosets $g_3 B_{+}$, $g_4 B_{+}$, and $g_5 B_{+}$).
  \item
    Recall $n(u)$ of \cref{rem:canonical-form-for-B-cosets}. The element
    assignments are
    \begin{align*}
      \alpha_{01} &= \lifttoG{w_0} & \alpha_{12} &= \lifttoG{w_0} & \alpha_{20} &= \lifttoG{w_0} \\
      \beta_{012} &= \Phi^{-1}(u_x) & \beta_{120} &= \lifttoG{w_0}^{-1} u_x^{-1} \lifttoG{w_0} & \beta_{201} &= \left( \Phi^{-1}(u_x^{-1}) \right)^{-1} \\
      \gamma_{012} &= n(g_3) & \gamma_{120} &= n([\lifttoG{w_0} u_x^{-1} g_4]_{-}) & \gamma_{201} &= n([\lifttoG{w_0} [\lifttoG{w_0}^{-1} u_x^{-1}]_{-}^{-1} \lifttoG{w_0}^{-1} u_x^{-1} g_5 ]_{-}]_{-}) \\
      \gamma_{102} &= \left( n([\lifttoG{w_0} g_3]_{-}) \right)^{-1} & \gamma_{210} &= \left( n(u_x^{-1} g_4) \right)^{-1} & \gamma_{021} &= \left( n([\lifttoG{w_0} [\lifttoG{w_0}^{-1} u_x]_{-} \lifttoG{w_0}^{-1} g_5 ]_{-}) \right)^{-1} \\
    \end{align*}
  \end{itemize}

  \begin{remark}
    All middle edges are assigned elements of $N_{+}$, and all short
    edges are assigned elements of $H$. Therefore any peripheral loop
    lies in $B_{+}$ and $\rho$ is boundary-borel.
  \end{remark}

  \begin{remark}
    Since $n(u)$ may require taking square roots, the field over which
    the $\mathcal{X}$-coordinates are defined may need to be extended in
    order to define $\rho$.
  \end{remark}

  Since we can now explicitly interpret points in these moduli spaces as
  representations, we describe the positivity conditions of higher
  Teichmüller spaces.

  \begin{definition}
    \label{def:positive-structure}
    Let $N_{+}$ be a maximal unipotent subgroup of $G$. Then
    \cref{def:factorization-coords-xi} gives coordinate charts on
    $N_{+}$ for each presentation of $w_0$. These charts are the
    \defined{positive structure} on $N_{+}$. The elements of $N_{+}$
    which have coordinates entirely in $\mathbb{R}_{>0}$ for all these
    charts are the \defined{positive part} of $N_{+}$.

    The maps $\chi_{\fundamentalweight_k}$ of
    \cref{def:root-space-decomposition} are also a \defined{positive
      structure} on the maximal torus $H$ of $G$. Again, the
    \defined{positive part} consists of the elements which have
    coordinates in $\mathbb{R}_{>0}$ by the positive structure.

    By \cite[Section~8.1]{fockgoncharov2006}, together with the
    canonical form of \cref{lem:canonical-form-for-conf_3_star}, these
    provide a \defined{positive structure}, and a \defined{positive
      part}, of $\Conf_3^{\ast}(G/N_{+})$. There is also a completely
    analogous positive structure on $\Conf_3^{\ast}(G/B_{+})$ following
    \cite[Section~5.5]{fockgoncharov2006}.
  \end{definition}

  \begin{remark}
    Generalized minors are compatible with these positive structures, as
    from \cite[Theorem~5.1]{fockgoncharov2006}. Therefore, positive
    points in the flag varieties $\Conf_3^{\ast}(G/K)$ are exactly those
    points for which all $\mathcal{A}$- and $\mathcal{X}$-coordinates in
    the cluster ensemble are in $\mathbb{R}_{>0}$.
  \end{remark}

  \begin{definition}
    \label{def:teich-plus}
    The space $\teich^{+}(\Sigma)$ is Teichmüller space, together with
    choices of orientation for non-cuspidal boundary components. For a
    surface $\Sigma$ with $n \ge 0$ boundary circles $b_1, \dotsc, b_n$,
    define \[ \teich^{+}(\Sigma) = \set{(p, \epsilon_1, \dotsc,
        \epsilon_n) : p \in \teich(S), \epsilon_i = \pm 1}, \] where
    $\epsilon_i$ is positive to denote that the chosen orientation of
    $b_i$ agrees with that naturally induced by $\Sigma$.
  \end{definition}

  \begin{remark}
    When $\Sigma$ has only cuspidal boundary components,
    $\teich^{+}(\Sigma) = \teich(\Sigma)$.
  \end{remark}

  \begin{definition}
    When $G$ is centerless, we may repeat the above construction, but
    take the $\mathcal{X}$-coordinates of the cluster ensemble. This
    gives the moduli space $\mathcal{X}_{G,\Sigma}$ of \defined{framed,
      $G$-local systems} on $\Sigma$ by
    \cite[Section~2.1]{fockgoncharov2006}. The $\mathbb{R}_{>0}$ points
    also give $\mathcal{X}^{+}_{G,\Sigma}$. This space is identified
    with $\teich^{+}(\Sigma)$.

    That $\teich^{+}(\Sigma)$ appears instead of $\teich(\Sigma)$ is
    rather a technicality. Restricting our attention to orderings of
    ideal triangulations that agree with the surface's natural
    orientation restrictrs $\teich^{+}(\Sigma)$ to a set we can
    canonically identify with $\teich(\Sigma)$.
  \end{definition}

  \begin{definition}
    When $G$ is simply connected, choose an ordered, oriented
    triangulation of $\Sigma$. Associate a copy of $Q$ to each triangle,
    and amalgamate all the quivers together by identifying shared edge
    vertices between triangles. The $\mathcal{A}$-coordinates of the
    cluster ensemble form the moduli space $\mathcal{A}_{G,\Sigma}$.

    By \cite[Section~8.6]{fockgoncharov2006} the $\mathbb{C}$-points of
    this moduli space parameterize \defined{twisted, decorated
      representations} into $G$. The $\mathbb{R}_{>0}$-points give
    $\mathcal{A}^{+}_{G,\Sigma}$, a \defined{higher Teichmüller space}.
    These correspond to flag varieties such that all generalized minors
    are strictly positive.
  \end{definition}

  These moduli spaces have further interpretations and properties,
  explored in \cite{fockgoncharov2006}, \cite{fockgoncharov2006_2},
  \cite{fockgoncharov2009}, \cite{goncharovshen2018}, etc.

  \subsection{Regarding \texorpdfstring{$3$}{3}-manifolds}

  The process described can be applied to $3$-manifolds as well as
  surfaces. We omit all details. An element of $\Conf_4^{\ast}(G/K)$ is
  attached to each tetrahedron, and the coordinates for these flag
  varieties are encoded on a quiver for each face of the tetrahedron.

  As in \cref{def:gluing-configurations}, only two copies of
  $\Conf_3^{\ast}(G/K)$ are necessary to define an element in
  $\Conf_4^{\ast}(G/K)$, so any two copies of $Q$ should determine all
  the coordinates of the tetrahedron. Therefore, the coordinates on any
  pair of faces determine the coordinates on the other pair. This
  relation is given by the mutation $\muflip$.

  Finally, as coordinates are identified along glued edges in the
  surface case, in the $3$-manifold case they are glued along faces, as
  in \cref{fig:tetrahedra-with-quivers}. When reconstructing $\rho$, the
  path $\gamma$ is homotoped as before and the same elements on each
  segment are used.

  \begin{figure}[h]
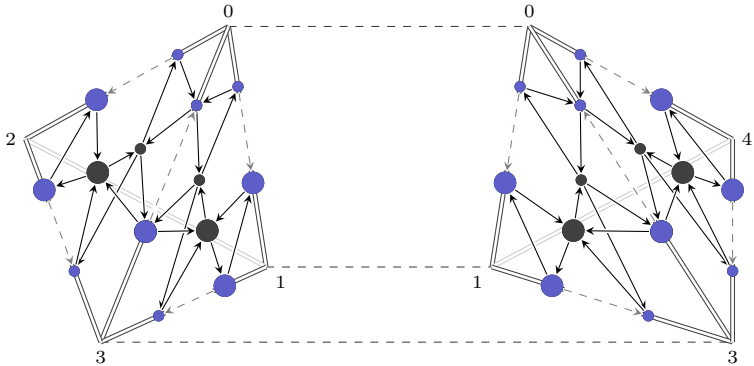

    \centering
    \includestandalone[mode=image|tex]{fig/tetrahedra-with-quivers}

    \caption{Two tetrahedra with a face identification. The quivers
      $Q_{C_2}$ on the front two faces of each tetrahedron are shown.
      \label{fig:tetrahedra-with-quivers}
    }
  \end{figure}

  The $\mathcal{A}$-coordinate variety constructed this way is called
  the \defined{Ptolemy variety}, see \cite{ggz2015}, \cite{gtz2015},
  \cite{zickert2016} for details. The variety constructed by the
  $\mathcal{X}$-coordinates will be the analogue of the \defined{shape
    coordinates} of \cite{ggz2015}, and the defining equations will
  generalize Thurston's gluing equations.

  Defining varieties by quivers in this fashion allows efficient
  computation, and databases have been constructed of Ptolemy varieties
  for large numbers of triangulations, See \cite{unhyperbolic}.

  \section{Main Result: Fock--Goncharov coordinate structures for
    non-\texorpdfstring{$A_n$}{A\_n}}

  \label{sec:main-result}

  We now present an algorithm for constructing Fock--Goncharov
  coordinate structures for a simple Lie group $G$ over $\mathbb{C}$,
  thus satisfying most of \cref{thm:fgcs-exists}. We delay all proofs
  until \cref{sec:proof}.

  We make one slight demand: the ability to present $w_0$ via
  \cref{fac:w0-construction}. Luckily, this demand is satisfied as long
  as the Coxeter number $h$ is even, equivalently $G \ne A_{2n}$. This
  is acceptable, since the $A_n$ case has a particularly nice form which
  has been the subject of considerable study, as in
  \cite{fockgoncharov2006}, \cite{gtz2015}. See \cref{sec:An-case} for a
  review of the results in language consistent with this section.

  \subsection{Overview}

  Here we loosely describe the algorithm for constructing $Q$, $\murot$,
  $\muflip$, and $\conftoquiv$.

  The quiver $Q$ will be divided into an interior and three edges.
  Coordinates at each vertex will be assigned generalized minors of
  elements in the canonical form $(h_1, h_2, h_3, u)$ of an element of
  $\Conf_3^{\ast}$. Each edge should contain information for some $h_i$,
  and the interior should contain information for $u$.

  One minor for each simple root determines an element of $H$; the
  coordinates on an edge will be of the form $\genminor{}{j}(h_i)$.
  Accordingly, the edges of the triangle will be quivers of Dynkin type.
  The interior vertices will be given by generalized minors of the form
  $\genminor{w_k}{\word{i}_k}(u)$. Laying these out to satisfy
  \cite[Theorem~1.17]{fominzelevinsky1999} follows the algorithm of
  \cite[Section~2]{fominzelevinsky2005}. This, together with our choice
  of presentation for $w_0$, means the interior will be a rectangular
  grid of vertices. Recall that these take the names $A_{\bullet}$,
  $B_{\bullet}$, $C_{\bullet}$, and $v_{i,j}$ by
  \cref{def:triangular-quiver}.

  The algorithm of Fomin--Zelevinsky describes a rectangle. Two of the
  rectangle's edges will be edges in the triangle. We call this $Q_0$.
  The chief difficulty of constructing $Q$ is introducing the third edge
  to $Q_0$. To do this, we look ahead to $\murot$. Since that quiver
  mutation must rotate $Q$, and $Q$ and $Q_0$ share a mutable portion,
  we can rotate $Q_0$. And since $Q_0$ already has two of $Q$'s three
  edges, the third edge of $Q$ can be deduced from the action of
  $\murot$ on $Q_0$.

  The flip mutation $\muflip$ is built in the same way that $\murot$ is:
  repeated application of \cite[Theorem~1.17]{fominzelevinsky1999}. We
  need a bit of compensation before and after because the amalgamation
  does not quite line up the quivers as necessary.

  \begin{remark}
    Our construction will produce $\murot$ and $\muflip$ as compositions
    of smaller mutations. They contain smaller mutations ($\murottwist$
    and $\mufliptwist$) which may be more useful for certain
    applications. See \cref{sec:what-does-the-twist-do}.
  \end{remark}

  Finally, the map $\conftoquiv$ is the generalized minors, as described
  above, together with a monomial compensation. At each step of $\murot$
  and $\muflip$, \cite[Theorem~1.17]{fominzelevinsky1999} takes the form
  described in \cref{sec:thm-1.17} \[ (\text{initial}) \cdot
    (\text{final}) = (\text{left}) \cdot (\text{right}) + \prod_{j \ne
      \word{i}_k} \left( \text{in same column} \right). \] However, at
  the edges of $Q$, when $j$ is very low or very high, the left or right
  elements might be trivially $1$ according to the theorem. In our
  quivers, however, these elements are non-trivial, given by frozen
  vertices.

  Therefore, we have to balance the equation of the theorem. To do so,
  we treat the edge coordinates in the mutation relation as ``Extra''
  information, and include this extra information in our definition of
  interior coordinates. Then the quiver mutations take forms similar to
  the following: \[ {\color{blue!60!darkgray!70} \text{Extra} } \cdot
    (\text{initial}) \cdot (\text{final}) =
    \overbrace{({\color{blue!60!darkgray!70}\text{Extra} } \cdot
      1)}^{\text{left}} \cdot (\text{right}) +
    {\color{blue!60!darkgray!70} \text{Extra} } \cdot \prod_{j \ne
      \word{i}_k} \left( \text{in same column} \right). \] Since the
  extraneous factors appear in every term, the desired result still
  holds by the theorem. This is the effect of the monomial map
  $\monomialmap$ in $\conftoquiv$.

  \begin{example}
    \label{ex:dynkin-quiver-f4}
    Based on the Dynkin diagram \inldynkinFfour{} for $F_4$, the
    following is a well-rooted tree-like Dynkin quiver (following
    \cref{def:well-rooted-tree-like-quiver}) for $F_4$:

    {\centering \includestandalone[mode=image|tex]{fig/dynkin-type-F4}

    }
  \end{example}

  \begin{remark}
    \label{rem:sizes-get-switched}
    By \cref{def:dynkin-type}, the nodes with higher weight in the
    quiver correspond to the nodes which are ``smaller'' in the Dynkin
    diagram.
  \end{remark}

  With this, we are ready to begin the construction. Let $D$ be a
  well-rooted tree-like quiver of Dykin type for $G$. Let $c = \set{c_1,
    c_2, \dotsc, c_n}$ be the induced Coxeter element as in
  \cref{def:Coxeter-elt-induced-by-quiver}, and let $w_0$ be the longest
  word, with presentation via \cref{fac:w0-construction}.

  \begin{remark}
    We will abuse notation slightly by writing $s_{i_1} s_{i_2} \dotsb
    s_{i_n}$ as $\set{i_1, i_2, \dotsc, i_n}$.
  \end{remark}

  \subsection{Building the rectangle \texorpdfstring{$Q_0$}{Q0}}

  \label{sec:Q0-construction}

  We desire to construct a quiver that holds coordinates for most of
  $H^3 \times_H N_{-}$. Conveniently, $H \times N_{-} = B_{-}$ is the
  \defined{double Bruhat cell} $G^{w_0,e} = B_{+} w_0 B_{+} \cap B_{-} e
  B_{-}$. We will not need any more information about double Bruhat
  cells, except to note that \cite[Section~2]{fominzelevinsky2005}
  describes an algorithm which accepts $u, v$ and creates a quiver whose
  cluster coordinates are coordinates on $G^{u,v}$. Therefore, we will
  follow this algorithm (with slight modifications) in the special case
  $u = w_0$, $v = e$. Since this simplifies the algorithm greatly, we
  can completely reproduce it here.

  The result will be a rectangle of $(h/2) + 1$ copies of $D$, with the
  first and last copies being frozen, and having half-weight edges. The
  first copy of $D$ will have vertices labeled $\set{B_{i}}_{i \in D}$,
  the last will be labeled with $\set{C_{i}}_{i \in D}$, The middle
  vertices will be labeled $v_{jk}$, where $j$ is the corresponding
  entry in $D$, and $k$ is the distance along the path from
  $B_{\bullet}$ to $C_{\bullet}$.

  Begin by setting $Q_0$ to be $D$. Label the vertices $\set{v_{i0}}_{i
    \in D}$. Let $f_i$ be the ``frontier vertices'' of $Q_0$, with
  initially $f_i = v_{i0}$ for each $i \in D$. For convenience, let $j :
  v_{ij} \mapsto j$, so that we may refer to $j(f_i)$. Also for
  convenience, let $\sigma(f_j, f_k)$ be the weight of the edge between
  $f_j$ and $f_k$ (taking values in $\set{-1, 0, 1}$).

  Now, proceed in order through the letters of $w_0$. For each letter
  $k$, let $j' = j(f_k) + 1$.
  \begin{itemize}
  \item
    Add $v_{kj'}$, a vertex of the same weight as $f_k$, to the left of
    $f_k$.
  \item
    For each $\ell \in D$, add an edge of weight $-\sigma(f_k,
    f_{\ell})$ from $v_{kj'}$ to $f_{\ell}$.
  \item
    Add an edge of weight $1$ from $v_{kj'}$ to $f_k$.
  \item
    Set $f_k$ to $v_{kj'}$.
  \end{itemize}

  When finished, all $f_k$ will be $v_{k(h/2)}$. Rename each $v_{k0}$ to
  $C_k$, each $v_{k(h/2)}$ to $B_k$, and halve the weights of any edges
  if they connect two $B_{\bullet}$ vertices or two $C_{\bullet}$
  vertices.

  \begin{remark}
    At each step, the frontier vertices form a sub-quiver of Dynkin
    type. Further, replacing $v_{ij}$ with $v_{i(j + 1)}$ operates,
    graphically, as a quiver mutation on the frontier.
  \end{remark}

  \begin{example}
    \label{ex:f4-part-1}
    \begin{figure}[h]
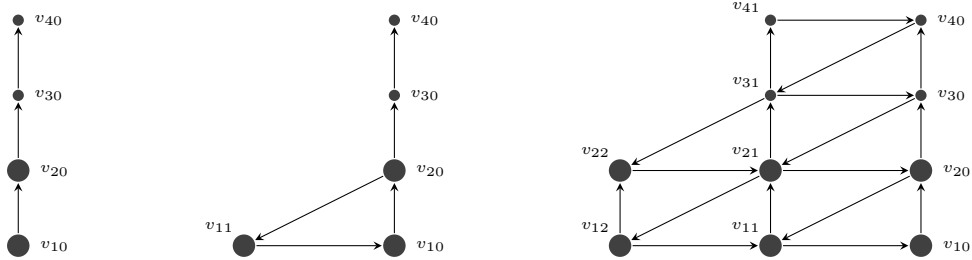

      \centering
      \includestandalone[mode=image|tex]{fig/rect-F4-in-progress}

      \caption{In-progress $Q_0$ for $F_4$ after $0$, $1$, and $6$
        letters of $w_0$.
        \label{fig:rect-F4-in-progress}
      }
    \end{figure}

    \begin{figure}[h]
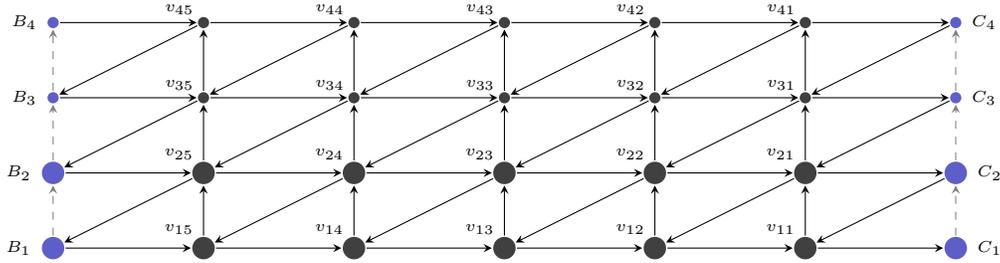

      \centering \includestandalone[mode=image|tex]{fig/rect-F4}

      \caption{Completed $Q_0$ for $F_4$.
        \label{fig:rect-F4}
      }
    \end{figure}

    The Dynkin-type quiver for $G = F_4$ in \cref{ex:dynkin-quiver-f4},
    with $c = \set{1,2,3,4}$ and $\ell = 6$ gives \[ w_0 = \set{1, 2, 3,
        4, 1, 2, 3, 4, 1, 2, 3, 4, 1, 2, 3, 4, 1, 2, 3, 4, 1, 2, 3, 4}.
    \] Carrying out the construction produces the rectangle shown in
    \cref{fig:rect-F4-in-progress,fig:rect-F4}.

    Using \cite{gilles2019}, the initial step of $Q_0$ can be generated
    by

    {\centering \texttt{clav-bfzIII -U -c F4 -v
        "1,2,3,4,1,2,3,4,1,2,3,4,1,2,3,4,1,2,3,4,1,2,3,4" > Q0.clav}.}
  \end{example}

  \subsection{Construction of \texorpdfstring{$\murot$}{mu\_\{rot\}}}

  The most difficult step is modifying $Q_0$ to become triangular by
  adding a third group of vertices, $A_{\bullet}$, also of half-Dynkin
  type. However, since the $A_{\bullet}$, $B_{\bullet}$, and
  $C_{\bullet}$ vertices will be frozen, the $Q_0$ alone is enough to
  describe the mutation sequence $\murot$. It will be composed of three
  main pieces.
  \begin{itemize}
  \item
    The mutation $\murottwist$ transforms coordinates of $\alpha$ to
    coordinates of $\operatorname{rot}(\alpha)$, in some order. The
    component mutation $\mucol(j)$ performs a mutation on column $j$
    that adjusts the minor coordinates by concatenating the Coxeter
    element to each word.
  \item
    The mutation $\muint{O1}$ (``ordering-1'') performs a
    $\sigma_{A_{\ell}}$ action on each row (a Dynkin diagram of
    $A_{\ell}$) using $\murowperm$ (``row $\sigma_{A_{\ell}}$'')
  \item
    The mutation $\muint{O2}$ (``ordering-2'') performs a $\sigma_G$
    action on each column (a Dynkin diagram of $G$) using $\mucolperm$
    (``column $\sigma_G$''), as in
    \cref{rem:sigma_G-action-on-Dynkin-diagram}. When $\sigma_G$ is
    trivial, this can be ignored.
  \end{itemize}

  \begin{definition}
    \label{def:murot-components}
    For any quiver\footnote{We are vague because we will use these
      mutations on $Q_0$ to find $Q$, and then on $Q$ itself as
      $\murot$.} with the same mutable portion as $Q_0$, fix an induced
    Coxeter element $c = \set{c_1, c_2, \dotsc, c_n}$, which admits a
    tree-like partition $\Set{T_1, T_2, \dotsc, T_m}$. Let $\ell =
    \frac{h}{2} - 1$. The sequences $\murot$ and $\murottwist$
    (``twisting rotation'') perform almost the same function, though
    they permute the vertices of the quiver differently.

    \begingroup \allowdisplaybreaks
    \begin{align*}
      \muT(i,j) &= \prod_{z \in T_i} \mu\Set{v_{zj}} \\
      \mucol(j) &= \prod_{i = 1}^{m} \muT(i,j) \\
      \murowperm(i) &= \left[ \prod_{k=1}^{\ell} \prod_{j=0}^{\ell - k} \muT(i, 1 + j) \right] \left[ \prod_{j=1}^{\ell} \muT(i, \ell - j) \right] \\
      \mucolperm(j) &= \left[ \prod_{i=0}^{\ell + 1} \mucol(j) \right] \\
      \murottwist &= \prod_{x=1}^{\ell} \prod_{y = 1}^{\ell + 1 - x} \mucol(y) \\
      \muint{O1} &= \prod_{i=1}^{m} \murowperm(i), \quad \muint{O2} = \begin{cases} \prod_{j=1}^{\ell} \mucolperm(j) & \text{$\sigma_G$ non-trivial} \\ \varnothing & \text{$\sigma_G$ trivial} \end{cases}\\
      \murot &= \left[ \murottwist \right ] \left[ \muint{O1} \right] \left[ \muint{O2} \right]
    \end{align*}
    \endgroup (Recall that, by \cref{not:writing-mutations}, multiplied
    mutations are applied left-to-right, and that products carry an
    ordering.)
  \end{definition}

  \begin{remark}
    An alternate presentation of $\murot$ which has the same effect on
    quivers may be given as roughly ``$\murottwist$ four times'', though
    we will focus on the above definition. \[ \left(\left[
      \prod_{x=1}^{\ell} \prod_{y =1}^{\ell + 1 - x} \mucol(y) \right]
      \left[ \prod_{x=1}^{\ell} \prod_{y=1}^{\ell + 1 - x}
      \mucol(\ell+1-y) \right] \right)^2. \]
  \end{remark}

  \begin{example}
    \label{ex:f4-part-2}
    Continuing from \cref{ex:f4-part-1} gives the following\footnote{The
      elements of the sequence are presented in usual reading order. The
      spacing is to emphasize decomposition into $\mucol$ terms.}:
    \begin{align*}
      \murottwist &= \mu\Set{
        \begin{subarray}{l}
\phantom{
         v_{1 5}, v_{2 5}, v_{3 5}, v_{4 5}, \quad
         v_{1 4}, v_{2 4}, v_{3 4}, v_{4 4}, \quad
         v_{1 3}, v_{2 3}, v_{3 3}, v_{4 3}, \quad
         v_{1 2}, v_{2 2}, v_{3 2}, v_{4 2}, \quad
}
         v_{1 1}, v_{2 1}, v_{3 1}, v_{4 1}, \\
\phantom{
         v_{1 5}, v_{2 5}, v_{3 5}, v_{4 5}, \quad
         v_{1 4}, v_{2 4}, v_{3 4}, v_{4 4}, \quad
         v_{1 3}, v_{2 3}, v_{3 3}, v_{4 3}, \quad
}
         v_{1 2}, v_{2 2}, v_{3 2}, v_{4 2}, \quad
         v_{1 1}, v_{2 1}, v_{3 1}, v_{4 1}, \\
\phantom{
         v_{1 5}, v_{2 5}, v_{3 5}, v_{4 5}, \quad
         v_{1 4}, v_{2 4}, v_{3 4}, v_{4 4}, \quad
}
         v_{1 3}, v_{2 3}, v_{3 3}, v_{4 3}, \quad
         v_{1 2}, v_{2 2}, v_{3 2}, v_{4 2}, \quad
         v_{1 1}, v_{2 1}, v_{3 1}, v_{4 1}, \\
\phantom{
         v_{1 5}, v_{2 5}, v_{3 5}, v_{4 5}, \quad
}
         v_{1 4}, v_{2 4}, v_{3 4}, v_{4 4}, \quad
         v_{1 3}, v_{2 3}, v_{3 3}, v_{4 3}, \quad
         v_{1 2}, v_{2 2}, v_{3 2}, v_{4 2}, \quad
         v_{1 1}, v_{2 1}, v_{3 1}, v_{4 1}, \\
\phantom{
}
         v_{1 5}, v_{2 5}, v_{3 5}, v_{4 5}, \quad
         v_{1 4}, v_{2 4}, v_{3 4}, v_{4 4}, \quad
         v_{1 3}, v_{2 3}, v_{3 3}, v_{4 3}, \quad
         v_{1 2}, v_{2 2}, v_{3 2}, v_{4 2}, \quad
         v_{1 1}, v_{2 1}, v_{3 1}, v_{4 1} \\
        \end{subarray}
      } \\
      \muint{O1} &= \mu\Set{
        \begin{subarray}{l}
w_{41}, w_{42}, w_{43}, w_{44}, w_{45}, \quad
w_{41}, w_{42}, w_{43}, w_{44}, \quad
w_{41}, w_{42}, w_{43}, \quad
w_{41}, w_{42}, \quad
w_{41}, \quad
w_{45}, w_{44}, w_{43}, w_{42}, w_{41}, \\
\\
w_{31}, w_{32}, w_{33}, w_{34}, w_{35}, \quad
w_{31}, w_{32}, w_{33}, w_{34}, \quad
w_{31}, w_{32}, w_{33}, \quad
w_{31}, w_{32}, \quad
w_{31}, \quad
w_{35}, w_{34}, w_{33}, w_{32}, w_{31}, \\
\\
w_{21}, w_{22}, w_{23}, w_{24}, w_{25}, \quad
w_{21}, w_{22}, w_{23}, w_{24}, \quad
w_{21}, w_{22}, w_{23}, \quad
w_{21}, w_{22}, \quad
w_{21}, \quad
w_{25}, w_{24}, w_{23}, w_{22}, w_{21}, \\
\\
w_{11}, w_{12}, w_{13}, w_{14}, w_{15}, \quad
w_{11}, w_{12}, w_{13}, w_{14}, \quad
w_{11}, w_{12}, w_{13}, \quad
w_{11}, w_{12}, \quad
w_{11}, \quad
w_{15}, w_{14}, w_{13}, w_{12}, w_{11} \\
        \end{subarray}
      } \\
      \muint{O2} &= \mu\Set{ }
    \end{align*}
  \end{example}

  \begin{lemma}
    \label{lem:murot-works-for-making-Q}
    $\murot \circ \murot \circ \murot$ induces the identity on $Q_0$.
    Furthermore, $\murot$ induces a graph isomorphism on the mutable
    portion of the $Q_0$ (the sub-quiver containing only the
    $\set{v_{ij}}$ vertices).
  \end{lemma}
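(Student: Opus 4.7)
The plan is to verify the two claims by decomposing $\murot$ into its declared pieces $\murottwist$, $\muint{O1}$, and $\muint{O2}$, tracking the graph action of each on the mutable rectangle of $Q_0$, and then composing.

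First I would establish well-definedness: within any level $T_i$ of the tree-like partition, the vertices $\set{v_{z,j} : z \in T_i}$ are pairwise non-adjacent in $Q_0$ (since $D$ is tree-like and edges only join consecutive $T_i$'s), so the internal ordering inside each $\muT(i,j)$ is immaterial and $\mucol(j)$ is unambiguous. The same check extends to the larger products $\murowperm$, $\mucolperm$, and $\murottwist$. Next, I would analyze $\murottwist$. The construction of $Q_0$ in \cref{sec:Q0-construction} adds one column per letter of $w_0$, and by \cref{fac:w0-construction} these letters are $h/2$ repeated copies of the Coxeter element $c$; correspondingly, each $\mucol(j)$ acts graphically as a single $c$-step that shifts the Dynkin structure of column $j$ by one. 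The triangular product $\prod_x \prod_y \mucol(y)$ then propagates these shifts across the rectangle. I would show by induction on the triangular pattern that this telescopes into an explicit graph automorphism of the mutable vertices, essentially reversing the column index while reshuffling rows according to how $c$ acts.

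Then I would use \cref{sec:yz-dynkin-identity} to handle the remaining two pieces. Each row of $Q_0$ restricted to the mutable $v_{i,*}$ is a linear quiver of type $A_\ell$, and $\murowperm(i)$ is exactly the source/sink mutation sequence whose length equals the integer $h(\cdot;c)$ from \cref{prop:def-h(i;c)} for $A_\ell$; by \cref{prop:compressed-yang-zelevinsky} this iterated primitive exchange realizes the Dynkin involution $\sigma_{A_\ell}$ (reversal) on the row. Analogously, $\mucolperm(j)$ runs a full $\sigma_G$-period of source/sink mutations on the $j$-th column, whose underlying quiver is $D$, and by the same proposition (now applied with the Coxeter element $c$ of $G$) it realizes $\sigma_G$ on the column. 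When $\sigma_G$ is trivial the step collapses as defined.

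Finally I would compose: on the index set $\set{1,\dotsc,r} \times \set{1,\dotsc,\ell}$ of mutable vertices, $\murot$ becomes the composite of (i) the explicit shift produced by $\murottwist$, (ii) row-reversal, and (iii) the column $\sigma_G$-action. I would check by direct calculation that this composite is a graph automorphism compatible with the rectangular Dynkin structure of $Q_0$, and that its cube is the identity; this mirrors the order-3 triangle rotation $(0,1,2) \mapsto (2,0,1)$ demanded by \cref{def:tcs-rot}. The main obstacle is the combinatorial bookkeeping in step (i): once the net permutation induced by the triangular pile of column mutations is pinned down, the remaining compositions with $\sigma_{A_\ell}$ on rows and $\sigma_G$ on columns reduce cleanly to verifying an explicit order-3 symmetry of the rectangle.
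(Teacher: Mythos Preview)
Your approach is genuinely different from the paper's, and it has a real gap in the analysis of $\murottwist$.

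The paper does not prove this lemma by a direct graph-theoretic analysis. Instead, it is deduced as an immediate corollary of \cref{lem:fgcs-exists-rot-works-trivial}: one first shows that, with trivial edge data, $\murotind$ carries the $\mathcal{A}$-coordinates of $\alpha$ to those of $\operatorname{rot}(\alpha)$. The key device is the generalized-minor identity of \cref{thm:local-1.17}, which matches the exchange relation at every single mutation of $\murottwist$ and shows that each $\mucol(j)$ replaces $\genminoryz{c^n}{c^m}{i}(\widetilde{u})$ by $\genminoryz{c^{n-1}}{c^{m-1}}{i}(\widetilde{u})$. Since $\operatorname{rot}$ manifestly has order $3$ on $\Conf_3^{\ast}(G/N_{+})$, and since a seed is determined by its cluster variables, the order-$3$ statement and the graph isomorphism follow for free. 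Your treatment of $\muint{O1}$ and $\muint{O2}$ via \cref{prop:compressed-yang-zelevinsky} is correct and is exactly what the paper does in \cref{lem:rowsigmaG-does-sigmaG,lem:colsigmaG-does-sigmaG}.

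The gap is your handling of $\murottwist$. You assert that the triangular product of column mutations ``telescopes into an explicit graph automorphism of the mutable vertices'' by induction, but you give no mechanism for this. A priori, $\murottwist$ is a long sequence of mutations through intermediate quivers that need not resemble $Q_0$ at all (and in fact can acquire edges of weight $>1$, as the paper notes). Without the minor interpretation and \cref{thm:local-1.17}, there is no evident invariant that lets you predict the final quiver from the pattern of mutations alone; ``each $\mucol(j)$ acts graphically as a single $c$-step'' is not something you can read off from the quiver combinatorics, it is precisely what \cref{thm:local-1.17} certifies at the coordinate level. Consequently your final step, computing the composite permutation and checking it cubes to the identity, never gets off the ground, because you have not actually determined the permutation induced by $\murottwist$. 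The paper sidesteps all of this bookkeeping by passing through $\operatorname{rot}$, where the order-$3$ property is tautological.
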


  \subsection{The \texorpdfstring{$A_{\bullet}$}{A.} edge}

  \label{sec:the-A-edge}

  With $\murottwist$ in hand, we are ready to construct $A_{\bullet}$.
  Let $Q_1$ be $Q_0$ together with a dummy $A_{\bullet}$, which is of
  Dynkin type $D$ with edges removed. Denote $Q_i' = \murottwist(Q_i)$
  and $Q_i'' = (\murottwist)^{-1}(Q_i)$.

  We would like $Q_1$ to be isomorphic to $Q_1'$ and $Q_1''$ via
  $\varphi$ and $\varphi^{-1}$, with \[ \varphi: \Set{v_{ij} \mapsto
      v_{ij}, \qquad A_i \mapsto C_i, \qquad B_i \mapsto A_i, \qquad C_i
      \mapsto B_i}, \] but unfortunately these are not isomorphisms.
  However, we can correct for this. Let $Q_2$ be the quiver containing
  the vertices of $Q_1$, and with edges defined by \[ \sigma_{Q_2}(v, w)
    = \sum \set{\sigma_{Q_1}(v,w), \sigma_{\varphi^{-1}(Q_1')}(v,w),
      \sigma_{\varphi(Q_1'')}(v,w)}. \] That is, we repeatedly rotate
  $Q_1$, taking the inclusion of all frozen vertices necessary to ensure
  that $\murot$ is of order $3$ on the entire quiver.

  \begin{remark}
    \label{rem:yes-using-murot-is-justified}
    It is not yet obvious that $\murot$ actually does rotate by a third.
    The proof of \cref{lem:fgcs-exists-rot-works-trivial}, however, will
    show that $\murot$ acts with order $3$ on the seed torus for $Q_0$,
    and therefore on the quiver. There is no circular dependency here,
    as that lemma does not depend on the existence of $A_{\bullet}$,
    $B_{\bullet}$, or $C_{\bullet}$.
  \end{remark}

  We therefore define $Q$ to be $Q_2$, and we have constructed $Q$ and
  $\murot$.

  \begin{remark}
    \label{rem:Q-is-visually-triangular}
    To impose \cref{def:triangular-quiver}, we will customarily put
    $A_{\bullet}$ on the $(0,1)$ edge, $B_{\bullet}$ on the $(1,2)$
    edge, and $C_{\bullet}$ on the $(2,0)$ edge, with the $v_{ij}$ as
    face vertices.
  \end{remark}

  \begin{example}
    \label{ex:f4-part-3}

    \begin{figure}
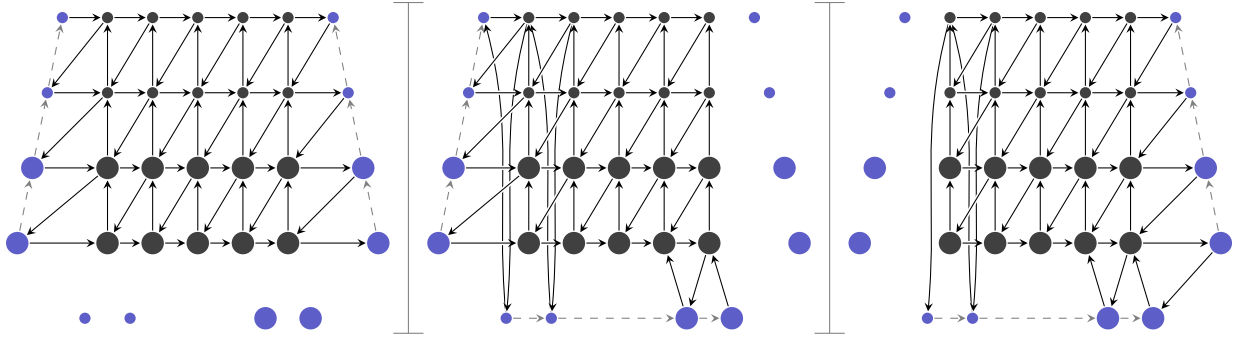

      \centering \includestandalone[mode=image|tex]{fig/F4-Q1s}

      \caption{$Q_1$, $(Q_1')$, and $(Q_1'')$ for $F_4$ (rotated to
        agree with $\id$, $\varphi^{-1}$, and $\varphi$ respectively).
        \label{fig:rect-F4-Q1s}
      }
    \end{figure}

    \begin{figure}
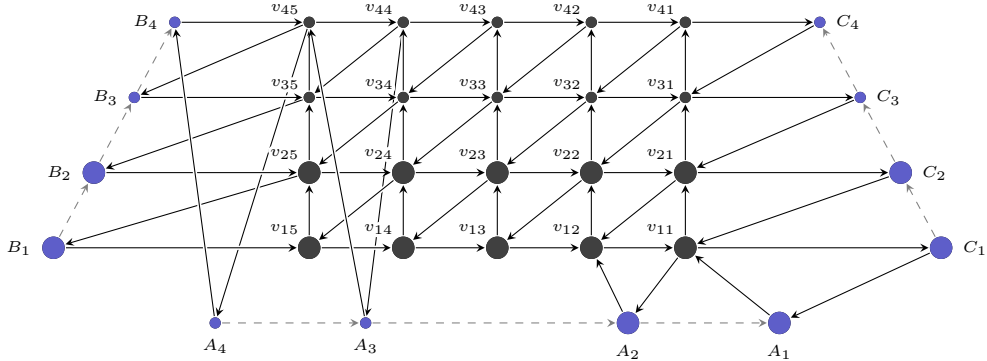

      \centering \includestandalone[mode=image|tex]{fig/QF4}

      \caption{$Q$ for $F_4$.
        \label{fig:QF4}
      }
    \end{figure}

    Continuing from \cref{ex:f4-part-2}, we obtain the quivers of
    \cref{fig:rect-F4-Q1s}. Merging them, we obtain $Q = Q_2$ as in
    \cref{fig:QF4}.

  \end{example}

  \subsection{Construction of \texorpdfstring{$\muflip$}{mu\_\{flip\}}}

  \label{sec:constructing-muflip}

  As in the case of $\murot$, the flip mutation is given by a tedious,
  repetitive sequence which looks like intertwined mutations on each row
  and column. It is again composed of pieces.
  \begin{itemize}
  \item
    The mutation $\muint{P}$ (``Pre-mutation'') rotates the sub-quivers
    on the left and right into a position where all the $w_{i,j}$
    vertices forma rectangle. See \cref{fig:Qdv-naming}.
  \item
    The mutation $\muflipcore$ is the core of the flip. It performs an
    analogous function to $\murottwist$: sequences of $\mucol(j)$
    mutations which adjusts minor coordinates (in the sense of
    \cref{sec:external-results}) in column $j$.
  \item
    The mutation $\muint{O3}$ (``ordering-3'') mirrors the rectangle of
    $w_{i,j}$ coordinates horizontally, again by
    \cref{rem:sigma_G-action-on-Dynkin-diagram}. The mutations
    $\muint{O4}$ and $\muint{O5}$ (``ordering-4'' and ``ordering-5'') do
    the same, but restricted to the left and right triangles. So the
    product $\muint{O3} \muint{O4} \muint{O5}$ switches positions of the
    left and right triangle interiors by translation as a composition of
    reflection.
  \end{itemize}

  To distinguish notation from $Q$, we shall label the edges of $\Qdv$
  by $D_{\bullet}$, $E_{\bullet}$, $F_{\bullet}$, $G_{\bullet}$, and the
  interior, mutable vertices by $w_{ij}$. See \cref{fig:Qdv-naming}.

  For $\mu$ a mutation defined on $Q$, let $\mu^R_{\ast}$ (resp.
  $\mu^L_{\ast}$) be the mutation defined to act on the right (left)
  part of the double quiver. Technically, replace each $v_{ij}$ with
  $w_{i(\ell + 1 - j)}$ (with $w_{i(L + 1 - j)}$) in $\mu_{\bullet}$ to
  obtain $\mu^{R}_{\bullet}$ ($\mu^{L}_{\bullet}$).

  \begin{figure}
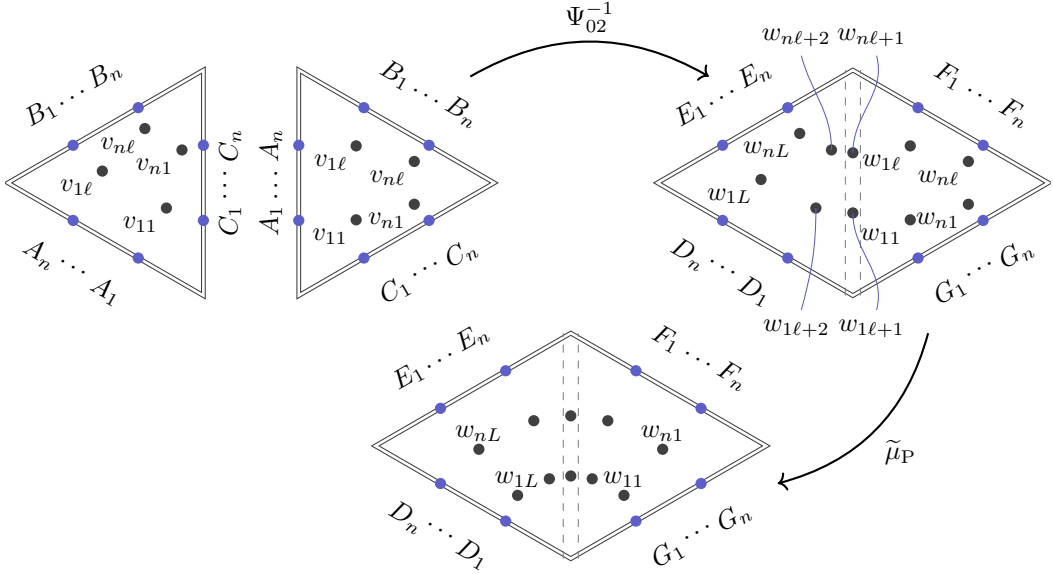

    \centering \includestandalone[mode=image|tex]{fig/Qdv-naming}

    \caption{Vertex naming for $\Qdv$ (with $L = 2\ell + 1$), agreeing
      with \cref{fig:dv-construction} and labeled to induce a mutable
      rectangle in $\muP(\Qdv)$.
      \label{fig:Qdv-naming}
    }
  \end{figure}

  \begin{definition}
    \label{def:muflip-components}
    Construct $\Qdv$ by \cref{def:dv-dh-construction} and labeled as in
    \cref{fig:Qdv-naming}. Recall that $m$ is the number of partitions
    $T_i$ of $c$ from \cref{def:tree-like-quiver}. Some names such as
    $\muT$ are re-used from \cref{def:murot-components} with slightly
    different meanings.

    \begingroup \allowdisplaybreaks
    \begin{align*}
      \muP &= \left( \murottwistL \right)^{-1} \left( \murottwistR \right)^{-1} \tag{See \cref{def:murot-components}} \\
      \muT(i,j) &= \prod_{k \in T_i} \mu\Set{w_{kj}} \quad \text{(possibly empty)} \\
      \mucol(j) &= \prod_{i = 1}^{m} \muT(i,j) \\
      \muflipcore &= \prod_{i=1}^{L} \prod_{j=i}^{L} \mucol(L + i - j) \\
      \murowperm(i, a, b) &= \left[ \prod_{k=0}^{b-a} \prod_{j=0}^{(b-a) - k} \muT(i, a + j) \right] \left[ \prod_{j=0}^{b-a} \muT(i, b - j) \right] \\
      \muint{O3} &= \prod_{i=1}^{m} \murowperm(i, 1, L), \quad \muint{O4} = \prod_{i=1}^{m} \murowperm(i, \ell + 2, L), \quad \muint{O5} = \prod_{i=1}^{m} \murowperm(i, 1, \ell) \\
      \mufliptwist &= \left[ \muP \right] \left[ \muflipcore \right] \\
      \muflip &= \left[ \muP \right] \left[ \muflipcore \right] \left[ \muint{O3} \right] \left[ \muint{O4} \right] \left[ \muint{O5} \right]
    \end{align*}
    \endgroup
  \end{definition}

  \begin{remark}
    Because $\muP$ is composed entirely of rotation mutations, we are
    morally justified in focusing on $\muP(\Qdv)$ instead of $\Qdv$: it
    makes the isomorphism with $\Qdh$ more evident.

    The mutation $\murowperm(i, a, b)$ permutes the
    $\mathcal{A}$-coordinates at those vertices, and is purely used for
    rearranging.
  \end{remark}

  \begin{example}
    \label{ex:f4-part-4}

    \begin{figure}
      \centering \includestandalone[mode=image|tex]{fig/Q0-2F4}

      \caption{$\muP(\Qdv)$ for $F_4$. We apologize for the hexadecimal
        notation; rows have $>9$ vertices.
        \label{fig:Q0-2F4}
      }
    \end{figure}

    Continuing from \cref{ex:f4-part-3} constructs $\Qdv$ such that
    $\muP(\Qdv)$ is as in \cref{fig:Q0-2F4}. The sequence of mutations
    defining the flip is given in \cref{fig:F4-muflip-example}.

    \begin{figure}[h]
      \begin{align*}
        \muP &= \mu\Set{
          \begin{subarray}{l}
\phantom{
         w_{47}, w_{37}, w_{27}, w_{17}, \quad
         w_{48}, w_{38}, w_{28}, w_{18}, \quad
         w_{49}, w_{39}, w_{29}, w_{19}, \quad
         w_{4a}, w_{3a}, w_{2a}, w_{1a}, \quad
}
         w_{4b}, w_{3b}, w_{2b}, w_{1b}, \\
\phantom{
         w_{47}, w_{37}, w_{27}, w_{17}, \quad
         w_{48}, w_{38}, w_{28}, w_{18}, \quad
         w_{49}, w_{39}, w_{29}, w_{19}, \quad
}
         w_{4a}, w_{3a}, w_{2a}, w_{1a}, \quad
         w_{4b}, w_{3b}, w_{2b}, w_{1b}, \\
\phantom{
         w_{47}, w_{37}, w_{27}, w_{17}, \quad
         w_{48}, w_{38}, w_{28}, w_{18}, \quad
}
         w_{49}, w_{39}, w_{29}, w_{19}, \quad
         w_{4a}, w_{3a}, w_{2a}, w_{1a}, \quad
         w_{4b}, w_{3b}, w_{2b}, w_{1b}, \\
\phantom{
         w_{47}, w_{37}, w_{27}, w_{17}, \quad
}
         w_{48}, w_{38}, w_{28}, w_{18}, \quad
         w_{49}, w_{39}, w_{29}, w_{19}, \quad
         w_{4a}, w_{3a}, w_{2a}, w_{1a}, \quad
         w_{4b}, w_{3b}, w_{2b}, w_{1b}, \\
\phantom{
}
         w_{47}, w_{37}, w_{27}, w_{17}, \quad
         w_{48}, w_{38}, w_{28}, w_{18}, \quad
         w_{49}, w_{39}, w_{29}, w_{19}, \quad
         w_{4a}, w_{3a}, w_{2a}, w_{1a}, \quad
         w_{4b}, w_{3b}, w_{2b}, w_{1b}, \\
\phantom{
         w_{41}, w_{31}, w_{21}, w_{11}, \quad
         w_{42}, w_{32}, w_{22}, w_{12}, \quad
         w_{43}, w_{33}, w_{23}, w_{13}, \quad
         w_{44}, w_{34}, w_{24}, w_{14}, \quad
}
         w_{45}, w_{35}, w_{25}, w_{15}, \\
\phantom{
         w_{41}, w_{31}, w_{21}, w_{11}, \quad
         w_{42}, w_{32}, w_{22}, w_{12}, \quad
         w_{43}, w_{33}, w_{23}, w_{13}, \quad
}
         w_{44}, w_{34}, w_{24}, w_{14}, \quad
         w_{45}, w_{35}, w_{25}, w_{15}, \\
\phantom{
         w_{41}, w_{31}, w_{21}, w_{11}, \quad
         w_{42}, w_{32}, w_{22}, w_{12}, \quad
}
         w_{43}, w_{33}, w_{23}, w_{13}, \quad
         w_{44}, w_{34}, w_{24}, w_{14}, \quad
         w_{45}, w_{35}, w_{25}, w_{15}, \\
\phantom{
         w_{41}, w_{31}, w_{21}, w_{11}, \quad
}
         w_{42}, w_{32}, w_{22}, w_{12}, \quad
         w_{43}, w_{33}, w_{23}, w_{13}, \quad
         w_{44}, w_{34}, w_{24}, w_{14}, \quad
         w_{45}, w_{35}, w_{25}, w_{15}, \\
\phantom{
}
         w_{41}, w_{31}, w_{21}, w_{11}, \quad
         w_{42}, w_{32}, w_{22}, w_{12}, \quad
         w_{43}, w_{33}, w_{23}, w_{13}, \quad
         w_{44}, w_{34}, w_{24}, w_{14}, \quad
         w_{45}, w_{35}, w_{25}, w_{15} \\
          \end{subarray}
        } \\
        \muflipcore &= \mu\Set{
          \begin{subarray}{l}
w_{1b}, w_{2b}, w_{3b}, w_{4b}, w_{1a}, w_{2a}, w_{3a}, w_{4a}, w_{19}, w_{29}, w_{39}, w_{49}, w_{18}, w_{28}, w_{38}, w_{48}, w_{17}, w_{27}, w_{37}, w_{47}, w_{16}, w_{26}, w_{36}, w_{46}, \\
        \qquad \qquad w_{15}, w_{25}, w_{35}, w_{45}, w_{14}, w_{24}, w_{34}, w_{44}, w_{13}, w_{23}, w_{33}, w_{43}, w_{12}, w_{22}, w_{32}, w_{42}, w_{11}, w_{21}, w_{31}, w_{41}, \\
w_{1b}, w_{2b}, w_{3b}, w_{4b}, w_{1a}, w_{2a}, w_{3a}, w_{4a}, w_{19}, w_{29}, w_{39}, w_{49}, w_{18}, w_{28}, w_{38}, w_{48}, w_{17}, w_{27}, w_{37}, w_{47}, w_{16}, w_{26}, w_{36}, w_{46}, \\
        \qquad \qquad w_{15}, w_{25}, w_{35}, w_{45}, w_{14}, w_{24}, w_{34}, w_{44}, w_{13}, w_{23}, w_{33}, w_{43}, w_{12}, w_{22}, w_{32}, w_{42}, \\
w_{1b}, w_{2b}, w_{3b}, w_{4b}, w_{1a}, w_{2a}, w_{3a}, w_{4a}, w_{19}, w_{29}, w_{39}, w_{49}, w_{18}, w_{28}, w_{38}, w_{48}, w_{17}, w_{27}, w_{37}, w_{47}, w_{16}, w_{26}, w_{36}, w_{46}, \\
        \qquad \qquad w_{15}, w_{25}, w_{35}, w_{45}, w_{14}, w_{24}, w_{34}, w_{44}, w_{13}, w_{23}, w_{33}, w_{43}, \\
w_{1b}, w_{2b}, w_{3b}, w_{4b}, w_{1a}, w_{2a}, w_{3a}, w_{4a}, w_{19}, w_{29}, w_{39}, w_{49}, w_{18}, w_{28}, w_{38}, w_{48}, w_{17}, w_{27}, w_{37}, w_{47}, w_{16}, w_{26}, w_{36}, w_{46}, \\
        \qquad \qquad w_{15}, w_{25}, w_{35}, w_{45}, w_{14}, w_{24}, w_{34}, w_{44}, \\
w_{1b}, w_{2b}, w_{3b}, w_{4b}, w_{1a}, w_{2a}, w_{3a}, w_{4a}, w_{19}, w_{29}, w_{39}, w_{49}, w_{18}, w_{28}, w_{38}, w_{48}, w_{17}, w_{27}, w_{37}, w_{47}, w_{16}, w_{26}, w_{36}, w_{46}, \\
        \qquad \qquad w_{15}, w_{25}, w_{35}, w_{45}, \\
w_{1b}, w_{2b}, w_{3b}, w_{4b}, w_{1a}, w_{2a}, w_{3a}, w_{4a}, w_{19}, w_{29}, w_{39}, w_{49}, w_{18}, w_{28}, w_{38}, w_{48}, w_{17}, w_{27}, w_{37}, w_{47}, w_{16}, w_{26}, w_{36}, w_{46}, \\
w_{1b}, w_{2b}, w_{3b}, w_{4b}, w_{1a}, w_{2a}, w_{3a}, w_{4a}, w_{19}, w_{29}, w_{39}, w_{49}, w_{18}, w_{28}, w_{38}, w_{48}, w_{17}, w_{27}, w_{37}, w_{47}, \\
w_{1b}, w_{2b}, w_{3b}, w_{4b}, w_{1a}, w_{2a}, w_{3a}, w_{4a}, w_{19}, w_{29}, w_{39}, w_{49}, w_{18}, w_{28}, w_{38}, w_{48}, \\
w_{1b}, w_{2b}, w_{3b}, w_{4b}, w_{1a}, w_{2a}, w_{3a}, w_{4a}, w_{19}, w_{29}, w_{39}, w_{49}, \\
w_{1b}, w_{2b}, w_{3b}, w_{4b}, w_{1a}, w_{2a}, w_{3a}, w_{4a}, \\
w_{1b}, w_{2b}, w_{3b}, w_{4b}, \\
          \end{subarray}
        } \\
        \muint{O3} &= \mu\Set{
          \begin{subarray}{l}
w_{41}, w_{42}, w_{43}, w_{44}, w_{45}, w_{46}, w_{47}, w_{48}, w_{49}, w_{4a}, w_{4b}, \\
w_{41}, w_{42}, w_{43}, w_{44}, w_{45}, w_{46}, w_{47}, w_{48}, w_{49}, w_{4a}, \\
w_{41}, w_{42}, w_{43}, w_{44}, w_{45}, w_{46}, w_{47}, w_{48}, w_{49}, \\
w_{41}, w_{42}, w_{43}, w_{44}, w_{45}, w_{46}, w_{47}, w_{48}, \\
w_{41}, w_{42}, w_{43}, w_{44}, w_{45}, w_{46}, w_{47}, \\
w_{41}, w_{42}, w_{43}, w_{44}, w_{45}, w_{46}, \\
w_{41}, w_{42}, w_{43}, w_{44}, w_{45}, \\
w_{41}, w_{42}, w_{43}, w_{44}, \\
w_{41}, w_{42}, w_{43}, \\
w_{41}, w_{42}, \\
w_{41}, \\
w_{4b}, w_{4a}, w_{49}, w_{48}, w_{47}, w_{46}, w_{45}, w_{44}, w_{43}, w_{42}, w_{41}, \\
\\
w_{31}, w_{32}, w_{33}, w_{34}, w_{35}, w_{36}, w_{37}, w_{38}, w_{39}, w_{3a}, w_{3b}, \\
w_{31}, w_{32}, w_{33}, w_{34}, w_{35}, w_{36}, w_{37}, w_{38}, w_{39}, w_{3a}, \\
w_{31}, w_{32}, w_{33}, w_{34}, w_{35}, w_{36}, w_{37}, w_{38}, w_{39}, \\
w_{31}, w_{32}, w_{33}, w_{34}, w_{35}, w_{36}, w_{37}, w_{38}, \\
w_{31}, w_{32}, w_{33}, w_{34}, w_{35}, w_{36}, w_{37}, \\
w_{31}, w_{32}, w_{33}, w_{34}, w_{35}, w_{36}, \\
w_{31}, w_{32}, w_{33}, w_{34}, w_{35}, \\
w_{31}, w_{32}, w_{33}, w_{34}, \\
w_{31}, w_{32}, w_{33}, \\
w_{31}, w_{32}, \\
w_{31}, \\
w_{3b}, w_{3a}, w_{39}, w_{38}, w_{37}, w_{36}, w_{35}, w_{34}, w_{33}, w_{32}, w_{31}, \\
\\
w_{21}, w_{22}, w_{23}, w_{24}, w_{25}, w_{26}, w_{27}, w_{28}, w_{29}, w_{2a}, w_{2b}, \\
w_{21}, w_{22}, w_{23}, w_{24}, w_{25}, w_{26}, w_{27}, w_{28}, w_{29}, w_{2a}, \\
w_{21}, w_{22}, w_{23}, w_{24}, w_{25}, w_{26}, w_{27}, w_{28}, w_{29}, \\
w_{21}, w_{22}, w_{23}, w_{24}, w_{25}, w_{26}, w_{27}, w_{28}, \\
w_{21}, w_{22}, w_{23}, w_{24}, w_{25}, w_{26}, w_{27}, \\
w_{21}, w_{22}, w_{23}, w_{24}, w_{25}, w_{26}, \\
w_{21}, w_{22}, w_{23}, w_{24}, w_{25}, \\
w_{21}, w_{22}, w_{23}, w_{24}, \\
w_{21}, w_{22}, w_{23}, \\
w_{21}, w_{22}, \\
w_{21}, \\
w_{2b}, w_{2a}, w_{29}, w_{28}, w_{27}, w_{26}, w_{25}, w_{24}, w_{23}, w_{22}, w_{21}, \\
\\
w_{11}, w_{12}, w_{13}, w_{14}, w_{15}, w_{16}, w_{17}, w_{18}, w_{19}, w_{1a}, w_{1b}, \\
w_{11}, w_{12}, w_{13}, w_{14}, w_{15}, w_{16}, w_{17}, w_{18}, w_{19}, w_{1a}, \\
w_{11}, w_{12}, w_{13}, w_{14}, w_{15}, w_{16}, w_{17}, w_{18}, w_{19}, \\
w_{11}, w_{12}, w_{13}, w_{14}, w_{15}, w_{16}, w_{17}, w_{18}, \\
w_{11}, w_{12}, w_{13}, w_{14}, w_{15}, w_{16}, w_{17}, \\
w_{11}, w_{12}, w_{13}, w_{14}, w_{15}, w_{16}, \\
w_{11}, w_{12}, w_{13}, w_{14}, w_{15}, \\
w_{11}, w_{12}, w_{13}, w_{14}, \\
w_{11}, w_{12}, w_{13}, \\
w_{11}, w_{12}, \\
w_{11}, \\
w_{1b}, w_{1a}, w_{19}, w_{18}, w_{17}, w_{16}, w_{15}, w_{14}, w_{13}, w_{12}, w_{11}, \\
          \end{subarray}
        },
        \substack{
          \muint{O4} = \mu\Set{
            \begin{subarray}{l}
w_{47}, w_{48}, w_{49}, w_{4a}, w_{4b}, \\
w_{47}, w_{48}, w_{49}, w_{4a}, \\
w_{47}, w_{48}, w_{49}, \\
w_{47}, w_{48}, \\
w_{47}, \\
w_{4b}, w_{4a}, w_{49}, w_{48}, w_{47}, \\
\\
w_{37}, w_{38}, w_{39}, w_{3a}, w_{3b}, \\
w_{37}, w_{38}, w_{39}, w_{3a}, \\
w_{37}, w_{38}, w_{39}, \\
w_{37}, w_{38}, \\
w_{37}, \\
w_{3b}, w_{3a}, w_{39}, w_{38}, w_{37}, \\
\\
w_{27}, w_{28}, w_{29}, w_{2a}, w_{2b}, \\
w_{27}, w_{28}, w_{29}, w_{2a}, \\
w_{27}, w_{28}, w_{29}, \\
w_{27}, w_{28}, \\
w_{27}, \\
w_{2b}, w_{2a}, w_{29}, w_{28}, w_{27}, \\
\\
w_{17}, w_{18}, w_{19}, w_{1a}, w_{1b}, \\
w_{17}, w_{18}, w_{19}, w_{1a}, \\
w_{17}, w_{18}, w_{19}, \\
w_{17}, w_{18}, \\
w_{17}, \\
w_{1b}, w_{1a}, w_{19}, w_{18}, w_{17}, \\
            \end{subarray}
          } \\
          \muint{O5} = \mu\Set{
            \begin{subarray}{l}
w_{41}, w_{42}, w_{43}, w_{44}, w_{45}, \\
w_{41}, w_{42}, w_{43}, w_{44}, \\
w_{41}, w_{42}, w_{43}, \\
w_{41}, w_{42}, \\
w_{41}, \\
w_{45}, w_{44}, w_{43}, w_{42}, w_{41}, \\
\\
w_{31}, w_{32}, w_{33}, w_{34}, w_{35}, \\
w_{31}, w_{32}, w_{33}, w_{34}, \\
w_{31}, w_{32}, w_{33}, \\
w_{31}, w_{32}, \\
w_{31}, \\
w_{35}, w_{34}, w_{33}, w_{32}, w_{31}, \\
\\
w_{21}, w_{22}, w_{23}, w_{24}, w_{25}, \\
w_{21}, w_{22}, w_{23}, w_{24}, \\
w_{21}, w_{22}, w_{23}, \\
w_{21}, w_{22}, \\
w_{21}, \\
w_{25}, w_{24}, w_{23}, w_{22}, w_{21}, \\
\\
w_{11}, w_{12}, w_{13}, w_{14}, w_{15}, \\
w_{11}, w_{12}, w_{13}, w_{14}, \\
w_{11}, w_{12}, w_{13}, \\
w_{11}, w_{12}, \\
w_{11}, \\
w_{15}, w_{14}, w_{13}, w_{12}, w_{11}, \\
            \end{subarray}
          } \\
        } \\
      \end{align*}
      \caption{The flip for $F_4$.
        \label{fig:F4-muflip-example}
      }
    \end{figure}
  \end{example}

  \begin{remark}
    At intermediate stages of $\murot$ and $\muflip$, the quiver may
    contain edges with weights higher than $1$. This is one of several
    phenomena which do not appear in the $A_n$ case. It is possible that
    alternate presentations of mutations exist which do not exhibit
    this.
  \end{remark}

  \subsection{The map \texorpdfstring{$\conftoquiv$}{M}}

  We now have $Q$ and the mutations, at least graphically. What remains
  is to finalize the association to the Lie group $G$. More
  specifically, the vertices of $Q$ should be associated to coordinates
  on $\Conf_3^{\ast}(G/N_{+})$.

  Recall that $\Conf_3^{\ast}(G/N_{+}) \cong H^3 \times_{H} (N_{-} \cap
  \lifttoG{w_0}G_0)$ by \cref{lem:canonical-form-for-conf_3_star}. So
  let $\alpha \in \Conf_3^{\ast}(G/N_{+})$ be given by $\alpha = (h_1,
  h_2, h_3, u)$. By abuse of notation, we denote the coordinate in
  $\seedAtorus_Q$ by the name of the vertex to which it is associated,
  and regard \[ \seedAtorus_Q =
    \overbrace{\set{A_i}}^{\partialtorus_{A}} \times
    \overbrace{\set{B_i}}^{\partialtorus_B} \times
    \overbrace{\set{C_i}}^{\partialtorus_C} \times
    \overbrace{\set{v_{ij}}}^{\partialtorus_V}. \] We will define
  $\minorcoordmap : \Conf_3^{\ast}(G/N_{+}) \to \seedAtorus_Q$ and a let
  $\conftoquiv$ be in terms of $\minorcoordmap$.

  \begin{definition}
    \label{def:conf_3_star-to-seed-torus}
    Let $k(i,j) = i + rj$. (This has the property that $v_{ij}$ is the
    $k(i,j)^{\text{th}}$ vertex added to $Q_0$ in the algorithm of
    \cref{sec:Q0-construction}.) Alternately, the prefix of $w_0^{-1}$
    used to place vertex $v_{ij}$ in $Q_0$ via the algorithm of
    \cite[Section~2]{fominzelevinsky2005} has length $(m + 1) - k$, and
    is equal to $w_{k(i,j)}$ by \cref{def:gamma-k}. Using
    \cref{def:generalized-minor},

    \begin{align*}
      \minorcoordmap_{A} &: \alpha \mapsto \set{A_{i} = \genminor{}{i}(h_1)} \\
      \minorcoordmap_{B} &: \alpha \mapsto \set{B_{i} = \genminor{}{i}(h_2)} \\
      \minorcoordmap_{C} &: \alpha \mapsto \set{C_{i} = \genminor{}{i}(h_3)} \\
      \minorcoordmap_{V} &: \alpha \mapsto \set{ v_{ij} = \genminor{w_{k(i,j)}}{i}(u)} \\
      \minorcoordmap &: \alpha \mapsto \minorcoordmap_{A}(\alpha) \oplus \minorcoordmap_{B}(\alpha) \oplus \minorcoordmap_{C}(\alpha) \oplus \minorcoordmap_{V}(\alpha) \\
    \end{align*}

    We then define a monomial map $\monomialmap : \seedAtorus_Q \to
    \seedAtorus_Q$ (using
    \cref{def:sigma_G,def:non-negative-generalized-minor}) as
    \begin{align*}
      \monomialmap &: A_i \mapsto A_i \\
      \monomialmap &: B_i \mapsto B_i \\
      \monomialmap &: C_i \mapsto C_i \\
      \monomialmap &: v_{ij} \mapsto v_{ij} \cdot \genminor{}{i}(\sigma_G(h_1^{-1}) h_2) \cdot \genminornumerator{w_{k(i,j)}}{\fundamentalweight_i}(\sigma_G(h_1)) \\
    \end{align*}

    We define $\conftoquiv = \monomialmap \circ \minorcoordmap$.
  \end{definition}

  \begin{remark}
    \label{rem:M-is-birational-equivalence}
    The map $\conftoquiv$ is a birational equivalence, following from
    \cite[Section~2.7]{fominzelevinsky1999}.
  \end{remark}

  \begin{example}
    \label{ex:f4-part-6}

    Continuing from \cref{ex:f4-part-4}, let $\alpha = (h_1, h_2, h_3,
    u)$. Let $h_{ij} = \genminor{}{j}(h_i)$. Then $\minorcoordmap_{V}$
    produces the following: \[ A_1 = h_{11}, \quad A_2 = h_{12}, \quad
      \dotsc, \quad C_4 = h_{34}, \quad v_{11} = \genminor{w_5}{1}(u),
      \quad v_{12} = \genminor{w_6}{2}(u), \quad \dotsc, \quad v_{45} =
      \genminor{w_{24}}{4}(u). \]

    To compute the monomial map, refer to figure
    \cref{fig:abstract-gen-minor}. For example, to calculate
    $\monomialmap(v_{23})$, we have $i = 2$, so
    $\genminor{}{i}(\sigma_G(h_1^{-1}) h_2) = \frac{h_{22}}{h_{12}}$.
    For the second factor, $k = k(2,3) = 2 + 4\cdot 3 = 14$. Taking the
    numerator of the $i = 2$, $k = 14$ entry gives $t_1^2 t_2$, so
    $\genminornumerator{w_{14}}{\fundamentalweight_i}(\sigma_G(h_1)) =
    h_{11}^2 h_{12}$. This gives $\monomialmap(v_{23}) = v_{23} \cdot
    \frac{h_{22}}{h_{12}} \cdot h_{11}^2 h_{12}$, and
    $\conftoquiv(\alpha)$ has $v_{23} = \genminor{w_{14}}{2}(u) \cdot
    h_{11}^2 h_{22}$.

    Repeating this for all others, $\conftoquiv(\alpha)$ has coordinates
    as given in \cref{fig:f4-M-assignment}.
  \end{example}
  \begin{figure}
    \centering \tabulinesep=1.1mm

    \begin{tabu}{r|llll p{0.5in} r|llll p{0.5in} r|llll}
      $i$ & $1$ & $2$ & $3$ & $4$ && & $1$ & $2$ & $3$ & $4$ && & $1$ & $2$ & $3$ & $4$ \\ \hline
      $k=1$ & $\frac{1}{t_1}$ & $\frac{1}{t_2}$ & $\frac{1}{t_3}$ & $\frac{1}{t_4}$ && $9$ & $\frac{t_2}{t_3}$ & $\frac{t_1 t_2}{t_3 t_4}$ & $\frac{t_1^2 t_2^2}{t_3^2 t_4}$ & $\frac{t_2^2}{t_3 t_4}$ && $17$ & $\frac{t_1 t_2}{t_3}$ & $\frac{t_1 t_2^2}{t_3 t_4}$ & $\frac{t_1^2 t_2^2}{t_3 t_4}$ & $\frac{t_2^2}{t_3}$ \\
      $2$ & $\frac{t_1}{t_2}$ & $\frac{1}{t_2}$ & $\frac{1}{t_3}$ & $\frac{1}{t_4}$ && $10$ & $\frac{t_1}{t_4}$ & $\frac{t_1 t_2}{t_3 t_4}$ & $\frac{t_1^2 t_2^2}{t_3^2 t_4}$ & $\frac{t_2^2}{t_3 t_4}$ && $18$ & $\frac{t_2}{t_4}$ & $\frac{t_1 t_2^2}{t_3 t_4}$ & $\frac{t_1^2 t_2^2}{t_3 t_4}$ & $\frac{t_2^2}{t_3}$ \\
      $3$ & $\frac{t_1}{t_2}$ & $\frac{t_1}{t_3}$ & $\frac{1}{t_3}$ & $\frac{1}{t_4}$ && $11$ & $\frac{t_1}{t_4}$ & $\frac{t_1^2 t_2}{t_3 t_4}$ & $\frac{t_1^2 t_2^2}{t_3^2 t_4}$ & $\frac{t_2^2}{t_3 t_4}$ && $19$ & $\frac{t_2}{t_4}$ & $\frac{t_1 t_2}{t_4}$ & $\frac{t_1^2 t_2^2}{t_3 t_4}$ & $\frac{t_2^2}{t_3}$ \\
      $4$ & $\frac{t_1}{t_2}$ & $\frac{t_1}{t_3}$ & $\frac{t_1^2}{t_3 t_4}$ & $\frac{1}{t_4}$ && $12$ & $\frac{t_1}{t_4}$ & $\frac{t_1^2 t_2}{t_3 t_4}$ & $\frac{t_1^2 t_2^2}{t_3 t_4^2}$ & $\frac{t_2^2}{t_3 t_4}$ && $20$ & $\frac{t_2}{t_4}$ & $\frac{t_1 t_2}{t_4}$ & $\frac{t_2^2}{t_4}$ & $\frac{t_2^2}{t_3}$ \\
      $5$ & $\frac{t_1}{t_2}$ & $\frac{t_1}{t_3}$ & $\frac{t_1^2}{t_3 t_4}$ & $\frac{t_1^2}{t_3}$ && $13$ & $\frac{t_1}{t_4}$ & $\frac{t_1^2 t_2}{t_3 t_4}$ & $\frac{t_1^2 t_2^2}{t_3 t_4^2}$ & $\frac{t_1^2}{t_4}$ && $21$ & $\frac{t_2}{t_4}$ & $\frac{t_1 t_2}{t_4}$ & $\frac{t_2^2}{t_4}$ & $\frac{t_3}{t_4}$ \\
      $6$ & $\frac{t_2}{t_3}$ & $\frac{t_1}{t_3}$ & $\frac{t_1^2}{t_3 t_4}$ & $\frac{t_1^2}{t_3}$ && $14$ & $\frac{t_1 t_2}{t_3}$ & $\frac{t_1^2 t_2}{t_3 t_4}$ & $\frac{t_1^2 t_2^2}{t_3 t_4^2}$ & $\frac{t_1^2}{t_4}$ && $22$ & $t_1$ & $\frac{t_1 t_2}{t_4}$ & $\frac{t_2^2}{t_4}$ & $\frac{t_3}{t_4}$ \\
      $7$ & $\frac{t_2}{t_3}$ & $\frac{t_1 t_2}{t_3 t_4}$ & $\frac{t_1^2}{t_3 t_4}$ & $\frac{t_1^2}{t_3}$ && $15$ & $\frac{t_1 t_2}{t_3}$ & $\frac{t_1 t_2^2}{t_3 t_4}$ & $\frac{t_1^2 t_2^2}{t_3 t_4^2}$ & $\frac{t_1^2}{t_4}$ && $23$ & $t_1$ & $t_2$ & $\frac{t_2^2}{t_4}$ & $\frac{t_3}{t_4}$ \\
      $8$ & $\frac{t_2}{t_3}$ & $\frac{t_1 t_2}{t_3 t_4}$ & $\frac{t_1^2 t_2^2}{t_3^2 t_4}$ & $\frac{t_1^2}{t_3}$ && $16$ & $\frac{t_1 t_2}{t_3}$ & $\frac{t_1 t_2^2}{t_3 t_4}$ & $\frac{t_1^2 t_2^2}{t_3 t_4}$ & $\frac{t_1^2}{t_4}$ && $24$ & $t_1$ & $t_2$ & $t_3$ & $\frac{t_3}{t_4}$ \\
    \end{tabu}

    \caption{For type $F_4$, $\genminoryz{w_k}{w_k}{i}(A)$ for abstract
      $A = \prod_{i=1}^{r} \torush_i(t_i)$.
      \label{fig:abstract-gen-minor}
    }
  \end{figure}

  \begin{figure}
    \begin{equation*}
      \begin{gathered}
        \begin{aligned}
          A_1 &= h_{11} &\qquad A_2 &= h_{12} &\qquad A_3 &= h_{13} &\qquad A_4 &= h_{14} \\
          B_1 &= h_{11} & B_2 &= h_{12} & B_3 &= h_{13} & B_4 &= h_{14} \\
          C_1 &= h_{11} & C_2 &= h_{12} & C_3 &= h_{13} & C_4 &= h_{14} \\
        \end{aligned}
        \\
        \begin{aligned}
          v_{11} &= \genminor{w_{ 5}}{1}(u) \cdot h_{21} & v_{21} &= \genminor{w_{ 6}}{2}(u) \cdot \frac{ h_{11} h_{22} }{ h_{12} } & v_{31} &= \genminor{w_{ 7}}{3}(u) \cdot \frac{ h_{11}^2 h_{23} }{ h_{13} } & v_{41} &= \genminor{w_{ 8}}{4}(u) \cdot \frac{ h_{11}^2 h_{24} }{ h_{14} } \\
          v_{12} &= \genminor{w_{ 9}}{1}(u) \cdot \frac{ h_{12} h_{21} }{ h_{11} } & v_{22} &= \genminor{w_{10}}{2}(u) \cdot h_{11} h_{22} & v_{32} &= \genminor{w_{11}}{3}(u) \cdot \frac{ h_{11}^2 h_{12}^2 h_{23} }{ h_{13} } & v_{42} &= \genminor{w_{12}}{4}(u) \cdot \frac{ h_{12}^2 h_{24} }{ h_{14} } \\
          v_{13} &= \genminor{w_{13}}{1}(u) \cdot h_{21} & v_{23} &= \genminor{w_{14}}{2}(u) \cdot h_{11}^2 h_{22} & v_{33} &= \genminor{w_{15}}{3}(u) \cdot \frac{ h_{11}^2 h_{12}^2 h_{23} }{ h_{13} } & v_{43} &= \genminor{w_{16}}{4}(u) \cdot \frac{ h_{11}^2 h_{24} }{ h_{14} } \\
          v_{14} &= \genminor{w_{17}}{1}(u) \cdot h_{12} h_{21} & v_{24} &= \genminor{w_{18}}{2}(u) \cdot h_{11} h_{12} h_{22} & v_{34} &= \genminor{w_{19}}{3}(u) \cdot \frac{ h_{11}^2 h_{12}^2 h_{23} }{ h_{13} } & v_{44} &= \genminor{w_{20}}{4}(u) \cdot \frac{ h_{12}^2 h_{24} }{ h_{14} } \\
          v_{15} &= \genminor{w_{21}}{1}(u) \cdot \frac{ h_{12} h_{21} }{ h_{11} } & v_{25} &= \genminor{w_{22}}{2}(u) \cdot h_{11} h_{22} & v_{35} &= \genminor{w_{23}}{3}(u) \cdot \frac{ h_{12}^2 h_{23} }{ h_{13} } & v_{45} &= \genminor{w_{24}}{4}(u) \cdot \frac{ h_{13} h_{24} }{ h_{14} }
        \end{aligned}
      \end{gathered}
    \end{equation*}
    \caption{$\conftoquiv(\alpha)$ for type $F_4$.
      \label{fig:f4-M-assignment}
    }
  \end{figure}

  \begin{remark}
    There is good reason to suspect that this construction is not unique
    up to mutation equivalency of the quivers. It is certainly not
    unique if one relaxes the construction of $\conftoquiv$ as a
    monomial map applied to generalized minors. For example, during our
    investigation $\conftoquiv$ was considered as
    $\Conf_3^{\ast}(G/N_{+}) \times \gen{\sigma_G} \to \seedAtorus_Q
    \times \gen{\sigma_G}$, where $\murot^{\ast}$ acted by $+1$ on the
    second factor. An infinite family of $\conftoquiv$ maps were found,
    depending on this factor of $[\tau] \in \gen{\sigma_G}$ to varying
    extent.
  \end{remark}

  \subsection{The significance of the ``twisting'' mutations}

  \label{sec:what-does-the-twist-do}

  The quiver mutations $\murot$ and $\muflip$ are intended to compose
  easily without requiring renaming the vertices of the quiver. For
  example, $\murot^2(Q)$ should be identical to $\murot^{-1}(Q)$.

  Unfortunately, quiver mutations that yield equivalent seeds do not
  necessarily preserve the positions of those seeds. For example,
  consider the classic ``pentagon recurrence'', which happens to be
  equivalent to $\murowperm$ on a row which is of Dynkin type $A_2$. The
  switching of positions is exactly the the action of $\sigma_{A_2}$ on
  the Dynkin diagram.

  \begin{figure}[h]
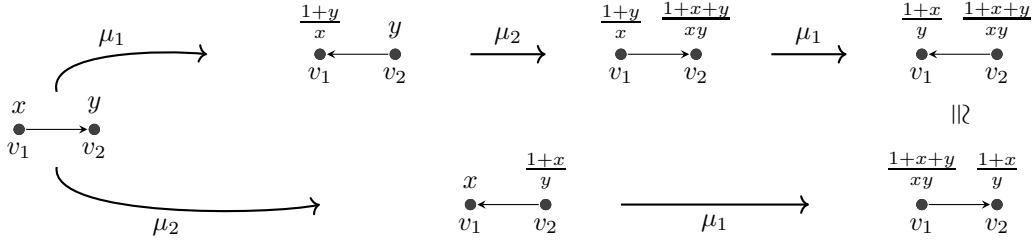

    \centering
    \includestandalone[mode=image|tex]{fig/pentagon-recurrence}

    \caption{The pentagon recurrence.
      \label{fig:pentagon-recurrence}
    }
  \end{figure}

  The $\muint{Oi}$ parts of $\murot$ and $\muflip$, distinguishing them
  from $\murottwist$ and $\mufliptwist$, are exactly to address these
  actions of $\sigma_G$. If desired, shorter mutations may be used at
  the expense of slightly more complicated identifications between
  variables.

  \section{Construction for \texorpdfstring{$A_n$}{A\_n}}

  \label{sec:An-case}

  The $A_n$ case was studied in detail in \cite{gtz2015}. We merely
  restate the conclusions in language consistent with the above.

  \begin{remark}
    That the algorithm of \cref{sec:main-result} does not work for
    $A_{2n}$ can be seen in a few different ways, which are
    interconnected.
    \begin{itemize}
    \item
      There is no general formula for a Coxeter element $c$ that yields
      a longest-word presentation $w_0 = c c \dotsb c$.
    \item
      The Coxeter number for $A_{2n}$ is odd.
    \item
      The action of $\sigma_G$ for $A_{2n}$ preserves no simple roots,
      therefore there can be no tree partitioning with a unique root
      node.
    \end{itemize}
  \end{remark}

  \begin{proposition}
    \label{prop:fgcs-exists-for-An}
    For $G$ of type $A_n$, \cref{thm:fgcs-exists} holds.
  \end{proposition}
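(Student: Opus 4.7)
The plan is to invoke the explicit construction of \cite{gtz2015} (itself building on \cite{fockgoncharov2006}) and to verify that it satisfies the axioms of \cref{def:fgcs}. We proceed in four steps.

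First, take $Q$ to be the triangular lattice quiver on a $2$-simplex: frozen vertices $A_1, \ldots, A_n$, $B_1, \ldots, B_n$, $C_1, \ldots, C_n$ along the three edges (each chain of half-Dynkin type $A_n$), together with interior vertices arranged so that the unoriented graph consists of alternating upward and downward triangular faces, oriented so that the adjacency matrix is skew-symmetric. The decisive feature is that $Q$ already admits a $\mathbb{Z}/3$ graph automorphism realizing rotation of the simplex, so $\murot$ may be taken to be empty. This is in stark contrast to the non-$A_n$ cases, where triangular symmetry had to be built up by nontrivial mutation sequences, and it is precisely what lets the construction succeed even for $A_{2n}$, where the algorithm of \cref{sec:main-result} fails due to the odd Coxeter number and the absence of a $\sigma_G$-fixed simple root.

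Second, assign coordinates by generalized minors. For $\alpha = (h_1, h_2, h_3, u) \in \Conf_3^{\ast}(G/N_{+})$, set $A_i = \genminor{}{i}(h_1)$, $B_i = \genminor{}{i}(h_2)$, $C_i = \genminor{}{i}(h_3)$ on the boundary, and assign to each interior vertex an appropriate Pl\"ucker-type minor of $u$ following \cite[Section~9]{fockgoncharov2006} and \cite{gtz2015}. Birationality of the resulting $\conftoquiv : \Conf_3^{\ast}(G/N_{+}) \to \seedAtorus_Q$ follows from the Gauss decomposition together with \cite[Section~2.7]{fominzelevinsky1999}.

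Third, verify \cref{def:fgcs-rot-works}. Since $\murot$ is trivial at the level of quiver mutation, the commuting diagram reduces to the assertion that, when the canonical form $(h_1, h_2, h_3, u)$ is pushed forward by $\operatorname{rot}$ as in \cref{prop:Phi-Psi-and-rot}, the Pl\"ucker assignment is cyclically permuted under the $\mathbb{Z}/3$ graph automorphism of $Q$. This is a direct $\SL_{n+1}$ calculation that can be read off from \cite{gtz2015}. Fourth, define $\muflip$ as the octahedron-recurrence mutation sequence on $\Qdv$, whose individual exchange relations are precisely the Pl\"ucker relations supplied by \cref{thm:local-1.17} specialized to $\SL_{n+1}$. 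Reading these off term by term simultaneously shows that $\muflip$ transforms $\Qdv$ into $\Qdh$ (establishing \cref{def:tcs-flip}) and that the diagram of \cref{def:fgcs-flip-works} commutes. The main expected difficulty, the $A_{2n}$ subcase, is dissolved by the triangular lattice structure directly realizing triangulation-compatible symmetry, bypassing the Coxeter-element route altogether; beyond that, the remaining work is a sequence of Pl\"ucker identities for which we defer to \cite{gtz2015}.
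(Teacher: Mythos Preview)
Your proposal is correct and follows essentially the same approach as the paper: both defer to \cite{gtz2015} for the $A_n$ construction, take the triangular lattice quiver with trivial $\murot$, and verify the flip via the standard mutation sequence (what you call the octahedron recurrence is the paper's ``diamond sequence''). The paper's own proof is even terser than yours, simply writing down $Q$, declaring $\murot = \mu\Set{}$, and pointing to a figure for $\muflip$, with $\conftoquiv$ recycled from \cref{def:conf_3_star-to-seed-torus} using the staircase word $\word{i} = (1,2,\dotsc,n,\ 1,\dotsc,n-1,\ \dotsc,\ 1)$.
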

  \begin{proof}
    For the construction of $Q$, take the quiver consisting of mutable
    vertices $\set{v_{ij} : 1 \le j \le n, 1 \le i \le n - j}$ and
    frozen vertices $\set{A_i, B_i, C_i : 1 \le i \le n}$, all of weight
    $1$. We consider \[ A_{i} = v_{0,i}, \quad B_{i} = v_{i,n+1-j},
      \quad C_{i} = v_{i,0}. \] The edges are given by \[ \sigma(a,b) =
      \begin{cases} c(a,b) & a = v_{i,j}, b = v_{i+1,j} \text{ or } a =
        v_{i,j}, b = v_{i-1,j+1} \text{ or } a = v_{i,j}, b = v_{i,j-1}
        \\ -c(a,b) & a = v_{i+1,j}, b = v_{i,j} \text{ or } a =
        v_{i-1,j+1}, b = v_{i,j} \text{ or } a = v_{i,j-1}, b = v_{i,j}
        \\ 0 & \text{else}, \end{cases} \] and $c(a,b)$ is $\frac{1}{2}$
    if $a$ and $b$ are both $A_i$s, both $B_i$s, or both $C_i$s, and is
    $1$ otherwise.

    The mutation $\murot = \mu\Set{}$ is trivial.

    Since the mutable portion of $Q_{A_n}$ is not rectangular, we need
    another convention for describing coordinates of $\Qdv$ in order to
    describe $\muflip$. We present this by example in
    \cref{fig:amalgamation-QAn}.
  \end{proof}

  \begin{figure}[h]
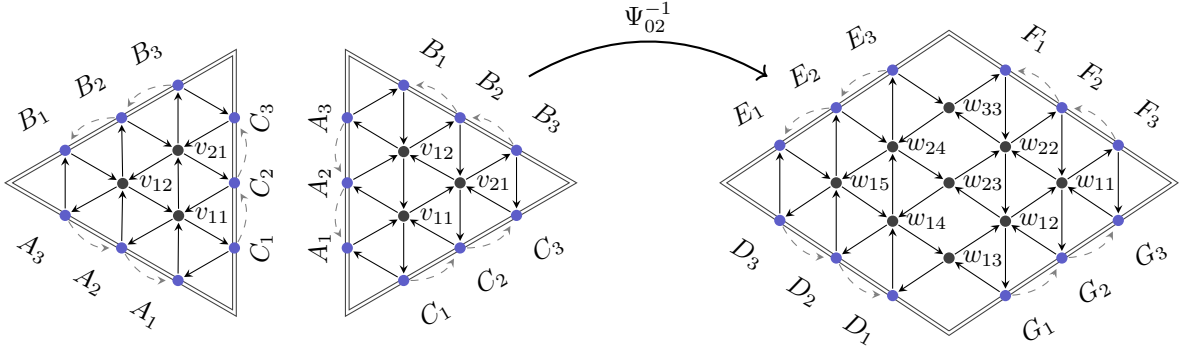

    \centering \includestandalone[mode=image|tex]{fig/amalgamation-QAn}

    \caption{Quivers for type $A_3$, also showing naming convention for
      $\Qdv$.
      \label{fig:amalgamation-QAn}
    }
  \end{figure}

  Mechanically, the inclusion of the left $Q_{A_n}$ into $\Qdv$ is the
  following: \[ A_i \mapsto D_i, \qquad B_i \mapsto E_i, \qquad C_i
    \mapsto w_{i,n}, \qquad v_{i,j} \mapsto w_{i,j+n}, \] and for the
  right $Q_{A_n}$ into $\Qdv$, the following: \[ A_i \mapsto w_{i,n},
    \qquad B_i \mapsto F_i, \qquad C_i \mapsto G_i, \qquad v_{i,j}
    \mapsto w_{j,n+1-i}, \]

  The mutation $\muflip$ is then the ``diamond sequence'':
  \begin{align*}
    t(\ell, j) &= \frac{\ell + 1}{2} - \abs*{j - \frac{\ell + 1}{2}} \\
    \muint{Rect}(\ell) &= \prod_{k = 1}^{\ell} \prod_{j=0}^{n-\ell} \mu\Set{w_{t(\ell,k) + j,2k + n - (\ell + 1)}} \\
    \muflip &= \prod_{\ell = 1}^{n} \muint{Rect}(\ell)
  \end{align*}

  The map $\conftoquiv$ is constructed as in
  \cref{def:conf_3_star-to-seed-torus}. Where needed, the presentation
  of the longest word is given by \[ \word{i} = (1, 2, \dotsc, n, \quad
    1, 2, \dotsc, n-1, \quad \dotsc, \quad 1, 2, \quad 1). \]

  \section{Proof of \cref{thm:fgcs-exists} for simple
    \texorpdfstring{$G$}{G}}

  \label{sec:proof}

  \subsection{Overview}

  Our goal is to prove \cref{thm:fgcs-exists}. We will only consider the
  case where $G$ is simple, as products are handled by
  \cref{sec:products}. We need to show that, for a fixed $G$, the
  results $Q$, $\murot$, $\muflip$, and $\conftoquiv$ satisfy all the
  requirements of \cref{def:tcs,def:fgcs}.

  \begin{proof}[Proof of \cref{thm:fgcs-exists}]
    If $G$ is not simple, then we may appeal to
    \cref{lem:fgcs-exists-for-products} and recurse, so assume $G$ is
    simple. For $G$ of type $A_n$, \cref{prop:fgcs-exists-for-An} is
    sufficient, so assume $G$ is not of type $A_n$.

    The algorithm of \cref{sec:main-result} produces $Q$, $\murot$,
    $\muflip$, and $\conftoquiv$. We must show these satisfy
    \cref{def:fgcs}.

    \begin{itemize}
    \item
      \cref{def:fgcs-1} (triangularity) is handled last. As described in
      \cref{sec:the-A-edge}, we need to know that $\murot$ is of order
      $3$ on the $\mathcal{A}$-coordinates before concluding that the
      quiver is triangular.
    \item
      \Cref{def:fgcs-2} (that the edges are half-Dynkin) is evident by
      construction.
    \item
      \Cref{def:fgcs-M-is-birational-equivalence} (that $\conftoquiv$ is
      a birational equivalence) is given by
      \cref{rem:M-is-birational-equivalence}.
    \item
      We handle \cref{def:fgcs-rot-works} (the rotation) with \cref{lem:fgcs-exists-rot-works-trivial,lem:fgcs-exists-rot-works-nontrivial}.
    \item
      We handle \cref{def:fgcs-flip-works} (the flip) with
      \cref{lem:fgcs-exists-flip-works}.
    \item
      Now, recalling \cref{rem:yes-using-murot-is-justified}, since
      $\murotind$ is of order $3$ acting on $\seedAtorus_Q$ and the seed
      torus of a cluster determines the quiver, $\murot$ is of order $3$
      on $Q$. Likewise, $\muflip$ must transform $\Qdv$ to $\Qdh$.
      Therefore, $Q$ has triangulation-compatible symmetry and
      \cref{def:fgcs-1} is satisfied.
    \end{itemize}
  \end{proof}

  In what follows, we assume $G$ a simple Lie group over $\mathbb{C}$ of
  type other than $A_n$, and that the $Q$, $\murot$, $\muflip$,
  $\conftoquiv$ are given by \cref{sec:main-result} for $G$.

  \subsection{Ordering mutations}

  The ends of both $\murot$ and $\muflip$ are sequences of $\murowperm$
  and $\mucolperm$. We must show that these permute Dynkin sub-quivers
  of $Q$ by switching certain vertices without changing the cluster
  seed, as in \cref{rem:sigma_G-action-on-Dynkin-diagram}. These are
  used in \cref{lem:fgcs-exists-rot-works-trivial,lem:fgcs-exists-rot-works-nontrivial,lem:fgcs-exists-flip-works}
  to justify the $\muint{Oi}$ parts.

  \begin{lemma}
    \label{lem:colsigmaG-does-sigmaG}
    The permutation $\mucolperm$ of
    \cref{def:murot-components,def:muflip-components} acts on quivers
    (and cluster ensembles) by permuting the vertices of the appropriate
    column according to $\sigma_G$.
  \end{lemma}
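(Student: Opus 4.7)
The plan is to identify the vertices in column $j$ with a Dynkin-type cluster pattern for $G$, recognize $\mucol(j)$ as a single bipartite/source mutation step of that pattern, and then apply the Yang--Zelevinsky periodicity of \cref{prop:compressed-yang-zelevinsky,prop:def-h(i;c)} to bound the number of iterations required.

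First I would confirm that each column of the mutable portion of the quiver is, by the construction of \cref{sec:Q0-construction}, isomorphic to the well-rooted tree-like Dynkin quiver $D$ for $G$, with vertices partitioned into tree levels $T_1, \dotsc, T_m$. Because $D$ is tree-like and its edges are oriented consistently between consecutive levels, mutating all vertices of $T_i$ in one pass is a bipartite step: each $\muT(i,j)$ is simultaneous mutation at a set of pairwise non-adjacent vertices within the column, so the order within $\muT(i,j)$ is immaterial, and the ordered product $\muT(1,j)\muT(2,j)\dotsb\muT(m,j)$ realizes one complete source-to-sink sweep of the column. By \cref{prop:compressed-yang-zelevinsky}, this sweep replaces the minor coordinate $\genminoryz{c^{n}}{c^{n}}{k}$ at $v_{k,j}$ with $\genminoryz{c^{n\pm1}}{c^{n\pm1}}{k}$, with the sign determined uniformly by the orientation of the Dynkin quiver.

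Next I would iterate using \cref{prop:def-h(i;c)}. For $G$ not of type $A_{2n}$ the Coxeter number $h$ is even, so $h(k;c)=h/2$ when $k\ne k^{\ast}$ and $h(k;c)=0$ otherwise; in both cases $h(k;c)+1$ divides $\ell+2 = h/2 + 1$. Applying $\mucol(j)$ exactly $\ell+2$ times therefore sends the minor at $v_{k,j}$ to the minor at $v_{\sigma_G(k),j}$, which is the claimed $\sigma_G$-action on the cluster variables. Since a seed determines its quiver, and every mutation in $\mucolperm(j)$ occurs inside column $j$, the graph-theoretic effect on the surrounding quiver is the same permutation restricted to that column, with all other edges and vertex weights unchanged.

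The main obstacle I foresee is that \cref{prop:compressed-yang-zelevinsky} is stated for the cluster algebra generated by a single Dynkin-type quiver, whereas each column of $Q$ carries additional edges to neighboring columns and to frozen edge subquivers. The resolution is that these extra edges only affect how mutation updates coordinates at vertices \emph{outside} the column being swept; the exchange relation used to compute the new minor at a column vertex reduces to the primitive Yang--Zelevinsky exchange relation, once one accounts for the monomial prefactors built into $\conftoquiv$ by $\monomialmap$. A minor additional check is that $\mucolperm(j)$ never mutates a frozen vertex, so the edge-weight conventions of \cref{def:quiver} are preserved throughout the sweep.
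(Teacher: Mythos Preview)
Your first two paragraphs correctly identify Yang--Zelevinsky periodicity as the engine for the isolated Dynkin-type column, and this matches the paper's approach. The gap is in your extension to the full quiver. You assert that the extra edges to neighbouring columns ``only affect how mutation updates coordinates at vertices \emph{outside} the column being swept,'' but this is false: when you mutate at $v_{k,j}$, the new $\mathcal{A}$-coordinate there is
\[
a'_{v_{k,j}} \;=\; \frac{1}{a_{v_{k,j}}}\Bigl(\prod_{\epsilon(v_{k,j},z)>0}a_z^{\epsilon(v_{k,j},z)} \;+\; \prod_{\epsilon(v_{k,j},z)<0}a_z^{-\epsilon(v_{k,j},z)}\Bigr),
\]
and the products range over \emph{all} neighbours $z$, including $v_{k,j-1}$ and $v_{k,j+1}$ in the adjacent columns. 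These sit on opposite sides of the sum (one is an in-neighbour, one an out-neighbour), so they cannot be pulled out as a common monomial prefactor, and the relation does not reduce to the primitive Yang--Zelevinsky exchange of \cref{prop:compressed-yang-zelevinsky}. Invoking $\monomialmap$ is a red herring: the lemma is a statement about the abstract cluster ensemble, independent of the coordinate map $\conftoquiv$, and $\monomialmap$ only adjusts frozen edge coordinates, not the inter-column edges.

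The paper resolves the extension quite differently. It first proves the $\sigma_G$-permutation on the \emph{isolated} Dynkin-type quiver (adjacent columns discarded entirely), and then appeals to \cite[Theorem~1.12]{fominzelevinsky2003}: in a cluster algebra of finite type, the cluster determines the seed. Treating everything outside column $j$ as coefficients, the principal part is still finite Dynkin type; since $\mucolperm$ returns the set of cluster variables to itself (permuted by $\sigma_G$), the full seed---including the coefficient rows recording edges to adjacent columns---must be permuted by the same $\sigma_G$. You should replace your third paragraph with this finiteness argument rather than the claim about exchange relations reducing.
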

  \begin{lemma}
    \label{lem:rowsigmaG-does-sigmaG}
    The permutation $\murowperm$ of
    \cref{def:murot-components,def:muflip-components} acts on quivers
    (and cluster ensembles) by permuting the vertices of the mutated
    sub-quiver according to $\sigma_{A_{\ell}}$.
  \end{lemma}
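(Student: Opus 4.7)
The plan is to reduce the analysis of $\murowperm(i)$ to a computation inside a single $A_{\ell}$-type sub-cluster, where the Dynkin automorphism $\sigma_{A_{\ell}}$ acts naturally. First, by \cref{def:tree-like-quiver}, each tree partition $T_i$ is an antichain in the Dynkin graph (vertices equidistant from the root in a tree are non-adjacent), so for a fixed row-index $i$, the vertices $\{v_{z,j} : z \in T_i\}$ within any column $j$ are pairwise non-adjacent in $Q$. The mutations comprising $\muT(i,j)$ therefore commute, and the analysis decomposes into independent ``strands'' indexed by $z \in T_i$. Each strand consists of the mutable path $v_{z,1}, \ldots, v_{z,\ell}$ together with its attached frozen vertices at its two endpoints, and it suffices to show $\murowperm(i)$ acts as $\sigma_{A_{\ell}}$ on a single such strand.

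Second, each strand together with its frozen endpoints is an $A_{\ell}$ Coxeter quiver with induced Coxeter element $c_{A_{\ell}} = s_1 s_2 \cdots s_{\ell}$, as is immediate from the way \cref{sec:Q0-construction} lays out successive columns along the strand in the order dictated by $w_0$. Each $\muT(i,j)$ restricted to the strand is then a source or sink mutation in the type $A_{\ell}$ Coxeter cluster algebra, and the key inputs are the primitive exchange identity of \cref{prop:compressed-yang-zelevinsky} together with the periodicity $h(k; c_{A_{\ell}})$ of \cref{prop:def-h(i;c)}. Successive source/sink mutations at a fixed position $k$ shift $\genminoryz{c^m}{c^m}{k}$ by one step in the Coxeter exponent, and after $h(k; c_{A_{\ell}}) + 1$ such shifts the cluster variable reaches $\genminoryz{}{}{k^{\ast}}$ with $k^{\ast} = \sigma_{A_{\ell}}(k) = \ell + 1 - k$.

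Finally, the lemma reduces to a combinatorial bookkeeping task: verifying that the bi-triangular composition $\murowperm(i) = \big[\prod_{k=1}^{\ell}\prod_{j=0}^{\ell-k} \muT(i,1+j)\big]\big[\prod_{j=1}^{\ell} \muT(i,\ell-j)\big]$ hits each position in the strand exactly the right number of times, and in the right source/sink alternation pattern, to realize the full $\sigma_{A_{\ell}}$-shift predicted by \cref{prop:def-h(i;c)}, and no other effect on the seed. The main obstacle is precisely this bookkeeping: the descending-triangle factor provides the principal index shift while the trailing ``sweep'' factor corrects the residual position so that the composed action is a genuine $\sigma_{A_{\ell}}$-permutation rather than a mere cyclic Coxeter shift. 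This is exactly the $A_{\ell}$-generalization of the pentagon recurrence depicted in \cref{fig:pentagon-recurrence}; once the count is verified position by position, the lemma follows since a cluster ensemble is determined by its seed and the induced graph automorphism on $Q$ must coincide with $\sigma_{A_{\ell}}$.
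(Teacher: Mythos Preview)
Your overall plan matches the paper's: isolate an $A_\ell$ strand, read its cluster variables as Yang--Zelevinsky minors $\genminoryz{c^m}{c^m}{k}$, observe that every step of $\murowperm$ is a source/sink mutation governed by \cref{prop:compressed-yang-zelevinsky}, and count exponent shifts against $h(k;c)$. The decomposition into independent strands indexed by $z\in T_i$ is a clean observation the paper leaves implicit.

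The substantive difference is the Coxeter element. You propose the linear $c_{A_\ell}=s_1\cdots s_\ell$ induced by the strand's own orientation; the paper instead uses the bipartite $c=t_+t_-$. This is not a stylistic choice: \cref{prop:def-h(i;c)}, the only tool on hand for computing $h(i;c)$ in type $A$, is stated specifically for the bipartite Coxeter. Consequently the paper cannot treat $\Q{Lin}$ as an initial seed. It first realizes $\Q{Lin}$ as a \emph{non-initial} seed inside $\mathcal{A}(t_+t_-)$ by applying an explicit source/sink sequence $\muint{Lin}$ from the bipartite seed $\Q{Alt}$, works out the resulting labels $(\ell-k)/2+Y(\ell)+Y(k)$ on $\Q{Lin}$, and only then traces $\murowperm=\muint{A}\muint{B}$ through those labels. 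Your linear-$c$ route would need $h(k;s_1\cdots s_\ell)$, which the cited material does not supply; without it the ``right number of times'' you mention cannot be identified with $h(k;c)+1$.

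Finally, the ``bookkeeping'' you defer is the entire substance of the argument. The paper's explicit computation---that $\muint{A}$ shifts each exponent down by $\ell-k+1$ and $\muint{B}$ shifts it back up by one, and that the net difference between the final label at $k$ and the initial label at $k^\ast$ equals exactly $-h(k^\ast;c)-1$ via \cref{fac:Y-sub-parity}---is where the proof actually happens. Waving it off leaves the lemma unproved.
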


  These proofs rely heavily on results of \cite{yangzelevinsky2008}
  described in \cref{sec:yz-dynkin-identity}. In short, we shall
  restrict the quivers on which $\mucolperm$ and $\murowperm$ act to
  those which Yang--Zelevinsky can strongly analyze. We then use
  counting arguments to establish that the mutation acts by permutation,
  and use classic results to apply these results to larger quivers.

  \begin{proof}[Proof of \cref{lem:colsigmaG-does-sigmaG}]
    In our construction, whenever $\mucolperm$ is applied, the vertices
    at which it mutates form a subquiver of Dynkin type. Therefore,
    first, we shall show that the result holds when $\mucolperm$ is
    applied to a quiver which is of Dynkin type. Then we shall show that
    the result holds for larger quivers which contain a sub-quiver of
    Dynkin type.

    First, suppose $\mucolperm$ acts on quiver of Dynkin type (non
    $A_{2n}$). There exists an element $g$ of $L^{c,c^{-1}}$ such that
    the initial cluster coordinate at $v_k$ is $\genminoryz{}{}{k}(g)$.

    Now we note that $\mucolperm$ is exactly $\ell + 2 = \frac{h}{2} +
    1$ iterations of a mutation following the Coxeter element $c$, with
    each mutation replacing $\genminoryz{c^m}{c^m}{k}$ with
    $\genminoryz{c^{m+1}}{c^{m+1}}{k}$. Since the number $h(i;c)$ is
    always $\frac{h}{2}$ by our construction of $c$, from
    \cref{prop:compressed-yang-zelevinsky} we obtain that the final
    cluster coordinate at $v_k$ is \[ \genminoryz{c^{h(k;c) +
          1}}{c^{h(k;c) + 1}}{k}(g) = \genminoryz{}{}{k^{\ast}}(g), \]
    which was the initial coordinate at $v_{k^{\ast}}$.

    Thus $\mucolperm$ acts on the cluster by permuting the associations
    between vertices and cluster variables according to $\sigma_G$. In
    this way we obtain the desired result without ever having to rely
    directly facts of $g \in L^{c,c^{-1}}$.

    Now it must be shown that this holds when $\mucolperm$ acts on a
    larger quiver. Since the action of $\mucolperm$ is solely a
    permutation on the mutable portion of the quiver, it preserves the
    set of cluster variables. By
    \cite[Theorem~1.12]{fominzelevinsky2003} and the finiteness of
    Dynkin-type quivers, the cluster variables determine the exchange
    matrix and therefore the quiver. Therefore the action of
    $\mucolperm$ on the graph must be trivial up to renaming, and
    therefore must be exactly the renaming that we have created.
  \end{proof}

  \begin{proof}[Proof of \cref{lem:rowsigmaG-does-sigmaG}]
    We would like to apply exactly the same argument as in the proof of
    \cref{lem:colsigmaG-does-sigmaG}. However, since rows are of type
    $A_{\ell}$ and our construction algorithm for $Q$ does not allow
    $\murowperm$ to be repeated iterations of a single Coxeter-style
    mutation, we must be a bit more careful.

    For a type $A_{\ell}$, define $\Q{Lin} = Q_{A_{\ell},\text{Lin}}$ as
    the Dynkin quiver where the graph describes a linear ordering from
    left to right. Also, let $\Q{Alt} = Q_{A_{\ell},\text{Alt}}$ be the
    Dynkin quiver in source-sink position, see
    \cref{fig:dynkin-type-Al}.

    \begin{figure}[h]
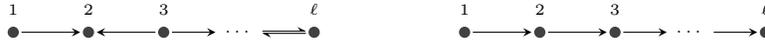

      \centering \includestandalone[mode=image|tex]{fig/dynkin-type-Al}

      \caption{$\Q{Alt}$ on the left, $\Q{Lin}$ on the right, both
        quivers of Dynkin type $A_{\ell}$.
        \label{fig:dynkin-type-Al}
      }
    \end{figure}

    We also introduce one more piece of notation.
    \begin{fact}
      \label{fac:Y-sub-parity}
      Let $Y(x)$ be defined as $\frac{1}{2}$ when $x$ is odd and $0$
      when $x$ is even. Then \[ Y(a) - Y(b) =
        (-1)^{\operatorname{parity} a}Y(a-b). \]
    \end{fact}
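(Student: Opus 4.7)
The plan is to prove \cref{fac:Y-sub-parity} by direct case analysis on the parities of $a$ and $b$, since both sides of the claimed identity depend only on the parities of $a$, $b$, and $a-b$, and the latter parity is determined by the first two. There are exactly four cases to check: $a, b$ both even; $a$ even, $b$ odd; $a$ odd, $b$ even; and $a, b$ both odd.

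In each case I would tabulate $Y(a)$, $Y(b)$, their difference, $Y(a - b)$, and the sign $(-1)^{\operatorname{parity} a}$, then verify that $Y(a) - Y(b)$ equals $(-1)^{\operatorname{parity} a} Y(a - b)$. For instance, when $a$ and $b$ have the same parity, $a - b$ is even, so $Y(a-b) = 0$, and simultaneously $Y(a) - Y(b) = 0$. When $a$ and $b$ have opposite parities, $a - b$ is odd, so $Y(a-b) = \tfrac{1}{2}$, and the sign $(-1)^{\operatorname{parity} a}$ precisely compensates the sign of $Y(a) - Y(b) = \pm \tfrac{1}{2}$.

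Since the identity is symmetric only up to the sign factor, the most delicate point is confirming the convention: one must fix once and for all whether $\operatorname{parity} a$ is $0$ or $1$ when $a$ is odd, and carry that choice consistently through all four rows of the case table. Once the convention is pinned down, every case reduces to a one-line arithmetic check, so there is no genuine obstacle — this fact is simply a packaged bookkeeping identity that will be invoked later in the proof of \cref{lem:rowsigmaG-does-sigmaG} to track sign changes between $Y(a)$ and $Y(b)$ as row-indices are shifted.
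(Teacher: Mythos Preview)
Your approach is correct and is exactly what the paper intends: the Fact is stated without proof (it is embedded inside the proof of \cref{lem:rowsigmaG-does-sigmaG} as an auxiliary observation), so a direct four-case parity check is the implicit argument. Your remark about the convention for $\operatorname{parity}$ is well taken; indeed, the identity as written forces the nonstandard choice $\operatorname{parity}(\text{even})=1$, $\operatorname{parity}(\text{odd})=0$ (equivalently, one could state the right-hand side as $(-1)^{a+1}Y(a-b)$), and the paper's subsequent use of the Fact is consistent with that reading.
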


    As above, we will interpret the cluster coordinates of the mutable
    Dynkin-type quiver as minor coordinates on some $g \in
    L^{c,c^{-1}}$, but we cannot interpret them as the initial minors.

    The mutation $\murowperm$ acts on $\ell = b - a$ vertices, always
    arranged as in $\Q{Lin}$ of \cref{fig:dynkin-type-Al}. Recalling
    notation of \cref{prop:def-h(i;c)}, by \cite{yangzelevinsky2008} the
    quiver $\Q{Alt}$ (together with cluster variables
    $\genminoryz{}{}{i}$ at $v_{i}$) is an initial seed, governed by an
    element of $L^{t_{+}t_{-},(t_{+}t_{-})^{-1}}$.

    It is straightforward that (up to Langlands dualizing, which has no
    effect on coordinates) the mutation
    \begin{equation*}
      \begin{aligned}
        \muint{Lin} &= \begin{cases}\mu\Set{v_1, v_3, \dotsc, v_{\ell - 0}, \quad v_2, v_4, \dotsc, v_{\ell - 1}, \quad \dotsc, \quad v_1, v_3, \quad v_2, \quad v_1} & \text{$\ell$ odd} \\ \mu\Set{v_1, v_3, \dotsc, v_{\ell - 1}, \quad v_2, v_4, \dotsc, v_{\ell - 2}, \quad \dotsc, \quad v_1, v_3, \quad v_2, \quad v_1} & \text{$\ell$ even} \end{cases} \\
        &= \prod_{j= 2 \lfloor \ell / 2 \rfloor - (\ell - 1)}^{\ell - 1} \begin{cases} \prod_{k=1}^{\lceil (\ell - j) / 2 \rceil} \mu\set{v_{2k - 1}} & \text{$\ell - j$ odd} \\ \prod_{k=1}^{(\ell - j) / 2} \mu\set{v_{2k}} & \text{$\ell - j$ even} \end{cases}
      \end{aligned}
    \end{equation*}
    transforms $\Q{Alt}$ to $\Q{Lin}$, using only source or sink
    mutations (thus governed by primitive exchange relations). By
    considering these relations, and counting mutations, it is also
    straightforward that the cluster coordinates of $\Q{Lin}$ are given
    by the following.
    \begin{equation*}
      \text{coordinate at $v_k$ is $\genminoryz{c^{(\ell - k)/2 +
            Y(\ell) + Y(k)}}{c^{(\ell - k)/2 + Y(\ell) + Y(k)}}{k}$}
    \end{equation*}
    For example, coordinates of $Q_{A_4,\text{Lin}}$ are
    $\Set{\genminoryz{c^2}{c^2}{1}, \genminoryz{c^1}{c^1}{2},
      \genminoryz{c^1}{c^1}{3}, \genminoryz{c^0}{c^0}{4}}$, and the
    coordinates for $Q_{A_3,\text{Lin}}$ are
    $\Set{\genminoryz{c^2}{c^2}{1}, \genminoryz{c^1}{c^1}{2},
      \genminoryz{c^1}{c^1}{3}}$. To prove the lemma, we must show that
    $\murowperm$ permutes these by $\sigma_G$, which in the case of
    $A_{\ell}$ replaces each $\fundamentalweight_k$ with
    $\fundamentalweight_{\ell + 1 - k}$.

    Since $\murowperm$ also consists entirely of source-or-sink
    mutations, it is also governed by primitive exchange mutations. Let
    \[ \muint{A} = \prod_{k=1}^{\ell} \prod_{j=0}^{\ell - k} \muT(i, 1 +
      j), \qquad \muint{B} = \prod_{j=1}^{\ell} \muT(i, \ell - j), \] so
    that $\murowperm = \muint{A} \muint{B}$. We make the following
    observations, which are all easily checked by induction:
    \begin{itemize}
    \item
      During $\muint{A}$, whenever a vertex $v_{k}$ is mutated at with
      $k$ odd, $v_{k}$ has coordinate $\genminoryz{c^m}{c^m}{k}$ while
      its neighbors have coordinates $\genminoryz{c^{m -
          1}}{c^{m-1}}{k-1}$ and $\genminoryz{c^{m - 1}}{c^{m-1}}{k+1}$.
      Since $k \prec_{c} k \pm 1$, the coordinate at $v_{k}$ becomes
      $\genminoryz{c^{m-1}}{c^{m-1}}{k+1}$.
    \item
      During $\muint{A}$, whenever a vertex $v_{k}$ is mutated at with
      $k$ even, $v_{k}$ has coordinate $\genminoryz{c^m}{c^m}{k}$ and
      its neighbors have coordinates $\genminoryz{c^m}{c^m}{k-1}$ and
      $\genminoryz{c^m}{c^m}{k+1}$. Since $k \pm 1 \prec_{c} k$, the
      coordinate at $v_{k}$ becomes
      $\genminoryz{c^{m-1}}{c^{m-1}}{k+1}$.
    \item
      By the same logic as the above statements every mutation in
      $\muint{B}$ takes the coordinate at $v_{k}$ from $\genminoryz{c^{m
        }}{c^{m}}{k}$ to $\genminoryz{c^{m+1}}{c^{m+1}}{k}$.
    \end{itemize}

    Therefore, since $\muint{A}$ touches vertex $k$ a total of $\ell - k
    + 1$ times and $\muint{B}$ touches each vertex once, we may conclude
    via \cref{prop:compressed-yang-zelevinsky} that after
    $\muint{A}\muint{B}$ the coordinates are given by
    \begin{equation*}
      \text{coordinate at $v_k$ is $\genminoryz{c^{(\ell - k)/2 +
            Y(\ell) + Y(k) - (\ell - k)}}{c^{(\ell - k)/2 + Y(\ell) +
            Y(k) - (\ell - k)}}{k}$.}
    \end{equation*}

    Now, let us consider the difference between the exponents of $c$ in
    the final coordinate of $v_{k}$ and in the initial coordinate of
    $v_{k^{\ast}}$. If the difference is $-h(k^{\ast};c) - 1$, then
    \cref{prop:compressed-yang-zelevinsky} will show that $\murowperm$
    acts by permuting vertices according to $\sigma_G$.

    \begin{align*}
      \text{final at $k$} - \text{initial at $k^{\ast}$} &= \left[ \frac{\ell - k}{2} + Y(\ell) + Y(k) - (\ell - k) \right] - \left[\frac{\ell - k^{\ast}}{2} + Y(\ell) + Y(k^{\ast}) \right] \\
      &= \frac{k^{\ast} - k}{2} - (\ell - k) + Y(k) - Y(k^{\ast}) \\
      &= \frac{(\ell + 1 - k) - k}{2} - (\ell - k) + Y(k) - Y(\ell + 1 - k) \\
      &= -\left[\frac{\ell + 1}{2} - Y(k) + Y(\ell + 1 - k)\right] - 1 \\
      &= \begin{cases} -\left[ \frac{\ell + 1}{2} + Y(\ell + 1)\right] - 1 & \text{$\ell + 1 - k$ odd} \\ -\left[ \frac{\ell + 1}{2} - Y(\ell + 1)\right] - 1 & \text{$\ell + 1 - k$ even}\end{cases} \tag{\Cref{fac:Y-sub-parity}} \\
      &= \begin{cases} -\lceil\frac{\ell + 1}{2}\rceil - 1 & \text{$k^{\ast}$ odd} \\ -\lfloor \frac{\ell + 1}{2}\rfloor - 1 & \text{$k^{\ast}$ even}\end{cases} \\
    \end{align*}
    Consulting \cref{prop:def-h(i;c)}, the exponent difference is indeed
    $-h(k^{\ast};c) - 1$, so the action of $\murowperm$ is to rearrange
    the vertices of $\Q{Lin}$ according to $\sigma_G$. From here, the
    action of $\murowperm$ generalizes to larger quivers as in the proof
    for $\mucolperm$.
  \end{proof}

  \subsection{Rotation mutations}

  \label{sec:proof-of-rot}

  We now turn to verifying the longer mutations, starting with $\murot$.
  First, we recall work of Zickert to describe the effect of
  $\operatorname{rot}$ in terms of canonical forms, taking $(h_1, h_2,
  h_3, u)$ to $(h_3, h_1, h_2, \widetilde{u})$. We show, using an
  identity of Fomin--Zelevinsky, that when all $h_i$ are trivial,
  $\murottwistind$ takes coordinates of $u$ to those of $\widetilde{u}$.
  Then we show that the monomial map $\monomialmap$ of $\conftoquiv$ is
  exactly what is necessary to extend to the general case. The final
  pieces $\muint{O1}$ and $\muint{O2}$, rearrange the vertices without
  changing their coordinates according to
  \cref{lem:colsigmaG-does-sigmaG,lem:rowsigmaG-does-sigmaG}.

  Recall that by \cref{prop:Phi-Psi-and-rot}, we have \[ \widetilde{u} =
    h_2^{-1} (w_0(h_1))^{-1} (\Phi \Psi \Phi \Psi)(u) (w_0(h_1)) h_2. \]
  So we must consider $(\Phi \Psi)^{2}(u)$ in terms of minor
  coordinates, then show that the result agrees in general with the
  cluster action of $\murot$. In other words, we must show that
  $\murotind$ fits into the diagram of \cref{fig:Psi-Phi-murot-cd}. We
  interpret the actions of the maps $\Phi$ and $\Psi$ on generalized
  minors, then appeal to an identity of Fomin--Zelevinsky that applies
  at every step of the mutation sequence $\murottwist$.

  \begin{figure}[h]
    \centering \includestandalone[mode=image|tex]{fig/Psi-Phi-murot-cd}

    \caption{Maps $\Phi$ and $\Psi$ by \cref{prop:Phi-Psi-and-rot},
      $\minorcoordmap_V$ by \cref{def:conf_3_star-to-seed-torus}.
      \label{fig:Psi-Phi-murot-cd}
    }
  \end{figure}

  \begin{lemma}
    \label{lem:fgcs-exists-rot-works-trivial}
    \Cref{def:fgcs-rot-works} of \cref{def:fgcs} holds for $h_1 = h_2 =
    h_3$ are all trivial.

  \end{lemma}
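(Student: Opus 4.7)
The plan is to exploit the dramatic collapse that occurs when $h_1 = h_2 = h_3 = 1$. In that case the frozen edge coordinates all equal $\pregenminor{\fundamentalweight_i}(1) = 1$, every correction factor in the monomial map $\monomialmap$ is a product of $\genminor{}{j}(h_i)$'s, all equal to $1$, so $\monomialmap$ is the identity, and the interior coordinates reduce to $v_{ij} = \genminor{w_{k(i,j)}}{i}(u)$. By \cref{prop:Phi-Psi-and-rot}, $\operatorname{rot}(1,1,1,u) = (1,1,1,(\Phi\Psi)^2(u))$. So the diagram commutes precisely when $\murotind$ sends the assignment $v_{ij} \mapsto \genminor{w_{k(i,j)}}{i}(u)$ on $Q$ to the assignment $v_{ij} \mapsto \genminor{w_{k(i,j)}}{i}((\Phi\Psi)^2(u))$ on $\murot(Q)$.

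I would follow the factorization $\murot = [\murottwist][\muint{O1}][\muint{O2}]$ and dispense with the two trailing pieces first. By \cref{lem:rowsigmaG-does-sigmaG} the mutation $\muint{O1}$ permutes each row of the interior rectangle by $\sigma_{A_{\ell}}$ without changing cluster variables, and by \cref{lem:colsigmaG-does-sigmaG} the mutation $\muint{O2}$ permutes each column by $\sigma_G$. Together these supply exactly the positional relabeling needed to align the indexing on $Q$ with the indexing on the rotated quiver.

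The heart of the argument is the analysis of $\murottwist^{\ast}$. I would track the minor at each $v_{ij}$ through every elementary mutation of $\murottwist = \prod_{x=1}^{\ell} \prod_{y=1}^{\ell+1-x} \mucol(y)$. At each elementary mutation $\mu\set{v_{ij}}$, the four coordinates immediately left, right, and above/below in the column are shown to match the five inputs of the grid exchange relation of \cref{thm:local-1.17}, or rather its extension \cref{lem:thm-1.17-for-repeated-c}, so the relation reads $\genminoryz{u s_i}{v s_i}{i}(u) \cdot \genminoryz{u}{v}{i}(u) = \genminoryz{u}{v s_i}{i}(u) \cdot \genminoryz{u s_i}{v}{i}(u) + \prod_{j \ne i} \genminoryz{u}{v}{j}(u)^{-A_{j,i}}$, and mutation replaces the minor at $v_{ij}$ by $\genminoryz{u}{v}{i}(u)$, shortening its indexing words by one reflection. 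Summing over all elementary mutations, $\murottwist$ shifts every indexing word by one full $c^{\ell+1}$-block. This shift is exactly the effect of $(\Phi\Psi)^2$ on generalized minors of elements of $N_{-} \cap \lifttoG{w_0} G_0$, which is the classical twist identity of Fomin--Zelevinsky (cf.\ \cite[Section~2]{fominzelevinsky1999}); invoking it identifies $\murottwist^{\ast} \genminor{w}{i}(u)$ with $\genminor{w}{i}((\Phi\Psi)^2(u))$ on the relevant range of $w$, $i$.

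The main obstacle is the bookkeeping: one must confirm that at each of the $\Theta(\ell^3)$ elementary mutations, the coordinates at $v_{ij}$ and its four neighbors really are of the shape demanded by \cref{lem:thm-1.17-for-repeated-c} (with a common factor $\prod_{j \ne i}$ in the same column and with subword-of-repeated-Coxeter prefixes), and that the subword hypothesis survives throughout. What makes the induction tractable is the extreme regularity of the Fomin--Zelevinsky rectangle built from $w_0 = c^{h/2}$ with a well-rooted, tree-like Coxeter element $c$: at every stage the ``frontier'' along the column being mutated remains a subquiver of Dynkin type, the indexing words remain suffixes of $w_0$ (up to prepending full Coxeter factors), and the classification of the neighbors into \emph{left}, \emph{right}, and \emph{same-column} entries of the exchange relation is enforced by the placement rule for $v_{ij}$ in \cref{sec:Q0-construction}.
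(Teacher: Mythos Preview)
Your macro-architecture matches the paper's: decompose $\murot = \murottwist \cdot \muint{O1} \cdot \muint{O2}$, dispatch $\muint{O1}$ and $\muint{O2}$ via \cref{lem:rowsigmaG-does-sigmaG} and \cref{lem:colsigmaG-does-sigmaG}, and drive the analysis of $\murottwist$ with the grid exchange relation \cref{thm:local-1.17}. That part is fine.

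The gap is in your handling of $\murottwist$. You propose to apply \cref{thm:local-1.17} directly to minors of $u$ and then, at the end, invoke ``the classical twist identity of Fomin--Zelevinsky'' to convert the outcome into minors of $(\Phi\Psi)^2(u)$. Two problems. First, this order does not close: the initial interior coordinates are $\genminoryz{w_k}{e}{i_k}(u)$ with second word $e$, so at the very first $\mucol(1)$ the required left-hand input $\genminoryz{p}{q s_i}{i}(u)$ is a nontrivial minor of $u$, not the value $1$ sitting at the frozen $B_i$; the exchange relation therefore does not match \cref{thm:local-1.17} evaluated at $u$. (Pushing the second word through $e$ via \cref{lem:thm-1.17-for-repeated-c} only relocates the difficulty.) Second---and this is the real issue---the ``twist identity'' you cite is not an off-the-shelf statement in \cite{fominzelevinsky1999}. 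The paper \emph{derives} the needed identity by an explicit unfolding of $(\Phi\Psi)^2(u)$: writing $y_4 = H_2^{-1}\Psi(x_2^{-1})H_2$ and chaining \cite[(2.14), (2.23), (2.25)]{fominzelevinsky1999} to obtain, in the trivial-$h$ case, $\genminoryz{w_0}{w_k}{i_k}(\widetilde{u}) = \genminoryz{w_k^{\ast}}{e}{i_k^{\ast}}(u)$. This computation is the substantive content of the lemma, and you have deferred it to a citation that does not supply it.

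The paper's order of operations is the fix: reinterpret \emph{first}, so that the initial coordinate at $v_{i,j}$ is read as $\genminoryz{w_0}{w_k}{i_k}(\widetilde{u})$, and only \emph{then} apply \cref{thm:local-1.17}. With that reinterpretation in place the words at every step have the form $(c^n, c^m)$ with both indices in range, the frozen left boundary genuinely equals $\genminoryz{w_0}{e}{i_k}(\widetilde{u}) = 1$, and each $\mucol$ cleanly decrements both exponents by one. Your claim that ``$\murottwist$ shifts every indexing word by one full $c^{\ell+1}$-block'' is also too coarse: column $j$ is hit $\ell+1-j$ times, so the shift depends on $j$, which is exactly what lands the second word at $e$ and the first word at $w_{m+r-k}^{\ast}$ before $\muint{O1}, \muint{O2}$ relabel.
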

  \begin{proof}
    By \cref{rem:p-commutes-with-mutation}, we need only consider the
    commuting diagrams for $\mathcal{A}$-coordinates; the diagrams for
    $\mathcal{X}$-coordinates will then follow immediately by applying
    $\TtoZ$.

    Let $\widetilde{u} = (w_0(h_1))^{-1} (\Phi \Psi)^2(u) w_0(h_1) h_2$
    for notation. The core idea is that, modulo coordinates of $h_i$,
    $\genminoryz{w_k}{e}{i_k}(u)$ is approximately
    $\genminoryz{w_0}{w_k}{i_k}(\widetilde{u})$. The mutation sequence
    $\murot$ transforms $\genminoryz{w_0}{w_k}{i_k}$ to
    $\genminoryz{w_k}{e}{i_k}$ by
    \cite[Theorem~1.17]{fominzelevinsky1999}, therefore changing
    coordinates of $u$ into coordinates of $\widetilde{u}$. Later, we
    will show that the monomial map exactly compensates for the
    coordinates of $h_i$.

    To be more precise, the coordinate assigned to interior vertex
    $v_{i,j}$ is (letting $k = k(i,j)$ as in
    \cref{def:conf_3_star-to-seed-torus}) \[ v_{i,j} =
      \genminoryz{w_k}{e}{i_k}(u) \cdot
      \genminor{}{i}(\sigma_G(h_1^{-1}) h_2) \cdot \genminornumerator{w_{k(i,j)}}{\fundamentalweight_i}(\sigma_G(h_1))
    \] Focusing on the first term, consider $(\Phi \Psi)^2(u) = [ \Psi([
        \Psi(u) \lifttoG{w_0} ]_{-}) \lifttoG{w_0} ]_{-}$. Letting
    $x_{\bullet} \in N_{+}$, $h_{\bullet} \in H$, and $y_{\bullet} \in
    N_{-}$,
    \begin{align*}
      y_0 &= u & x_1 &= \Psi(y_0) & y_2 H_2 x_2 &= x_1 \lifttoG{w_0} \\
      y_2 &= (\Phi \Psi)(u) & x_3 &= \Psi(y_2) & y_4 H_4 x_4 &= x_3 \lifttoG{w_0} \\
      y_4 &= (\Phi \Psi)^2(u).
    \end{align*}
    Expanding those terms, we have
    \begin{align*}
      y_4 H_4 x_4 &= x_3 \lifttoG{w_0} \\
      &= \Psi(y_2) \lifttoG{w_0} \\
      &= \Psi(x_1 \lifttoG{w_0} x_2^{-1} H_2^{-1}) \lifttoG{w_0} \\
      &= \Psi(\Psi(y_0) \lifttoG{w_0} x_2^{-1} H_2^{-1}) \lifttoG{w_0} \\
      &= \Psi(H_2^{-1}) \Psi(x_2^{-1}) \Psi(\lifttoG{w_0}) y_0 \lifttoG{w_0} \\
      &= H_2^{-1} \Psi(x_2^{-1}) \lifttoG{w_0} y_0 \lifttoG{w_0} \\
      &= \overbrace{H_2^{-1} \Psi(x_2^{-1}) H_2}^{\in N_{-}} \cdot \overbrace{H_2^{-1} s_G}^{\in H} \cdot \overbrace{\lifttoG{w_0}^{-1} y_0 \lifttoG{w_0}}^{\in N_{+}}
    \end{align*}
    Extracting the $N_{-}$ term, $(\Phi \Psi)^2(u) = y_4 = H_2^{-1}
    \Psi(x_2^{-1}) H_2$, which we can rewrite via \[ H_2 = [ \Psi(y_0)
        \lifttoG{w_0} ]_{0}, \qquad x_2 = \Psi([\Psi(y_0) \lifttoG{w_0}
      ]_{+}^{-1}). \]

    Now, consider the minor $\genminoryz{w_0}{w_k}{i_k}(y_4)$. Denote by
    $w_k^{\ast}$ the word defined by \cref{def:gamma-k} using fixed
    presentation $\word{i}^{\ast} = \sigma_G(\word{i})$. By relying on a
    number of identities from \cite{fominzelevinsky1999},
    \begin{align*}
      \genminoryz{w_0}{w_k}{i_k}(H_2^{-1} \Psi(x_2^{-1}) H_2) &= \frac{\genminoryz{w_0}{w_k}{i_k}(\Psi(x_2^{-1})) \cdot \genminoryz{w_k}{w_k}{i_k}(H_2)}{\genminoryz{w_0}{w_0}{i_k}(H_2)} \tag{\cite[Equation~2.14]{fominzelevinsky1999}} \\
      \genminoryz{w_0}{w_k}{i_k}(\Psi(x_2^{-1})) &= \genminoryz{w_k}{w_0}{i_k}(x_2^{-1}) \tag{\cite[Equation~2.25]{fominzelevinsky1999}} \\
      &= \genminoryz{w_k}{w_0}{i_k}( \Psi([\Psi(y_0)\lifttoG{w_0} ]_{+}^{-1})^{-1} ) \\
      &= \genminoryz{w_k^{\ast}}{w_0}{i_k^{\ast}}( \lifttoG{w_0} [\Psi(y_0)\lifttoG{w_0} ]_{+}^{-1} \lifttoG{w_0}^{-1} ) \\
      &= \genminoryz{w_0w_k^{\ast}}{e}{i_k^{\ast}}( [\Psi(y_0)\lifttoG{w_0} ]_{+}^{-1} ) \\
      &= \genminoryz{w_0w_k^{\ast}}{e}{i_k^{\ast}}( [\lifttoG{w_0} y_0]_{-} ) \\
      &= \left(\genminoryz{w_0w_k^{\ast}}{e}{i_k^{\ast}}( \lifttoG{w_0} y_0 )\middle) \middle/ \middle(\genminoryz{e}{e}{i_k^{\ast}}(\lifttoG{w_0} y_0) \right) \tag{\cite[Equation~2.23]{fominzelevinsky1999}} \\
      &= \left(\genminoryz{w_k^{\ast}}{e}{i_k^{\ast}}( y_0 ) \middle) \middle/ \middle( \genminor{}{i_k^{\ast}}([\lifttoG{w_0}^{-1} y_0]_{0}) \right)
    \end{align*}
    Also, by \cref{lem:canonical-form-for-conf_3_star}, $H_2 =
    [\Psi(y_0) \lifttoG{w_0}]_{0} = [\lifttoG{w_0}^{-1} y_0]_{0} =
    (w_0(h_3 h_1) h_2)^{-1}$, where the $h_i$ in the last term refer to
    the elements of $H$ in the canonical form for
    $\Conf_{3}^{\ast}(G/N_{+})$. Writing solely in terms of $(h_1, h_2,
    h_3, u)$), we have
    \begin{align*}
      \genminoryz{w_0}{w_k}{i_k}((\Phi \Psi)^2(u)) &= \genminoryz{w_k^{\ast}}{e}{i_k^{\ast}}(u) \cdot \frac{ \genminoryz{w_0}{w_0}{i_k}( w_0(h_3 h_1)h_2 ) \genminoryz{e}{e}{i_k^{\ast}}( w_0(h_3 h_1)h_2 ) }{ \genminoryz{w_k}{w_k}{i_k}( w_0(h_3 h_1) h_2 ) } \\
      &= \genminoryz{w_k^{\ast}}{e}{i_k^{\ast}}(u) \cdot \frac{ 1 }{ \genminoryz{w_k}{w_k}{i_k}( w_0(h_3 h_1) h_2 ) } \\
    \end{align*}
    Now, introducing the factor of $w_0(h_1) h_2$, we have
    \begin{align*}
      \genminoryz{w_0}{w_k}{i_k}(\widetilde{u}) &= \genminoryz{w_0}{w_k}{i_k}((w_0(h_1))^{-1} (\Phi \Psi)^2(u) w_0(h_1) h_2) \\
      &= \genminoryz{w_0}{w_k}{i_k}((\Phi \Psi)^2(u)) \cdot \genminoryz{w_0}{w_0}{i_k}(w_0(h_1)^{-1}h_2^{-1}) \cdot \genminoryz{w_k}{w_k}{i_k}(w_0(h_1)h_2) \\
      &= \genminoryz{w_k^{\ast}}{e}{i_k^{\ast}}(u) \cdot \frac{1}{ \genminoryz{w_k}{w_k}{i_k}(w_0(h_3)) \genminoryz{w_0}{w_0}{i_k}(w_0(h_1) h_2) }
    \end{align*}

    Thus, including the assignment of the monomial map $\monomialmap$,
    the coordinate at $v_{i,j}$ is given by
    \begin{align*}
      v_{i,j} &= \genminoryz{w_0}{w_k}{i_k}(\widetilde{u}) \cdot \overbrace{\genminoryz{e}{e}{i_i}(w_0(h_1) h_2) \cdot \genminoryz{w_0}{w_0}{i_k}(w_0(h_1)h_2)}^{1} \cdot \genminoryz{w_k}{w_k}{i_k}(w_0(h_3)) \cdot \genminornumerator{w_k}{\fundamentalweight_{i_k}}(w_0(h_1^{-1})) \\
      &= \genminoryz{w_0}{w_k}{i_k}(\widetilde{u} \cdot w_0(h_3)) \cdot \genminornumerator{w_k}{\fundamentalweight_{i_k}}(w_0(h_1^{-1}))
    \end{align*}

    In the case that $h_1 = h_2 = h_3 = e$, we may ignore the frozen
    vertices of the quiver. The proof is completed by appealing to the
    identity of \cite[Theorem~1.17]{fominzelevinsky1999}, which states
    that when $\length(u s_i) = \length(u) + 1$ and $\length(v s_i) =
    \length(v) + 1$, \[ \genminoryz{u}{v}{i} \genminoryz{us_i}{vs_i}{i}
      = \genminoryz{u s_i}{v}{i} \genminoryz{v}{u s_i}{i} + \prod_{j \ne
        i} \genminoryz{u}{v}{i}^{-a_{ji}}. \]

    This identity exactly matches the quiver mutation relation at each
    step of $\murottwist$, so each $\mucol(i)$ in $\murot$ transforms
    the coordinates of $v_{i,\bullet}$ from \[
      \genminoryz{c^n}{c^m}{j}(\widetilde{u})\] to \[
      \genminoryz{c^{n-1}}{c^{m-1}}{j}(\widetilde{u}).\] Thus, after
    applying $\murottwist$ the vertex $v_{i,j}$ has coordinate \[
      \genminoryz{w_{m+r-k}^{\ast}}{e}{i_k^{\ast}}(\widetilde{u}). \] By
    \cref{lem:colsigmaG-does-sigmaG,lem:rowsigmaG-does-sigmaG},
    $\muint{O1}$ converts the $w_{m+r-k}$ to $w_k$, and $\muint{O2}$
    removes the $\cdot^{\ast}$. Thus $\murot$ changes coordinates for
    $u$ into those for $h_2^{-1} w_0(h_1)^{-1} (\Phi \Psi)^2(u) w_0(h_1)
    h_2$ as desired.
  \end{proof}

  \begin{proof}[Proof of \cref{lem:murot-works-for-making-Q}]
    Since we ignored frozen vertices above, the result applies to $Q_0$.
    Since the quiver mutation changes coordinates of $\alpha$ to those
    of $\operatorname{rot}(\alpha)$, it is of order 3 and induces a
    graph isomorphism.
  \end{proof}

  \begin{lemma}
    \label{lem:fgcs-exists-rot-works-nontrivial}
    \Cref{def:fgcs-rot-works} of \cref{def:fgcs} holds for arbitrary
    $h_1, h_2, h_3$.
  \end{lemma}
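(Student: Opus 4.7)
The strategy is to reuse essentially all of the calculation in the proof of \cref{lem:fgcs-exists-rot-works-trivial}; the derivation there in fact established, for arbitrary $h_1, h_2, h_3$, the identity
\[
  v_{i,j} = \genminoryz{w_0}{w_k}{i_k}\bigl(\widetilde{u}\cdot w_0(h_3)\bigr)\cdot \genminornumerator{w_k}{\fundamentalweight_{i_k}}\bigl(w_0(h_1^{-1})\bigr),
\]
where $\widetilde{u}$ is the rotated canonical form. The triviality of $h_i$ was only invoked at the very end in order to drop the frozen vertices $A_\bullet$, $B_\bullet$, $C_\bullet$ before quoting \cref{thm:local-1.17}. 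Thus the remaining task is to check that the frozen contributions balance correctly at each step of $\murottwist$, $\muint{O1}$, $\muint{O2}$.

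First I would carry out the ``Extra factor'' accounting sketched in the overview of \cref{sec:main-result}. At each step of $\murottwist$, the quiver mutation identity has the schematic form
\[
  (\text{initial})\cdot(\text{final}) = (\text{left})\cdot(\text{right}) + \prod_{j\ne i_k}(\text{same column}),
\]
and \cref{thm:local-1.17} provides exactly this identity on the mutable portion. At vertices adjacent to $A_\bullet$, $B_\bullet$, or $C_\bullet$, one or more of the ``left'', ``right'', or ``same column'' positions is a frozen vertex whose coordinate is a fixed monomial in $\genminor{}{j}(h_i)$. My claim, to be verified by induction on the position within the sequence $\murottwist$, is that the monomial $\monomialmap$ in \cref{def:conf_3_star-to-seed-torus} is precisely chosen so that the frozen factor appearing on each side of the mutation identity is the same ``Extra'' monomial, namely the product of $\genminor{}{i_k}(\sigma_G(h_1^{-1})h_2)$ and $\genminornumerator{w_k}{\fundamentalweight_{i_k}}(\sigma_G(h_1))$ evaluated at the relevant $(i,j)$. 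Since the same Extra appears in every term, it factors out and \cref{thm:local-1.17} applies verbatim to the remaining minors of $\widetilde u\cdot w_0(h_3)$.

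Given the balancing, the mutable portion of $\murottwist$ transforms the coordinates of $u$ into those of $\widetilde u$ by exactly the same chain of applications of \cref{thm:local-1.17} used in the trivial case, and the pieces $\muint{O1}$, $\muint{O2}$ act only by permutation on both the mutable and frozen vertices by \cref{lem:colsigmaG-does-sigmaG,lem:rowsigmaG-does-sigmaG}. To finish, I would verify that the triangular symmetry of \cref{def:tcs-rot} acts on the three frozen edges by the cyclic permutation $A_i\mapsto C_i$, $B_i\mapsto A_i$, $C_i\mapsto B_i$; this is immediate from the construction of $A_\bullet$ in \cref{sec:the-A-edge}, and matches the effect of $\operatorname{rot}$ on the $H$-tuple $(h_1,h_2,h_3)\mapsto(h_3,h_1,h_2)$ via \cref{prop:Phi-Psi-and-rot}. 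The $\mathcal X$-coordinate diagram follows by \cref{rem:p-commutes-with-mutation}.

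The main obstacle will be Step~1: the Extra factor at a given vertex $v_{i,j}$ changes as $\murottwist$ progresses, because earlier mutations in the sequence replace neighbors of $v_{i,j}$ and thereby alter which frozen vertices contribute. So the inductive bookkeeping must track, at each moment in $\murottwist$, the current word $w$ for which the coordinate at $v_{i,j}$ is a minor of the form $\genminoryz{w}{\cdot}{i_k}(\widetilde u\cdot w_0(h_3))$ times a $\monomialmap$-style correction, and show that the correction always matches the frozen factor produced by the quiver mutation identity. This should follow uniformly from \cref{rem:all-wkomegaj-are-gamma-k-or-trivial} together with \cref{def:non-negative-generalized-minor}, since those precisely describe how a minor $\genminoryz{w}{w'}{i}$ restricted to $H$ depends on $w$ through a non-negative monomial in the coordinates of $H$; this is exactly what $\genminornumerator{w_k}{\fundamentalweight_{i_k}}$ was defined to package.
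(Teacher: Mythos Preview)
Your strategy is essentially the paper's: reuse the derivation from the trivial case up to the identity $v_{i,j}=\genminoryz{w_0}{w_k}{i_k}(\widetilde u\cdot w_0(h_3))\cdot\genminornumerator{w_k}{\fundamentalweight_{i_k}}(w_0(h_1^{-1}))$, then verify that the frozen contributions balance at every step of $\murottwist$ so that \cref{thm:local-1.17} still applies, and finish with the permutation lemmas for $\muint{O1}$, $\muint{O2}$ and \cref{rem:p-commutes-with-mutation} for the $\mathcal X$-side. You have also correctly isolated the only real obstacle: tracking how the monomial correction at each $v_{i,j}$ evolves as $\murottwist$ progresses.

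Where your plan diverges from the paper is in the last paragraph: you expect the balancing to ``follow uniformly'' from \cref{rem:all-wkomegaj-are-gamma-k-or-trivial} and \cref{def:non-negative-generalized-minor}. The paper does not obtain such a uniform argument. Instead it (i)~observes that the check is a monomial identity and hence can be verified, for each frozen variable separately, by evaluating at finitely many linearly independent log-vectors; (ii)~carries out a direct type-by-type inductive tracking of the $A_j$-factors through $\murottwist$ for the classical series (the $D_n$, $h_1$ case is written out in full, with the pattern ``factor at $v_{i,j}$ is copied from left to right at each $\prod_y\mucol(y)$''); and (iii)~relies on a computer verification (\texttt{test-murot} of \cite{gilles2020sw}) for the exceptional types. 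So your plan is not wrong, but your expectation that \cref{rem:all-wkomegaj-are-gamma-k-or-trivial} and \cref{def:non-negative-generalized-minor} alone will close the gap is optimistic; in practice the bookkeeping is handled case-by-case and, for $F_4,E_6,E_7,E_8$, numerically.
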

  \begin{proof}
    In the case that $h_1, h_2, h_3$ are not trivial, we must verify
    that the equation of \cite[Theorem~1.17]{fominzelevinsky1999} still
    holds with the frozen vertices considered, which introduce factors
    not included in the equation. Again, we only consider
    $\mathcal{A}$-coordinates by \cref{rem:p-commutes-with-mutation}. We
    note the following:

    \begin{itemize}
    \item
      The vertices $B_k$ take the place of
      $\genminoryz{w_0}{e}{i_k}(\widetilde{u}) =
      \chi_{i_k^{\ast}}([\lifttoG{w_0}^{-1} \widetilde{u}]_{0}) =
      \chi_{i_k^{\ast}}(w_0(h_2 h_3) h_1)^{-1}$ in the equation.
      Therefore, they should be considered as \[ B_k =
        \genminoryz{w_0}{e}{i_k}(\widetilde{u}) \cdot
        \frac{A_{k^{\ast}}}{C_k} =
        \genminoryz{w_0}{e}{i_k}(\widetilde{u} \cdot w_0(h_3)) \cdot
        A_{k^{\ast}}. \]
    \item
      If, at each step of the mutation, the frozen vertices and the
      monomial map $\monomialmap$ induce equal extra factors on the
      $\genminoryz{u s_i}{v}{i} \genminoryz{v}{u s_i}{i}$ and $\prod_{j
        \ne i} \genminoryz{u}{v}{i}^{-a_{ji}}$ terms, then the
      difference of the final terms from coordinates of $\widetilde{u}$
      will also be a monomial map.
    \item
      If this final difference monomial map matches $\monomialmap$, the
      result will be proven.
    \item
      Since the extra factors at each coordinate are given by a monomial
      map, each frozen vertex may be checked individually.
    \end{itemize}

    For any particular group, these results may be verified by a few
    numerical calculations: to verify the monomial identity $x_1^{a_1}
    \dotsb x_m^{a_m} \stackrel{?}{=} x_1^{b_1} \dotsb x_m^{b_m}$, taking
    $\log$ of both sides reduces to $\sum_{i} \log(x_i) (a_i - b_i)
    \stackrel{?}{=} 0$. Therefore, evaluating at $m+1$ linearly
    independent choices of vector $\langle\log(x_i)\rangle$ verifies the
    result. This has been carried out for the exceptional groups. The
    \texttt{test-murot} sub-program of \cite{gilles2020sw} may be used
    for this purpose.

    We present an argument that the coordinates of $h_1$ are treated
    correctly by the monomial map and $\murot$ for type $D_n$; other
    arguments are similar.

    We will ignore all terms other than $h_{1,j} = A_j$, and annotate
    vertices with these. Half the Coxeter number minus one will be
    denoted $\ell$, which in the case of $D_n$ is equal to $n - 2$. Then
    we will trace the effects of $\murottwist$ in general. The objective
    is to show that, starting with the assignments given by the monomial
    factors attached to $\genminoryz{w_0}{w_k}{i_k}(\widetilde{u})$
    above, we end with those factors of given by the monomial map
    $\monomialmap$ following $\operatorname{rot}$ (i.e.\ the powers
    associated to $h_2$ by $\monomialmap$). Before applying any
    mutations, $Q_{D_n}$ is as in \cref{fig:murot-QD7-stage0}. The
    formula for $A_j$ factors is, following
    $\genminornumerator{w_k}{\fundamentalweight_{i_k}}(w_0(h_1^{-1}))$,
    \[ \text{at }v_{i,j} : \begin{cases} A_{j} & i \ge n-1 \\ A_{j} & i
        < n-1, i + j < n \\ A_{j} A_{i+j+1-n} & i < n-1, i + j \ge n
      \end{cases} \qquad \text{at }A_k: A_k, \qquad \text{at }B_k:
      A_{k^{\ast}}, \qquad \text{at }C_k: 1 \]

    \begin{figure}[h]
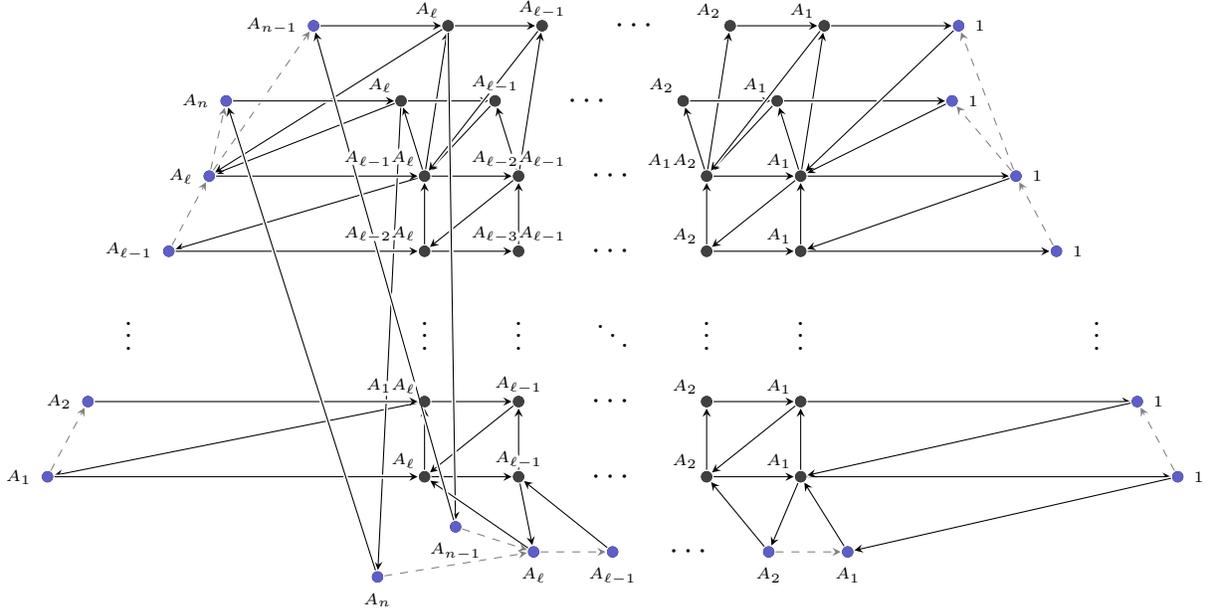

      \centering
      \includestandalone[mode=image|tex]{fig/murot-QD7-stage0}

      \caption{$Q_{D_n}$ before any mutation.
        \label{fig:murot-QD7-stage0}
      }
    \end{figure}

    After one iteration of $\mucol(1)$, by straight-forward induction
    the factor at $v_{i,j}$ matches that at $v_{i,j+1}$, as in
    \cref{fig:murot-QD7-stage1}.

    \begin{figure}[h]
      \centering
      \includestandalone[mode=image|tex]{fig/murot-QD7-stage1}

      \caption{$Q_{D_n}$ after $\mucol(1)$.
        \label{fig:murot-QD7-stage1}
      }
    \end{figure}

    After $\prod_{y=1}^{\ell} \mucol(y)$, this pattern continues as
    shown in \cref{fig:murot-QD7-stage2}: for $j < \ell$, we have the
    factor at $v_{i,j}$ matching the original factor at $v_{i,j+1}$. At
    $v_{i,\ell}$, however, the factor matches the original factor
    associated to $B_i$. Therefore, the new formula for $A_j$ factors at
    $v_{i,j}$ is \[ \text{at }v_{i,j} : \begin{cases} A_{i^{\ast}} & j =
        \ell \\ A_{j+1} & j < \ell, i \ge n-1 \\ A_{j+1} & j < \ell, i <
        n-1, i + j < n \\ A_{j+1} A_{i+j+2-n} & j < \ell, i < n-1, i + j
        \ge n \end{cases} \]

    \begin{figure}[h]
      \centering
      \includestandalone[mode=image|tex]{fig/murot-QD7-stage2}

      \caption{$Q_{D_n}$ after $\prod_{y=1}^{\ell}\mucol(y)$.
        \label{fig:murot-QD7-stage2}
      }
    \end{figure}

    Each successive $\prod_{y=1}^{\dotsc} \mucol(y)$ performs the same
    adjustment: factors of $A_j$ are copied from left to right. As can
    be shown by induction, after $\prod_{x=1}^{k} \prod_{y=1}^{\ell + 1
      - x} \mucol(y)$, the formula for $A_j$ factors at $v_{i,j}$ is \[
      \text{at }v_{i,j} : \begin{cases} A_{i^{\ast}} & j > \ell - k \\
        A_{j+k} & j \le \ell - k, i \ge n-1 \\ A_{j+k} & j \le \ell - k,
        i < n-1, i + j < n \\ A_{j+k} A_{i+j+1+k-n} & j \le \ell - k, i
        < n-1, i + j \ge n \end{cases} \]

    Thus, after applying $\murottwist$, each $v_{i,j}$ has a factor of
    $A_{i^{\ast}}$ attached. Since the monomial map $\monomialmap$
    associates a factor of $\genminor{}{i}(h_2) = B_{i}$, rotation
    renames $A_{\bullet}$ to $B_{\bullet}$, and $\muint{O1}$ renames
    $v_{i,j}$ to $v_{i^{\ast},j}$, the result is proven.

    Proofs that the monomial map works correctly for factors of $h_2$
    and $h_3$ for type $D$, and for all factors of type $A$, $B$, and
    $C$, have similar forms, and the result has been numerically checked
    for exceptional types. This proves the result in all cases.
  \end{proof}

  \subsection{Flip mutations}

  We now verify the longest flip mutation: the flip. The idea of the
  proof is similar to that of $\murot$. First, we put the coordinates of
  $\Qdv$ into a rectangular pattern, then apply mutations that respect
  \cite[Theorem~1.17]{fominzelevinsky1999}, showing that the resulting
  coordinates are of $\Qdh$. The most significant difficulty is to
  construct the element of $G$ holding the appropriate generalized
  minors, which we obtain by a result of Zickert.

  \begin{lemma}
    \label{lem:fgcs-exists-flip-works}
    \Cref{def:fgcs-flip-works} of \cref{def:fgcs} holds.
  \end{lemma}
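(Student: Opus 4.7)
The plan is to follow the template of the proof of Lemma~\ref{lem:fgcs-exists-rot-works-trivial}, with the complications coming from the glued nature of the configurations. First, by Remark~\ref{rem:p-commutes-with-mutation} it suffices to check the commuting diagram for $\mathcal{A}$-coordinates, since the $\mathcal{X}$-diagram then follows by applying $p$. Starting from $\alpha \in \Conf_4^{\ast}(G/N_+)$, write $\Psi_{02}^{-1}(\alpha)$ and $\Psi_{13}(\alpha)$ as pairs of canonical forms, and use Lemma~\ref{lem:canonical-form-for-conf_3_star} together with Proposition~\ref{prop:Phi-Psi-and-rot} to write their torus components and unipotent components as explicit expressions in a single ``master'' element $U \in N_- \cap \lifttoG{w_0} G_0 \lifttoG{w_0}$ attached to the whole tetrahedron, following Zickert's gluing prescription in Section~2.2.1 of~\cite{zickert2016}. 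The goal is that the $\mathcal{A}$-coordinates of $\seedAtorus_{\Qdv}$ under $\conftoquiv \circ \Psi_{02}^{-1}$ are of the form $\genminoryz{c^a}{c^b}{i_k}(U)$ (times monomial factors in the $h_i$), arranged on the rectangle $\muP(\Qdv)$ of Figure~\ref{fig:Qdv-naming}, and that the $\mathcal{A}$-coordinates under $\conftoquiv \circ \Psi_{13}$ are likewise such minors but with shifted exponents matching the other diagonal.

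Next, I would check each piece of $\muflip$ in turn. The pre-mutation $\muP = (\murottwistL)^{-1}(\murottwistR)^{-1}$ simply runs two inverse rotation sequences; by Lemma~\ref{lem:fgcs-exists-rot-works-trivial} (applied to each constituent triangle of the amalgamated $\Qdv$) these undo the canonical-form rotations, leaving every $w_{i,j}$ coordinate as a minor $\genminoryz{c^a}{c^b}{i_k}(U)$ in the rectangular pattern. Then $\muflipcore$ is a sequence of $\mucol(j)$ mutations sweeping the rectangle diagonally. At each step the vertex being mutated, together with its left, right, and same-column neighbours, exactly matches the template of Theorem~\ref{thm:local-1.17}, so the exchange relation replaces $\genminoryz{u s_{i_k}}{v s_{i_k}}{i_k}$ by $\genminoryz{u}{v}{i_k}$. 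Since the words $u,v$ involved are subwords of the repeated Coxeter element, I invoke Lemma~\ref{lem:thm-1.17-for-repeated-c} to handle the places where the naive length-additivity condition would fail (in particular at the boundary where a full $c^{h/2}=w_0$ has been absorbed). After $\muflipcore$, the exponents of $c$ are globally shifted so that the resulting minors match those coming from $\Psi_{13}$, up to reordering. Finally $\muint{O3}$, $\muint{O4}$, $\muint{O5}$ are sequences of $\murowperm$ on the rectangles; by Lemma~\ref{lem:rowsigmaG-does-sigmaG} each permutes the rows according to $\sigma_{A_{L}}$, $\sigma_{A_\ell}$, and $\sigma_{A_\ell}$ respectively, and their product realizes the precise relabelling that identifies $\muflipcore(\muP(\Qdv))$ with $\Qdh$ as amalgamations of two copies of $Q$.

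The main obstacle is the monomial-compensation bookkeeping: just as in Lemma~\ref{lem:fgcs-exists-rot-works-nontrivial}, the identity of Theorem~\ref{thm:local-1.17} holds only when the frozen edge contributions on both sides of each exchange agree, and these contributions include the $h_i$-factors introduced by $\monomialmap$ together with the identified edge vertices from amalgamation. I expect to have to verify, separately for each class of frozen vertex ($D_\bullet, E_\bullet, F_\bullet, G_\bullet$ and the identified middle edge), that the extra monomial factors appearing on the ``left $\cdot$ right'' term and on the $\prod_{j \ne i_k}$ term agree at every mutation step. The element $s_G$ built into $\Psi_{02}$ and $\Psi_{13}$ (see Remark~\ref{rem:purpose-of-sG}) is precisely what makes these factors match across the amalgamation seam. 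As in the rotation case, for classical types this can be done by induction on the position of the current column in the $\muflipcore$ sweep, and for the exceptional types the identities reduce to a finite set of monomial equalities that can be verified numerically using the \texttt{test-muflip} facility analogous to the one cited in the proof of Lemma~\ref{lem:fgcs-exists-rot-works-nontrivial}.
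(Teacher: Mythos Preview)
Your proposal is essentially the same approach as the paper's. Two points of precision where the paper sharpens your outline: (i) the ``master element'' is the explicit $g = \Phi^{-1}(u_{123})\,u_{130}\,h_{31}^{-1}h_{30} = u_{120}\,h_{20}\,w_0(h_{12})\,\Phi^{-1}(u_{023})$ supplied by \cite[Proposition~5.14]{zickert2016} (not merely the gluing of Section~2.2.1), and it lies in $G_0$, not in $N_- \cap \lifttoG{w_0} G_0 \lifttoG{w_0}$; (ii) rather than carrying out the full monomial bookkeeping you describe, the paper invokes the argument of \cref{lem:fgcs-exists-rot-works-nontrivial} to reduce to the case where every $h_{ij}$ except $h_{20},h_{02}$ is trivial, and then observes that those two remaining factors are exactly cancelled by the $\genminor{}{i}(h_2)$ term in $\monomialmap$.
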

  \begin{proof}
    This proof almost entirely focuses on $\muflipcore$. The prefix,
    $\muint{P}$, is a sequence of rotations as depicted in
    \cref{fig:Qdv-naming}. The suffix, $\muint{O3} \muint{O4}
    \muint{O5}$, is a reordering that switches the left and right sides.

    Let $\alpha = (g_0 N_{+}, g_1 N_{+}, g_2 N_{+}, g_3 N_{+}) \in
    \Conf_4^{\ast}(G/N_{+})$, and recall
    \cref{def:gluing-configurations}. For notation, let
    \begin{align*}
      \alpha_{012} &= (g_0 s_G N_{+}, g_1 N_{+}, g_2 N_{+}) & \alpha_{023} &= (g_0 N_{+}, g_2 N_{+}, g_3 N_{+}) \\
      \alpha_{123} &= (g_1 N_{+}, g_2 N_{+}, g_3 N_{+}) & \alpha_{013} &= (g_0 N_{+}, g_1 s_G N_{+}, g_3 N_{+}),
    \end{align*}
    with other $\alpha_{ijk}$ defined by rotation, and let $u_{ijk}$ be
    the element of $N_{-}$ in the canonical form of $\alpha_{ijk}$. As
    usual, let $h_{ij} = [ \lifttoG{w_0}^{-1} g_i^{-1} g_j]_{0}$.

    We must show that $\muflip$ takes $\Qdv$ to $\Qdh$. By an argument
    entirely analogous to that of
    \cref{lem:fgcs-exists-rot-works-nontrivial}, we assume that all edge
    coordinates of $\alpha_{123}$ and $\alpha_{013}$ are trivial. That
    is, every $h_{ij}$ is trivial except $h_{20}$ and $h_{02}$. We also
    appeal to \cref{rem:p-commutes-with-mutation}, and only consider
    $\mathcal{A}$-coordinates.

    By \cref{def:dv-dh-construction}, coordinates for $\Qdv$ come from
    $\alpha_{012}$ and $\alpha_{023}$. After applying $\muP$, these are
    rotated to $\alpha_{120}$ and $\alpha_{230}$. Treating
    $\alpha_{230}$ as the rotation of $\alpha_{302}$, the coordinates
    are given by \[ \genminoryz{w_k}{}{\word{i}_k}(u_{120}) \quad
      \text{and} \quad \genminoryz{w_k}{}{\word{i}_k}(u_{230}). \]

    We must show that $\muflipcore$ takes these to \[
      \genminoryz{w_k}{}{\word{i}_k}(u_{123}) \quad \text{and} \quad
      \genminoryz{w_k}{}{\word{i}_k}(u_{013}). \]

    Now consider $g = \Phi^{-1}(u_{123}) u_{130} h_{31}^{-1} h_{30}$. We
    also have $g = u_{120} h_{20} w_0(h_{12}) \Phi^{-1}(u_{023})$ by
    \cite[Proposition~5.14]{zickert2016}, and this will be identity
    which relates the two sides of the flip. All the coordinates we need
    are, up to edge coordinates, minors of $g$.

    \begin{align*}
      \genminoryz{w_k}{}{\word{i}_k}(u_{120}) &= \chi_{\fundamentalweight_{\word{i}_k}}\left(\left[ \dlifttoG{w_k^{-1}} [g]_{-} \right]_{0}\right) \\
      &= \chi_{\fundamentalweight_{\word{i}_k}}\left(\left[ \dlifttoG{w_k^{-1}} g [g]_{0}^{-1} \left( [g]_{0}^{-1} [g]_{+}^{-1} [g]_{0} \right) \right]_{0}\right) \\
      &= \chi_{\fundamentalweight_{\word{i}_k}}\left(\left[ \dlifttoG{w_k^{-1}} g \right]_{0} \right) / \chi_{\fundamentalweight_{\word{i}_k}} \left( [g]_{0} \right) \\
      &= \genminoryz{w_k}{}{\word{i}_k}(g) \frac{1}{\chi_{\fundamentalweight_{\word{i}_k}}(h_{20} w_0(h_{12}))} \\
      &= \genminoryz{w_k}{}{\word{i}_k}(g) \frac{1}{\chi_{\fundamentalweight_{\word{i}_k}}(h_{20}) \chi_{\fundamentalweight_{\word{i}_k}}(w_0(h_{12}))}
    \end{align*}
    \begin{align*}
      \genminoryz{w_k}{}{\word{i}_k}(u_{302}) &= \genminoryz{w_k}{}{\word{i}_k}( (\Phi \Psi \Phi \Psi)(u_{023}) ) \frac{\genminoryz{}{}{\word{i}_k}(w_0(h_{02}) h_{23})}{\genminoryz{w_k}{w_k}{\word{i}_k}(w_0(h_{02}) h_{23})} \\
      &= \genminoryz{w_k}{}{\word{i}_k}( [[\lifttoG{w_0} u_{023}]_{+} \lifttoG{w_0}]_{-}) \frac{\genminoryz{w_k}{w_k}{\word{i}_k}(w_0(h_{02}) h_{23})}{\genminoryz{w_0}{w_0}{\word{i}_k}(w_0(h_{02}) h_{23})} \\
      &= \genminoryz{w_k}{}{\word{i}_k}([\lifttoG{w_0} u_{023}]_{-}^{-1}) \frac{\genminoryz{}{}{\word{i}_k}([\lifttoG{w_0} u_{023}]_{0}) \genminoryz{w_k}{w_k}{\word{i}_k}(w_0(h_{02}) h_{23})}{\genminoryz{w_k}{w_k}{\word{i}_k}([\lifttoG{w_0} u_{023}]_{0}) \genminoryz{w_0}{w_0}{\word{i}_k}(w_0(h_{02}) h_{23})} \\
      &= \genminoryz{}{w_k^{\ast}}{\word{i}_k^{\ast}}([g]_{+}) \frac{\genminoryz{}{}{\word{i}_k}([\lifttoG{w_0} u_{023}]_{0}) \genminoryz{w_k}{w_k}{\word{i}_k}(w_0(h_{02}) h_{23})}{\genminoryz{w_k}{w_k}{\word{i}_k}([\lifttoG{w_0} u_{023}]_{0}) \genminoryz{w_0}{w_0}{\word{i}_k}(w_0(h_{02}) h_{23})} \\
      &= \genminoryz{}{w_k^{\ast}}{\word{i}_k^{\ast}}(g) \frac{\genminoryz{}{}{\word{i}_k}([\lifttoG{w_0} u_{023}]_{0}) \genminoryz{w_k}{w_k}{\word{i}_k}(w_0(h_{02}) h_{23})}{ \genminoryz{}{}{\word{i}_{k}^{\ast}}([g]_{0}) \genminoryz{w_k}{w_k}{\word{i}_k}([\lifttoG{w_0} u_{023}]_{0}) \genminoryz{w_0}{w_0}{\word{i}_k}(w_0(h_{02}) h_{23})} \\
      &= \genminoryz{}{w_k^{\ast}}{\word{i}_k^{\ast}}(g) \frac{\chi_{\fundamentalweight_{\word{i}_k}}\left(w_0(h_{20} h_{30}^{-1}) h_{12}\right)}{\genminoryz{w_k}{w_k}{\word{i}_k}(h_{03})} \\
      &= \genminoryz{}{w_k^{\ast}}{\word{i}_k^{\ast}}(g) \frac{\chi_{\fundamentalweight_{\word{i}_k}}\left(w_0(h_{30}^{-1}) h_{12}\right)}{\chi_{\fundamentalweight_{\word{i}_k}}(h_{02}) \genminoryz{w_k}{w_k}{\word{i}_k}(h_{03})}
    \end{align*}
    \begin{align*}
      \genminoryz{w_k}{}{\word{i}_k}(u_{123}) &= \genminoryz{w_k}{}{\word{i}_k}([g \lifttoG{w_0}]_{-}) \\
      &= \chi_{\fundamentalweight_{\word{i}_k}}\left(\left[ \dlifttoG{w_k^{-1}} g \lifttoG{w_0} [g \lifttoG{w_0}]_{+}^{-1} [g \lifttoG{w_0}]_{0}^{-1} \right]_{0}\right) \\
      &= \genminoryz{w_k}{w_0}{\word{i}_k}(g) \frac{1}{\genminoryz{}{w_0}{\word{i}_k}(g)} \\
      &= \genminoryz{w_k}{w_0}{\word{i}_k}(g) \frac{1}{\chi_{\fundamentalweight_{\word{i}_k}}([\lifttoG{w_0} [\lifttoG{w_0}^{-1} u_{123}]_{-}]_{0} w_0(h_{31}^{-1} h_{30}))} \\
      &= \genminoryz{w_k}{w_0}{\word{i}_k}(g) \frac{1}{\chi_{\fundamentalweight_{\word{i}_k}}([\lifttoG{w_0}^{-1} u_{123}]_{0}^{-1} w_0(h_{31}^{-1} h_{30}))} \\
      &= \genminoryz{w_k}{w_0}{\word{i}_k}(g) \frac{1}{\chi_{\fundamentalweight_{\word{i}_k}}(w_0(h_{12} h_{30}) h_{23})}
    \end{align*}
    \begin{align*}
      \genminoryz{w_0}{w_k}{\word{i}_k}(u_{130}) &= \genminoryz{w_0}{w_k}{\word{i}_k}(\Phi^{-1}(u_{123}) u_{130} h_{31}^{-1} h_{30}) \frac{1}{\genminoryz{w_k}{w_k}{\word{i}_k}(h_{31}^{-1} h_{30})} \\
      &= \genminoryz{w_0}{w_k}{\word{i}_k}(g) \frac{1}{\genminoryz{w_k}{w_k}{\word{i}_k}(h_{31}^{-1} h_{30})}
    \end{align*}

    Amalgamating $\alpha_{120}$ and $\alpha_{302}$ and labeling
    coordinates by minors of $g$ (up to edge coordinates) gives
    \cref{fig:flip-C3-120-302}. This includes the central edge because
    $[g]_{0} = h_{20} w_0(h_{12})$. This is not $\Qdv$, but the
    difference is only a matter of rotations and twistings.

    Note that the factors of
    $\chi_{\fundamentalweight_{\word{i}_k}}(h_{20})$ and
    $\chi_{\fundamentalweight_{\word{i}_k}}(h_{02})$ in the assignments
    for $\genminoryz{w_k}{}{\word{i}_k}(u_{120})$ and
    $\genminoryz{w_k}{}{\word{i}_k}(u_{302})$ are exactly cancelled out
    by the $\genminor{}{i}(h_2)$ factor of $\monomialmap$ from
    \cref{def:conf_3_star-to-seed-torus}. Therefore we are justified in
    ignoring edge coordinates for the remainder of the proof since we
    assume the others to be trivial.

    \begin{figure}
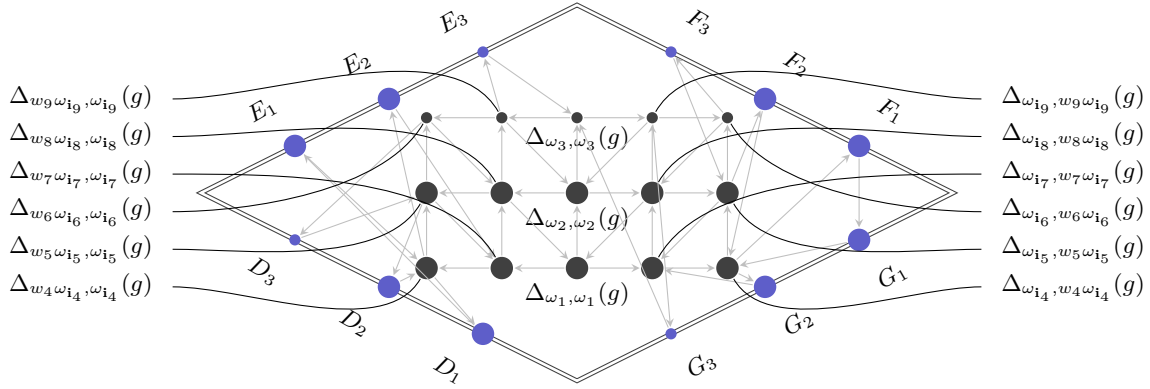

      \centering \includestandalone[mode=image|tex]{fig/flip-C3-120-302}

      \caption{Coordinates from $\alpha_{120}$ and $\alpha_{302}$.
        \label{fig:flip-C3-120-302}
      }
    \end{figure}

    After applying $\left( \murottwistR \right)$ the result is
    $\muP(\Qdv)$, with the coordinates of $\alpha_{120}$ and
    $\alpha_{230}$, as in \cref{fig:flip-C3-120-230}. In this figure,
    since each $\genminoryz{w_j}{w_k}{\word{i}_k}(g)$ is equivalent to
    some $\genminoryz{c^n}{c^m}{\word{i}_k}(g)$, we label vertices by
    the pair $(n,m)$. Further, taking into account that $c^{h} = w_0^2$
    which lifts to an element of $H$, we may consider $n$ and $m$ modulo
    $h$.

    \begin{figure}
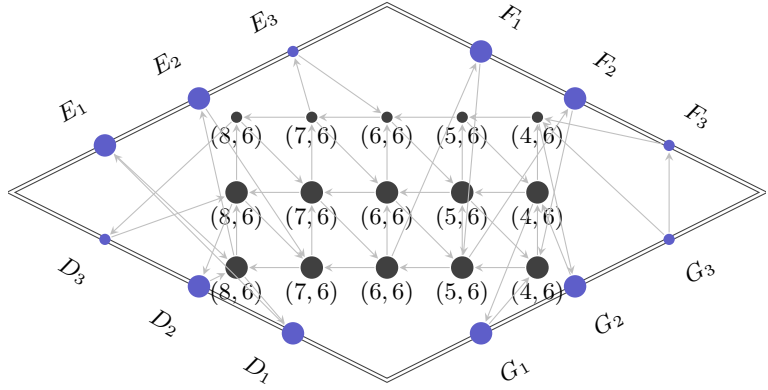

      \centering \includestandalone[mode=image|tex]{fig/flip-C3-120-230}

      \caption{Coordinates from $\alpha_{120}$ and $\alpha_{230}$,
        labeled by $(n,m)$.
        \label{fig:flip-C3-120-230}
      }
    \end{figure}

    As with $\murot$, each mutation in $\muflipcore$ takes
    $\genminoryz{c^a}{c^b}{\word{i}_k}(g)$ to
    $\genminoryz{c^{a-1}}{c^{b-1}}{\word{i}_k}(g)$ by
    \cite[Theorem~1.17]{fominzelevinsky1999} and
    \cref{lem:thm-1.17-for-repeated-c}. That results in
    \cref{fig:flip-C3-120-230-post-flipcore}.

    \begin{figure}
      \centering \includestandalone[mode=image|tex]{fig/flip-C3-120-230-post-flipcore}

      \caption{Coordinates after applying $\muflipcore$, labeled by
        $(n,m)$.
        \label{fig:flip-C3-120-230-post-flipcore}
      }
    \end{figure}

    Converting $(n,m)$ back to $\genminoryz{c^n}{c^m}{\word{i}_k}(g)$,
    we obtain \cref{fig:flip-C3-123-013}. By the calculations above,
    these are coordinates of $u_{123}$ on the right and $u_{013}$ (as
    rotated $u_{130}$) on the left.

    \begin{figure}
      \centering \includestandalone[mode=image|tex]{fig/flip-C3-123-013}

      \caption{Coordinates after applying $\muflipcore$.
        \label{fig:flip-C3-123-013}
      }
    \end{figure}

    The final combination of $\muint{O3} \muint{O4} \muint{O5}$ serve to
    swap the left and right halves by \cref{lem:rowsigmaG-does-sigmaG}.
    This completes the proof.
  \end{proof}

  \section{Proof of \cref{thm:fgcs-exists} for semisimple
    \texorpdfstring{$G$}{G}}

  \label{sec:products}

  We now expand from ``simple'' to ``semisimple''. Since the algorithm
  of \cref{sec:main-result} really only needed the information of $G$'s
  Dynkin diagram, and Dynkin diagrams behave very nicely with respect to
  products, the construction easily generalizes.

  \begin{lemma}
    \label{lem:fgcs-exists-for-products}
    Let $G$, a semisimple Lie group over $\mathbb{C}$, be $G = G_1
    \oplus G_2 \oplus \dotsb \oplus G_n$, with each $G_i$ a simple Lie
    group over $\mathbb{C}$ admitting quivers $Q_i$ with Fock--Goncharov
    coordinate structures $(\murot)_i$, $(\muflip)_i$, and
    $\conftoquiv_i$.

    Then for $G$, there is a quiver $Q$ which also carries a
    Fock--Goncharov coordinate structure.
  \end{lemma}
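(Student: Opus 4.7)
The plan is to build all data for $G$ as disjoint unions/products of the corresponding data for each $G_i$, and then verify each clause of \cref{def:fgcs} factorwise. The key observation driving everything is that semisimplicity is precisely the algebraic expression of a direct product decomposition: the Dynkin diagram of $G$ is the disjoint union of those of the $G_i$, and consequently the root system, the Weyl group $W = W_1 \times \dotsb \times W_n$, the maximal torus $H = H_1 \times \dotsb \times H_n$, the unipotent subgroups $N_{\pm} = (N_{\pm})_1 \times \dotsb \times (N_{\pm})_n$, and the fundamental weights all decompose as products. In particular, a presentation for $w_0$ is obtained by concatenating fixed presentations $w_0^{(i)}$ for each factor (with each letter referring to a node of the appropriate component of the Dynkin diagram), and this presentation still satisfies \cref{fac:w0-construction} componentwise. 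From the isomorphism $G/N_{+} \cong \prod_i G_i/(N_{+})_i$ one also obtains $\Conf_3^{\ast}(G/N_{+}) \cong \prod_i \Conf_3^{\ast}(G_i/(N_{+})_i)$, with the canonical form of \cref{lem:canonical-form-for-conf_3_star} decomposing accordingly, and likewise for configurations of four flags.

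Next I would define $Q$ as the disjoint union of the $Q_i$, taking the triangular structure of \cref{def:triangular-quiver} coordinate by coordinate: the $A_{\bullet}$ edge of $Q$ is the disjoint union of the $A_{\bullet}^{(i)}$ edges, similarly for $B_{\bullet}$, $C_{\bullet}$, and the interior vertices. Because the subquivers $Q_i$ live on disjoint vertex sets and are connected by no edges, any mutation at a vertex of $Q_i$ has the same effect whether performed inside $Q_i$ alone or inside $Q$, and mutations at vertices of different $Q_i$ commute. I would then set $\murot = \prod_{i=1}^{n} (\murot)_i$ and $\muflip = \prod_{i=1}^{n} (\muflip)_i$, with the order of the product immaterial, and define $\conftoquiv : \Conf_3^{\ast}(G/N_{+}) \to \seedAtorus_Q$ componentwise via the $\conftoquiv_i$ under the product decomposition above.

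Finally I would check the five clauses of \cref{def:fgcs} individually, each reducing to the corresponding property for each $G_i$. Triangularity and the existence of $\murot$ and $\muflip$ realising rotation and flip (\cref{def:fgcs-1}) are immediate from the disjoint-union construction since $\Qdv$ and $\Qdh$ also decompose as disjoint unions of the $(Q_i)^{0\text{--}2}$ and $(Q_i)^{1\text{--}3}$. The half-Dynkin condition (\cref{def:fgcs-2}) is immediate because the Dynkin diagram of $G$ is the disjoint union of those of the $G_i$ and the identifications $\varphi,\psi$ of \cref{def:dv-dh-construction} are taken componentwise. Birationality of $\conftoquiv$ (\cref{def:fgcs-M-is-birational-equivalence}) is a product of birational equivalences, hence birational. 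For \cref{def:fgcs-rot-works,def:fgcs-flip-works} one observes that the maps $\operatorname{rot}$ and $\Psi_{13}\circ\Psi_{02}^{-1}$ on configurations of flags for $G$ are themselves the products of the corresponding maps for each $G_i$, because they act on each coset factor independently; thus the commuting square for $G$ is the product of the commuting squares for each $G_i$, each of which commutes by hypothesis.

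I do not anticipate any substantive obstacle: the whole argument is a formal check that every ingredient of \cref{def:fgcs} is compatible with direct products of groups and disjoint unions of quivers. The only point requiring a moment of care is to confirm that the chosen presentation of $w_0$ for $G$, obtained by concatenating the presentations for the $w_0^{(i)}$, is compatible with the choices of Coxeter elements and tree-like orderings used in \cref{sec:main-result} for each $G_i$; but since the algorithm acts on each component independently whenever the Dynkin diagram is disconnected, this is automatic.
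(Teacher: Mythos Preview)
Your proposal is correct and follows essentially the same approach as the paper: construct $Q$ as the disjoint union of the $Q_i$, take $\murot$ and $\muflip$ as concatenations of the factor mutations (which commute since the components are disconnected), define $\conftoquiv$ componentwise, and then verify each clause of \cref{def:fgcs} by reducing to the factors via the product decomposition of $G$, $N_{+}$, $\Conf_3^{\ast}$, $\operatorname{rot}$, and $\Psi_{ij}$. The paper adds only the small explicit justification that brackets between generators from different factors vanish (so the one-parameter subgroups commute and the factorization of a generic element into its $G_i$-components is well defined), which you have subsumed in your opening paragraph.
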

  \begin{proof}
    We must construct $Q$, $\murot$, $\muflip$, $\conftoquiv$, then show
    that all requirements of \cref{def:fgcs} are satisfied. This follows
    entirely from the fact that the Dynkin diagram of the direct product
    of groups is the disjoint union of Dynkin diagrams of the factors.

    For $Q$, take the disjoint union of each $Q_i$, with each
    $\facemap_{\Delta}(Q)$ being the disjoint union of all
    $\facemap_{\Delta}(Q_i)$. This is triangular since each component
    is.

    The mutations $\murot$ and $\muflip$ are concatenations of all
    $(\murot)_i$ and $(\muflip)_i$ respectively. The ordering of these
    components are irrelevant, because quiver mutations at disconnected
    vertices commute and $Q$ is exactly a disjoint union. Any
    concatenation will operate componentwise on the pieces of the
    disjoint union. This satisfies \cref{def:fgcs-1}.

    By the aforementioned fact of Dynkin diagrams, \cref{def:fgcs-2} is
    also satisfied.

    The map $\conftoquiv$ is defined as a product of each
    $\conftoquiv_i$ in the following manner. Let $\alpha_i$ and
    $\alpha_j$ be any two elements of the roots system for $G$
    associated to different factors. Then the nodes corresponding to
    those roots are disconnected in the Dynkin diagram for $G$, so the
    $A_{ij} = 0$ in the Cartan matrix.

    By e.g. \cite[Proposition~2.95]{knapp1996}, the bracket of any
    $\set{e_i, f_i, h_i}$ and $\set{e_j, f_j, h_j}$ vanishes, so any
    $\set{x_i, y_i, \torush_i}$ and $\set{x_j, y_j, \torush_j}$ commute.
    Letting $\alpha = (g_0 N_{+}, g_1 N_{+}, g_2 N_{+}, g_3 N_{+})$, by
    sufficient genericity we may factorize any $g_i$ as a product of
    $x_k, y_k, \torush_k$ elements, and may therefore rearrange these
    factors so that \[ g_i = (g_i)_1 (g_i)_2 \dotsb (g_i)_n, \qquad
      (g_i)_j \in G_j. \] Then the maps $\operatorname{rot}$ and
    $\Psi_{ij}$ trivially factor through the decomposition of $\alpha$
    into \[ \prod_{i = 1}^{n} \left[ \strut \alpha_i = ((g_0)_i, (g_1)_i
      N_{+}, (g_2)_i N_{+}, (g_3)_i N_{+}) \right]. \] That is, it is
    obvious that $\prod_i \operatorname{rot}(\alpha_i) =
    \operatorname{rot}(\prod_i \alpha_i)$. This, together with the
    construction of $Q$, shows that \cref{def:fgcs-M-is-birational-equivalence,def:fgcs-rot-works,def:fgcs-flip-works}
    are satisfied since they are satisfied for each $G_i$.
  \end{proof}

  \section{Examples}

  \label{sec:examples}

  Here we provide examples, in consistent notation, of $Q$, $\murot$,
  $\muflip$, and $\conftoquiv$ for various types (for $F_4$ see the
  examples of \cref{sec:main-result}). Our notation is intended to agree
  with that of \cite{zickert2016}, with the caveat that the mutations we
  present are longer but do not require vertex renaming (see
  \cref{sec:what-does-the-twist-do}). All Dynkin diagrams are taken from
  \cite[Figure~2.4]{knapp1996}, equivalently
  \cite[Plates~II--IX]{bourbaki2002_46} (ignoring the extending nodes).
  In each example, $\alpha = (h_1, h_2, h_3, u)$, where the coordinates
  of $h_i$ are $\set{h_{ij}}$, as in \cref{ex:f4-part-6}.

  These results were obtained by the \texttt{latex-Q},
  \texttt{print-murot}, \texttt{print-muflip}, and \texttt{print-M}
  commands of \cite{gilles2020sw}.

  %
  %
  %
  %

  \subsection{\texorpdfstring{$A_5$}{A\_5}}

  We use the Dynkin diagram \inldynkinAfive. The Coxeter element is $c =
  \set{3, 2, 4, 1, 5}$, and the partitions are $T_0 = \set{3}$, $T_1 =
  \set{2, 4}$, $T_2 = \set{1, 5}$. The Dynkin-type quiver follows.

  {\centering \includestandalone[mode=image|tex]{fig/dynkin-type-A5}

  } $Q_{A_5}$ is given in \cref{fig:QA5}.

  \begin{figure}
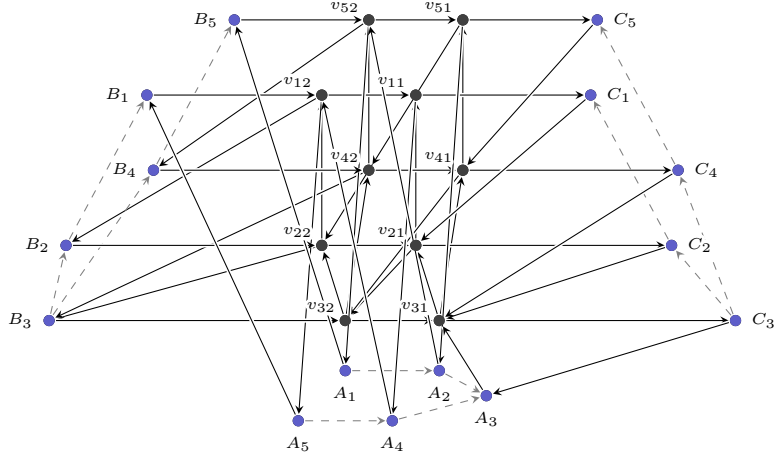

    \centering \includestandalone[mode=image|tex]{fig/QA5}

    \caption{$Q$ for $A_5$.
      \label{fig:QA5}
    }
  \end{figure}

  \inlmutseq{\murot}{ $v_{3 1}$, $v_{2 1}$, $v_{4 1}$, $v_{1 1}$, $v_{5
      1}$, $v_{3 2}$, $v_{2 2}$, $v_{4 2}$, $v_{1 2}$, $v_{5 2}$, $v_{3
      1}$, $v_{2 1}$, $v_{4 1}$, $v_{1 1}$, $v_{5 1}$, $v_{3 1}$, $v_{2
      1}$, $v_{4 1}$, $v_{1 1}$, $v_{5 1}$, $v_{3 1}$, $v_{2 1}$, $v_{4
      1}$, $v_{1 1}$, $v_{5 1}$, $v_{3 1}$, $v_{2 1}$, $v_{4 1}$, $v_{1
      1}$, $v_{5 1}$, $v_{3 1}$, $v_{2 1}$, $v_{4 1}$, $v_{1 1}$, $v_{5
      1}$, $v_{3 2}$, $v_{2 2}$, $v_{4 2}$, $v_{1 2}$, $v_{5 2}$, $v_{3
      2}$, $v_{2 2}$, $v_{4 2}$, $v_{1 2}$, $v_{5 2}$, $v_{3 2}$, $v_{2
      2}$, $v_{4 2}$, $v_{1 2}$, $v_{5 2}$, $v_{3 2}$, $v_{2 2}$, $v_{4
      2}$, $v_{1 2}$, $v_{5 2}$, $v_{3 1}$, $v_{3 2}$, $v_{3 1}$, $v_{3
      2}$, $v_{3 1}$, $v_{2 1}$, $v_{4 1}$, $v_{2 2}$, $v_{4 2}$, $v_{2
      1}$, $v_{4 1}$, $v_{2 2}$, $v_{4 2}$, $v_{2 1}$, $v_{4 1}$, $v_{1
      1}$, $v_{5 1}$, $v_{1 2}$, $v_{5 2}$, $v_{1 1}$, $v_{5 1}$, $v_{1
      2}$, $v_{5 2}$, $v_{1 1}$, $v_{5 1}$ }

  \inlmutseq{\muflip}{$w_{1 5}$, $w_{5 5}$, $w_{2 5}$, $w_{4 5}$, $w_{3
      5}$, $w_{1 4}$, $w_{5 4}$, $w_{2 4}$, $w_{4 4}$, $w_{3 4}$, $w_{1
      5}$, $w_{5 5}$, $w_{2 5}$, $w_{4 5}$, $w_{3 5}$, $w_{1 2}$, $w_{5
      2}$, $w_{2 2}$, $w_{4 2}$, $w_{3 2}$, $w_{1 1}$, $w_{5 1}$, $w_{2
      1}$, $w_{4 1}$, $w_{3 1}$, $w_{1 2}$, $w_{5 2}$, $w_{2 2}$, $w_{4
      2}$, $w_{3 2}$, $w_{3 5}$, $w_{2 5}$, $w_{4 5}$, $w_{1 5}$, $w_{5
      5}$, $w_{3 4}$, $w_{2 4}$, $w_{4 4}$, $w_{1 4}$, $w_{5 4}$, $w_{3
      3}$, $w_{2 3}$, $w_{4 3}$, $w_{1 3}$, $w_{5 3}$, $w_{3 2}$, $w_{2
      2}$, $w_{4 2}$, $w_{1 2}$, $w_{5 2}$, $w_{3 1}$, $w_{2 1}$, $w_{4
      1}$, $w_{1 1}$, $w_{5 1}$, $w_{3 5}$, $w_{2 5}$, $w_{4 5}$, $w_{1
      5}$, $w_{5 5}$, $w_{3 4}$, $w_{2 4}$, $w_{4 4}$, $w_{1 4}$, $w_{5
      4}$, $w_{3 3}$, $w_{2 3}$, $w_{4 3}$, $w_{1 3}$, $w_{5 3}$, $w_{3
      2}$, $w_{2 2}$, $w_{4 2}$, $w_{1 2}$, $w_{5 2}$, $w_{3 5}$, $w_{2
      5}$, $w_{4 5}$, $w_{1 5}$, $w_{5 5}$, $w_{3 4}$, $w_{2 4}$, $w_{4
      4}$, $w_{1 4}$, $w_{5 4}$, $w_{3 3}$, $w_{2 3}$, $w_{4 3}$, $w_{1
      3}$, $w_{5 3}$, $w_{3 5}$, $w_{2 5}$, $w_{4 5}$, $w_{1 5}$, $w_{5
      5}$, $w_{3 4}$, $w_{2 4}$, $w_{4 4}$, $w_{1 4}$, $w_{5 4}$, $w_{3
      5}$, $w_{2 5}$, $w_{4 5}$, $w_{1 5}$, $w_{5 5}$, $w_{1 1}$, $w_{5
      1}$, $w_{1 2}$, $w_{5 2}$, $w_{1 3}$, $w_{5 3}$, $w_{1 4}$, $w_{5
      4}$, $w_{1 5}$, $w_{5 5}$, $w_{1 1}$, $w_{5 1}$, $w_{1 2}$, $w_{5
      2}$, $w_{1 3}$, $w_{5 3}$, $w_{1 4}$, $w_{5 4}$, $w_{1 1}$, $w_{5
      1}$, $w_{1 2}$, $w_{5 2}$, $w_{1 3}$, $w_{5 3}$, $w_{1 1}$, $w_{5
      1}$, $w_{1 2}$, $w_{5 2}$, $w_{1 1}$, $w_{5 1}$, $w_{1 5}$, $w_{5
      5}$, $w_{1 4}$, $w_{5 4}$, $w_{1 3}$, $w_{5 3}$, $w_{1 2}$, $w_{5
      2}$, $w_{1 1}$, $w_{5 1}$, $w_{2 1}$, $w_{4 1}$, $w_{2 2}$, $w_{4
      2}$, $w_{2 3}$, $w_{4 3}$, $w_{2 4}$, $w_{4 4}$, $w_{2 5}$, $w_{4
      5}$, $w_{2 1}$, $w_{4 1}$, $w_{2 2}$, $w_{4 2}$, $w_{2 3}$, $w_{4
      3}$, $w_{2 4}$, $w_{4 4}$, $w_{2 1}$, $w_{4 1}$, $w_{2 2}$, $w_{4
      2}$, $w_{2 3}$, $w_{4 3}$, $w_{2 1}$, $w_{4 1}$, $w_{2 2}$, $w_{4
      2}$, $w_{2 1}$, $w_{4 1}$, $w_{2 5}$, $w_{4 5}$, $w_{2 4}$, $w_{4
      4}$, $w_{2 3}$, $w_{4 3}$, $w_{2 2}$, $w_{4 2}$, $w_{2 1}$, $w_{4
      1}$, $w_{3 1}$, $w_{3 2}$, $w_{3 3}$, $w_{3 4}$, $w_{3 5}$, $w_{3
      1}$, $w_{3 2}$, $w_{3 3}$, $w_{3 4}$, $w_{3 1}$, $w_{3 2}$, $w_{3
      3}$, $w_{3 1}$, $w_{3 2}$, $w_{3 1}$, $w_{3 5}$, $w_{3 4}$, $w_{3
      3}$, $w_{3 2}$, $w_{3 1}$, $w_{1 4}$, $w_{5 4}$, $w_{1 5}$, $w_{5
      5}$, $w_{1 4}$, $w_{5 4}$, $w_{1 5}$, $w_{5 5}$, $w_{1 4}$, $w_{5
      4}$, $w_{2 4}$, $w_{4 4}$, $w_{2 5}$, $w_{4 5}$, $w_{2 4}$, $w_{4
      4}$, $w_{2 5}$, $w_{4 5}$, $w_{2 4}$, $w_{4 4}$, $w_{3 4}$, $w_{3
      5}$, $w_{3 4}$, $w_{3 5}$, $w_{3 4}$, $w_{1 1}$, $w_{5 1}$, $w_{1
      2}$, $w_{5 2}$, $w_{1 1}$, $w_{5 1}$, $w_{1 2}$, $w_{5 2}$, $w_{1
      1}$, $w_{5 1}$, $w_{2 1}$, $w_{4 1}$, $w_{2 2}$, $w_{4 2}$, $w_{2
      1}$, $w_{4 1}$, $w_{2 2}$, $w_{4 2}$, $w_{2 1}$, $w_{4 1}$, $w_{3
      1}$, $w_{3 2}$, $w_{3 1}$, $w_{3 2}$, $w_{3 1}$ }

  $\conftoquiv(\alpha)$ gives
  \begin{equation*}
    \begin{gathered}
      \begin{aligned}
        A_1 &= h_{11} &\quad A_2 &= h_{12} &\quad A_3 &= h_{13} &\quad A_4 &= h_{14} &\quad A_5 &= h_{15} \\
        B_1 &= h_{21} &\quad B_2 &= h_{22} &\quad B_3 &= h_{23} &\quad B_4 &= h_{24} &\quad B_5 &= h_{25} \\
        C_1 &= h_{35} &\quad C_2 &= h_{34} &\quad C_3 &= h_{33} &\quad C_4 &= h_{32} &\quad C_5 &= h_{31} \\
      \end{aligned}
      \\
      \begin{aligned}
        v_{3 1} &= \genminor{w_{6}}{3}(u) \cdot h_{23} &v_{3 2} &= \genminor{w_{11}}{3}(u) \cdot h_{23} \\
        v_{2 1} &= \genminor{w_{7}}{2}(u) \cdot \frac{h_{13} h_{22} }{h_{14}} &v_{2 2} &= \genminor{w_{12}}{2}(u) \cdot \frac{h_{13} h_{22} }{h_{14}} \\
        v_{4 1} &= \genminor{w_{8}}{4}(u) \cdot \frac{h_{13} h_{24} }{h_{12}} &v_{4 2} &= \genminor{w_{13}}{4}(u) \cdot \frac{h_{13} h_{24} }{h_{12}} \\
        v_{1 1} &= \genminor{w_{9}}{1}(u) \cdot \frac{h_{13} h_{21} }{h_{15}} &v_{1 2} &= \genminor{w_{14}}{1}(u) \cdot \frac{h_{14} h_{21} }{h_{15}} \\
        v_{5 1} &= \genminor{w_{10}}{5}(u) \cdot \frac{h_{13} h_{25} }{h_{11}} &v_{5 2} &= \genminor{w_{15}}{5}(u) \cdot \frac{h_{12} h_{25} }{h_{11}}
      \end{aligned}
    \end{gathered}
  \end{equation*}

  \subsection{\texorpdfstring{$B_3$}{B\_3}}

  We use the Dynkin diagram \inldynkinBthree. This is linear, so the
  Coxeter element and partitions are trivially $c = \set{1, 2, 3}$ and
  $T_0 = \set{1}, T_1 = \set{2}, T_2 = \set{3}$. The Dynkin-type quiver
  follows (recall \cref{rem:sizes-get-switched}).

  {\centering \includestandalone[mode=image|tex]{fig/dynkin-type-B3}

  } $Q_{B_3}$ is given in \cref{fig:QB3}.

  \begin{figure}
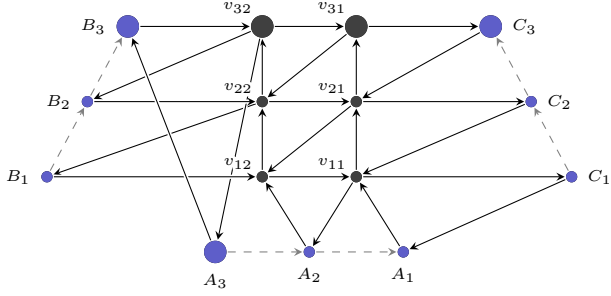

    \centering \includestandalone[mode=image|tex]{fig/QB3}

    \caption{$Q$ for $B_3$.
      \label{fig:QB3}
    }
  \end{figure}

  \inlmutseq{\murot}{ $v_{1 1}$, $v_{2 1}$, $v_{3 1}$, $v_{1 2}$, $v_{2
      2}$, $v_{3 2}$, $v_{1 1}$, $v_{2 1}$, $v_{3 1}$, $v_{1 1}$, $v_{1
      2}$, $v_{1 1}$, $v_{1 2}$, $v_{1 1}$, $v_{2 1}$, $v_{2 2}$, $v_{2
      1}$, $v_{2 2}$, $v_{2 1}$, $v_{3 1}$, $v_{3 2}$, $v_{3 1}$, $v_{3
      2}$, $v_{3 1}$ }

  \inlmutseq{\muflip}{ $w_{3 5}$, $w_{2 5}$, $w_{1 5}$, $w_{3 4}$, $w_{2
      4}$, $w_{1 4}$, $w_{3 5}$, $w_{2 5}$, $w_{1 5}$, $w_{3 2}$, $w_{2
      2}$, $w_{1 2}$, $w_{3 1}$, $w_{2 1}$, $w_{1 1}$, $w_{3 2}$, $w_{2
      2}$, $w_{1 2}$, $w_{1 5}$, $w_{2 5}$, $w_{3 5}$, $w_{1 4}$, $w_{2
      4}$, $w_{3 4}$, $w_{1 3}$, $w_{2 3}$, $w_{3 3}$, $w_{1 2}$, $w_{2
      2}$, $w_{3 2}$, $w_{1 1}$, $w_{2 1}$, $w_{3 1}$, $w_{1 5}$, $w_{2
      5}$, $w_{3 5}$, $w_{1 4}$, $w_{2 4}$, $w_{3 4}$, $w_{1 3}$, $w_{2
      3}$, $w_{3 3}$, $w_{1 2}$, $w_{2 2}$, $w_{3 2}$, $w_{1 5}$, $w_{2
      5}$, $w_{3 5}$, $w_{1 4}$, $w_{2 4}$, $w_{3 4}$, $w_{1 3}$, $w_{2
      3}$, $w_{3 3}$, $w_{1 5}$, $w_{2 5}$, $w_{3 5}$, $w_{1 4}$, $w_{2
      4}$, $w_{3 4}$, $w_{1 5}$, $w_{2 5}$, $w_{3 5}$, $w_{3 1}$, $w_{3
      2}$, $w_{3 3}$, $w_{3 4}$, $w_{3 5}$, $w_{3 1}$, $w_{3 2}$, $w_{3
      3}$, $w_{3 4}$, $w_{3 1}$, $w_{3 2}$, $w_{3 3}$, $w_{3 1}$, $w_{3
      2}$, $w_{3 1}$, $w_{3 5}$, $w_{3 4}$, $w_{3 3}$, $w_{3 2}$, $w_{3
      1}$, $w_{2 1}$, $w_{2 2}$, $w_{2 3}$, $w_{2 4}$, $w_{2 5}$, $w_{2
      1}$, $w_{2 2}$, $w_{2 3}$, $w_{2 4}$, $w_{2 1}$, $w_{2 2}$, $w_{2
      3}$, $w_{2 1}$, $w_{2 2}$, $w_{2 1}$, $w_{2 5}$, $w_{2 4}$, $w_{2
      3}$, $w_{2 2}$, $w_{2 1}$, $w_{1 1}$, $w_{1 2}$, $w_{1 3}$, $w_{1
      4}$, $w_{1 5}$, $w_{1 1}$, $w_{1 2}$, $w_{1 3}$, $w_{1 4}$, $w_{1
      1}$, $w_{1 2}$, $w_{1 3}$, $w_{1 1}$, $w_{1 2}$, $w_{1 1}$, $w_{1
      5}$, $w_{1 4}$, $w_{1 3}$, $w_{1 2}$, $w_{1 1}$, $w_{3 4}$, $w_{3
      5}$, $w_{3 4}$, $w_{3 5}$, $w_{3 4}$, $w_{2 4}$, $w_{2 5}$, $w_{2
      4}$, $w_{2 5}$, $w_{2 4}$, $w_{1 4}$, $w_{1 5}$, $w_{1 4}$, $w_{1
      5}$, $w_{1 4}$, $w_{3 1}$, $w_{3 2}$, $w_{3 1}$, $w_{3 2}$, $w_{3
      1}$, $w_{2 1}$, $w_{2 2}$, $w_{2 1}$, $w_{2 2}$, $w_{2 1}$, $w_{1
      1}$, $w_{1 2}$, $w_{1 1}$, $w_{1 2}$, $w_{1 1}$ }

  $\conftoquiv(\alpha)$ gives
  \begin{equation*}
    \begin{gathered}
      \begin{aligned}
        A_1 &= h_{11} &\qquad A_2 &= h_{12} &\qquad A_3 &= h_{13} \\
        B_1 &= h_{11} &\qquad B_2 &= h_{12} &\qquad B_3 &= h_{13} \\
        C_1 &= h_{11} &\qquad C_2 &= h_{12} &\qquad C_3 &= h_{13} \\
      \end{aligned}
      \\
      \begin{aligned}
        v_{1 1} &= \genminor{w_{4}}{1}(u) \cdot h_{21} & v_{2 1} &= \genminor{w_{5}}{2}(u) \cdot \frac{h_{11} h_{22} }{h_{12}} & v_{3 1} &= \genminor{w_{6}}{3}(u) \cdot \frac{h_{11} h_{23} }{h_{13}} \\
        v_{1 2} &= \genminor{w_{7}}{1}(u) \cdot \frac{h_{12} h_{21} }{h_{11}} & v_{2 2} &= \genminor{w_{8}}{2}(u) \cdot h_{11} h_{22} & v_{3 2} &= \genminor{w_{9}}{3}(u) \cdot \frac{h_{12} h_{23} }{h_{13}} \\
      \end{aligned}
    \end{gathered}
  \end{equation*}

  \subsection{\texorpdfstring{$D_5$}{D\_5}}

  Based on the Dynkin diagram \inldynkinDfive, the following quiver is
  of well-rooted Dynkin type for $D_5$ (recall that $\sigma_G$ is
  trivial for type $D_{2n}$ and non-trivial for type $D_{2n+1}$). It
  admits an induced Coxeter element $c = \set{1, 2, 3, 4, 5}$, with
  partitions \[ T_0 = \set{1}, \quad T_1 = \set{2}, \quad T_2 = \set{3},
    \quad T_3 = \set{4, 5}. \]

  {\centering \includestandalone[mode=image|tex]{fig/dynkin-type-D5}

  } $Q_{D_5}$ is given in \cref{fig:QD5}.

  \begin{figure}
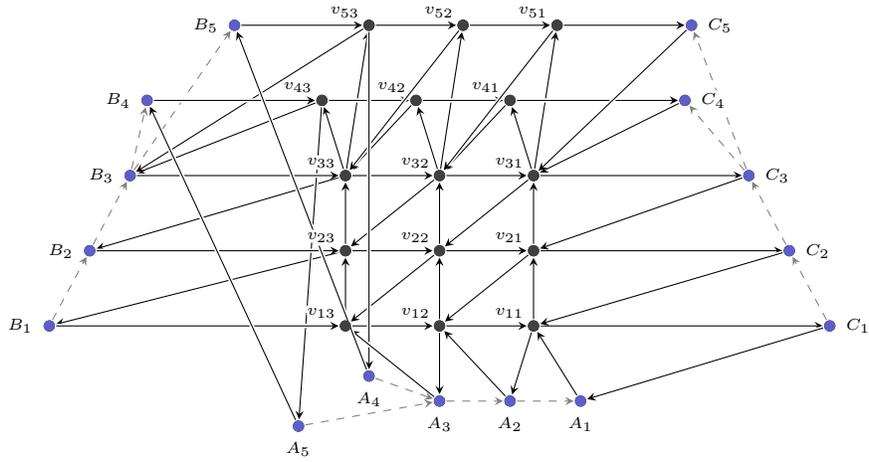

    \centering \includestandalone[mode=image|tex]{fig/QD5}

    \caption{$Q$ for $D_5$.
      \label{fig:QD5}
    }
  \end{figure}

  \inlmutseq{\murot}{ $v_{1 1}$, $v_{2 1}$, $v_{3 1}$, $v_{4 1}$, $v_{5
      1}$, $v_{1 2}$, $v_{2 2}$, $v_{3 2}$, $v_{4 2}$, $v_{5 2}$, $v_{1
      1}$, $v_{2 1}$, $v_{3 1}$, $v_{4 1}$, $v_{5 1}$, $v_{1 3}$, $v_{2
      3}$, $v_{3 3}$, $v_{4 3}$, $v_{5 3}$, $v_{1 2}$, $v_{2 2}$, $v_{3
      2}$, $v_{4 2}$, $v_{5 2}$, $v_{1 1}$, $v_{2 1}$, $v_{3 1}$, $v_{4
      1}$, $v_{5 1}$, $v_{1 1}$, $v_{2 1}$, $v_{3 1}$, $v_{4 1}$, $v_{5
      1}$, $v_{1 1}$, $v_{2 1}$, $v_{3 1}$, $v_{4 1}$, $v_{5 1}$, $v_{1
      1}$, $v_{2 1}$, $v_{3 1}$, $v_{4 1}$, $v_{5 1}$, $v_{1 1}$, $v_{2
      1}$, $v_{3 1}$, $v_{4 1}$, $v_{5 1}$, $v_{1 1}$, $v_{2 1}$, $v_{3
      1}$, $v_{4 1}$, $v_{5 1}$, $v_{1 2}$, $v_{2 2}$, $v_{3 2}$, $v_{4
      2}$, $v_{5 2}$, $v_{1 2}$, $v_{2 2}$, $v_{3 2}$, $v_{4 2}$, $v_{5
      2}$, $v_{1 2}$, $v_{2 2}$, $v_{3 2}$, $v_{4 2}$, $v_{5 2}$, $v_{1
      2}$, $v_{2 2}$, $v_{3 2}$, $v_{4 2}$, $v_{5 2}$, $v_{1 2}$, $v_{2
      2}$, $v_{3 2}$, $v_{4 2}$, $v_{5 2}$, $v_{1 3}$, $v_{2 3}$, $v_{3
      3}$, $v_{4 3}$, $v_{5 3}$, $v_{1 3}$, $v_{2 3}$, $v_{3 3}$, $v_{4
      3}$, $v_{5 3}$, $v_{1 3}$, $v_{2 3}$, $v_{3 3}$, $v_{4 3}$, $v_{5
      3}$, $v_{1 3}$, $v_{2 3}$, $v_{3 3}$, $v_{4 3}$, $v_{5 3}$, $v_{1
      3}$, $v_{2 3}$, $v_{3 3}$, $v_{4 3}$, $v_{5 3}$, $v_{1 1}$, $v_{1
      2}$, $v_{1 3}$, $v_{1 1}$, $v_{1 2}$, $v_{1 1}$, $v_{1 3}$, $v_{1
      2}$, $v_{1 1}$, $v_{2 1}$, $v_{2 2}$, $v_{2 3}$, $v_{2 1}$, $v_{2
      2}$, $v_{2 1}$, $v_{2 3}$, $v_{2 2}$, $v_{2 1}$, $v_{3 1}$, $v_{3
      2}$, $v_{3 3}$, $v_{3 1}$, $v_{3 2}$, $v_{3 1}$, $v_{3 3}$, $v_{3
      2}$, $v_{3 1}$, $v_{4 1}$, $v_{5 1}$, $v_{4 2}$, $v_{5 2}$, $v_{4
      3}$, $v_{5 3}$, $v_{4 1}$, $v_{5 1}$, $v_{4 2}$, $v_{5 2}$, $v_{4
      1}$, $v_{5 1}$, $v_{4 3}$, $v_{5 3}$, $v_{4 2}$, $v_{5 2}$, $v_{4
      1}$, $v_{5 1}$ }

  \inlmutseq{\muflip}{ $w_{4 7}$, $w_{5 7}$, $w_{3 7}$, $w_{2 7}$, $w_{1
      7}$, $w_{4 6}$, $w_{5 6}$, $w_{3 6}$, $w_{2 6}$, $w_{1 6}$, $w_{4
      7}$, $w_{5 7}$, $w_{3 7}$, $w_{2 7}$, $w_{1 7}$, $w_{4 5}$, $w_{5
      5}$, $w_{3 5}$, $w_{2 5}$, $w_{1 5}$, $w_{4 6}$, $w_{5 6}$, $w_{3
      6}$, $w_{2 6}$, $w_{1 6}$, $w_{4 7}$, $w_{5 7}$, $w_{3 7}$, $w_{2
      7}$, $w_{1 7}$, $w_{4 3}$, $w_{5 3}$, $w_{3 3}$, $w_{2 3}$, $w_{1
      3}$, $w_{4 2}$, $w_{5 2}$, $w_{3 2}$, $w_{2 2}$, $w_{1 2}$, $w_{4
      3}$, $w_{5 3}$, $w_{3 3}$, $w_{2 3}$, $w_{1 3}$, $w_{4 1}$, $w_{5
      1}$, $w_{3 1}$, $w_{2 1}$, $w_{1 1}$, $w_{4 2}$, $w_{5 2}$, $w_{3
      2}$, $w_{2 2}$, $w_{1 2}$, $w_{4 3}$, $w_{5 3}$, $w_{3 3}$, $w_{2
      3}$, $w_{1 3}$, $w_{1 7}$, $w_{2 7}$, $w_{3 7}$, $w_{4 7}$, $w_{5
      7}$, $w_{1 6}$, $w_{2 6}$, $w_{3 6}$, $w_{4 6}$, $w_{5 6}$, $w_{1
      5}$, $w_{2 5}$, $w_{3 5}$, $w_{4 5}$, $w_{5 5}$, $w_{1 4}$, $w_{2
      4}$, $w_{3 4}$, $w_{4 4}$, $w_{5 4}$, $w_{1 3}$, $w_{2 3}$, $w_{3
      3}$, $w_{4 3}$, $w_{5 3}$, $w_{1 2}$, $w_{2 2}$, $w_{3 2}$, $w_{4
      2}$, $w_{5 2}$, $w_{1 1}$, $w_{2 1}$, $w_{3 1}$, $w_{4 1}$, $w_{5
      1}$, $w_{1 7}$, $w_{2 7}$, $w_{3 7}$, $w_{4 7}$, $w_{5 7}$, $w_{1
      6}$, $w_{2 6}$, $w_{3 6}$, $w_{4 6}$, $w_{5 6}$, $w_{1 5}$, $w_{2
      5}$, $w_{3 5}$, $w_{4 5}$, $w_{5 5}$, $w_{1 4}$, $w_{2 4}$, $w_{3
      4}$, $w_{4 4}$, $w_{5 4}$, $w_{1 3}$, $w_{2 3}$, $w_{3 3}$, $w_{4
      3}$, $w_{5 3}$, $w_{1 2}$, $w_{2 2}$, $w_{3 2}$, $w_{4 2}$, $w_{5
      2}$, $w_{1 7}$, $w_{2 7}$, $w_{3 7}$, $w_{4 7}$, $w_{5 7}$, $w_{1
      6}$, $w_{2 6}$, $w_{3 6}$, $w_{4 6}$, $w_{5 6}$, $w_{1 5}$, $w_{2
      5}$, $w_{3 5}$, $w_{4 5}$, $w_{5 5}$, $w_{1 4}$, $w_{2 4}$, $w_{3
      4}$, $w_{4 4}$, $w_{5 4}$, $w_{1 3}$, $w_{2 3}$, $w_{3 3}$, $w_{4
      3}$, $w_{5 3}$, $w_{1 7}$, $w_{2 7}$, $w_{3 7}$, $w_{4 7}$, $w_{5
      7}$, $w_{1 6}$, $w_{2 6}$, $w_{3 6}$, $w_{4 6}$, $w_{5 6}$, $w_{1
      5}$, $w_{2 5}$, $w_{3 5}$, $w_{4 5}$, $w_{5 5}$, $w_{1 4}$, $w_{2
      4}$, $w_{3 4}$, $w_{4 4}$, $w_{5 4}$, $w_{1 7}$, $w_{2 7}$, $w_{3
      7}$, $w_{4 7}$, $w_{5 7}$, $w_{1 6}$, $w_{2 6}$, $w_{3 6}$, $w_{4
      6}$, $w_{5 6}$, $w_{1 5}$, $w_{2 5}$, $w_{3 5}$, $w_{4 5}$, $w_{5
      5}$, $w_{1 7}$, $w_{2 7}$, $w_{3 7}$, $w_{4 7}$, $w_{5 7}$, $w_{1
      6}$, $w_{2 6}$, $w_{3 6}$, $w_{4 6}$, $w_{5 6}$, $w_{1 7}$, $w_{2
      7}$, $w_{3 7}$, $w_{4 7}$, $w_{5 7}$, $w_{4 1}$, $w_{5 1}$, $w_{4
      2}$, $w_{5 2}$, $w_{4 3}$, $w_{5 3}$, $w_{4 4}$, $w_{5 4}$, $w_{4
      5}$, $w_{5 5}$, $w_{4 6}$, $w_{5 6}$, $w_{4 7}$, $w_{5 7}$, $w_{4
      1}$, $w_{5 1}$, $w_{4 2}$, $w_{5 2}$, $w_{4 3}$, $w_{5 3}$, $w_{4
      4}$, $w_{5 4}$, $w_{4 5}$, $w_{5 5}$, $w_{4 6}$, $w_{5 6}$, $w_{4
      1}$, $w_{5 1}$, $w_{4 2}$, $w_{5 2}$, $w_{4 3}$, $w_{5 3}$, $w_{4
      4}$, $w_{5 4}$, $w_{4 5}$, $w_{5 5}$, $w_{4 1}$, $w_{5 1}$, $w_{4
      2}$, $w_{5 2}$, $w_{4 3}$, $w_{5 3}$, $w_{4 4}$, $w_{5 4}$, $w_{4
      1}$, $w_{5 1}$, $w_{4 2}$, $w_{5 2}$, $w_{4 3}$, $w_{5 3}$, $w_{4
      1}$, $w_{5 1}$, $w_{4 2}$, $w_{5 2}$, $w_{4 1}$, $w_{5 1}$, $w_{4
      7}$, $w_{5 7}$, $w_{4 6}$, $w_{5 6}$, $w_{4 5}$, $w_{5 5}$, $w_{4
      4}$, $w_{5 4}$, $w_{4 3}$, $w_{5 3}$, $w_{4 2}$, $w_{5 2}$, $w_{4
      1}$, $w_{5 1}$, $w_{3 1}$, $w_{3 2}$, $w_{3 3}$, $w_{3 4}$, $w_{3
      5}$, $w_{3 6}$, $w_{3 7}$, $w_{3 1}$, $w_{3 2}$, $w_{3 3}$, $w_{3
      4}$, $w_{3 5}$, $w_{3 6}$, $w_{3 1}$, $w_{3 2}$, $w_{3 3}$, $w_{3
      4}$, $w_{3 5}$, $w_{3 1}$, $w_{3 2}$, $w_{3 3}$, $w_{3 4}$, $w_{3
      1}$, $w_{3 2}$, $w_{3 3}$, $w_{3 1}$, $w_{3 2}$, $w_{3 1}$, $w_{3
      7}$, $w_{3 6}$, $w_{3 5}$, $w_{3 4}$, $w_{3 3}$, $w_{3 2}$, $w_{3
      1}$, $w_{2 1}$, $w_{2 2}$, $w_{2 3}$, $w_{2 4}$, $w_{2 5}$, $w_{2
      6}$, $w_{2 7}$, $w_{2 1}$, $w_{2 2}$, $w_{2 3}$, $w_{2 4}$, $w_{2
      5}$, $w_{2 6}$, $w_{2 1}$, $w_{2 2}$, $w_{2 3}$, $w_{2 4}$, $w_{2
      5}$, $w_{2 1}$, $w_{2 2}$, $w_{2 3}$, $w_{2 4}$, $w_{2 1}$, $w_{2
      2}$, $w_{2 3}$, $w_{2 1}$, $w_{2 2}$, $w_{2 1}$, $w_{2 7}$, $w_{2
      6}$, $w_{2 5}$, $w_{2 4}$, $w_{2 3}$, $w_{2 2}$, $w_{2 1}$, $w_{1
      1}$, $w_{1 2}$, $w_{1 3}$, $w_{1 4}$, $w_{1 5}$, $w_{1 6}$, $w_{1
      7}$, $w_{1 1}$, $w_{1 2}$, $w_{1 3}$, $w_{1 4}$, $w_{1 5}$, $w_{1
      6}$, $w_{1 1}$, $w_{1 2}$, $w_{1 3}$, $w_{1 4}$, $w_{1 5}$, $w_{1
      1}$, $w_{1 2}$, $w_{1 3}$, $w_{1 4}$, $w_{1 1}$, $w_{1 2}$, $w_{1
      3}$, $w_{1 1}$, $w_{1 2}$, $w_{1 1}$, $w_{1 7}$, $w_{1 6}$, $w_{1
      5}$, $w_{1 4}$, $w_{1 3}$, $w_{1 2}$, $w_{1 1}$, $w_{4 5}$, $w_{5
      5}$, $w_{4 6}$, $w_{5 6}$, $w_{4 7}$, $w_{5 7}$, $w_{4 5}$, $w_{5
      5}$, $w_{4 6}$, $w_{5 6}$, $w_{4 5}$, $w_{5 5}$, $w_{4 7}$, $w_{5
      7}$, $w_{4 6}$, $w_{5 6}$, $w_{4 5}$, $w_{5 5}$, $w_{3 5}$, $w_{3
      6}$, $w_{3 7}$, $w_{3 5}$, $w_{3 6}$, $w_{3 5}$, $w_{3 7}$, $w_{3
      6}$, $w_{3 5}$, $w_{2 5}$, $w_{2 6}$, $w_{2 7}$, $w_{2 5}$, $w_{2
      6}$, $w_{2 5}$, $w_{2 7}$, $w_{2 6}$, $w_{2 5}$, $w_{1 5}$, $w_{1
      6}$, $w_{1 7}$, $w_{1 5}$, $w_{1 6}$, $w_{1 5}$, $w_{1 7}$, $w_{1
      6}$, $w_{1 5}$, $w_{4 1}$, $w_{5 1}$, $w_{4 2}$, $w_{5 2}$, $w_{4
      3}$, $w_{5 3}$, $w_{4 1}$, $w_{5 1}$, $w_{4 2}$, $w_{5 2}$, $w_{4
      1}$, $w_{5 1}$, $w_{4 3}$, $w_{5 3}$, $w_{4 2}$, $w_{5 2}$, $w_{4
      1}$, $w_{5 1}$, $w_{3 1}$, $w_{3 2}$, $w_{3 3}$, $w_{3 1}$, $w_{3
      2}$, $w_{3 1}$, $w_{3 3}$, $w_{3 2}$, $w_{3 1}$, $w_{2 1}$, $w_{2
      2}$, $w_{2 3}$, $w_{2 1}$, $w_{2 2}$, $w_{2 1}$, $w_{2 3}$, $w_{2
      2}$, $w_{2 1}$, $w_{1 1}$, $w_{1 2}$, $w_{1 3}$, $w_{1 1}$, $w_{1
      2}$, $w_{1 1}$, $w_{1 3}$, $w_{1 2}$, $w_{1 1}$ }

  $\conftoquiv(\alpha)$ gives
  \begin{equation*}
    \begin{gathered}
      \begin{aligned}
        A_1 &= h_{11} &\qquad A_2 &= h_{12} &\qquad A_3 &= h_{13} &\qquad A_4 &= h_{14} &\qquad A_5 &= h_{15} \\
        B_1 &= h_{21} & B_2 &= h_{22} & B_3 &= h_{23} & B_4 &= h_{24} & B_5 &= h_{25} \\
        C_1 &= h_{31} & C_2 &= h_{32} & C_3 &= h_{33} & C_4 &= h_{35} & C_5 &= h_{34} \\
      \end{aligned}
      \\
      \begin{aligned}
        v_{1 1} &= \genminor{w_{6}}{1}(u) \cdot h_{21} & v_{1 2} &= \genminor{w_{11}}{1}(u) \cdot \frac{h_{12} h_{21} }{h_{11}} & v_{1 3} &= \genminor{w_{16}}{1}(u) \cdot \frac{h_{13} h_{21} }{h_{11}} \\
        v_{2 1} &= \genminor{w_{7}}{2}(u) \cdot \frac{h_{11} h_{22} }{h_{12}} & v_{2 2} &= \genminor{w_{12}}{2}(u) \cdot h_{22} & v_{2 3} &= \genminor{w_{17}}{2}(u) \cdot \frac{h_{11} h_{13} h_{22} }{h_{12}} \\
        v_{3 1} &= \genminor{w_{8}}{3}(u) \cdot \frac{h_{11} h_{23} }{h_{13}} & v_{3 2} &= \genminor{w_{13}}{3}(u) \cdot \frac{h_{11} h_{12} h_{23} }{h_{13}} & v_{3 3} &= \genminor{w_{18}}{3}(u) \cdot h_{12} h_{23} \\
        v_{4 1} &= \genminor{w_{9}}{4}(u) \cdot \frac{h_{11} h_{24} }{h_{15}} & v_{4 2} &= \genminor{w_{14}}{4}(u) \cdot \frac{h_{12} h_{24} }{h_{15}} & v_{4 3} &= \genminor{w_{19}}{4}(u) \cdot \frac{h_{13} h_{24} }{h_{15}} \\
        v_{5 1} &= \genminor{w_{10}}{5}(u) \cdot \frac{h_{11} h_{25} }{h_{14}} & v_{5 2} &= \genminor{w_{15}}{5}(u) \cdot \frac{h_{12} h_{25} }{h_{14}} & v_{5 3} &= \genminor{w_{20}}{5}(u) \cdot \frac{h_{13} h_{25} }{h_{14}} \\
      \end{aligned}
    \end{gathered}
  \end{equation*}

  \subsection{\texorpdfstring{$G_2$}{G\_2}}

  Based on the Dynkin diagram \inldynkinGtwo, (with $c = \set{1, 2}$ and
  trivial linear partitioning) we use the following quiver for Dynkin
  type.

  {\centering \includestandalone[mode=image|tex]{fig/dynkin-type-G2}

  } $Q_{G_2}$ is given in \cref{fig:QG2}.

  \begin{figure}
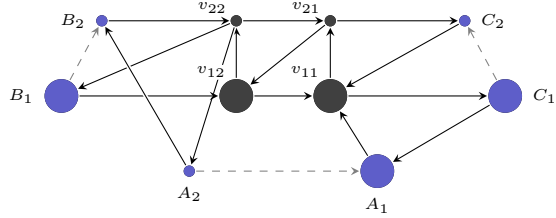

    \centering \includestandalone[mode=image|tex]{fig/QG2}

    \caption{$Q$ for $G_2$.
      \label{fig:QG2}
    }
  \end{figure}

  \inlmutseq{\murot}{ $v_{1 1}$, $v_{2 1}$, $v_{1 2}$, $v_{2 2}$, $v_{1
      1}$, $v_{2 1}$, $v_{1 1}$, $v_{1 2}$, $v_{1 1}$, $v_{1 2}$, $v_{1
      1}$, $v_{2 1}$, $v_{2 2}$, $v_{2 1}$, $v_{2 2}$, $v_{2 1}$ }

  \inlmutseq{\muflip}{$w_{2 5}$, $w_{1 5}$, $w_{2 4}$, $w_{1 4}$, $w_{2
      5}$, $w_{1 5}$, $w_{2 2}$, $w_{1 2}$, $w_{2 1}$, $w_{1 1}$, $w_{2
      2}$, $w_{1 2}$, $w_{1 5}$, $w_{2 5}$, $w_{1 4}$, $w_{2 4}$, $w_{1
      3}$, $w_{2 3}$, $w_{1 2}$, $w_{2 2}$, $w_{1 1}$, $w_{2 1}$, $w_{1
      5}$, $w_{2 5}$, $w_{1 4}$, $w_{2 4}$, $w_{1 3}$, $w_{2 3}$, $w_{1
      2}$, $w_{2 2}$, $w_{1 5}$, $w_{2 5}$, $w_{1 4}$, $w_{2 4}$, $w_{1
      3}$, $w_{2 3}$, $w_{1 5}$, $w_{2 5}$, $w_{1 4}$, $w_{2 4}$, $w_{1
      5}$, $w_{2 5}$, $w_{2 1}$, $w_{2 2}$, $w_{2 3}$, $w_{2 4}$, $w_{2
      5}$, $w_{2 1}$, $w_{2 2}$, $w_{2 3}$, $w_{2 4}$, $w_{2 1}$, $w_{2
      2}$, $w_{2 3}$, $w_{2 1}$, $w_{2 2}$, $w_{2 1}$, $w_{2 5}$, $w_{2
      4}$, $w_{2 3}$, $w_{2 2}$, $w_{2 1}$, $w_{1 1}$, $w_{1 2}$, $w_{1
      3}$, $w_{1 4}$, $w_{1 5}$, $w_{1 1}$, $w_{1 2}$, $w_{1 3}$, $w_{1
      4}$, $w_{1 1}$, $w_{1 2}$, $w_{1 3}$, $w_{1 1}$, $w_{1 2}$, $w_{1
      1}$, $w_{1 5}$, $w_{1 4}$, $w_{1 3}$, $w_{1 2}$, $w_{1 1}$, $w_{2
      4}$, $w_{2 5}$, $w_{2 4}$, $w_{2 5}$, $w_{2 4}$, $w_{1 4}$, $w_{1
      5}$, $w_{1 4}$, $w_{1 5}$, $w_{1 4}$, $w_{2 1}$, $w_{2 2}$, $w_{2
      1}$, $w_{2 2}$, $w_{2 1}$, $w_{1 1}$, $w_{1 2}$, $w_{1 1}$, $w_{1
      2}$, $w_{1 1}$}

  $\conftoquiv(\alpha)$ gives
  \begin{equation*}
    \begin{gathered}
      \begin{aligned}
        A_1 &= h_{11} &\qquad A_2 &= h_{12} \\
        B_1 &= h_{21} & B_2 &= h_{22} \\
        C_1 &= h_{31} & C_2 &= h_{32} \\
      \end{aligned}
      \\
      \begin{aligned}
        v_{1 1} &= \genminor{w_{3}}{1}(u) \cdot h_{21} & v_{2 1} &= \genminor{w_{4}}{2}(u) \cdot \frac{h_{11}^3 h_{22} }{h_{12}} \\
        v_{1 2} &= \genminor{w_{5}}{1}(u) \cdot h_{11} h_{21} & v_{2 2} &= \genminor{w_{6}}{2}(u) \cdot \frac{h_{11}^3 h_{22} }{h_{12}} \\
      \end{aligned}
    \end{gathered}
  \end{equation*}

  \subsection{\texorpdfstring{$E_6$}{E\_6}}

  Based on the Dynkin diagram \inldynkinEsix, the following quiver is of
  well-rooted Dynkin type for $E_6$. It admits an induced Coxeter
  element $c = \set{2,4,3,5,1,6}$, with partitions $T_0 = \set{2}, T_1 =
  \set{4}, T_3 = \set{3,5}, T_4 = \set{1,6}$.

  {\centering \includestandalone[mode=image|tex]{fig/dynkin-type-E6}

  } $Q_{E_6}$ is given in \cref{fig:QE6}.

  \begin{figure}
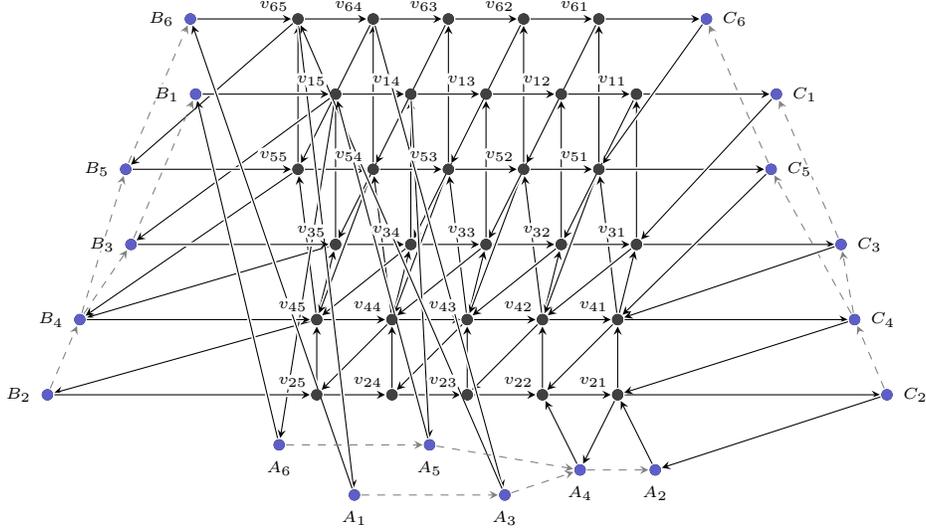

    \centering \includestandalone[mode=image|tex]{fig/QE6}

    \caption{$Q$ for $E_6$.
      \label{fig:QE6}
    }
  \end{figure}

  \inlmutseq{\murot}{ $v_{2 1}$, $v_{4 1}$, $v_{3 1}$, $v_{5 1}$, $v_{1
      1}$, $v_{6 1}$, $v_{2 2}$, $v_{4 2}$, $v_{3 2}$, $v_{5 2}$, $v_{1
      2}$, $v_{6 2}$, $v_{2 1}$, $v_{4 1}$, $v_{3 1}$, $v_{5 1}$, $v_{1
      1}$, $v_{6 1}$, $v_{2 3}$, $v_{4 3}$, $v_{3 3}$, $v_{5 3}$, $v_{1
      3}$, $v_{6 3}$, $v_{2 2}$, $v_{4 2}$, $v_{3 2}$, $v_{5 2}$, $v_{1
      2}$, $v_{6 2}$, $v_{2 1}$, $v_{4 1}$, $v_{3 1}$, $v_{5 1}$, $v_{1
      1}$, $v_{6 1}$, $v_{2 4}$, $v_{4 4}$, $v_{3 4}$, $v_{5 4}$, $v_{1
      4}$, $v_{6 4}$, $v_{2 3}$, $v_{4 3}$, $v_{3 3}$, $v_{5 3}$, $v_{1
      3}$, $v_{6 3}$, $v_{2 2}$, $v_{4 2}$, $v_{3 2}$, $v_{5 2}$, $v_{1
      2}$, $v_{6 2}$, $v_{2 1}$, $v_{4 1}$, $v_{3 1}$, $v_{5 1}$, $v_{1
      1}$, $v_{6 1}$, $v_{2 5}$, $v_{4 5}$, $v_{3 5}$, $v_{5 5}$, $v_{1
      5}$, $v_{6 5}$, $v_{2 4}$, $v_{4 4}$, $v_{3 4}$, $v_{5 4}$, $v_{1
      4}$, $v_{6 4}$, $v_{2 3}$, $v_{4 3}$, $v_{3 3}$, $v_{5 3}$, $v_{1
      3}$, $v_{6 3}$, $v_{2 2}$, $v_{4 2}$, $v_{3 2}$, $v_{5 2}$, $v_{1
      2}$, $v_{6 2}$, $v_{2 1}$, $v_{4 1}$, $v_{3 1}$, $v_{5 1}$, $v_{1
      1}$, $v_{6 1}$, $v_{2 1}$, $v_{4 1}$, $v_{3 1}$, $v_{5 1}$, $v_{1
      1}$, $v_{6 1}$, $v_{2 1}$, $v_{4 1}$, $v_{3 1}$, $v_{5 1}$, $v_{1
      1}$, $v_{6 1}$, $v_{2 1}$, $v_{4 1}$, $v_{3 1}$, $v_{5 1}$, $v_{1
      1}$, $v_{6 1}$, $v_{2 1}$, $v_{4 1}$, $v_{3 1}$, $v_{5 1}$, $v_{1
      1}$, $v_{6 1}$, $v_{2 1}$, $v_{4 1}$, $v_{3 1}$, $v_{5 1}$, $v_{1
      1}$, $v_{6 1}$, $v_{2 1}$, $v_{4 1}$, $v_{3 1}$, $v_{5 1}$, $v_{1
      1}$, $v_{6 1}$, $v_{2 1}$, $v_{4 1}$, $v_{3 1}$, $v_{5 1}$, $v_{1
      1}$, $v_{6 1}$, $v_{2 2}$, $v_{4 2}$, $v_{3 2}$, $v_{5 2}$, $v_{1
      2}$, $v_{6 2}$, $v_{2 2}$, $v_{4 2}$, $v_{3 2}$, $v_{5 2}$, $v_{1
      2}$, $v_{6 2}$, $v_{2 2}$, $v_{4 2}$, $v_{3 2}$, $v_{5 2}$, $v_{1
      2}$, $v_{6 2}$, $v_{2 2}$, $v_{4 2}$, $v_{3 2}$, $v_{5 2}$, $v_{1
      2}$, $v_{6 2}$, $v_{2 2}$, $v_{4 2}$, $v_{3 2}$, $v_{5 2}$, $v_{1
      2}$, $v_{6 2}$, $v_{2 2}$, $v_{4 2}$, $v_{3 2}$, $v_{5 2}$, $v_{1
      2}$, $v_{6 2}$, $v_{2 2}$, $v_{4 2}$, $v_{3 2}$, $v_{5 2}$, $v_{1
      2}$, $v_{6 2}$, $v_{2 3}$, $v_{4 3}$, $v_{3 3}$, $v_{5 3}$, $v_{1
      3}$, $v_{6 3}$, $v_{2 3}$, $v_{4 3}$, $v_{3 3}$, $v_{5 3}$, $v_{1
      3}$, $v_{6 3}$, $v_{2 3}$, $v_{4 3}$, $v_{3 3}$, $v_{5 3}$, $v_{1
      3}$, $v_{6 3}$, $v_{2 3}$, $v_{4 3}$, $v_{3 3}$, $v_{5 3}$, $v_{1
      3}$, $v_{6 3}$, $v_{2 3}$, $v_{4 3}$, $v_{3 3}$, $v_{5 3}$, $v_{1
      3}$, $v_{6 3}$, $v_{2 3}$, $v_{4 3}$, $v_{3 3}$, $v_{5 3}$, $v_{1
      3}$, $v_{6 3}$, $v_{2 3}$, $v_{4 3}$, $v_{3 3}$, $v_{5 3}$, $v_{1
      3}$, $v_{6 3}$, $v_{2 4}$, $v_{4 4}$, $v_{3 4}$, $v_{5 4}$, $v_{1
      4}$, $v_{6 4}$, $v_{2 4}$, $v_{4 4}$, $v_{3 4}$, $v_{5 4}$, $v_{1
      4}$, $v_{6 4}$, $v_{2 4}$, $v_{4 4}$, $v_{3 4}$, $v_{5 4}$, $v_{1
      4}$, $v_{6 4}$, $v_{2 4}$, $v_{4 4}$, $v_{3 4}$, $v_{5 4}$, $v_{1
      4}$, $v_{6 4}$, $v_{2 4}$, $v_{4 4}$, $v_{3 4}$, $v_{5 4}$, $v_{1
      4}$, $v_{6 4}$, $v_{2 4}$, $v_{4 4}$, $v_{3 4}$, $v_{5 4}$, $v_{1
      4}$, $v_{6 4}$, $v_{2 4}$, $v_{4 4}$, $v_{3 4}$, $v_{5 4}$, $v_{1
      4}$, $v_{6 4}$, $v_{2 5}$, $v_{4 5}$, $v_{3 5}$, $v_{5 5}$, $v_{1
      5}$, $v_{6 5}$, $v_{2 5}$, $v_{4 5}$, $v_{3 5}$, $v_{5 5}$, $v_{1
      5}$, $v_{6 5}$, $v_{2 5}$, $v_{4 5}$, $v_{3 5}$, $v_{5 5}$, $v_{1
      5}$, $v_{6 5}$, $v_{2 5}$, $v_{4 5}$, $v_{3 5}$, $v_{5 5}$, $v_{1
      5}$, $v_{6 5}$, $v_{2 5}$, $v_{4 5}$, $v_{3 5}$, $v_{5 5}$, $v_{1
      5}$, $v_{6 5}$, $v_{2 5}$, $v_{4 5}$, $v_{3 5}$, $v_{5 5}$, $v_{1
      5}$, $v_{6 5}$, $v_{2 5}$, $v_{4 5}$, $v_{3 5}$, $v_{5 5}$, $v_{1
      5}$, $v_{6 5}$, $v_{2 1}$, $v_{2 2}$, $v_{2 3}$, $v_{2 4}$, $v_{2
      5}$, $v_{2 1}$, $v_{2 2}$, $v_{2 3}$, $v_{2 4}$, $v_{2 1}$, $v_{2
      2}$, $v_{2 3}$, $v_{2 1}$, $v_{2 2}$, $v_{2 1}$, $v_{2 5}$, $v_{2
      4}$, $v_{2 3}$, $v_{2 2}$, $v_{2 1}$, $v_{4 1}$, $v_{4 2}$, $v_{4
      3}$, $v_{4 4}$, $v_{4 5}$, $v_{4 1}$, $v_{4 2}$, $v_{4 3}$, $v_{4
      4}$, $v_{4 1}$, $v_{4 2}$, $v_{4 3}$, $v_{4 1}$, $v_{4 2}$, $v_{4
      1}$, $v_{4 5}$, $v_{4 4}$, $v_{4 3}$, $v_{4 2}$, $v_{4 1}$, $v_{3
      1}$, $v_{5 1}$, $v_{3 2}$, $v_{5 2}$, $v_{3 3}$, $v_{5 3}$, $v_{3
      4}$, $v_{5 4}$, $v_{3 5}$, $v_{5 5}$, $v_{3 1}$, $v_{5 1}$, $v_{3
      2}$, $v_{5 2}$, $v_{3 3}$, $v_{5 3}$, $v_{3 4}$, $v_{5 4}$, $v_{3
      1}$, $v_{5 1}$, $v_{3 2}$, $v_{5 2}$, $v_{3 3}$, $v_{5 3}$, $v_{3
      1}$, $v_{5 1}$, $v_{3 2}$, $v_{5 2}$, $v_{3 1}$, $v_{5 1}$, $v_{3
      5}$, $v_{5 5}$, $v_{3 4}$, $v_{5 4}$, $v_{3 3}$, $v_{5 3}$, $v_{3
      2}$, $v_{5 2}$, $v_{3 1}$, $v_{5 1}$, $v_{1 1}$, $v_{6 1}$, $v_{1
      2}$, $v_{6 2}$, $v_{1 3}$, $v_{6 3}$, $v_{1 4}$, $v_{6 4}$, $v_{1
      5}$, $v_{6 5}$, $v_{1 1}$, $v_{6 1}$, $v_{1 2}$, $v_{6 2}$, $v_{1
      3}$, $v_{6 3}$, $v_{1 4}$, $v_{6 4}$, $v_{1 1}$, $v_{6 1}$, $v_{1
      2}$, $v_{6 2}$, $v_{1 3}$, $v_{6 3}$, $v_{1 1}$, $v_{6 1}$, $v_{1
      2}$, $v_{6 2}$, $v_{1 1}$, $v_{6 1}$, $v_{1 5}$, $v_{6 5}$, $v_{1
      4}$, $v_{6 4}$, $v_{1 3}$, $v_{6 3}$, $v_{1 2}$, $v_{6 2}$, $v_{1
      1}$, $v_{6 1}$ }

  \inlmutseq{\muflip}{$w_{1 b}$, $w_{6 b}$, $w_{3 b}$, $w_{5 b}$, $w_{4
      b}$, $w_{2 b}$, $w_{1 a}$, $w_{6 a}$, $w_{3 a}$, $w_{5 a}$, $w_{4
      a}$, $w_{2 a}$, $w_{1 b}$, $w_{6 b}$, $w_{3 b}$, $w_{5 b}$, $w_{4
      b}$, $w_{2 b}$, $w_{1 9}$, $w_{6 9}$, $w_{3 9}$, $w_{5 9}$, $w_{4
      9}$, $w_{2 9}$, $w_{1 a}$, $w_{6 a}$, $w_{3 a}$, $w_{5 a}$, $w_{4
      a}$, $w_{2 a}$, $w_{1 b}$, $w_{6 b}$, $w_{3 b}$, $w_{5 b}$, $w_{4
      b}$, $w_{2 b}$, $w_{1 8}$, $w_{6 8}$, $w_{3 8}$, $w_{5 8}$, $w_{4
      8}$, $w_{2 8}$, $w_{1 9}$, $w_{6 9}$, $w_{3 9}$, $w_{5 9}$, $w_{4
      9}$, $w_{2 9}$, $w_{1 a}$, $w_{6 a}$, $w_{3 a}$, $w_{5 a}$, $w_{4
      a}$, $w_{2 a}$, $w_{1 b}$, $w_{6 b}$, $w_{3 b}$, $w_{5 b}$, $w_{4
      b}$, $w_{2 b}$, $w_{1 7}$, $w_{6 7}$, $w_{3 7}$, $w_{5 7}$, $w_{4
      7}$, $w_{2 7}$, $w_{1 8}$, $w_{6 8}$, $w_{3 8}$, $w_{5 8}$, $w_{4
      8}$, $w_{2 8}$, $w_{1 9}$, $w_{6 9}$, $w_{3 9}$, $w_{5 9}$, $w_{4
      9}$, $w_{2 9}$, $w_{1 a}$, $w_{6 a}$, $w_{3 a}$, $w_{5 a}$, $w_{4
      a}$, $w_{2 a}$, $w_{1 b}$, $w_{6 b}$, $w_{3 b}$, $w_{5 b}$, $w_{4
      b}$, $w_{2 b}$, $w_{1 5}$, $w_{6 5}$, $w_{3 5}$, $w_{5 5}$, $w_{4
      5}$, $w_{2 5}$, $w_{1 4}$, $w_{6 4}$, $w_{3 4}$, $w_{5 4}$, $w_{4
      4}$, $w_{2 4}$, $w_{1 5}$, $w_{6 5}$, $w_{3 5}$, $w_{5 5}$, $w_{4
      5}$, $w_{2 5}$, $w_{1 3}$, $w_{6 3}$, $w_{3 3}$, $w_{5 3}$, $w_{4
      3}$, $w_{2 3}$, $w_{1 4}$, $w_{6 4}$, $w_{3 4}$, $w_{5 4}$, $w_{4
      4}$, $w_{2 4}$, $w_{1 5}$, $w_{6 5}$, $w_{3 5}$, $w_{5 5}$, $w_{4
      5}$, $w_{2 5}$, $w_{1 2}$, $w_{6 2}$, $w_{3 2}$, $w_{5 2}$, $w_{4
      2}$, $w_{2 2}$, $w_{1 3}$, $w_{6 3}$, $w_{3 3}$, $w_{5 3}$, $w_{4
      3}$, $w_{2 3}$, $w_{1 4}$, $w_{6 4}$, $w_{3 4}$, $w_{5 4}$, $w_{4
      4}$, $w_{2 4}$, $w_{1 5}$, $w_{6 5}$, $w_{3 5}$, $w_{5 5}$, $w_{4
      5}$, $w_{2 5}$, $w_{1 1}$, $w_{6 1}$, $w_{3 1}$, $w_{5 1}$, $w_{4
      1}$, $w_{2 1}$, $w_{1 2}$, $w_{6 2}$, $w_{3 2}$, $w_{5 2}$, $w_{4
      2}$, $w_{2 2}$, $w_{1 3}$, $w_{6 3}$, $w_{3 3}$, $w_{5 3}$, $w_{4
      3}$, $w_{2 3}$, $w_{1 4}$, $w_{6 4}$, $w_{3 4}$, $w_{5 4}$, $w_{4
      4}$, $w_{2 4}$, $w_{1 5}$, $w_{6 5}$, $w_{3 5}$, $w_{5 5}$, $w_{4
      5}$, $w_{2 5}$, $w_{2 b}$, $w_{4 b}$, $w_{3 b}$, $w_{5 b}$, $w_{1
      b}$, $w_{6 b}$, $w_{2 a}$, $w_{4 a}$, $w_{3 a}$, $w_{5 a}$, $w_{1
      a}$, $w_{6 a}$, $w_{2 9}$, $w_{4 9}$, $w_{3 9}$, $w_{5 9}$, $w_{1
      9}$, $w_{6 9}$, $w_{2 8}$, $w_{4 8}$, $w_{3 8}$, $w_{5 8}$, $w_{1
      8}$, $w_{6 8}$, $w_{2 7}$, $w_{4 7}$, $w_{3 7}$, $w_{5 7}$, $w_{1
      7}$, $w_{6 7}$, $w_{2 6}$, $w_{4 6}$, $w_{3 6}$, $w_{5 6}$, $w_{1
      6}$, $w_{6 6}$, $w_{2 5}$, $w_{4 5}$, $w_{3 5}$, $w_{5 5}$, $w_{1
      5}$, $w_{6 5}$, $w_{2 4}$, $w_{4 4}$, $w_{3 4}$, $w_{5 4}$, $w_{1
      4}$, $w_{6 4}$, $w_{2 3}$, $w_{4 3}$, $w_{3 3}$, $w_{5 3}$, $w_{1
      3}$, $w_{6 3}$, $w_{2 2}$, $w_{4 2}$, $w_{3 2}$, $w_{5 2}$, $w_{1
      2}$, $w_{6 2}$, $w_{2 1}$, $w_{4 1}$, $w_{3 1}$, $w_{5 1}$, $w_{1
      1}$, $w_{6 1}$, $w_{2 b}$, $w_{4 b}$, $w_{3 b}$, $w_{5 b}$, $w_{1
      b}$, $w_{6 b}$, $w_{2 a}$, $w_{4 a}$, $w_{3 a}$, $w_{5 a}$, $w_{1
      a}$, $w_{6 a}$, $w_{2 9}$, $w_{4 9}$, $w_{3 9}$, $w_{5 9}$, $w_{1
      9}$, $w_{6 9}$, $w_{2 8}$, $w_{4 8}$, $w_{3 8}$, $w_{5 8}$, $w_{1
      8}$, $w_{6 8}$, $w_{2 7}$, $w_{4 7}$, $w_{3 7}$, $w_{5 7}$, $w_{1
      7}$, $w_{6 7}$, $w_{2 6}$, $w_{4 6}$, $w_{3 6}$, $w_{5 6}$, $w_{1
      6}$, $w_{6 6}$, $w_{2 5}$, $w_{4 5}$, $w_{3 5}$, $w_{5 5}$, $w_{1
      5}$, $w_{6 5}$, $w_{2 4}$, $w_{4 4}$, $w_{3 4}$, $w_{5 4}$, $w_{1
      4}$, $w_{6 4}$, $w_{2 3}$, $w_{4 3}$, $w_{3 3}$, $w_{5 3}$, $w_{1
      3}$, $w_{6 3}$, $w_{2 2}$, $w_{4 2}$, $w_{3 2}$, $w_{5 2}$, $w_{1
      2}$, $w_{6 2}$, $w_{2 b}$, $w_{4 b}$, $w_{3 b}$, $w_{5 b}$, $w_{1
      b}$, $w_{6 b}$, $w_{2 a}$, $w_{4 a}$, $w_{3 a}$, $w_{5 a}$, $w_{1
      a}$, $w_{6 a}$, $w_{2 9}$, $w_{4 9}$, $w_{3 9}$, $w_{5 9}$, $w_{1
      9}$, $w_{6 9}$, $w_{2 8}$, $w_{4 8}$, $w_{3 8}$, $w_{5 8}$, $w_{1
      8}$, $w_{6 8}$, $w_{2 7}$, $w_{4 7}$, $w_{3 7}$, $w_{5 7}$, $w_{1
      7}$, $w_{6 7}$, $w_{2 6}$, $w_{4 6}$, $w_{3 6}$, $w_{5 6}$, $w_{1
      6}$, $w_{6 6}$, $w_{2 5}$, $w_{4 5}$, $w_{3 5}$, $w_{5 5}$, $w_{1
      5}$, $w_{6 5}$, $w_{2 4}$, $w_{4 4}$, $w_{3 4}$, $w_{5 4}$, $w_{1
      4}$, $w_{6 4}$, $w_{2 3}$, $w_{4 3}$, $w_{3 3}$, $w_{5 3}$, $w_{1
      3}$, $w_{6 3}$, $w_{2 b}$, $w_{4 b}$, $w_{3 b}$, $w_{5 b}$, $w_{1
      b}$, $w_{6 b}$, $w_{2 a}$, $w_{4 a}$, $w_{3 a}$, $w_{5 a}$, $w_{1
      a}$, $w_{6 a}$, $w_{2 9}$, $w_{4 9}$, $w_{3 9}$, $w_{5 9}$, $w_{1
      9}$, $w_{6 9}$, $w_{2 8}$, $w_{4 8}$, $w_{3 8}$, $w_{5 8}$, $w_{1
      8}$, $w_{6 8}$, $w_{2 7}$, $w_{4 7}$, $w_{3 7}$, $w_{5 7}$, $w_{1
      7}$, $w_{6 7}$, $w_{2 6}$, $w_{4 6}$, $w_{3 6}$, $w_{5 6}$, $w_{1
      6}$, $w_{6 6}$, $w_{2 5}$, $w_{4 5}$, $w_{3 5}$, $w_{5 5}$, $w_{1
      5}$, $w_{6 5}$, $w_{2 4}$, $w_{4 4}$, $w_{3 4}$, $w_{5 4}$, $w_{1
      4}$, $w_{6 4}$, $w_{2 b}$, $w_{4 b}$, $w_{3 b}$, $w_{5 b}$, $w_{1
      b}$, $w_{6 b}$, $w_{2 a}$, $w_{4 a}$, $w_{3 a}$, $w_{5 a}$, $w_{1
      a}$, $w_{6 a}$, $w_{2 9}$, $w_{4 9}$, $w_{3 9}$, $w_{5 9}$, $w_{1
      9}$, $w_{6 9}$, $w_{2 8}$, $w_{4 8}$, $w_{3 8}$, $w_{5 8}$, $w_{1
      8}$, $w_{6 8}$, $w_{2 7}$, $w_{4 7}$, $w_{3 7}$, $w_{5 7}$, $w_{1
      7}$, $w_{6 7}$, $w_{2 6}$, $w_{4 6}$, $w_{3 6}$, $w_{5 6}$, $w_{1
      6}$, $w_{6 6}$, $w_{2 5}$, $w_{4 5}$, $w_{3 5}$, $w_{5 5}$, $w_{1
      5}$, $w_{6 5}$, $w_{2 b}$, $w_{4 b}$, $w_{3 b}$, $w_{5 b}$, $w_{1
      b}$, $w_{6 b}$, $w_{2 a}$, $w_{4 a}$, $w_{3 a}$, $w_{5 a}$, $w_{1
      a}$, $w_{6 a}$, $w_{2 9}$, $w_{4 9}$, $w_{3 9}$, $w_{5 9}$, $w_{1
      9}$, $w_{6 9}$, $w_{2 8}$, $w_{4 8}$, $w_{3 8}$, $w_{5 8}$, $w_{1
      8}$, $w_{6 8}$, $w_{2 7}$, $w_{4 7}$, $w_{3 7}$, $w_{5 7}$, $w_{1
      7}$, $w_{6 7}$, $w_{2 6}$, $w_{4 6}$, $w_{3 6}$, $w_{5 6}$, $w_{1
      6}$, $w_{6 6}$, $w_{2 b}$, $w_{4 b}$, $w_{3 b}$, $w_{5 b}$, $w_{1
      b}$, $w_{6 b}$, $w_{2 a}$, $w_{4 a}$, $w_{3 a}$, $w_{5 a}$, $w_{1
      a}$, $w_{6 a}$, $w_{2 9}$, $w_{4 9}$, $w_{3 9}$, $w_{5 9}$, $w_{1
      9}$, $w_{6 9}$, $w_{2 8}$, $w_{4 8}$, $w_{3 8}$, $w_{5 8}$, $w_{1
      8}$, $w_{6 8}$, $w_{2 7}$, $w_{4 7}$, $w_{3 7}$, $w_{5 7}$, $w_{1
      7}$, $w_{6 7}$, $w_{2 b}$, $w_{4 b}$, $w_{3 b}$, $w_{5 b}$, $w_{1
      b}$, $w_{6 b}$, $w_{2 a}$, $w_{4 a}$, $w_{3 a}$, $w_{5 a}$, $w_{1
      a}$, $w_{6 a}$, $w_{2 9}$, $w_{4 9}$, $w_{3 9}$, $w_{5 9}$, $w_{1
      9}$, $w_{6 9}$, $w_{2 8}$, $w_{4 8}$, $w_{3 8}$, $w_{5 8}$, $w_{1
      8}$, $w_{6 8}$, $w_{2 b}$, $w_{4 b}$, $w_{3 b}$, $w_{5 b}$, $w_{1
      b}$, $w_{6 b}$, $w_{2 a}$, $w_{4 a}$, $w_{3 a}$, $w_{5 a}$, $w_{1
      a}$, $w_{6 a}$, $w_{2 9}$, $w_{4 9}$, $w_{3 9}$, $w_{5 9}$, $w_{1
      9}$, $w_{6 9}$, $w_{2 b}$, $w_{4 b}$, $w_{3 b}$, $w_{5 b}$, $w_{1
      b}$, $w_{6 b}$, $w_{2 a}$, $w_{4 a}$, $w_{3 a}$, $w_{5 a}$, $w_{1
      a}$, $w_{6 a}$, $w_{2 b}$, $w_{4 b}$, $w_{3 b}$, $w_{5 b}$, $w_{1
      b}$, $w_{6 b}$, $w_{1 1}$, $w_{6 1}$, $w_{1 2}$, $w_{6 2}$, $w_{1
      3}$, $w_{6 3}$, $w_{1 4}$, $w_{6 4}$, $w_{1 5}$, $w_{6 5}$, $w_{1
      6}$, $w_{6 6}$, $w_{1 7}$, $w_{6 7}$, $w_{1 8}$, $w_{6 8}$, $w_{1
      9}$, $w_{6 9}$, $w_{1 a}$, $w_{6 a}$, $w_{1 b}$, $w_{6 b}$, $w_{1
      1}$, $w_{6 1}$, $w_{1 2}$, $w_{6 2}$, $w_{1 3}$, $w_{6 3}$, $w_{1
      4}$, $w_{6 4}$, $w_{1 5}$, $w_{6 5}$, $w_{1 6}$, $w_{6 6}$, $w_{1
      7}$, $w_{6 7}$, $w_{1 8}$, $w_{6 8}$, $w_{1 9}$, $w_{6 9}$, $w_{1
      a}$, $w_{6 a}$, $w_{1 1}$, $w_{6 1}$, $w_{1 2}$, $w_{6 2}$, $w_{1
      3}$, $w_{6 3}$, $w_{1 4}$, $w_{6 4}$, $w_{1 5}$, $w_{6 5}$, $w_{1
      6}$, $w_{6 6}$, $w_{1 7}$, $w_{6 7}$, $w_{1 8}$, $w_{6 8}$, $w_{1
      9}$, $w_{6 9}$, $w_{1 1}$, $w_{6 1}$, $w_{1 2}$, $w_{6 2}$, $w_{1
      3}$, $w_{6 3}$, $w_{1 4}$, $w_{6 4}$, $w_{1 5}$, $w_{6 5}$, $w_{1
      6}$, $w_{6 6}$, $w_{1 7}$, $w_{6 7}$, $w_{1 8}$, $w_{6 8}$, $w_{1
      1}$, $w_{6 1}$, $w_{1 2}$, $w_{6 2}$, $w_{1 3}$, $w_{6 3}$, $w_{1
      4}$, $w_{6 4}$, $w_{1 5}$, $w_{6 5}$, $w_{1 6}$, $w_{6 6}$, $w_{1
      7}$, $w_{6 7}$, $w_{1 1}$, $w_{6 1}$, $w_{1 2}$, $w_{6 2}$, $w_{1
      3}$, $w_{6 3}$, $w_{1 4}$, $w_{6 4}$, $w_{1 5}$, $w_{6 5}$, $w_{1
      6}$, $w_{6 6}$, $w_{1 1}$, $w_{6 1}$, $w_{1 2}$, $w_{6 2}$, $w_{1
      3}$, $w_{6 3}$, $w_{1 4}$, $w_{6 4}$, $w_{1 5}$, $w_{6 5}$, $w_{1
      1}$, $w_{6 1}$, $w_{1 2}$, $w_{6 2}$, $w_{1 3}$, $w_{6 3}$, $w_{1
      4}$, $w_{6 4}$, $w_{1 1}$, $w_{6 1}$, $w_{1 2}$, $w_{6 2}$, $w_{1
      3}$, $w_{6 3}$, $w_{1 1}$, $w_{6 1}$, $w_{1 2}$, $w_{6 2}$, $w_{1
      1}$, $w_{6 1}$, $w_{1 b}$, $w_{6 b}$, $w_{1 a}$, $w_{6 a}$, $w_{1
      9}$, $w_{6 9}$, $w_{1 8}$, $w_{6 8}$, $w_{1 7}$, $w_{6 7}$, $w_{1
      6}$, $w_{6 6}$, $w_{1 5}$, $w_{6 5}$, $w_{1 4}$, $w_{6 4}$, $w_{1
      3}$, $w_{6 3}$, $w_{1 2}$, $w_{6 2}$, $w_{1 1}$, $w_{6 1}$, $w_{3
      1}$, $w_{5 1}$, $w_{3 2}$, $w_{5 2}$, $w_{3 3}$, $w_{5 3}$, $w_{3
      4}$, $w_{5 4}$, $w_{3 5}$, $w_{5 5}$, $w_{3 6}$, $w_{5 6}$, $w_{3
      7}$, $w_{5 7}$, $w_{3 8}$, $w_{5 8}$, $w_{3 9}$, $w_{5 9}$, $w_{3
      a}$, $w_{5 a}$, $w_{3 b}$, $w_{5 b}$, $w_{3 1}$, $w_{5 1}$, $w_{3
      2}$, $w_{5 2}$, $w_{3 3}$, $w_{5 3}$, $w_{3 4}$, $w_{5 4}$, $w_{3
      5}$, $w_{5 5}$, $w_{3 6}$, $w_{5 6}$, $w_{3 7}$, $w_{5 7}$, $w_{3
      8}$, $w_{5 8}$, $w_{3 9}$, $w_{5 9}$, $w_{3 a}$, $w_{5 a}$, $w_{3
      1}$, $w_{5 1}$, $w_{3 2}$, $w_{5 2}$, $w_{3 3}$, $w_{5 3}$, $w_{3
      4}$, $w_{5 4}$, $w_{3 5}$, $w_{5 5}$, $w_{3 6}$, $w_{5 6}$, $w_{3
      7}$, $w_{5 7}$, $w_{3 8}$, $w_{5 8}$, $w_{3 9}$, $w_{5 9}$, $w_{3
      1}$, $w_{5 1}$, $w_{3 2}$, $w_{5 2}$, $w_{3 3}$, $w_{5 3}$, $w_{3
      4}$, $w_{5 4}$, $w_{3 5}$, $w_{5 5}$, $w_{3 6}$, $w_{5 6}$, $w_{3
      7}$, $w_{5 7}$, $w_{3 8}$, $w_{5 8}$, $w_{3 1}$, $w_{5 1}$, $w_{3
      2}$, $w_{5 2}$, $w_{3 3}$, $w_{5 3}$, $w_{3 4}$, $w_{5 4}$, $w_{3
      5}$, $w_{5 5}$, $w_{3 6}$, $w_{5 6}$, $w_{3 7}$, $w_{5 7}$, $w_{3
      1}$, $w_{5 1}$, $w_{3 2}$, $w_{5 2}$, $w_{3 3}$, $w_{5 3}$, $w_{3
      4}$, $w_{5 4}$, $w_{3 5}$, $w_{5 5}$, $w_{3 6}$, $w_{5 6}$, $w_{3
      1}$, $w_{5 1}$, $w_{3 2}$, $w_{5 2}$, $w_{3 3}$, $w_{5 3}$, $w_{3
      4}$, $w_{5 4}$, $w_{3 5}$, $w_{5 5}$, $w_{3 1}$, $w_{5 1}$, $w_{3
      2}$, $w_{5 2}$, $w_{3 3}$, $w_{5 3}$, $w_{3 4}$, $w_{5 4}$, $w_{3
      1}$, $w_{5 1}$, $w_{3 2}$, $w_{5 2}$, $w_{3 3}$, $w_{5 3}$, $w_{3
      1}$, $w_{5 1}$, $w_{3 2}$, $w_{5 2}$, $w_{3 1}$, $w_{5 1}$, $w_{3
      b}$, $w_{5 b}$, $w_{3 a}$, $w_{5 a}$, $w_{3 9}$, $w_{5 9}$, $w_{3
      8}$, $w_{5 8}$, $w_{3 7}$, $w_{5 7}$, $w_{3 6}$, $w_{5 6}$, $w_{3
      5}$, $w_{5 5}$, $w_{3 4}$, $w_{5 4}$, $w_{3 3}$, $w_{5 3}$, $w_{3
      2}$, $w_{5 2}$, $w_{3 1}$, $w_{5 1}$, $w_{4 1}$, $w_{4 2}$, $w_{4
      3}$, $w_{4 4}$, $w_{4 5}$, $w_{4 6}$, $w_{4 7}$, $w_{4 8}$, $w_{4
      9}$, $w_{4 a}$, $w_{4 b}$, $w_{4 1}$, $w_{4 2}$, $w_{4 3}$, $w_{4
      4}$, $w_{4 5}$, $w_{4 6}$, $w_{4 7}$, $w_{4 8}$, $w_{4 9}$, $w_{4
      a}$, $w_{4 1}$, $w_{4 2}$, $w_{4 3}$, $w_{4 4}$, $w_{4 5}$, $w_{4
      6}$, $w_{4 7}$, $w_{4 8}$, $w_{4 9}$, $w_{4 1}$, $w_{4 2}$, $w_{4
      3}$, $w_{4 4}$, $w_{4 5}$, $w_{4 6}$, $w_{4 7}$, $w_{4 8}$, $w_{4
      1}$, $w_{4 2}$, $w_{4 3}$, $w_{4 4}$, $w_{4 5}$, $w_{4 6}$, $w_{4
      7}$, $w_{4 1}$, $w_{4 2}$, $w_{4 3}$, $w_{4 4}$, $w_{4 5}$, $w_{4
      6}$, $w_{4 1}$, $w_{4 2}$, $w_{4 3}$, $w_{4 4}$, $w_{4 5}$, $w_{4
      1}$, $w_{4 2}$, $w_{4 3}$, $w_{4 4}$, $w_{4 1}$, $w_{4 2}$, $w_{4
      3}$, $w_{4 1}$, $w_{4 2}$, $w_{4 1}$, $w_{4 b}$, $w_{4 a}$, $w_{4
      9}$, $w_{4 8}$, $w_{4 7}$, $w_{4 6}$, $w_{4 5}$, $w_{4 4}$, $w_{4
      3}$, $w_{4 2}$, $w_{4 1}$, $w_{2 1}$, $w_{2 2}$, $w_{2 3}$, $w_{2
      4}$, $w_{2 5}$, $w_{2 6}$, $w_{2 7}$, $w_{2 8}$, $w_{2 9}$, $w_{2
      a}$, $w_{2 b}$, $w_{2 1}$, $w_{2 2}$, $w_{2 3}$, $w_{2 4}$, $w_{2
      5}$, $w_{2 6}$, $w_{2 7}$, $w_{2 8}$, $w_{2 9}$, $w_{2 a}$, $w_{2
      1}$, $w_{2 2}$, $w_{2 3}$, $w_{2 4}$, $w_{2 5}$, $w_{2 6}$, $w_{2
      7}$, $w_{2 8}$, $w_{2 9}$, $w_{2 1}$, $w_{2 2}$, $w_{2 3}$, $w_{2
      4}$, $w_{2 5}$, $w_{2 6}$, $w_{2 7}$, $w_{2 8}$, $w_{2 1}$, $w_{2
      2}$, $w_{2 3}$, $w_{2 4}$, $w_{2 5}$, $w_{2 6}$, $w_{2 7}$, $w_{2
      1}$, $w_{2 2}$, $w_{2 3}$, $w_{2 4}$, $w_{2 5}$, $w_{2 6}$, $w_{2
      1}$, $w_{2 2}$, $w_{2 3}$, $w_{2 4}$, $w_{2 5}$, $w_{2 1}$, $w_{2
      2}$, $w_{2 3}$, $w_{2 4}$, $w_{2 1}$, $w_{2 2}$, $w_{2 3}$, $w_{2
      1}$, $w_{2 2}$, $w_{2 1}$, $w_{2 b}$, $w_{2 a}$, $w_{2 9}$, $w_{2
      8}$, $w_{2 7}$, $w_{2 6}$, $w_{2 5}$, $w_{2 4}$, $w_{2 3}$, $w_{2
      2}$, $w_{2 1}$, $w_{1 7}$, $w_{6 7}$, $w_{1 8}$, $w_{6 8}$, $w_{1
      9}$, $w_{6 9}$, $w_{1 a}$, $w_{6 a}$, $w_{1 b}$, $w_{6 b}$, $w_{1
      7}$, $w_{6 7}$, $w_{1 8}$, $w_{6 8}$, $w_{1 9}$, $w_{6 9}$, $w_{1
      a}$, $w_{6 a}$, $w_{1 7}$, $w_{6 7}$, $w_{1 8}$, $w_{6 8}$, $w_{1
      9}$, $w_{6 9}$, $w_{1 7}$, $w_{6 7}$, $w_{1 8}$, $w_{6 8}$, $w_{1
      7}$, $w_{6 7}$, $w_{1 b}$, $w_{6 b}$, $w_{1 a}$, $w_{6 a}$, $w_{1
      9}$, $w_{6 9}$, $w_{1 8}$, $w_{6 8}$, $w_{1 7}$, $w_{6 7}$, $w_{3
      7}$, $w_{5 7}$, $w_{3 8}$, $w_{5 8}$, $w_{3 9}$, $w_{5 9}$, $w_{3
      a}$, $w_{5 a}$, $w_{3 b}$, $w_{5 b}$, $w_{3 7}$, $w_{5 7}$, $w_{3
      8}$, $w_{5 8}$, $w_{3 9}$, $w_{5 9}$, $w_{3 a}$, $w_{5 a}$, $w_{3
      7}$, $w_{5 7}$, $w_{3 8}$, $w_{5 8}$, $w_{3 9}$, $w_{5 9}$, $w_{3
      7}$, $w_{5 7}$, $w_{3 8}$, $w_{5 8}$, $w_{3 7}$, $w_{5 7}$, $w_{3
      b}$, $w_{5 b}$, $w_{3 a}$, $w_{5 a}$, $w_{3 9}$, $w_{5 9}$, $w_{3
      8}$, $w_{5 8}$, $w_{3 7}$, $w_{5 7}$, $w_{4 7}$, $w_{4 8}$, $w_{4
      9}$, $w_{4 a}$, $w_{4 b}$, $w_{4 7}$, $w_{4 8}$, $w_{4 9}$, $w_{4
      a}$, $w_{4 7}$, $w_{4 8}$, $w_{4 9}$, $w_{4 7}$, $w_{4 8}$, $w_{4
      7}$, $w_{4 b}$, $w_{4 a}$, $w_{4 9}$, $w_{4 8}$, $w_{4 7}$, $w_{2
      7}$, $w_{2 8}$, $w_{2 9}$, $w_{2 a}$, $w_{2 b}$, $w_{2 7}$, $w_{2
      8}$, $w_{2 9}$, $w_{2 a}$, $w_{2 7}$, $w_{2 8}$, $w_{2 9}$, $w_{2
      7}$, $w_{2 8}$, $w_{2 7}$, $w_{2 b}$, $w_{2 a}$, $w_{2 9}$, $w_{2
      8}$, $w_{2 7}$, $w_{1 1}$, $w_{6 1}$, $w_{1 2}$, $w_{6 2}$, $w_{1
      3}$, $w_{6 3}$, $w_{1 4}$, $w_{6 4}$, $w_{1 5}$, $w_{6 5}$, $w_{1
      1}$, $w_{6 1}$, $w_{1 2}$, $w_{6 2}$, $w_{1 3}$, $w_{6 3}$, $w_{1
      4}$, $w_{6 4}$, $w_{1 1}$, $w_{6 1}$, $w_{1 2}$, $w_{6 2}$, $w_{1
      3}$, $w_{6 3}$, $w_{1 1}$, $w_{6 1}$, $w_{1 2}$, $w_{6 2}$, $w_{1
      1}$, $w_{6 1}$, $w_{1 5}$, $w_{6 5}$, $w_{1 4}$, $w_{6 4}$, $w_{1
      3}$, $w_{6 3}$, $w_{1 2}$, $w_{6 2}$, $w_{1 1}$, $w_{6 1}$, $w_{3
      1}$, $w_{5 1}$, $w_{3 2}$, $w_{5 2}$, $w_{3 3}$, $w_{5 3}$, $w_{3
      4}$, $w_{5 4}$, $w_{3 5}$, $w_{5 5}$, $w_{3 1}$, $w_{5 1}$, $w_{3
      2}$, $w_{5 2}$, $w_{3 3}$, $w_{5 3}$, $w_{3 4}$, $w_{5 4}$, $w_{3
      1}$, $w_{5 1}$, $w_{3 2}$, $w_{5 2}$, $w_{3 3}$, $w_{5 3}$, $w_{3
      1}$, $w_{5 1}$, $w_{3 2}$, $w_{5 2}$, $w_{3 1}$, $w_{5 1}$, $w_{3
      5}$, $w_{5 5}$, $w_{3 4}$, $w_{5 4}$, $w_{3 3}$, $w_{5 3}$, $w_{3
      2}$, $w_{5 2}$, $w_{3 1}$, $w_{5 1}$, $w_{4 1}$, $w_{4 2}$, $w_{4
      3}$, $w_{4 4}$, $w_{4 5}$, $w_{4 1}$, $w_{4 2}$, $w_{4 3}$, $w_{4
      4}$, $w_{4 1}$, $w_{4 2}$, $w_{4 3}$, $w_{4 1}$, $w_{4 2}$, $w_{4
      1}$, $w_{4 5}$, $w_{4 4}$, $w_{4 3}$, $w_{4 2}$, $w_{4 1}$, $w_{2
      1}$, $w_{2 2}$, $w_{2 3}$, $w_{2 4}$, $w_{2 5}$, $w_{2 1}$, $w_{2
      2}$, $w_{2 3}$, $w_{2 4}$, $w_{2 1}$, $w_{2 2}$, $w_{2 3}$, $w_{2
      1}$, $w_{2 2}$, $w_{2 1}$, $w_{2 5}$, $w_{2 4}$, $w_{2 3}$, $w_{2
      2}$, $w_{2 1}$ }

  $\conftoquiv(\alpha)$ gives
  \begin{equation*}
    \begin{gathered}
      \begin{aligned}
        A_1 &= h_{11} &\qquad A_2 &= h_{12} &\qquad A_3 &= h_{13} &\qquad A_4 &= h_{14} &\qquad A_5 &= h_{15} &\qquad A_6 &= h_{16} \\
        B_1 &= h_{21} & B_2 &= h_{22} & B_3 &= h_{23} & B_4 &= h_{24} & B_5 &= h_{25} & B_6 &= h_{26} \\
        C_1 &= h_{36} & C_2 &= h_{32} & C_3 &= h_{35} & C_4 &= h_{34} & C_5 &= h_{33} & C_6 &= h_{31} \\
      \end{aligned}
      \\
      \begin{aligned}
        v_{2 1} &= \genminor{w_{7}}{2}(u) \cdot h_{22} & v_{4 1} &= \genminor{w_{8}}{4}(u) \cdot \frac{h_{12} h_{24} }{h_{14}} & v_{3 1} &= \genminor{w_{9}}{3}(u) \cdot \frac{h_{12} h_{23} }{h_{15}} \\
        v_{5 1} &= \genminor{w_{10}}{5}(u) \cdot \frac{h_{12} h_{25} }{h_{13}} & v_{1 1} &= \genminor{w_{11}}{1}(u) \cdot \frac{h_{12} h_{21} }{h_{16}} & v_{6 1} &= \genminor{w_{12}}{6}(u) \cdot \frac{h_{12} h_{26} }{h_{11}} \\
        v_{2 2} &= \genminor{w_{13}}{2}(u) \cdot \frac{h_{14} h_{22} }{h_{12}} & v_{4 2} &= \genminor{w_{14}}{4}(u) \cdot h_{12} h_{24} & v_{3 2} &= \genminor{w_{15}}{3}(u) \cdot \frac{h_{12} h_{14} h_{23} }{h_{15}} \\
        v_{5 2} &= \genminor{w_{16}}{5}(u) \cdot \frac{h_{12} h_{14} h_{25} }{h_{13}} & v_{1 2} &= \genminor{w_{17}}{1}(u) \cdot \frac{h_{14} h_{21} }{h_{16}} & v_{6 2} &= \genminor{w_{18}}{6}(u) \cdot \frac{h_{14} h_{26} }{h_{11}} \\
        v_{2 3} &= \genminor{w_{19}}{2}(u) \cdot h_{22} & v_{4 3} &= \genminor{w_{20}}{4}(u) \cdot h_{12}^2 h_{24} & v_{3 3} &= \genminor{w_{21}}{3}(u) \cdot \frac{h_{12} h_{14} h_{23} }{h_{15}} \\
        v_{5 3} &= \genminor{w_{22}}{5}(u) \cdot \frac{h_{12} h_{14} h_{25} }{h_{13}} & v_{1 3} &= \genminor{w_{23}}{1}(u) \cdot \frac{h_{12} h_{21} }{h_{16}} & v_{6 3} &= \genminor{w_{24}}{6}(u) \cdot \frac{h_{12} h_{26} }{h_{11}} \\
        v_{2 4} &= \genminor{w_{25}}{2}(u) \cdot h_{14} h_{22} & v_{4 4} &= \genminor{w_{26}}{4}(u) \cdot h_{12} h_{14} h_{24} & v_{3 4} &= \genminor{w_{27}}{3}(u) \cdot \frac{h_{12} h_{14} h_{23} }{h_{15}} \\
        v_{5 4} &= \genminor{w_{28}}{5}(u) \cdot \frac{h_{12} h_{14} h_{25} }{h_{13}} & v_{1 4} &= \genminor{w_{29}}{1}(u) \cdot \frac{h_{14} h_{21} }{h_{16}} & v_{6 4} &= \genminor{w_{30}}{6}(u) \cdot \frac{h_{14} h_{26} }{h_{11}} \\
        v_{2 5} &= \genminor{w_{31}}{2}(u) \cdot \frac{h_{14} h_{22} }{h_{12}} & v_{4 5} &= \genminor{w_{32}}{4}(u) \cdot h_{12} h_{24} & v_{3 5} &= \genminor{w_{33}}{3}(u) \cdot \frac{h_{14} h_{23} }{h_{15}} \\
        v_{5 5} &= \genminor{w_{34}}{5}(u) \cdot \frac{h_{14} h_{25} }{h_{13}} & v_{1 5} &= \genminor{w_{35}}{1}(u) \cdot \frac{h_{15} h_{21} }{h_{16}} & v_{6 5} &= \genminor{w_{36}}{6}(u) \cdot \frac{h_{13} h_{26} }{h_{11}} \\
      \end{aligned}
    \end{gathered}
  \end{equation*}

  \subsection{\texorpdfstring{$E_7$}{E\_7}}

  Based on the Dynkin diagram \inldynkinEseven, the following quiver is
  of well-rooted Dynkin type for $E_7$. It admits an induced Coxeter
  element $c = \set{1, 2, 3, 4, 5, 6, 7}$, with partitions \[ T_0 =
    \set{1}, \quad T_1 = \set{2}, \quad T_2 = \set{3}, \quad T_3 =
    \set{4, 5}, \quad T_4 = \set{6}, \quad T_5 = \set{7}. \]

  {\centering \includestandalone[mode=image|tex]{fig/dynkin-type-E7}

  } $Q_{E_7}$ is given in \cref{fig:QE7}. The mutations are too long to
  be reasonably presented.

  \begin{figure}
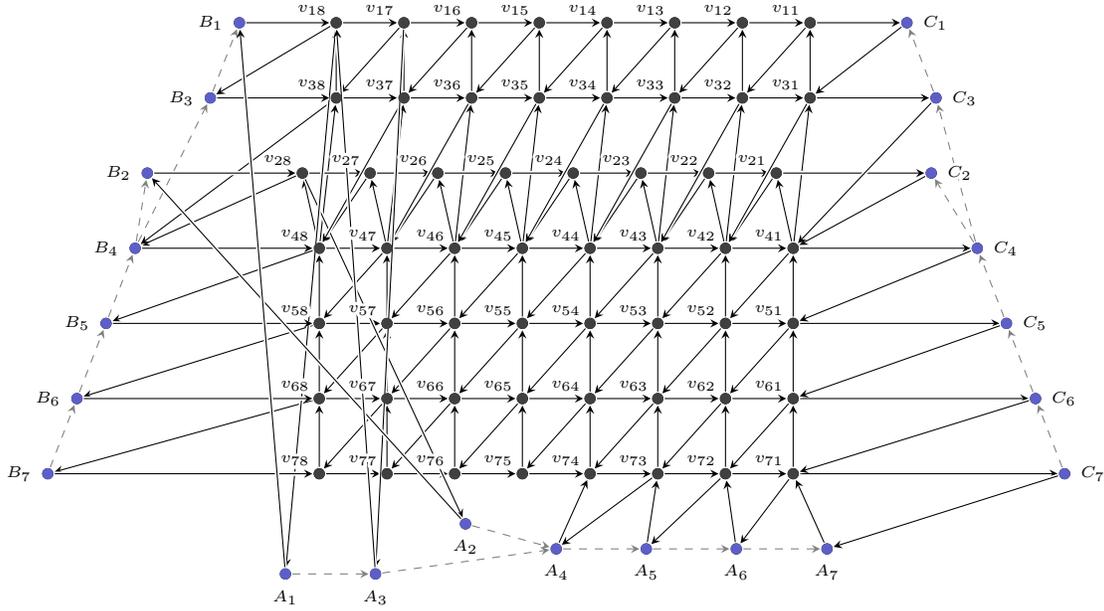

    \centering \includestandalone[mode=image|tex]{fig/QE7}

    \caption{$Q$ for $E_7$.
      \label{fig:QE7}
    }
  \end{figure}

  \subsection{\texorpdfstring{$E_8$}{E\_8}}

  Based on the Dynkin diagram \inldynkinEeight, the following quiver is
  of well-rooted Dynkin type for $E_8$. It admits an induced Coxeter
  element $c = \set{1, 2, 3, 4, 5, 6, 7, 8}$, with partitions \[ T_0 =
    \set{1}, T_1 = \set{2}, T_2 = \set{3}, T_3 = \set{4}, T_4 = \set{5},
    T_5 = \set{6, 7}, T_6 = \set{8}. \]

  {\centering \includestandalone[mode=image|tex]{fig/dynkin-type-E8}

  } $Q_{E_8}$ is given in \cref{fig:QE8}. The mutations are too long to
  be reasonably presented.

  \begin{figure}
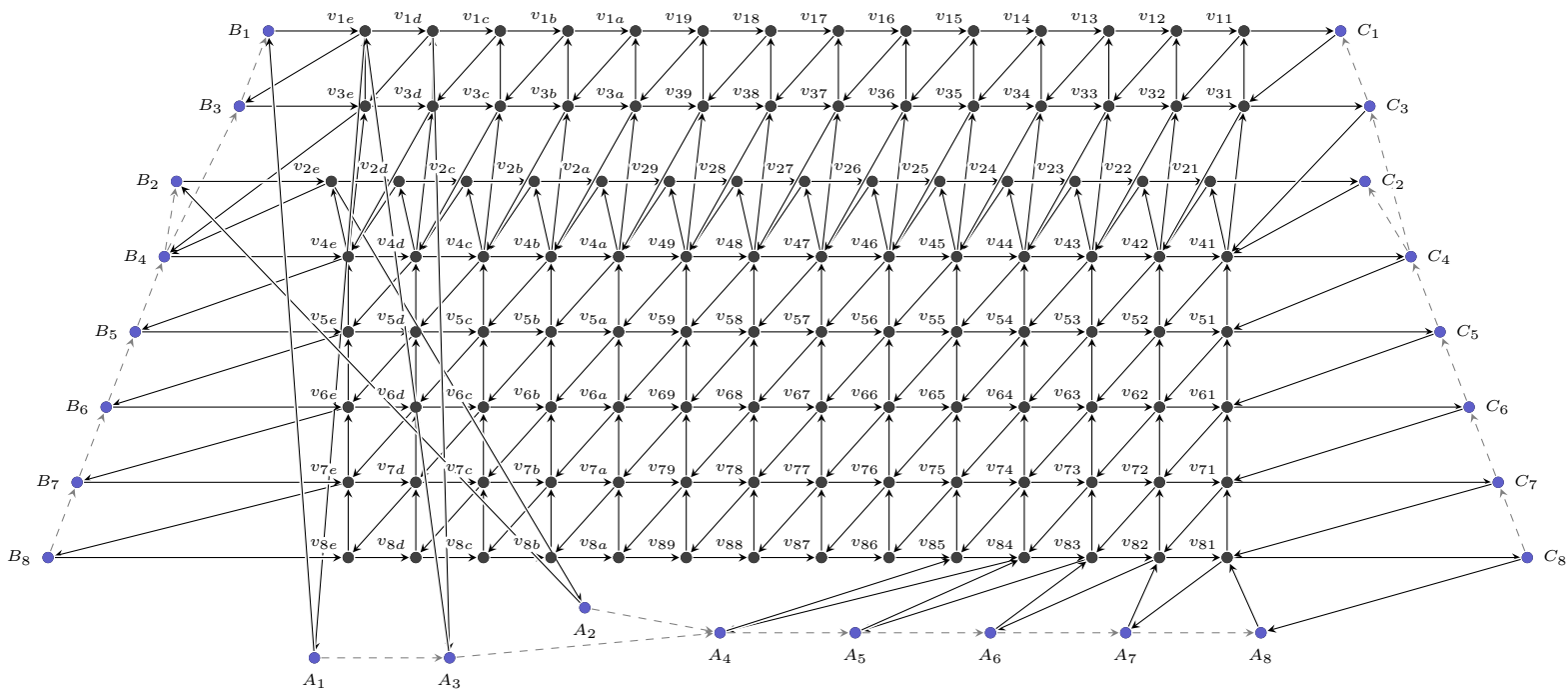

    \centering \includestandalone[angle=270,mode=image|tex]{fig/QE8}

    \caption{$Q$ for $E_8$.
      \label{fig:QE8}
    }
  \end{figure}

  \subsection{\texorpdfstring{$D_2 = A_1 \times A_1$}{D\_2 = A\_1 x
      A\_1}}

  Using the Dynkin diagram \inldynkinDtwo, we use the following quiver.

  {\centering \includestandalone[mode=image|tex]{fig/dynkin-type-D2}

  } The quiver $Q_G$ is shown in \cref{fig:QD2}.

  \begin{figure}
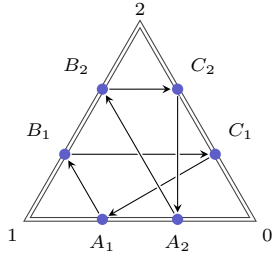

    \centering \includestandalone[mode=image|tex]{fig/QD2}

    \caption{$Q$ for $D_2 = A_1 \times A_1$.
      \label{fig:QD2}
    }
  \end{figure}

  \inlmutseq{\murot}{ }

  \inlmutseq{\muflip}{ $w_{1 1}$, $w_{2 1}$ }

  And $\conftoquiv$ is given the edge coordinate assignment only. Since
  $\sigma_{D_2}$ is trivial, this is direct.

  \begin{equation*}
    \begin{gathered}
      \begin{aligned}
        A_1 &= h_{11} &\quad A_2 &= h_{12} \\
        B_1 &= h_{21} &\quad B_2 &= h_{22} \\
        C_1 &= h_{31} &\quad C_2 &= h_{32}
      \end{aligned}
    \end{gathered}
  \end{equation*}

  \subsection{\texorpdfstring{$A_3 \times C_2$}{A\_3 x C\_2}}

  To demonstrate \cref{lem:fgcs-exists-for-products} for a less trivial
  type, consider $G = G_1 \oplus G_2$, with $G_1$ of type $A_3$ and
  $G_2$ of type $C_2$. The Dynkin diagram for $G$ is
  \inldynkinAthreeCtwo. The quiver $Q_G$ is the disjoint union of
  $Q_{A_3}$ and $Q_{C_2}$ as shown in \cref{fig:QA3-C2}. We label the
  simple roots as shown in the Dynkin type quiver.

  {\centering \includestandalone[mode=image|tex]{fig/dynkin-type-A3-C2}

  }

  \begin{figure}
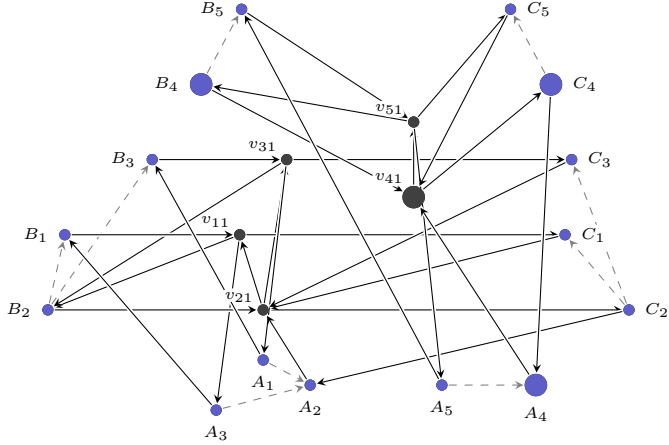

    \centering \includestandalone[mode=image|tex]{fig/QA3-C2}

    \caption{$Q$ for $A_3 \times C_2$.
      \label{fig:QA3-C2}
    }
  \end{figure}

  \inlmutseq{\murot}{ $v_{2 1}$, $v_{1 1}$, $v_{3 1}$, $v_{2 1}$, $v_{1
      1}$, $v_{3 1}$, $v_{2 1}$, $v_{1 1}$, $v_{3 1}$, $v_{2 1}$, $v_{1
      1}$, $v_{3 1}$, \qquad $v_{4 1}$, $v_{5 1}$ }

  \inlmutseq{\muflip}{ $w_{1 3}$, $w_{3 3}$, $w_{2 3}$, $w_{1 1}$, $w_{3
      1}$, $w_{2 1}$, $w_{2 3}$, $w_{1 3}$, $w_{3 3}$, $w_{2 2}$, $w_{1
      2}$, $w_{3 2}$, $w_{2 1}$, $w_{1 1}$, $w_{3 1}$, $w_{2 3}$, $w_{1
      3}$, $w_{3 3}$, $w_{2 2}$, $w_{1 2}$, $w_{3 2}$, $w_{2 3}$, $w_{1
      3}$, $w_{3 3}$, $w_{1 1}$, $w_{3 1}$, $w_{1 2}$, $w_{3 2}$, $w_{1
      3}$, $w_{3 3}$, $w_{1 1}$, $w_{3 1}$, $w_{1 2}$, $w_{3 2}$, $w_{1
      1}$, $w_{3 1}$, $w_{1 3}$, $w_{3 3}$, $w_{1 2}$, $w_{3 2}$, $w_{1
      1}$, $w_{3 1}$, $w_{2 1}$, $w_{2 2}$, $w_{2 3}$, $w_{2 1}$, $w_{2
      2}$, $w_{2 1}$, $w_{2 3}$, $w_{2 2}$, $w_{2 1}$, \qquad $w_{5 3}$,
    $w_{4 3}$, $w_{5 1}$, $w_{4 1}$, $w_{4 3}$, $w_{5 3}$, $w_{4 2}$,
    $w_{5 2}$, $w_{4 1}$, $w_{5 1}$, $w_{4 3}$, $w_{5 3}$, $w_{4 2}$,
    $w_{5 2}$, $w_{4 3}$, $w_{5 3}$, $w_{5 1}$, $w_{5 2}$, $w_{5 3}$,
    $w_{5 1}$, $w_{5 2}$, $w_{5 1}$, $w_{5 3}$, $w_{5 2}$, $w_{5 1}$,
    $w_{4 1}$, $w_{4 2}$, $w_{4 3}$, $w_{4 1}$, $w_{4 2}$, $w_{4 1}$,
    $w_{4 3}$, $w_{4 2}$, $w_{4 1}$ }

  To construct $\conftoquiv$, as described in
  \cref{lem:fgcs-exists-for-products}, the coordinates on $u$ are
  exactly divided between two elements of $N_{-}$ corresponding to $A_3$
  and $C_2$. Let $(w_k)_{1}$ be the word $w_k$ for $A_3$, and
  $(w_k)_{2}$ be the word $w_k$ for $C_2$ (with the indices of each
  component $s_{\alpha_i}$ increased by $3$). Then appropriate $i$ and
  $j$ in $\genminor{(w_k)_i}{j}(u)$ will allow extracting coordinates of
  $u$ in ways corresponding to the factors $G_i$.
  \begin{equation*}
    \begin{gathered}
      \begin{aligned}
        A_1 &= h_{11} &\quad A_2 &= h_{12} &\quad A_3 &= h_{13} &\qquad A_4 &= h_{14} &\quad A_5 &= h_{15} \\
        B_1 &= h_{21} &\quad B_2 &= h_{22} &\quad B_3 &= h_{23} &\qquad B_4 &= h_{24} &\quad B_5 &= h_{25} \\
        C_1 &= h_{33} &\quad C_2 &= h_{32} &\quad C_3 &= h_{31} &\qquad C_4 &= h_{34} &\quad C_5 &= h_{35} \\
      \end{aligned}
      \\
      \begin{aligned}
        v_{2 1} &= \genminor{(w_{4})_{1}}{2}(u) \cdot h_{22} & \quad
        v_{1 1} &= \genminor{(w_{5})_{1}}{1}(u) \cdot \frac{h_{12} h_{21} }{h_{13}} & \quad
        v_{3 1} &= \genminor{(w_{6})_{1}}{3}(u) \cdot \frac{h_{12} h_{23} }{h_{11}}
      \end{aligned}
      \\
      \begin{aligned}
        v_{4 1} &= \genminor{(w_{3})_{2}}{4}(u) \cdot h_{24} &\quad v_{5 1} &= \genminor{(w_{4})_{2}}{5}(u) \cdot \frac{h_{14}^2 h_{25} }{h_{15}}
      \end{aligned}
    \end{gathered}
  \end{equation*}

  \printbibliography 
\end{document}